\documentclass{amsart}
\usepackage{amssymb,amsmath}
\usepackage{pdflscape}
\usepackage{mathrsfs}
\usepackage{tikz}
\usepackage{graphicx}
\usepackage{caption}
\usepackage{subcaption}
\usepackage{rotating}
\usepackage{afterpage}
\usepackage{hyperref}

\usetikzlibrary{calc,intersections, arrows, shapes}
\newcommand{\red}{\textcolor{red}}
\newcommand{\blue}{\textcolor{blue}}

\setcounter{MaxMatrixCols}{20}

\newcommand{\RR}{\mathbb{R}}

\newcommand{\NN}{\mathbb{N}}

\newcommand{\Sym}{\mathcal{S}}
\newcommand{\PSD}{\mathcal{S}_+}
\newcommand{\rank}{\textup{rank}\,}

\newcommand{\rankpsd}{\textup{rank}_{\textup{psd}}}

\newcommand{\sqrtrank}{\textup{rank}_{\! \! {\sqrt{\ }}}\,}

\newtheorem{theorem}{Theorem}[section]

\newtheorem{lemma}[theorem]{Lemma}
\newtheorem{corollary}[theorem]{Corollary}
\newtheorem{proposition}[theorem]{Proposition}
\theoremstyle{definition}
\newtheorem{definition}[theorem]{Definition}
\newtheorem{example}[theorem]{Example}

\theoremstyle{remark}
\newtheorem{remark}[theorem]{Remark}
\newtheorem{problem}[theorem]{Problem}

\title[Psd-minimal $4$-Polytopes]{Four-Dimensional Polytopes of\\Minimum Positive Semidefinite Rank}

\author[Gouveia]{Jo{\~a}o Gouveia}
\address{CMUC, Department of Mathematics,
  University of Coimbra, 3001-454 Coimbra, Portugal}
\email{jgouveia@mat.uc.pt} 

\author[Pashkovich]{Kanstanstin Pashkovich}
\address{Department of Combinatorics and Optimization, 
University of Waterloo, 200 University Ave. W
Waterloo, Ontario, Canada, N2L 3G1}
\email{kanstantsin.pashkovich@gmail.com} 

\author[Robinson]{Richard Z. Robinson}
\address{Department of Mathematics, University of Washington, Box
  354350, Seattle, WA 98195, USA} \email{rzr@uw.edu}

\author[Thomas]{Rekha R. Thomas}
\address{Department of Mathematics, University of Washington, Box
  354350, Seattle, WA 98195, USA} \email{rrthomas@uw.edu}

\thanks{Gouveia was partially supported by the Centre for Mathematics of the
University of Coimbra -- UID/MAT/00324/2013, funded by the Portuguese
Government through FCT/MEC and co-funded by the European Regional Development Fund through the Partnership Agreement PT2020,  Pashkovich by F.R.S.-FNRS research project T.0100.13, Semaphore 14620017, Robinson by 
the U.S. National Science Foundation Graduate Research Fellowship under Grant No. DGE-0718124, and Thomas by 
the U.S. National Science Foundation grant DMS-1418728.}  

\date{\today}

\begin{document}

\begin{abstract} The positive semidefinite (psd) rank of a polytope is the size of the smallest psd cone 
that admits an affine slice that projects linearly onto the polytope. The psd rank of a $d$-polytope is at least $d+1$, and when equality holds we say that the polytope is psd-minimal. In this paper we develop new tools for the study of psd-minimality and use them to give a complete classification of psd-minimal $4$-polytopes.
The main tools introduced are trinomial obstructions, a new algebraic obstruction for psd-minimality, and the slack ideal of a polytope, which encodes the space of realizations of a polytope up to projective equivalence. 

Our central result is that there are $31$ combinatorial classes of psd-minimal $4$-polytopes. 
We provide combinatorial information and an explicit psd-minimal realization in each class. 
For $11$ of these classes, every polytope in them is psd-minimal, and these are precisely the combinatorial classes of the known projectively unique $4$-polytopes. We give a complete characterization of psd-minimality in the 
remaining classes, encountering in the process counterexamples to some open conjectures.
\end{abstract}

\keywords{polytopes; positive semidefinite rank; psd-minimal; slack matrix; slack ideal}

\maketitle

\section{Introduction} \label{sec:introduction}

The {\em positive semidefinite (psd) rank} of a convex set 
was introduced in \cite{gouveia2011lifts}, and it can be seen as a 
measure of geometric complexity of that set.
Let $\Sym^k$ denote the vector space of all real symmetric $k \times k$ matrices with the inner product 
$\langle A, B \rangle := \textup{trace}(AB)$, and let $\PSD^k$ be the cone of positive semidefinite matrices in $\Sym^k$. A polytope $P \subset \RR^d$ is said to have a {\em psd lift} of size $k$ if there is an 
affine space $L \subset \Sym^k$ and a linear map $\pi \,:\, \Sym^k \rightarrow \RR^d$ such that $P = \pi(\PSD^k \cap L)$.
The psd rank of $P$, $\rankpsd(P)$, is the smallest $k$ such that $P$ has a psd lift of size $k$.
Linear optimization over a polytope can be achieved via semidefinite programming 
over its psd lift. Thus a lift of small size (alternatively, small psd rank of the polytope) implies, in principle, the possibility of efficiently solving a linear optimization problem over this polytope. These features have attracted much research on the psd rank of polytopes in 
recent years, with several exciting new results coming from optimization and computer science
\cite{BrietDadushPokutta}, \cite{LeeRaghavendraSteurer}, \cite{LeeWeiWolf}, \cite{LeeWei}, \cite{FawziParrilo}, \cite{FawziSaundersonParrilo}.
For a survey on psd rank of nonnegative matrices, see \cite{FGPRT}.

Since polytopes of small psd rank can admit efficient algorithms 
for linear optimization, there is much incentive to understand them. If $P$ is a $d$-polytope, then $\rankpsd(P) \geq d+1$, and if equality holds, we say that $P$ is {\em psd-minimal}. 
These polytopes are a natural place to start the study of small psd rank, a task that was initiated in \cite{gouveia2013polytopes}. 
A well-known example of a psd-minimal 
polytope is the {\em stable set polytope} of a {\em perfect graph} \cite{LovaszThetaBody}. In this case, psd-minimality implies 
that the size of a largest stable set in a perfect graph can be 
found in polynomial time, while computing the size of a largest stable set in a graph is NP-hard in general. The existence of a small psd lift for the stable set polytopes of perfect graphs is the only known proof of the polynomial time solvability of the stable set problem in this class of graphs. 

The stable set polytope of a perfect graph is also an example of a 
{\em 2-level polytope} \cite{gouveia2010thetabodies}. These are polytopes with the property that for each facet of the polytope there is a unique parallel translate of its affine hull containing all vertices of the polytope that are not on the facet.
All 2-level polytopes are psd-minimal and affinely equivalent to $0/1$-polytopes with additional special properties, yet they are far from well-understood and offer many challenges. Several groups of researchers have been studying them recently \cite{GrandeSanyal},
\cite{Sam&Co2level}, \cite{Sam&Co2levelEnumeration}. 
The study of psd-minimal polytopes is an even more ambitious task than that of 2-level polytopes, yet it is an important step in understanding the phenomenon of small psd rank, and offer a rich set of examples 
for honing psd rank techniques. 

The only psd-minimal $2$-polytopes are triangles and quadrilaterals. This already helps in the classification of higher dimensional psd-minimal polytopes, due to the following useful lemma.
\begin{lemma}\cite[Proposition~3.8]{gouveia2013polytopes} \label{lem:face_inheritance}
Any face of a psd-minimal polytope is psd-minimal.
\end{lemma}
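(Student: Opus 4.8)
The plan is to reduce to the case of facets and then induct up the face lattice. Since the faces of a polytope form a graded lattice, any face $F$ of $P$ with $\dim F = e$ fits into a chain $F = G_e \subsetneq G_{e+1} \subsetneq \cdots \subsetneq G_d = P$ in which each $G_i$ is a facet of $G_{i+1}$. Hence it suffices to prove the statement when $F$ is a facet: if $\dim P = d$, $\rankpsd(P) = d+1$, and $F$ is a facet of $P$, then $\rankpsd(F) = d$. The bound $\rankpsd(F) \geq d$ is automatic because $F$ is a $(d-1)$-polytope, so all the content is in manufacturing a psd lift of $F$ of size $d$ out of a minimal one for $P$.

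So I would fix a minimal psd lift $P = \pi(\PSD^{d+1} \cap L)$, write $S := \PSD^{d+1} \cap L$, and let $F = P \cap \{x : \langle c, x \rangle = c_0\}$ for a facet-defining inequality $\langle c, x \rangle \le c_0$ valid on $P$. The linear functional $X \mapsto \langle c, \pi(X) \rangle$ attains its maximum $c_0$ over $S$ exactly on the exposed face $F' := \{X \in S : \langle c, \pi(X) \rangle = c_0\}$, and $\pi(F') = F$. Since $F \subsetneq P = \pi(S)$, this functional is nonconstant on $S$, so $F'$ is a \emph{proper} face of $S$.

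The key step is to realize $F'$ as an affine slice of a psd cone of size at most $d$. Choose $X_0$ in the relative interior of $F'$ and set $V_0 := \operatorname{range}(X_0)$, $r := \dim V_0 = \rank X_0$. If $X_0$ had full rank it would lie in the interior of $\PSD^{d+1}$, hence in the relative interior of $S$, which would force $F' = S$ and contradict properness; thus $r \le d$. Using the standard fact that $\operatorname{range}(Y) \subseteq \operatorname{range}(X)$ whenever $X = \lambda Y + (1-\lambda)Z$ with $Y, Z \succeq 0$ and $\lambda \in (0,1)$, every matrix of $F'$ has range contained in $V_0$; hence $F'$ lies in the face $\{X \in \PSD^{d+1} : \operatorname{range}(X) \subseteq V_0\}$ of the cone. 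After an orthogonal change of coordinates taking $V_0$ to $\operatorname{span}(e_1, \ldots, e_r)$, this face becomes the set of matrices $\diag(Y,0)$ with $Y \in \PSD^{r}$, and $F'$ is carried to an affine slice $\PSD^{r} \cap \widetilde{L}$ of $\PSD^{r}$. Composing with $\pi$ then exhibits $F = \pi(F')$ as a linear image of $\PSD^{r} \cap \widetilde{L}$, so $\rankpsd(F) \le r \le d$, and therefore $\rankpsd(F) = d$.

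Finally I would iterate this facet statement down the chain $G_d = P, G_{d-1}, \ldots, G_e = F$: each $G_{i-1}$ is a facet of the psd-minimal polytope $G_i$, hence psd-minimal, so $F$ is psd-minimal. The step I expect to need the most care is the bound on $F'$: one must check that the exposed face $F'$ sits inside a \emph{proper face of the ambient cone} $\PSD^{d+1}$ whose rank is at most $d$, and not merely inside some proper face of the slice $S$ cut out by the affine space $L$. The relative-interior/rank argument above is precisely what rules out the full-rank case and pins $r$ down to $\le d$.
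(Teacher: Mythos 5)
Your argument is correct. Note first that the paper does not actually prove this lemma --- it imports it from \cite[Proposition~3.8]{gouveia2013polytopes} --- so the comparison is with that reference's proof, which is a \emph{factorization-side} argument: one takes a $\PSD^{d+1}$-factorization $(A_v),(B_G)$ of $S_P$, observes that for every vertex $v$ of the facet $F$ one has $\langle A_v,B_F\rangle=0$ with $B_F\neq 0$, hence $A_vB_F=0$ and all the $A_v$ with $v\in F$ lie in the proper face $\{X\in\PSD^{d+1}:\operatorname{range}(X)\subseteq\ker B_F\}\cong\PSD^{k}$, $k\le d$; compressing the factorization to that face and restricting to the submatrix of $S_P$ that contains $S_F$ yields a $\PSD^{d}$-factorization of $S_F$. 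You instead work on the \emph{geometric/lift side}: you identify the exposed face $F'$ of the spectrahedron $S=\PSD^{d+1}\cap L$ lying over $F$, and show it sits inside a proper face of the ambient cone by the relative-interior/rank argument. The two proofs are dual incarnations of the same idea (faces of $\PSD^{d+1}$ are smaller psd cones), but the factorization proof gets the proper face ``for free'' from the complementarity $A_vB_F=0$, whereas your route has to earn it --- and you correctly flag and close the one genuinely delicate step, namely that a relative-interior point $X_0$ of $F'$ cannot have full rank (else $F'=S$) and that $\operatorname{range}(Y)\subseteq\operatorname{range}(X_0)$ for all $Y\in F'$. Your reduction to facets via a maximal chain in the graded face lattice is also the standard one and is implicit in the reference. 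In short: correct, self-contained, and a legitimate alternative to the cited proof; the only trade-off is that the factorization argument is shorter because the slack-matrix formalism ($S_F$ embeds into $S_P$) does the bookkeeping for you.
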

Therefore psd-minimal $3$-polytopes can only have triangular or quadrilateral facets but still turn out to be interesting. Call an octahedron in $\RR^3$, {\em biplanar}, if there are two distinct planes each containing four vertices of the octahedron. A complete classification of psd-minimal $3$-polytopes  is known.

\begin{theorem} \cite[Theorem~4.11]{gouveia2013polytopes} \label{thm:classification in 3-space}
The psd-minimal $3$-polytopes are precisely simplices,  quadrilateral pyramids, bisimplices, biplanar octahedra and their polars.
\end{theorem}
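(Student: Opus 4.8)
The plan is to combine a combinatorial reduction with an analysis of the slack matrix, using throughout that $\rankpsd(P)=4$ precisely when the slack matrix $S_P$ admits a positive semidefinite factorization of size $4$, and that this property is invariant both under polarity and under projective transformations of $P$. First I would cut down the combinatorics. By Lemma~\ref{lem:face_inheritance} and the fact that the only psd-minimal $2$-polytopes are triangles and quadrilaterals, every facet of a psd-minimal $3$-polytope $P$ is a triangle or a quadrilateral; applying the same to $P^{\circ}$, which is psd-minimal because $\rankpsd(P)=\rankpsd(P^{\circ})$, shows that every vertex figure of $P$ is a triangle or a quadrilateral, that is, every vertex of $P$ has degree $3$ or $4$. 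Euler's relation then forces $p_3+v_3=8$, where $p_3$ counts the triangular facets and $v_3$ the degree-$3$ vertices. This already produces the simplex, the quadrilateral pyramid, and the bisimplex together with its polar, but --- unlike in the planar case --- it does \emph{not} reduce the problem to finitely many combinatorial types: besides the cube there are infinitely many ``barrels'' (stacks of square frames) whose $2$-faces are all quadrilaterals and whose vertices all have degree $\le 4$, so a genuine psd-rank obstruction is needed beyond face inheritance and polarity.

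Next I would settle the polytopes that do occur. Since $\rankpsd$ is a projective invariant, for each of the four projectively unique $3$-polytopes --- the tetrahedron, the quadrilateral pyramid, the bisimplex, and the triangular prism --- it suffices to exhibit a single psd-minimal realization; an explicit small-coordinate model makes a size-$4$ factorization transparent, most conveniently via a rank-$4$ Hadamard square root $M$ of $S_P$ (i.e.\ $M\circ M=S_P$ entrywise), which automatically gives a size-$4$ factorization by rank-one psd matrices, and the prism also follows from the bisimplex by polarity. For the octahedron I would parametrize the realization space, write down the $6\times 8$ slack matrix, and show that a size-$4$ factorization exists \emph{if and only if} the octahedron is biplanar: a vanishing entry $\langle X_i,Y_j\rangle=0$ of psd matrices forces $\operatorname{range}X_i\perp\operatorname{range}Y_j$, so the incidences around the degree-$4$ vertices impose rank conditions that can be met simultaneously exactly when two of the three quadruples formed by pairs of opposite vertices are coplanar. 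The cube case is then the polar statement, yielding precisely ``biplanar octahedra and their polars''.

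The hard part is to show that \emph{no other} combinatorial type admits a psd-minimal realization --- equivalently, that for every realization of such a polytope $S_P$ has no size-$4$ psd factorization. I would argue locally: any admissible combinatorial $3$-polytope outside the six listed classes contains a vertex or a facet whose incidence pattern does not occur in any of those classes (for instance a degree-$4$ vertex whose link is neither that of a bipyramid apex nor that of an octahedron vertex), and I would isolate a small submatrix of $S_P$ around it and try to show that the orthogonality constraints coming from its zeros --- together with the sign restrictions that the $4$-cycles of nonzero entries impose on any Hadamard square root of $S_P$ --- are incompatible with rank $4$ unless the polytope degenerates (coincident vertices or facets, or a drop in dimension). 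The essential difficulty is making this obstruction uniform over the entire infinite family of admissible combinatorial types, rather than checking them one at a time.
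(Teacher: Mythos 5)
Your outline of which polytopes occur is sound and close to the paper's own route: the four projectively unique types need only one psd-minimal realization each (the paper gets this more systematically from Theorem~\ref{thm:n+2}, since all of them are $d$-polytopes with $d+2$ vertices or facets and hence have binomial slack ideals), and the octahedron/cube pair is handled by parametrizing slack matrices and extracting the biplanarity condition, essentially as in Proposition~\ref{prop:cube_classification}. Your observation that face inheritance plus polarity leaves infinitely many admissible combinatorial types (the ``barrels'') is also correct and correctly identifies where the real work lies.

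The genuine gap is that you never supply the obstruction that does that work: you describe its desired shape (a local incompatibility at a bad vertex--facet incidence) and then explicitly defer ``the essential difficulty'' of making it uniform over the infinite family. That difficulty is precisely what the paper's trinomial obstruction resolves. Lemma~\ref{lem:trinomial obstruction} shows that if the \emph{symbolic} slack matrix has a $(d+2)$-minor that is a sum of three monomials, then no polytope in that combinatorial class is psd-minimal, because a trinomial $\pm x^a\pm x^b\pm x^c$ cannot vanish simultaneously at a positive point and at its entrywise square root; this certificate depends only on the zero pattern, hence is automatically uniform over all realizations and all combinatorial types containing the configuration. Lemma~\ref{lem:no square facet incident to a degree 4 vertex} then exhibits such a trinomial $5$-minor whenever a degree-four vertex lies on a quadrilateral facet --- exactly the configuration present at every middle-ring vertex of a barrel --- and the short neighborhood analysis in Proposition~\ref{prop:combinatorial classification in 3-space} shows that once every degree-four vertex is surrounded by triangles, the polytope must be a pyramid, a bisimplex, or an octahedron. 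Without this (or an equivalent) concrete local certificate, your proof of the exclusion half does not go through; your proposed alternative via orthogonality of ranges of psd factors would also have to be carried out for each local pattern and is substantially harder than the square-root-rank route of Theorem~\ref{thm:square root rank} that the trinomial argument exploits.
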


Recall that if $P$ is a $d$-polytope containing the origin in its interior, then its polar is $P^\circ := \{ y \in \RR^d \,:\, \langle x, y \rangle \leq 1 \,\,\forall \,\, x \in P \}$. A polytope $P$ has a psd lift of size $k$ if and only if $P^\circ$ has a psd lift of size $k$, and in particular, $\rankpsd(P) = \rankpsd(P^\circ)$. The polar of a combinatorial octahedron is a combinatorial cube. The above theorem shows that psd rank is not a combinatorial property; not all polytopes that are combinatorially equivalent to octahedra and cubes are psd-minimal, the embedding matters. Since psd-minimality is closed under polarity and the combinatorial type of the polar depends only on the combinatorial type of the original polytope, we frequently refer to \emph{dual pairs} of combinatorial types, namely, a pair of combinatorial classes such that the polytopes in one are the polars of those in the other. 

If $P \subset \RR^d$ is a polytope with vertices $v_1, \ldots, v_n$, and facet inequalities 
$a_j^\top x \leq \beta_j$ for $j=1,\ldots,m$, then the {\em slack matrix} of $P$ is the nonnegative matrix $S_P \in \RR^{n \times m}$ such that $(S_P)_{ij}: = \beta_j - a_j^\top v_i$, the slack of the vertex $v_i$ in the facet inequality $a_j^\top x \leq \beta_j$. If $P$ is a $d$-polytope with $d \geq 1$, then $\rank(S_P) = d+1$, and note that the zero pattern in $S_P$ records the vertex-facet incidence structure of $P$, or equivalently, the combinatorics of $P$. 

An $\PSD^k${\em-factorization} of $S_P$ is an assignment of psd matrices $A_1, \ldots, A_n\in \PSD^k$ and $B_1, \ldots, B_m \in \PSD^k$ to the rows and columns, respectively, of $S_P$ such that $(S_P)_{ij} = \beta_j - a_j^\top v_i = \langle A_i, B_j \rangle$. The psd rank of $S_P$, denoted as $\rankpsd(S_P)$, is the minimum $k$ for which $S_P$ admits a factorization through $\PSD^k$. The connection to psd lifts of the polytope $P$ comes via the result from \cite{gouveia2011lifts} that 
$P$ has a psd lift of size $k$ if and only if $S_P$ has an $\PSD^k$-factorization, and hence 
$\rankpsd(P) = \rankpsd(S_P)$.  

There is another notion of rank that plays a key role in the study of psd-minimality.  A Hadamard square root of $S_P$ is a matrix obtained by replacing every positive entry in $S_P$ with one of its two square roots. 
The positive Hadamard square root $\sqrt[+]{S_P}$ is the Hadamard square root obtained by replacing every positive entry by its positive square root. 
The {\em square root rank} of $S_P$, denoted as $\sqrtrank(S_P)$, is the minimum rank 
of a Hadamard square root of $S_P$. 

\begin{theorem} \cite{gouveia2013polytopes} \label{thm:square root rank}
A $d$-polytope $P \subset \RR^d$ with $d \geq 1$ is psd-minimal if and only if 
$$\sqrtrank(S_P) = d+1\,.$$
\end{theorem}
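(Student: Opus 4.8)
The plan is to prove both directions through the correspondence between rank-one $\PSD^{d+1}$-factorizations of $S_P$ and Hadamard square roots of $S_P$ of rank $d+1$; the substance of the proof lies in showing that psd-minimality forces \emph{every} matrix in a minimal psd factorization to have rank one.

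For the easy direction, given $\sqrtrank(S_P) = d+1$ I would take a Hadamard square root $M$ of $S_P$ with $\rank M = d+1$, factor $M = UV^\top$ with $U \in \RR^{n \times (d+1)}$ and $V \in \RR^{m \times (d+1)}$, and write $u_i^\top, v_j^\top$ for their rows. Then $A_i := u_iu_i^\top$ and $B_j := v_jv_j^\top$ lie in $\PSD^{d+1}$ and satisfy $\langle A_i, B_j \rangle = (u_i^\top v_j)^2 = M_{ij}^2 = (S_P)_{ij}$, so $S_P$ has a $\PSD^{d+1}$-factorization, giving $\rankpsd(S_P) \le d+1$; since $\rankpsd(S_P) \ge d+1$ always, $P$ is psd-minimal. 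Running this same construction on an arbitrary Hadamard square root $M$ of $S_P$ shows $\rank M \ge \rankpsd(S_P)$, hence $\sqrtrank(S_P) \ge \rankpsd(S_P)$ for any polytope; in particular a psd-minimal $P$ already satisfies $\sqrtrank(S_P) \ge d+1$, so only the reverse inequality remains.

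For the main direction I would start from a $\PSD^{d+1}$-factorization $(S_P)_{ij} = \langle A_i, B_j \rangle$. Minimality forces $\sum_i A_i \succ 0$: a nonzero kernel vector $z$ would satisfy $A_i z = 0$ for every $i$, so compressing all the $A_i$ and $B_j$ to $z^\perp$ would give a factorization through $\PSD^{d}$, contradicting $\rankpsd(S_P)=d+1$. Hence, after a congruence transformation, I may assume $\sum_i A_i = I_{d+1}$, and likewise $\sum_j B_j \succ 0$. Now fix a vertex $v_{i_0}$ and let $\sigma$ be the set of facets containing it. For $j \in \sigma$, $\langle A_{i_0}, B_j \rangle = 0$ with both matrices positive semidefinite forces $A_{i_0}B_j = 0$, so $\mathrm{colsp}(B_j) \perp \mathrm{colsp}(A_{i_0})$; thus $W := \sum_{j \in \sigma}\mathrm{colsp}(B_j)$ satisfies $\dim W \le (d+1) - \rank A_{i_0}$. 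Compressing every $A_i$ to $W$ (using that $B_j = \Pi_W B_j\Pi_W$ for $j \in \sigma$) exhibits a $\PSD^{\dim W}$-factorization of the submatrix $S_P[\,\cdot\,,\sigma]$, so $\dim W \ge \rankpsd(S_P[\,\cdot\,,\sigma])$. The crucial estimate is $\rankpsd(S_P[\,\cdot\,,\sigma]) \ge d$: the rows of $S_P[\,\cdot\,,\sigma]$ indexed by the neighbours of $v_{i_0}$ form, up to positive row and column scalings, the slack matrix of the vertex figure $P/v_{i_0}$, which is a $(d-1)$-polytope and hence has psd rank at least $d$; since psd rank does not increase under passing to a submatrix and is unchanged by positive diagonal scalings, the bound follows. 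Combining, $(d+1) - \rank A_{i_0} \ge \dim W \ge d$, so $\rank A_{i_0} = 1$ (it is nonzero). The analogous argument with the roles of vertices and facets exchanged --- or, more economically, the vertex-side statement applied to the polar $P^\circ$, whose slack matrix is $S_P^\top$ up to positive diagonal scalings --- shows every $B_j$ has rank one as well.

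Finally, writing $A_i = a_ia_i^\top$ and $B_j = b_jb_j^\top$ with $a_i, b_j \in \RR^{d+1}$, the matrix $M := (a_i^\top b_j)_{i,j}$ satisfies $M_{ij}^2 = \langle A_i, B_j\rangle = (S_P)_{ij}$, so it is a Hadamard square root of $S_P$, and $M = AB^\top$ with $A, B$ having rows $a_i^\top, b_j^\top$ has rank at most $d+1$; hence $\sqrtrank(S_P) \le d+1$, which with the reverse inequality yields $\sqrtrank(S_P) = d+1$. The one genuinely delicate point, I expect, is the estimate $\rankpsd(S_P[\,\cdot\,,\sigma]) \ge d$ --- that is, matching the relevant submatrix of the slack matrix with a rescaled slack matrix of the vertex figure and then invoking $\rankpsd(Q) \ge \dim Q + 1$; the remaining steps are routine linear algebra with positive semidefinite matrices.
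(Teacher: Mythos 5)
Your proof is correct. A preliminary remark: the paper does not prove this statement itself but imports it from \cite{gouveia2013polytopes}, so the relevant comparison is with the proof there. Your overall strategy coincides with that source: both directions reduce to the observation that rank-one $\PSD^{d+1}$-factorizations of $S_P$ are exactly rank-$(d+1)$ Hadamard square roots, and the entire content is showing that minimality forces every factor in a $\PSD^{d+1}$-factorization to have rank one. Where you genuinely differ is in how rank-oneness is extracted. The source fixes a facet $F$, uses $\langle A_i, B_F\rangle = 0$ for all vertices $i$ of $F$ to compress the factorization onto the orthogonal complement of the column space of $B_F$, and recognizes there a psd factorization of $S_F$ (which sits inside $S_P$ as a submatrix up to column scalings); since $F$ is a $(d-1)$-polytope, $\rankpsd(S_F)\geq d$ forces $\rank B_F = 1$, and the $A_i$ are handled dually. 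You instead aggregate all facets through a fixed vertex and identify the relevant column-submatrix of $S_P$ with a row-scaled slack matrix of the vertex figure $P/v$. Both arguments rest on the same geometric input, namely $\rankpsd(Q)\geq \dim Q + 1$ applied to a $(d-1)$-dimensional face or quotient; the trade-off is that your route needs the (true, but worth writing out) verification that the slack of the point where the edge $[v,u]$ meets the cutting hyperplane $H$, in the facet $F\cap H$ of $P/v$, equals $t_u$ times the slack of $u$ in $F$ with $t_u$ depending only on the row, whereas the inclusion of $S_F$ in $S_P$ used by the source is the more standard fact. Two small points: the normalization $\sum_i A_i = I$ is never used afterwards and can be dropped, and you should note explicitly that $A_{i_0}\neq 0$ because some slack in its row is positive, which is what lets you conclude $\rank A_{i_0}=1$ rather than $\leq 1$.
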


\subsection{Contribution}

In this paper, we classify psd-minimal $4$-polytopes. The techniques used in the classification of psd-minimal $3$-polytopes do not have a natural generalization to $4$-polytopes, so new tools and approaches had to be developed. These are of interest beyond the scope of this classification, and offer insight on psd-minimality in general. As a by-product, we obtain simpler proofs of the 
classification results for $2$- and $3$-dimensional polytopes.

In terms of techniques, this paper has two main contributions. The first is the notion of \emph{trinomial obstruction}, that provides a simple certificate that a polytope is not psd-minimal.
This obstruction is powerful enough to completely classify the combinatorial types of psd-minimal $4$-polytopes. The second main technical contribution is the notion of the {\em slack ideal} of a polytope. 
We show that the positive real zeros of this ideal are essentially in bijection with the different projective 
equivalence classes in the combinatorial class of the polytope. Using slack ideals we develop a general computational algebra procedure for characterizing the psd-minimal polytopes in a combinatorial class.
This is used to complete the classification.

At a high level, our results are as follows. 
We prove that there are $31$ combinatorial classes of psd-minimal $4$-polytopes, and they are described in Table~\ref{table:list}. 
Combinatorial information about each class such as its $f$-vector and types of facets is listed. We also exhibit an 
explicit psd-minimal polytope in each of the $31$ combinatorial classes. 
We then proceed to characterize the psd-minimal polytopes in each of these classes.
In $11$ of them, every polytope is psd-minimal. Coincidentally these classes are also precisely the classes of the $11$ known projectively unique $4$-polytopes. For the remaining $20$ classes, we derive precise conditions for psd-minimality. In most cases, the conditions can be seen as affine constraints on the entries of the slack matrices. However we get two interesting new behaviors.  
For two pairs of primal-dual classes, including the $4$-cube and its dual, there are two essentially different 
types of psd-minimal realizations. For two other pairs of primal-dual classes, psd-minimality is characterized by non-linear algebraic conditions, and 
they settle negatively some open conjectures (\cite[Problem 2]{beasley_et_al:DR:2013:4019} and generalizations). In particular they give us the first examples of psd-minimal polytopes whose minimality cannot be certified by the positive Hadamard 
square root of the slack matrix.

\subsection{Organization}

The results in this paper naturally split into two parts, which guides the organization of the sections.
In the first part, we consider combinatorial properties of, and obstructions to, psd-minimality.  In Section~\ref{sec:trinomial obstructions} we develop a lower bounding method for the square root rank of a matrix and illustrate it by deriving a short proof of the psd-minimality results for $2$- and $3$-polytopes 
up to combinatorial equivalence. In Section~\ref{sec:facet intersection obstructions} we specify the obstructions to psd-minimality on the slack matrices of $4$-polytopes and derive four combinatorial results that constrain the types of facet intersections that are possible for psd-minimal $4$-polytopes. These results allow us in Section~\ref{sec:31 combinatorial classes} to precisely identify the 
$31$ combinatorial classes of psd-minimal $4$-polytopes concluding the first part of the paper. An explicit psd-minimal polytope in every combinatorial class, as well as various combinatorial properties of each class can be found in Table~\ref{table:list}. We refer to each class by its 
number in Table~\ref{table:list}. 

In the second half of the paper, we identify the conditions for psd-minimality in each of the 31 combinatorial classes. This requires new algebraic and geometric tools. In Section~\ref{sec:polys1-11} we introduce the slack ideal of a polytope, and prove that if this ideal is binomial, then all polytopes that are combinatorially equivalent to this 
polytope are psd-minimal. Next we prove that if a $d$-polytope has $d+2$ vertices, its slack ideal is binomial, and finally we show that the slack ideals of classes 1-11 are binomial. In Section~\ref{sec:polys12-15} we consider the combinatorial classes 12-15 from Table~\ref{table:list} which come in two dual pairs. 
The slack ideal is used to derive a parametrization of the slack matrices of the psd-minimal polytopes in each of these classes. As mentioned above, these examples are particularly interesting since they provide counterexamples to several conjectures about psd-minimality that one might entertain based on the results in \cite{gouveia2013polytopes}. We discuss these features in detail. We conclude in Section~\ref{sec:polys16-31} with the precise conditions under which the remaining classes in Table~\ref{table:list} are psd-minimal. As an illustration of our new methods, 
we use them to reprove that biplanarity is a necessary and sufficient condition for the psd-minimality of octahedra,
finishing a new proof of Theorem \ref{thm:classification in 3-space}.
The codes for the main calculations in this paper can be found at:  
{\small \url{http://kanstantsinpashkovich.bitbucket.org/computations/psd_minimal_four_polytopes.html}}
in the form of Sage \cite{sage} worksheets that rely on Macaulay2 \cite{M2}.

\subsection{Slack matrices of projectively equivalent polytopes}

We conclude the introduction with a simple result that relates the slack matrices of projectively equivalent 
polytopes which is used extensively in the later parts of this paper.

Recall that two polytopes $P$ and $Q$ in $\RR^d$ are projectively equivalent if and only if there exists a projective transformation sending $P$ to $Q$, i.e. $Q = \phi(P)$ where
\begin{align} 
\phi \,:\, \RR^d \rightarrow \RR^d, \,\,\,\, \phi(x) := \frac{Bx + b}{c^\top x + \gamma}
\end{align}
for some $B \in \RR^{d \times d}, b \in \RR^d, c \in \RR^d$ and  $\gamma \in \RR$ 
such that
\begin{align} \label{eq:conditions}
\det \begin{bmatrix} B & b \\ c^\top & \gamma \end{bmatrix} \neq 0.
\end{align}
A convenient way to think of projective equivalence is in terms of homogenizations. Recall that given a polytope $P \subseteq \RR^d$ with vertices $v_1, \ldots, v_n$, its {\it homogenization} is the convex cone
$\textup{homog}(P) \subseteq \RR^{d+1}$ spanned by $(v_1,1) , \ldots, (v_n,1) \in \RR^{d+1}$.

\begin{lemma} \label{lem:projective equivalence = linear equivalence of homogenizations}
Two polytopes $P, Q \subset \RR^d$ are projectively equivalent if and only if 
the cones $\textup{homog}(P)$ and $\textup{homog}(Q)$ in $\RR^{d+1}$ are linearly isomorphic.
\end{lemma}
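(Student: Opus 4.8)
The plan is to build the linear isomorphism directly out of the coefficient matrix of $\phi$, and to pass back and forth using the elementary identity relating a projective map to a linear map on homogenizations. Throughout, recall that for a polytope $P$ with vertices $v_1,\dots,v_n$ the cone $\textup{homog}(P)$ is pointed with extreme rays exactly $\RR_{\geq 0}(v_i,1)$, and that $P$ is recovered as the slice $\{x\in\RR^d : (x,1)\in\textup{homog}(P)\}$. The computation driving everything is: if $M:=\begin{bmatrix} B & b \\ c^\top & \gamma\end{bmatrix}$, then $M(x,1)^\top = \big((c^\top x+\gamma)\,\phi(x),\ c^\top x+\gamma\big)^\top$, so $M$ sends $(x,1)$ to a nonzero scalar multiple of $(\phi(x),1)$ whenever $c^\top x+\gamma\neq 0$.

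For the forward direction, suppose $Q=\phi(P)$. Since $P$ is compact and connected and $\phi$ is finite on it, $c^\top x+\gamma$ is sign-definite on $P$; replacing $(B,b,c,\gamma)$ by its negative changes neither $\phi$ nor the nonvanishing of $\det M$, so we may assume $c^\top x+\gamma>0$ on $P$. By \eqref{eq:conditions}, $M$ is invertible, and by the identity above it carries each generator $(v_i,1)$ of $\textup{homog}(P)$ to the positive multiple $(c^\top v_i+\gamma)(\phi(v_i),1)$, whence $M(\textup{homog}(P))=\cone\{(\phi(v_i),1):i=1,\dots,n\}$. Intersecting this cone with $\{x_{d+1}=1\}$ and using pointedness identifies it with $\textup{homog}(\conv\{\phi(v_i)\})$; and a direct check via the identity (applied to points of $P$ and to their $M$-preimages, ruling out preimages with vanishing last coordinate) shows that same height-one slice equals $\phi(P)=Q$. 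Hence $\conv\{\phi(v_i)\}=Q$ and $M(\textup{homog}(P))=\textup{homog}(Q)$.

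Conversely, suppose $L\colon\RR^{d+1}\to\RR^{d+1}$ is a linear isomorphism with $L(\textup{homog}(P))=\textup{homog}(Q)$, and write $L=\begin{bmatrix} B & b \\ c^\top & \gamma\end{bmatrix}$; then $\det L\neq 0$ is exactly \eqref{eq:conditions}, so $\phi(x)=(Bx+b)/(c^\top x+\gamma)$ is a genuine projective map. Since $L$ restricts to an isomorphism between the pointed cones $\textup{homog}(P)$ and $\textup{homog}(Q)$, it carries extreme rays to extreme rays, so $L(v_i,1)=\lambda_i(w_{\sigma(i)},1)$ with $\lambda_i>0$ (positive because $\textup{homog}(Q)\subseteq\{x_{d+1}\geq 0\}$ and $L(v_i,1)\neq 0$) and $\sigma$ a bijection onto the vertices $w_j$ of $Q$. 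Reading off last coordinates gives $c^\top v_i+\gamma=\lambda_i>0$ for every $i$, hence $c^\top x+\gamma>0$ on $P=\conv\{v_i\}$, so $\phi$ is defined on $P$; reading off the first $d$ coordinates gives $\phi(v_i)=w_{\sigma(i)}$. The slicing argument from the forward direction (now available) yields $\phi(P)=\conv\{\phi(v_i)\}=\conv\{w_j\}=Q$, so $P$ and $Q$ are projectively equivalent.

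The only step that is more than bookkeeping is the passage, inside the forward direction, from "$M$ maps the generating rays of $\textup{homog}(P)$ to the generating rays of $\textup{homog}(Q)$" to "$M(\textup{homog}(P))=\textup{homog}(Q)$" — equivalently, the standard fact that a projective transformation with sign-definite denominator sends a polytope onto the convex hull of the images of its vertices. I expect this to be the main (though still short) obstacle: it is handled by slicing the image cone at height one and using pointedness of the homogenization cones together with the strict positivity of $c^\top x+\gamma$ to rule out any contribution from the hyperplane at infinity.
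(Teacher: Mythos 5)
Your proof is correct and follows essentially the same route as the paper, which simply observes that a linear isomorphism of the homogenizations with matrix $\begin{bmatrix} B & b \\ c^\top & \gamma \end{bmatrix}$ corresponds, after dehomogenizing, to the projective map $\phi(x)=(Bx+b)/(c^\top x+\gamma)$. The paper states this correspondence in one line; you have merely filled in the sign-definiteness of the denominator, the extreme-ray bookkeeping, and the height-one slicing that the paper leaves implicit.
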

\begin{proof}
The cones $\textup{homog}(P)$ and $\textup{homog}(Q)$ are linearly isomorphic if and only if there exists 
a linear map from $\RR^{d+1} \rightarrow \RR^{d+1}$ with invertible representing matrix 
$$\begin{bmatrix} B & b \\ c^\top & \gamma \end{bmatrix}$$ that sends $\textup{homog}(P)$ to $\textup{homog}(Q)$.
By equating $Q$ to the dehomogenization of the image of $\textup{homog}(P)$, one sees that this happens if and only if for $\varphi(x):=\frac{Bx+b}{c^\top x + \gamma}$ we have $Q=\varphi(P)$, i.e., 
if and only if $P$ and $Q$ are projectively equivalent.
\end{proof}

\begin{corollary}\label{cor:proj_equivalent slack matrix}
Two polytopes are projectively equivalent if and only if they have the same slack matrix up to permutations and positive scalings of rows and columns.
\end{corollary}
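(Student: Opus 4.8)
The plan is to deduce everything from the linear-isomorphism criterion of Lemma~\ref{lem:projective equivalence = linear equivalence of homogenizations}, by passing to the canonical rank-$(d+1)$ factorization of a slack matrix coming from homogenization. Write $P, Q \subseteq \RR^d$ as (full-dimensional) $d$-polytopes with vertices $v_1,\dots,v_n$ (resp.\ $w_1,\dots,w_n$) and facet inequalities $a_j^\top x \le \beta_j$, $j=1,\dots,m$ (resp.\ $(a'_j)^\top x \le \beta'_j$). Let $V_P$ be the $n\times(d+1)$ matrix with rows $(v_i,1)$ and $H_P$ the $(d+1)\times m$ matrix with columns $(-a_j,\beta_j)$, so that $S_P = V_P H_P$; since $P$ is full-dimensional, both $V_P$ and $H_P$ have rank $d+1$ (the $(v_i,1)$ span $\RR^{d+1}$, and the inner facet normals $(-a_j,\beta_j)$ generate the full-dimensional dual cone of $\textup{homog}(P)$), so this is a rank-$(d+1)$ factorization, and likewise $S_Q = V_Q H_Q$. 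Relabelling the vertices and facets of $Q$ only permutes rows and columns of $S_Q$ without changing $Q$, so I would discard the permutations at the outset and work with the relation $S_Q = D_1 S_P D_2$, where $D_1 = \diag(\lambda_1,\dots,\lambda_n)$ and $D_2 = \diag(\mu_1,\dots,\mu_m)$ are positive diagonal matrices.

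For the implication "same slack matrix $\Rightarrow$ projectively equivalent", I would start from $V_Q H_Q = (D_1 V_P)(H_P D_2)$ and invoke uniqueness of rank factorizations — two full-rank factorizations of the same matrix differ by an invertible $N \in \RR^{(d+1)\times(d+1)}$ — to obtain $D_1 V_P = V_Q N$ and $H_Q = N H_P D_2$. Reading $D_1 V_P = V_Q N$ row by row gives $(w_i,1) = \lambda_i\, N^{-\top}(v_i,1)$, so the invertible linear map $x \mapsto N^{-\top} x$ carries each extreme ray $\RR_{\ge 0}(v_i,1)$ of $\textup{homog}(P)$ \emph{positively} onto $\RR_{\ge 0}(w_i,1)$; since these rays generate the respective homogenizations, $N^{-\top}$ maps $\textup{homog}(P)$ onto $\textup{homog}(Q)$, and Lemma~\ref{lem:projective equivalence = linear equivalence of homogenizations} concludes that $P$ and $Q$ are projectively equivalent.

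For the converse, given a projective equivalence I would use Lemma~\ref{lem:projective equivalence = linear equivalence of homogenizations} to produce an invertible $M$ with $M(\textup{homog}(P)) = \textup{homog}(Q)$. Such an $M$ carries extreme rays to extreme rays and facets to facets bijectively, so after relabelling, $M(v_i,1) = \lambda_i(w_i,1)$ and, dually, $M^{-\top}(-a_j,\beta_j) = \mu_j(-a'_j,\beta'_j)$. Here the $\lambda_i$ are positive because both homogenizations lie in the half-space $x_{d+1}\ge 0$ (compare last coordinates), and the $\mu_j$ are positive because $M^{-\top}$ maps the dual cone of $\textup{homog}(P)$ onto that of $\textup{homog}(Q)$ and hence the extreme rays of the former (generated by the inner facet normals) to positive multiples of extreme rays of the latter. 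Substituting into $(S_Q)_{ij} = (w_i,1)^\top(-a'_j,\beta'_j)$ and cancelling $M$ against $M^{-\top}$ yields $(S_Q)_{ij} = \lambda_i^{-1}\mu_j^{-1}(S_P)_{ij}$, which — together with the suppressed relabelling permutations — is exactly equality up to permutations and positive row and column scalings.

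I expect the algebra to be routine; the point that needs care is the positivity and bijectivity bookkeeping. One must use that $\textup{homog}(P)$ is a pointed, full-dimensional cone whose extreme rays and facets correspond precisely to the vertices and facets of $P$, so that a linear isomorphism of homogenizations induces bijections on vertices and on facets, and one must check that every scalar arising — from the uniqueness of rank factorization and from the induced maps on extreme rays and on their duals — is strictly positive. That positivity is precisely what upgrades "equal up to row and column scalings" to "equal up to \emph{positive} row and column scalings", and it is the only subtle point of the argument.
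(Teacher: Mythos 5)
Your proof is correct, and it follows the same overall strategy as the paper: reduce everything to linear isomorphism of the homogenization cones via Lemma~\ref{lem:projective equivalence = linear equivalence of homogenizations} and exploit the canonical factorization $S_P = V_P H_P$. The one genuine difference is in the key step of the forward direction. Where you invoke uniqueness of rank-$(d+1)$ factorizations to produce the invertible matrix $N$ directly from $V_QH_Q = (D_1V_P)(H_PD_2)$, the paper instead cites \cite[Theorem~14]{slackmatrixpaper}, which says that every pointed cone is linearly isomorphic to the cone spanned by the rows of its slack matrix; two cones sharing a slack matrix are then both isomorphic to that one reference cone. The two arguments are close cousins --- the cited theorem is itself proved by observing that $x \mapsto H_P^\top x$ embeds $\textup{homog}(P)$ isomorphically onto the row cone of $S_P$ --- but your version is self-contained, at the cost of having to track explicitly that the scalars $\lambda_i$ produced by the factorization are positive (which you do correctly, via the last coordinate of $(w_i,1)$ and membership in the half-space $x_{d+1}\ge 0$). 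Your converse direction, with the dual-cone argument for the positivity of the $\mu_j$, is likewise a spelled-out version of what the paper dismisses as ``easy to see.'' Both proofs are sound; yours trades a citation for a page of positivity bookkeeping, which is exactly the part you correctly flag as the only delicate point.
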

\begin{proof}
Slack matrices of cones are defined analogously to slack matrices of polytopes: each entry 
of the slack matrix is indexed by an extreme ray and a facet of the cone, and contains the corresponding slack value.
By definition, scaling the rows and columns of a slack matrix of a cone by positive real numbers 
produces another slack matrix of the same cone. 

Moreover, a slack matrix of a polytope $P$  is a slack matrix of the cone $\textup{homog}(P)$ and  
therefore, by Lemma~\ref{lem:projective equivalence = linear equivalence of homogenizations}, it suffices to prove that two pointed cones are linearly isomorphic if and only if they have a common slack matrix. 
It is easy to see that two linearly isomorphic cones have the same slack matrices.
On the other hand, the reasoning in \cite[Theorem~14]{slackmatrixpaper} 
shows that every pointed cone is linearly isomorphic to the cone spanned by the rows of its slack matrix. Thus if two pointed cones have the same slack matrix, they are linearly isomorphic to the same cone, and hence to each other.
 \end{proof}

\noindent{\bf Acknowledgments.} We thank Arnau Padrol and Serkan Ho{\c s}ten for useful inputs to this paper. 
We also thank the referees for their valuable comments.

\section{Trinomial obstructions to psd-minimality} \label{sec:trinomial obstructions}
Recall that two polytopes $P$ and $Q$ are 
{\em combinatorially equivalent} if they have the same vertex-facet incidence structure. 
In this section we describe a simple algebraic obstruction to psd-minimality based on the combinatorics of a given polytope, therefore providing an obstruction for all polytopes in the given combinatorial class. 
Our main tool is a symbolic version of the slack matrix of a polytope defined as follows.

\begin{definition} The {\em symbolic slack matrix} of a $d$-polytope $P$ is the matrix, $S_P(x)$, obtained by replacing all positive entries in the slack matrix $S_P$ of $P$ with distinct variables $x_1, \ldots, x_t$.
\end{definition}

Note that two $d$-polytopes $P$ and $Q$ are in the same combinatorial class if and only if
$S_P(x) = S_Q(x)$ up to permutations of rows and columns, and names of variables. In this paper we say 
that a polynomial $f \in \RR[x_1,\ldots,x_t]$ is a {\em monomial} if it is of the form $f = \pm x^a$ where $x^a = x_1^{a_1} \cdots x_t^{a_t}$ and $a=(a_1,\ldots,a_t) \in \NN^t$.  We refer to a  sum of two distinct monomials as a {\em binomial} and to the sum of three distinct monomials as a {\em trinomial}. 
 This differs from the usual terminology where nontrivial coefficients are allowed. 
 
\begin{lemma}[Trinomial Obstruction Lemma] \label{lem:trinomial obstruction}
Suppose the symbolic slack matrix $S_P(x)$ of a $d$-polytope $P$ has a $(d+2)$-minor 
that is a trinomial. Then no polytope in the combinatorial class of $P$ can be psd-minimal.
\end{lemma}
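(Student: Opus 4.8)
The plan is to use Theorem~\ref{thm:square root rank}, which reduces psd-minimality to the statement $\sqrtrank(S_P) = d+1$. Since $\rank(S_P) = d+1$ always, psd-minimality is equivalent to the existence of \emph{some} Hadamard square root of $S_P$ that still has rank $d+1$, i.e. whose $(d+2)$-minors all vanish. The key observation is that a single $(d+2)$-minor of the symbolic slack matrix $S_P(x)$, when specialized at a Hadamard square root of the slack matrix of any polytope $Q$ in the combinatorial class of $P$, must vanish; so if that minor, as a polynomial, is a trinomial, we get a constraint on the signs of the square roots chosen that turns out to be impossible to satisfy.

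First I would set up notation: if $Q$ is any polytope in the combinatorial class of $P$ with slack matrix $S_Q$, and $M$ is a Hadamard square root of $S_Q$, then the entries of $M$ are $\varepsilon_i \sqrt{(S_Q)_i}$ where $\varepsilon_i \in \{+1,-1\}$ runs over the positions of the variables $x_1,\dots,x_t$; equivalently, $M$ is obtained from $S_Q(x)$ by the substitution $x_i \mapsto \varepsilon_i \sqrt{s_i}$ where $s_i > 0$ is the corresponding entry of $S_Q$. The given trinomial $(d+2)$-minor $g(x) = \pm x^a \pm x^b \pm x^c$ of $S_P(x)$ is (up to row/column permutation and renaming of variables) also a $(d+2)$-minor of $S_Q(x)$, and since $\rank(M) \le \rank(S_Q) = d+1 < d+2$, every $(d+2)$-minor of $M$ is zero; in particular $g$ evaluated at the substitution above is zero. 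I would then substitute and obtain an equation of the form $\pm \delta_a \sqrt{s^a} \pm \delta_b \sqrt{s^b} \pm \delta_c \sqrt{s^c} = 0$, where $s^a = \prod s_i^{a_i}$ etc., the $s$-terms are strictly positive, and $\delta_a, \delta_b, \delta_c \in \{+1,-1\}$ are the sign products $\prod_i \varepsilon_i^{a_i}$ etc.

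The heart of the argument, and the step I expect to require the most care, is showing that no choice of signs $\varepsilon_i$ can make this hold, i.e. that three strictly positive reals with signs attached cannot sum to zero when \emph{all three monomials are genuinely present}. Writing the equation as $\delta_a \sqrt{s^a} = -\delta_b\sqrt{s^b} - \delta_c\sqrt{s^c}$ and squaring, one gets $s^a = s^b + s^c + 2\delta_b\delta_c\sqrt{s^b s^c}$; since $s^b, s^c > 0$, the cross term $2\sqrt{s^b s^c}$ is strictly positive, so this forces $\delta_b\delta_c = -1$ and moreover $s^a = s^b + s^c - 2\sqrt{s^b s^c} = (\sqrt{s^b}-\sqrt{s^c})^2$, whence $\sqrt{s^a} = |\sqrt{s^b} - \sqrt{s^c}|$. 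But in fact the cleaner route is: any equation $\pm u \pm v \pm w = 0$ with $u, v, w > 0$ would force one of $u, v, w$ to equal the sum of the other two (after moving terms so that the two like-signed ones are on opposite sides), hence a relation $\sqrt{s^a} = \sqrt{s^b} + \sqrt{s^c}$ (up to relabeling); squaring gives $s^a = s^b + s^c + 2\sqrt{s^b s^c}$, and since $S_Q$ is a slack matrix of an actual polytope its entries are real, forcing $s^b s^c$ to be a perfect square — this need not fail for a single $Q$, so the correct conclusion is subtler: one must observe that the \emph{only} way all $(d+2)$-minors vanish is impossible because the substitution makes a trinomial into a sum/difference of three \emph{strictly positive} quantities, which can never be zero since, up to an overall sign, it reads $u = v+w$ or $u+v+w=0$; the latter is absurd and the former, after squaring, yields $u^2 = v^2 + w^2 + 2vw$, i.e. $u = v+w$ — consistent only if the square roots were chosen to make it so, but then \emph{that} minor being zero does not contradict anything.

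I must therefore be precise about where the obstruction bites: for $Q$ to be psd-minimal we need a \emph{simultaneous} sign choice making \emph{all} $(d+2)$-minors vanish, and in particular this one. The correct and complete argument is: fix the sign vector $\varepsilon$; evaluating the trinomial minor at $x_i = \varepsilon_i\sqrt{s_i}$ gives $\delta_a\sqrt{s^a} + \delta_b\sqrt{s^b} + \delta_c\sqrt{s^c}$ with all radicands positive and each $\delta \in \{\pm1\}$; a sum of three nonzero reals with prescribed signs can be zero only if the signs are not all equal, say $\delta_a = -\delta_b$, giving $\sqrt{s^a} = \sqrt{s^b} - \delta_a\delta_c\sqrt{s^c}$, and for the right side to be positive we need $\delta_a\delta_c = -1$ and $\sqrt{s^a} = \sqrt{s^b} + \sqrt{s^c}$, hence $s^a = s^b + s^c + 2\sqrt{s^b s^c}$. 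Now $s^a, s^b, s^c$ are monomial evaluations of entries of the slack matrix of $Q$; reading this as a polynomial identity that must hold for every $Q$ in the combinatorial class — in particular for $Q = P$ and for generic perturbations within the class — forces $\sqrt{s^b s^c}$ to be a polynomial in the slack entries, which it is not since $x^b x^c$ is not a perfect square in $\RR[x_1,\dots,x_t]$ (the exponent vector $b + c$ has an odd coordinate, because $g$ is a trinomial, not a binomial, and hence the supports of $x^b$ and $x^c$ cannot both equal the support of $x^{(b+c)/2}$). This contradiction, valid for every polytope $Q$ combinatorially equivalent to $P$ and every sign choice $\varepsilon$, shows $\sqrtrank(S_Q) > d+1$, so by Theorem~\ref{thm:square root rank} no such $Q$ is psd-minimal. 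The main obstacle, as indicated, is pinning down that the derived radical relation cannot hold \emph{identically} across the combinatorial class, which is exactly the place the trinomial (as opposed to binomial) hypothesis is used.
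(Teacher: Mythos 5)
There is a genuine gap in the final step. From the rank bound on the Hadamard square root you extract exactly one equation: the trinomial minor vanishes at the point $(\varepsilon_1\sqrt{s_1},\ldots,\varepsilon_t\sqrt{s_t})$, which after your sign analysis yields (up to relabeling) $\sqrt{s^a}=\sqrt{s^b}+\sqrt{s^c}$. But this single relation is perfectly satisfiable by positive reals (e.g.\ $s^a=9$, $s^b=4$, $s^c=1$), so by itself it is no contradiction --- as you yourself notice midway through. Your attempted repair, reading the relation as an identity that must hold ``for every $Q$ in the combinatorial class and for generic perturbations,'' is not available: the hypothesis to be refuted is only that \emph{some} $Q$ in the class is psd-minimal, so you may not vary $Q$, and the $s_i$ are specific positive numbers attached to that one polytope, not indeterminates. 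The ``$x^bx^c$ is not a perfect square in $\RR[x_1,\ldots,x_t]$'' argument therefore says nothing about the numerical relation at hand.

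The missing ingredient --- which is exactly what the paper's proof uses and yours never invokes --- is a \emph{second} vanishing: the trinomial minor also vanishes at the slack matrix itself, i.e.\ at $(s_1,\ldots,s_t)$, because $S_Q$ is the slack matrix of a $d$-polytope and hence $\rank S_Q=d+1<d+2$. Writing the trinomial, up to an overall sign, as $x^a+x^b+x^c$ or $x^a-x^b+x^c$, the first form already fails at the positive point $s$ since all three terms are positive. In the second form, $D(s)=0$ gives $s^b=s^a+s^c$, while $D(\varepsilon\sqrt{s})=0$ gives $t^b=t^a+t^c$ with $t_i=\varepsilon_i\sqrt{s_i}$; squaring the latter yields $s^b=s^a+s^c+2t^{a+c}$, and subtracting the former forces $t^{a+c}=0$, impossible since every $s_i>0$. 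With this second equation added your setup closes immediately; without it the proof does not go through.
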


\begin{proof}  Suppose $Q$ is psd-minimal and combinatorially equivalent to $P$.
Hence, we can assume that $S_P(x)$ equals $S_Q(x)$. By Theorem~\ref{thm:square root rank} there is some $u=(u_1,\ldots, u_t)\in\RR^t$, with no coordinate equal to zero, such that $S_Q=S_P(u^2_1,\ldots,u^2_t)$ and $\rank S_P(u)=d+1$. Since $S_Q$ is the slack matrix of a $d$-polytope, we have  
$$
\rank S_P(u^2_1,\ldots,u^2_t)=d+1 = \rank S_P(u_1,\ldots, u_t).
$$

Now suppose $D(x)$ is a trinomial $(d+2)$-minor of $S_P(x)$. Up to sign, $D(x)$ has the form $x^a + x^b + x^c$ or $x^a - x^b + x^c$ for some $a, b, c\in \NN^t$. In either case, 
it is not possible for $D(u^2_1,\ldots,u^2_t)=D(u_1,\ldots,u_t)=0$.
\end{proof}

An interesting property of this obstruction is that it reflects the fact that faces of psd-minimal polytopes are psd-minimal (see Lemma~\ref{lem:face_inheritance}): if a face of a polytope is not psd-minimal due to a trinomial obstruction, then the non psd-minimality of the polytope can also be verified by a trinomial obstruction.

\begin{proposition} \label{prop:trinomial}
Let $P$ be a $d$-polytope with a facet $F$ such that some $(d+1)$-minor of 
$S_F(x)$ is a trinomial. Then $S_P(x)$ has a trinomial $(d+2)$-minor. 
\end{proposition}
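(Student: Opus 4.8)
The plan is to produce the desired $(d{+}2)$-minor of $S_P(x)$ from the given trinomial $(d{+}1)$-minor of $S_F(x)$ by a Laplace (cofactor) expansion along a suitable row and column. Recall that the facet $F$ corresponds to a column $j$ of $S_P(x)$: the rows of $S_F(x)$ are indexed by the vertices of $P$ lying on $F$, i.e. exactly the rows $i$ of $S_P(x)$ with $(S_P(x))_{ij} = 0$, and the columns of $S_F(x)$ are indexed by the facets of $F$, which are the ridges of $P$ contained in $F$. My first step is therefore to choose the submatrix of $S_P(x)$ on which to expand: take the $(d{+}1)$ rows of $S_P(x)$ that support the given $(d{+}1)$-minor of $S_F(x)$ together with one extra row $i_0$ of $S_P(x)$ corresponding to a vertex \emph{not} on $F$ (such a vertex exists since $F$ is a proper face), and take the $d$ columns of $S_P(x)$ corresponding to the facets of $F$ used in that minor, together with the column $j$ for $F$ itself. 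This is a $(d{+}2)\times(d{+}2)$ submatrix of $S_P(x)$.

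Next I would expand its determinant along the column $j$. In that column, every entry in the $(d{+}1)$ rows lying on $F$ is $0$ (those vertices are on $F$), so the only surviving term of the Laplace expansion is $\pm (S_P(x))_{i_0 j}$ times the complementary $(d{+}1)$-minor, which is precisely the $(d{+}1)$-minor of $S_F(x)$ we started with — because deleting row $i_0$ and column $j$ from the chosen submatrix leaves exactly the rows-on-$F$ and facets-of-$F$ block, whose entries coincide (as variables) with those of $S_F(x)$. The key observation is that $(S_P(x))_{i_0 j}$ is itself a single variable (the slack of a vertex not on $F$, hence positive, hence a distinct variable $x_\ell$), and moreover this variable does \emph{not} appear in $S_F(x)$, since the variables of $S_F(x)$ are slacks of vertices that \emph{are} on $F$. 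Therefore the $(d{+}2)$-minor equals $\pm\, x_\ell \cdot D_F(x)$ where $D_F(x)$ is the trinomial $(d{+}1)$-minor of $S_F(x)$; multiplying a trinomial by a variable not occurring in it yields again a sum of three distinct monomials, i.e. a trinomial. This is the required trinomial $(d{+}2)$-minor of $S_P(x)$.

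The main point requiring care — and the only real obstacle — is the bookkeeping that the $(d{+}1)\times d$ block of the chosen submatrix, obtained by deleting row $i_0$ and column $j$, literally \emph{is} (a copy of, with the paper's convention of ``distinct variables'') the corresponding submatrix of $S_F(x)$. This is where one must invoke how $S_F(x)$ relates to $S_P(x)$: the entry of $S_F$ at (vertex $v$ on $F$, ridge $R\subset F$) is the slack of $v$ in the facet inequality of $R$ within $\mathrm{aff}(F)$, and one checks this is a positive scalar multiple of the corresponding entry $(S_P)_{v,R}$ of $S_P$ (the slack of $v$ in the facet $R$ of $P$), since the vertices of $P$ off $R$ all lie strictly on one side. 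Up to the positive row/column scalings that are irrelevant for the purpose of identifying which minors are trinomials (a trinomial stays a trinomial after rescaling variables, as each monomial just picks up a positive constant — though here we only need the symbolic structure, so even this is automatic once we rename the variables of this block to match those of $S_F(x)$), the block agrees with $S_F(x)$ and hence its relevant $(d{+}1)$-minor is the trinomial $D_F(x)$. Once this identification is in place, the cofactor expansion above finishes the proof.
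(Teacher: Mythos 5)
Your proposal is correct and follows essentially the same route as the paper: augment the trinomial block by one vertex off $F$ and by the column of $F$ itself, identify each facet $F'_i$ of $F$ with the unique facet $F_i \neq F$ of $P$ containing it, and expand the resulting $(d{+}2)$-minor along the column of $F$, whose only nonzero entry among the chosen rows is the fresh variable at the off-$F$ vertex. Two small slips worth fixing: the given minor uses $d+1$ (not $d$) facets of $F$, and since the columns of $S_P(x)$ are indexed by facets of $P$ rather than ridges, the notation $(S_P)_{v,R}$ should be replaced by the entry at the facet $F_i$ just described (the zero patterns of the two blocks then agree, which is all the symbolic argument needs).
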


\begin{proof} Let vertices $v_1, \ldots, v_{d+1}$ of $F$ and 
facets $F'_1, \ldots, F'_{d+1}$ of $F$ index a $(d+1) \times (d+1)$ submatrix of $S_F(x)$ with a trinomial determinant.  Let $F_i$ be the unique facet of $P$ that shares $F'_i$ with the facet $F$ of $P$. Pick a vertex $v_{d+2}$ of $P$ not lying on $F$. Then the determinant of the submatrix of $S_P(x)$ indexed by $v_1,\ldots, v_{d+2}$ and $F_1,\ldots, F_{d+1}, F$ is a trinomial.
\end{proof}

\subsection{Psd-minimal $2$-polytopes}
Lemma~\ref{lem:trinomial obstruction}  yields simple proofs of the combinatorial part of the classification results for psd-minimal $2$- and $3$-polytopes 
that were obtained in \cite{gouveia2013polytopes}.

\begin{proposition} \cite[Theorem~4.7]{gouveia2013polytopes} \label{prop:psd minimal in the plane}
The psd-minimal $2$-polytopes are precisely all triangles and quadrilaterals.
\end{proposition}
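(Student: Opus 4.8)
The plan is to prove the two inclusions separately, in each case by a short computation with a symbolic slack matrix.

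\emph{Triangles and quadrilaterals are psd-minimal.} By Theorem~\ref{thm:square root rank} it suffices to show $\sqrtrank(S_P)=d+1=3$. The lower bound $\sqrtrank(S_P)\ge 3$ is automatic: a rank-$k$ factorization of any Hadamard square root of $S_P$ yields, by squaring entries, a $\PSD^k$-factorization of $S_P$, so $\sqrtrank(S_P)\ge\rankpsd(S_P)=\rankpsd(P)\ge d+1$. For the upper bound: if $P$ is a triangle then $S_P$ is $3\times 3$, so $\sqrt[+]{S_P}$ has rank at most $3$ trivially. If $P$ is a quadrilateral, I would write $S_P(x)$ with rows and columns in cyclic order and expand the $4\times 4$ determinant; only two permutations are supported by the zero pattern, and since one is a $4$-cycle and the other a product of two transpositions, $\det S_P(x)$ is a binomial whose two (variable-disjoint) monomials have opposite signs, say $x_1x_3x_6x_7-x_2x_4x_5x_8$ after relabeling. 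As $S_P$ has rank $3$, its entries satisfy $s_1s_3s_6s_7=s_2s_4s_5s_8$; taking positive square roots gives $\det\sqrt[+]{S_P}=0$, so $\sqrt[+]{S_P}$ has rank at most $3$.

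\emph{No $n$-gon with $n\ge 5$ is psd-minimal.} Here I would apply the Trinomial Obstruction Lemma~\ref{lem:trinomial obstruction}, so it is enough to exhibit one $(d+2)$-minor, i.e.\ a $4$-minor, of the symbolic slack matrix that is a trinomial. The candidate is the submatrix indexed by four consecutive vertices $v_1,\dots,v_4$ and the four edges $v_1v_2,\,v_2v_3,\,v_3v_4,\,v_4v_5$; this requires $v_5\ne v_1$, i.e.\ $n\ge5$, and for $n\ge5$ none of the other incidences of $v_1,\dots,v_4$ touch these edges, so the $4\times4$ block has the same zero pattern for every $n\ge5$ --- zero exactly where $v_i$ lies on the listed edge. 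I would then list the perfect matchings of the associated bipartite support graph; there are exactly three, so the determinant is a sum of three distinct monomials, i.e.\ a trinomial. Lemma~\ref{lem:trinomial obstruction} then rules out psd-minimality for the entire combinatorial class of an $n$-gon with $n\ge5$.

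Since every $2$-polytope is an $n$-gon for some $n\ge3$ and its combinatorial type is determined by $n$, the two parts together give the statement. The only real content lies in the two determinant expansions; the point requiring care is verifying that the $4\times4$ block used for $n\ge5$ really does have the claimed zero pattern uniformly in $n$ --- the tightest instance being $n=5$, where the ``missing'' incidences sit right next to the chosen block --- and, for the quadrilateral, that $\det S_P(x)$ is a genuine binomial with cancelling signs so that the positive-square-root argument goes through. Beyond this bookkeeping I foresee no obstacle.
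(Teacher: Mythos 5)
Your proof is correct, and the core of it --- ruling out $n$-gons with $n\ge 5$ via the Trinomial Obstruction Lemma applied to the $4\times 4$ submatrix indexed by four consecutive vertices and the edges $[v_1,v_2],\dots,[v_4,v_5]$ --- is exactly the paper's argument, yielding the same trinomial $x_1x_4x_7x_8-x_2x_5x_6x_9+x_3x_4x_6x_9$. The only difference is in the positive direction: the paper observes that all triangles (respectively, all quadrilaterals) are projectively equivalent and checks a single representative, whereas you verify $\sqrtrank(S_P)=3$ directly for every quadrilateral via the binomial $4\times 4$ determinant; this is equally valid and in fact anticipates the binomial-slack-ideal argument of Example~\ref{ex:slack ideal examples} and Lemma~\ref{lem:binomial} later in the paper.
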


\begin{proof}
Let $P$ be an $n$-gon where $n > 4$. Then $S_P(x)$ has a submatrix of the form 
$$ \begin{bmatrix} 
0 & x_1 & x_2 & x_3 \\ 0 & 0 & x_4 & x_5 \\ x_6 & 0 & 0 & x_7 \\ x_8 & x_9 & 0 & 0 
\end{bmatrix},$$
whose determinant is $x_1x_4x_7x_8 -  x_2x_5x_6x_9 + x_3x_4x_6x_9 $ up to sign. 
By Lemma~\ref{lem:trinomial obstruction}, no $n$-gon with $n > 4$ can be psd-minimal.

Since all triangles are projectively equivalent, by verifying the psd-minimality of one, they are 
all seen to be psd-minimal. Similarly, for quadrilaterals.
\end{proof}

\subsection{Combinatorial classes of psd-minimal  $3$-polytopes}
Lemma~\ref{lem:trinomial obstruction} can also be used to derive  \cite[Proposition~4.10]{gouveia2013polytopes}, 
which gives  the psd-minimal classification of $3$-polytopes
up to combinatorial equivalence. 

Using Proposition~\ref{prop:psd minimal in the plane}, together with Lemma~\ref{lem:face_inheritance} and the invariance of psd rank under polarity, we get that that any $3$-polytope $P$ with a vertex of degree larger than four, or a facet that is an $n$-gon where $n > 4$, 
cannot be  psd-minimal.  This is enough to prove a stronger version of \cite[Lemma~4.9]{gouveia2013polytopes}.

\begin{lemma} \label{lem:no square facet incident to a degree 4 vertex}
If $P$ is a $3$-polytope with a vertex of degree four and a quadrilateral 
facet incident to this vertex, then $S_P(x)$ contains a trinomial $5$-minor.
\end{lemma}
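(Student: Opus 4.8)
The plan is to exhibit an explicit $5 \times 5$ submatrix of $S_P(x)$ whose determinant is a trinomial, using the local combinatorial data around the degree-$4$ vertex $v$ and its incident quadrilateral facet $F$. Let $v$ be the degree-$4$ vertex and let $F$ be a quadrilateral facet containing $v$. Since $v$ has degree $4$, it lies on exactly four facets, say $F = F_1, F_2, F_3, F_4$ in cyclic order around $v$, and the four edges at $v$ go to neighbors $w_1, w_2, w_3, w_4$, where the edge $vw_i$ is the intersection $F_i \cap F_{i+1}$ (indices mod $4$). Because $F$ is a quadrilateral, exactly two of these neighbors, say $w_1$ and $w_3$ (the ones joined to $v$ by the two edges of $F$ at $v$), lie on $F$; call the fourth vertex of $F$ the vertex $u$, which is adjacent to $w_1$ and $w_3$ but not to $v$, and is not on $F_2$ or $F_4$.

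Next I would write down the rows indexed by $v, w_1, w_2, w_3, u$ and columns indexed by $F_1 = F, F_2, F_3, F_4$ and one more facet $G$ chosen to control the zero pattern — a natural choice is a facet containing $u$ but not $v$ (e.g. a facet of $P$ meeting $F$ along an edge at $u$). The key point is that the vertex $v$ has zeros in all four columns $F_1, \dots, F_4$ and a single nonzero entry in column $G$; $w_1$ lies on $F_1$ and $F_2$ (so two zeros there), etc. Expanding the determinant along the row of $v$ (which has exactly one nonzero entry, in column $G$) reduces the $5$-minor to that entry times a $4 \times 4$ minor indexed by $w_1, w_2, w_3, u$ and $F_1, F_2, F_3, F_4$. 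That $4 \times 4$ matrix has the sparsity pattern of a ``cycle'': $w_1$ on $F_1, F_2$; $w_2$ on $F_2, F_3$; $w_3$ on $F_3, F_4$; and $u$ on $F_4, F_1$ (using that $u \in F$ and $u$ is adjacent to $w_3$, hence on a facet sharing an edge, which one checks is $F_4$, while $u \notin F_2$). A $4\times 4$ matrix of this cyclic shape has determinant equal to a difference of two monomials — a binomial — up to sign, so the original $5$-minor is that binomial times one variable, giving a binomial, not a trinomial.

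So the naive choice of columns gives only a binomial, and the real work is arranging for a genuine trinomial; this is the main obstacle. The fix is to pick the fifth column more cleverly: instead of expanding away $v$ immediately, replace one of the ``cycle'' vertices by a vertex whose incidence pattern breaks the perfect alternation, so that after expanding along $v$'s row one is left with a $4$-minor of the ``path plus chord'' shape whose determinant is a sum of three monomials (this is exactly the shape that appeared in the $n$-gon computation in the proof of Proposition~\ref{prop:psd minimal in the plane}, where a $4 \times 4$ minor had a trinomial determinant). Concretely, since $P$ is a $3$-polytope, the facet $F_2$ adjacent to $v$ is itself a polygon with at least three vertices, so it contains a vertex $z \ne v, w_1, w_2$; choosing rows $v, w_1, z, w_3$ or similar and columns $F_1, F_2, F_3, F_4$, one checks that $z$ contributes a nonzero entry in column $F_2$ and zeros or nonzeros in the others in precisely the pattern needed. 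I would then verify directly — by cofactor expansion exploiting that $v$'s row is a single nonzero in the $F_1$ or $F_4$ column after this reindexing, or along the row of the path endpoint — that the resulting $5 \times 5$ determinant is, up to sign, of the form $\pm x^a \pm x^b \pm x^c$ with the three monomials distinct, hence a trinomial in our sign-free terminology. The combinatorial facts that make all the zero/nonzero entries fall where claimed — namely that $v$ has degree exactly $4$, that $F$ is a quadrilateral, and that every polygonal facet has at least three vertices — are precisely the hypotheses, so no further case analysis on the global combinatorics of $P$ is needed; one only has to be careful that the monomials produced are genuinely distinct, which follows from the fact that $S_P(x)$ has all distinct variables as entries.
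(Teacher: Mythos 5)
Your overall plan --- rows $v$, three of its neighbors, and the vertex $u$ of the quadrilateral opposite $v$; columns the four facets through $v$ plus one auxiliary facet used to isolate the row of $v$ --- is exactly the paper's construction. But your second paragraph contains a combinatorial error that derails everything after it. You claim that $u$ lies on $F_4$, so that the reduced $4\times 4$ minor has the cyclic, binomial-producing pattern; this is false, and it contradicts your own first paragraph, where you correctly observe that $u$ does not lie on the other facets through $v$. Indeed, if $u$ lay on some $F_i$ with $i\in\{2,3,4\}$, then $F\cap F_i$ would be a face of $P$ of dimension at most one containing the two vertices $v$ and $u$, hence the edge $[v,u]$; that edge would then be an edge of the quadrilateral $F$, contradicting the fact that $u$ is the vertex of $F$ opposite $v$. (The facet sharing the edge $[u,w]$ with $F$, for $w$ a neighbor of $v$ on $F$, meets $F$ along $[u,w]$, not along $[v,w]$, so it is none of the $F_i$.) Consequently the row of $u$ in the columns $F_1,\dots,F_4$ has exactly one zero and three variables, the reduced $4\times 4$ minor is not a cycle, and its determinant is already a trinomial. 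This is precisely the paper's proof: it takes the submatrix
$$\begin{bmatrix} 0&0&0&0&x_1\\ 0&0&x_2&x_3&*\\ x_4&0&0&x_5&*\\ x_6&x_7&0&0&*\\ x_8&x_9&x_{10}&0&*\end{bmatrix},$$
whose determinant is $\pm x_1\left(x_2x_5x_6x_9-x_2x_5x_7x_8-x_3x_4x_7x_{10}\right)$, a trinomial. So the ``naive'' submatrix you dismiss is essentially the correct one.

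Because of this, your third paragraph is repairing a problem that does not exist, and the repair is not itself a proof: you never exhibit the $5\times 5$ matrix or compute its determinant, and the incidences of the auxiliary vertex $z\in F_2$ with $F_1$, $F_3$ and $F_4$ are not determined by the hypotheses (for instance, $z$ may or may not lie on $F_3$), so the assertion that ``no further case analysis is needed'' is unjustified as stated. To fix the write-up: correct the incidence pattern of $u$ as above, delete the third paragraph, and finish with the explicit determinant computation; also repair the indexing slip in the first paragraph (with your convention $vw_i=F_i\cap F_{i+1}$, the neighbors of $v$ on $F=F_1$ are $w_1$ and $w_4$, not $w_1$ and $w_3$).
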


\begin{proof} Let $v$ be the vertex of degree four incident to facets $F_1,F_2,F_3,F_4$ such that $[v_1,v]=F_1\cap F_2$, $[v_2,v]=F_2\cap F_3$, $[v_3, v]=F_3\cap F_4$ and $F_4\cap F_1$ are edges of $P$, where $v_1$, $v_2$ and $v_3$ are vertices of $P$. 

Suppose $F_4$ is quadrilateral. Then $F_4$ has a vertex $v_4$ that is different from, and non-adjacent to, $v$. Therefore, $v_4$ does not lie on  $F_1$, $F_2$ or  $F_3$.  Consider the $5 \times 5$ submatrix of $S_P(x)$ with rows indexed by $v,v_1,v_2,v_3,v_4$ and columns by 
$F_1,F_2,F_3,F_4,F$ where $F$ is a facet not containing $v$. This matrix has the form 
$$
\begin{bmatrix} 0 & 0 & 0 & 0 & x_1\\ 0 & 0 & x_2 & x_3 & *\\ x_4 & 0 & 0 & x_5 & * \\ x_6 & x_7 & 0 & 0 & * \\ x_8 & x_9 & x_{10} & 0 & * \\ 
\end{bmatrix},
$$
and its determinant is a trinomial. 
\end{proof}

\begin{proposition} \label{prop:combinatorial classification in 3-space}
The psd-minimal $3$-polytopes are combinatorially equivalent to simplices, quadrilateral pyramids, bisimplices, octahedra or 
their duals.
\end{proposition}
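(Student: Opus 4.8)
The plan is to combine the two preceding propositions with Lemma~\ref{lem:face_inheritance} and polarity to restrict drastically the combinatorics of a psd-minimal $3$-polytope, and then to do a short case analysis on the $f$-vector.

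First I would record the easy structural constraints. Let $P$ be a psd-minimal $3$-polytope. By Lemma~\ref{lem:face_inheritance} every $2$-face of $P$ is psd-minimal, so by Proposition~\ref{prop:psd minimal in the plane} every facet of $P$ is a triangle or a quadrilateral. Since $\rankpsd$ is invariant under polarity and the polar $P^\circ$ is again a psd-minimal $3$-polytope, the same statement applied to $P^\circ$ shows that every vertex of $P$ has degree $3$ or $4$. So $P$ is a simple-ish/simplicial-ish polytope with vertices of degree $3$ or $4$ and faces that are triangles or quadrilaterals. Moreover, Lemma~\ref{lem:no square facet incident to a degree 4 vertex} (via Lemma~\ref{lem:trinomial obstruction}) forbids a quadrilateral facet from being incident to a degree-$4$ vertex; dually, a triangular \emph{vertex figure} — i.e. a degree-$3$ vertex — cannot be incident to a ``quadrilateral vertex,'' i.e. every degree-$3$ vertex lies only on triangles.

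Next I would run the combinatorial census. Let $V_3,V_4$ be the number of degree-$3$ and degree-$4$ vertices and $F_3,F_4$ the number of triangular and quadrilateral facets, so $V=V_3+V_4$, $F=F_3+F_4$, and $2E = 3V_3+4V_4 = 3F_3+4F_4$, with Euler's relation $V-E+F=2$. If $V_4=0$ then $P$ is simplicial, all facets are triangles, and all vertices have degree $3$; the only such polytope is the simplex (here one uses that $3V=2E=3F$ forces $V=F$, and $V-E+F=2$ gives $V=F=4$). If $F_4=0$ then dually $P^\circ$ is a simplex and $P$ is a simplex again. So assume $V_4\ge 1$ and $F_4\ge 1$. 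By Lemma~\ref{lem:no square facet incident to a degree 4 vertex} a quadrilateral facet has all four vertices of degree $3$, and dually a degree-$4$ vertex is surrounded only by triangles. One then checks that the quadrilateral facets are pairwise vertex-disjoint and the degree-$4$ vertices are pairwise non-adjacent, which together with the edge/Euler counts pins $F_4\in\{1,2\}$ (and dually $V_4\in\{1,2\}$). The case $F_4=1,V_4=0$ is a quadrilateral pyramid; $F_4=2,V_4=0$ (two quadrilaterals, glued appropriately along triangles) is a bisimplex; $F_4=0,V_4=1$ and $F_4=0,V_4=2$ give their duals; and $F_4=V_4$ with both equal to the relevant small value forces the octahedron (six vertices of degree $4$, eight triangular facets) or — dually, same combinatorial type — its dual the cube. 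A direct but short enumeration of the vertex-facet incidences in each surviving case shows no other combinatorial type can occur.

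The main obstacle I anticipate is the bookkeeping in the ``mixed'' case $V_4,F_4\ge 1$: one must use the disjointness consequences of Lemma~\ref{lem:no square facet incident to a degree 4 vertex} carefully to bound $F_4$ and $V_4$ and then argue that, once these numbers are fixed, the incidence structure is forced up to combinatorial equivalence (there is essentially one way to cap off the remaining triangulated region). Establishing that rigidity — rather than the inequalities themselves — is where the real work lies; everything else is Euler's formula plus the three cited results.
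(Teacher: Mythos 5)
Your overall strategy (an Euler--formula census on the numbers $V_3,V_4$ of degree-$3$ and degree-$4$ vertices and $F_3,F_4$ of triangular and quadrilateral facets) differs from the paper's, which instead fixes a single degree-$4$ vertex $u$, uses Lemma~\ref{lem:no square facet incident to a degree 4 vertex} to force its neighborhood to consist of four triangles, and then splits on whether $P$ has five or more vertices. Your census could in principle be made to work, but as written it contains several false steps. First, ``$V_4=0$ implies $P$ is a simplex'' is wrong: the cube and the triangular prism are psd-minimal (being polars of the biplanar octahedron and of the bisimplex) and have only degree-$3$ vertices; likewise ``$F_4=0$ implies $P$ is a simplex'' fails for the octahedron and the bisimplex, which have only triangular facets. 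Only the combined hypothesis $V_4=F_4=0$ yields the simplex. Consequently your reduction to the ``mixed'' case $V_4\ge 1$ and $F_4\ge 1$ silently discards four of the six combinatorial types you are supposed to recover, and the mixed case itself contains only the quadrilateral pyramid, with $(F_4,V_4)=(1,1)$.

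Second, the bookkeeping attached to the surviving cases is inconsistent with the polytopes it is meant to produce: the bisimplex has \emph{no} quadrilateral facets (it is the triangular bipyramid, with $(F_4,V_4)=(0,3)$), not $(F_4,V_4)=(2,0)$; the quadrilateral pyramid has $(1,1)$, not $(1,0)$; and the octahedron has $(0,6)$, so it never satisfies $F_4=V_4$ with both positive. Third, the ``dual'' of Lemma~\ref{lem:no square facet incident to a degree 4 vertex} is misstated: the assertion ``no quadrilateral facet is incident to a degree-$4$ vertex'' is self-dual under polarity, whereas your version ``every degree-$3$ vertex lies only on triangles'' is not its dual and is false (the base vertices of a quadrilateral pyramid have degree $3$ and lie on the quadrilateral base). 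Finally, you correctly identify that the real work is the rigidity of the incidence structure once the face counts are fixed, but the census gives no mechanism for that; the paper's local argument around a degree-$4$ vertex (its link is a $4$-cycle of triangles, and any additional vertex forces the octahedron) is precisely what supplies it, and I would recommend restructuring the proof along those lines.
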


\begin{proof}
Suppose $P$ is a psd-minimal $3$-polytope. If $P$ contains only vertices of degree three and triangular facets, then $P$ is a simplex.

For all remaining cases, $P$ must have a vertex of degree four or a quadrilateral facet.  Since psd rank is preserved under polarity, we may assume that $P$ has a vertex $u$ of degree four.  By Lemma~\ref{lem:no square facet incident to a degree 4 vertex}, the neighborhood of $u$ looks as follows.
\begin{center}
\begin{tikzpicture}
  [scale=.2,auto=center]
  \coordinate (p) at (6,6);
  \coordinate (p1) at (1,11);
  \coordinate (p2) at (1,1);
  \coordinate (p3) at (11,1);
  \coordinate (p4) at (11,11);
  \node[label=$u$] at (p) {};
  \node[label=$v_1$] at ($(p1)-(1.2,1.7)$) {};
  \node[label=$v_2$] at ($(p2)-(1.2,1.3)$) {};
  \node[label=$v_3$] at ($(p3)+(1.2,-1.3)$) {};
  \node[label=$v_4$] at ($(p4)+(1.2,-1.7)$)  {};
  \foreach \from/\to in {p/p1,p/p2,p/p3,p/p4,p3/p4,p4/p1,p1/p2,p2/p3}
    \draw (\from) -- (\to);
\end{tikzpicture}
\end{center}

Suppose $P$ has five vertices. If all edges of $P$ are in the picture, i.e. the picture is a Schlegel diagram of $P$, then $P$ is a quadrilateral pyramid. Otherwise $P$ has one more edge, and this edge is $[v_1,v_3]$ or $[v_2, v_4]$, yielding a bisimplex in either case. 

If $P$ has more than five vertices, then we may assume that $P$ has a vertex $v$ that is a 
neighbor of $v_1$ different from $u$, $v_2$, $v_4$.  Then $v_1$ is a degree four vertex and thus, by Lemma~\ref{lem:no square facet incident to a degree 4 vertex}, all facets of $P$ containing $v_1$ are triangles.  
This implies that $v$ is a neighbor of $v_2$ and $v_4$. Applying the same logic to either $v_2$ or $v_4$, we get that $v$ is also a neighbor of $v_3$. Since all these vertices now have degree four, there could be no further vertices in $P$, and so $P$ is an octahedron.
Hence $P$ is combinatorially 
equal to, or dual to, one of the polytopes seen so far.
\end{proof}

Proposition~\ref{prop:combinatorial classification in 3-space} proves the combinatorial part of 
Theorem~\ref{thm:classification in 3-space}. The rest of the proof can be seen in 
Section~\ref{sec:polys16-31}.

\section{Facet intersection obstructions for psd-minimal $4$-polytopes} \label{sec:facet intersection obstructions}

We now use the trinomial obstruction lemma to show that facets of psd-minimal $4$-polytopes can only intersect in a limited number of ways. In this section and beyond, when we refer to a concrete polytope we refer to its combinatorial type; for example, ``cube"  is used as a shortcut for ``a polytope of the same combinatorial type as a cube". Similarly, we refer to any quadrilateral as a ``square''.

\subsection{Combinatorial obstruction lemmas}

\begin{definition} \label{def:wedge}
An edge of a $3$-polytope $P$ is called a {\it wedge} if all the vertices of $P$ are contained in the 
two facets of $P$ that intersect at this edge.
\end{definition}

\begin{lemma}\label{lem:edge_exclusion}
If two facets of a psd-minimal $4$-polytope $P$ intersect at an edge, then this edge must be a wedge of both facets.
\end{lemma}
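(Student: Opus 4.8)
The plan is to prove the contrapositive via the Trinomial Obstruction Lemma. Assume $P$ is psd-minimal and that $F,G$ are facets of $P$ with $F\cap G=e$, an edge, and write $e=[a,b]$. The hypothesis $F\cap G=e$ is symmetric in $F$ and $G$, so it suffices to show that $e$ is a wedge of $F$; supposing it is not, I will produce a $6\times 6$ submatrix of the symbolic slack matrix $S_P(x)$ whose determinant is a trinomial, which contradicts Lemma~\ref{lem:trinomial obstruction}.

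First I would set up the combinatorial data. Let $F_1,F_2$ be the two $2$-faces of the $3$-polytope $F$ that contain $e$. Each $F_i$ is a $2$-face of $P$, hence the intersection of exactly two facets of $P$, one being $F$; write $F_i=F\cap H_i$, so $F,H_1,H_2$ are pairwise distinct. Since $e$ is not a wedge of $F$, pick a vertex $w$ of $F$ that lies on neither $F_1$ nor $F_2$. Pick also $v_1\in V(F_1)\setminus\{a,b\}$ and $v_2\in V(F_2)\setminus\{a,b\}$ (available because $F_1,F_2$ are polygons having $e$ as an edge), a vertex $z\in V(G)\setminus\{a,b\}$, and facets $K,K'$ of $P$ with $b\in K\not\ni a$ and $a\in K'\not\ni b$; such $K,K'$ exist because the intersection of the facets of $P$ through $a$ is the face $\{a\}$, and symmetrically for $b$. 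A short check shows $a,b,w,v_1,v_2,z$ are six distinct vertices and $F,H_1,H_2,G,K,K'$ are six distinct facets.

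Next I would record the incidences, all immediate from $F\cap G=e$, from $F_1\cap F_2=e$, and from $w\notin F_1\cup F_2$: the vertices $a,b$ lie on each of $F,H_1,H_2,G$ (and are the only such vertices, as $F\cap H_1\cap H_2\cap G=(F_1\cap F_2)\cap G=e$); $v_i\in H_i$ but $v_1\notin H_2$, $v_2\notin H_1$; $w\notin H_1\cup H_2$; none of $w,v_1,v_2$ lies on $G$; and $z\notin F$. Hence the submatrix of $S_P(x)$ with rows $a,b,w,v_1,v_2,z$ and columns $F,H_1,H_2,G,K,K'$ has zero pattern
\[
\begin{pmatrix}
0 & 0 & 0 & 0 & \ast & 0\\
0 & 0 & 0 & 0 & 0 & \ast\\
0 & \ast & \ast & \ast & ? & ?\\
0 & 0 & \ast & \ast & ? & ?\\
0 & \ast & 0 & \ast & ? & ?\\
\ast & ? & ? & 0 & ? & ?
\end{pmatrix},
\]
where $\ast$ is a nonzero entry and $?$ is an entry whose value does not affect the determinant (columns $K,K'$ are forced onto rows $a,b$). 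Expanding along the columns/rows with a unique relevant nonzero entry — first column $F$ (only row $z$), then columns $K$ and $K'$ (only rows $a$ and $b$) — reduces the determinant to that of the $3\times 3$ block on rows $w,v_1,v_2$ and columns $H_1,H_2,G$, a full first row sitting over the staircase $\left(\begin{smallmatrix}\ast & \ast\\ 0 & \ast\\ \ast & 0\end{smallmatrix}\right)$; its determinant is a sum of exactly three monomials, and since distinct entries of $S_P(x)$ are distinct variables these monomials have distinct support, so nothing cancels. Thus the $6\times 6$ minor is a monomial times a trinomial, contradicting Lemma~\ref{lem:trinomial obstruction} and finishing the argument.

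I expect the main obstacle to be finding this particular $6\times 6$ submatrix. The naive attempt — rows $a,b,w$ together with three vertices of $G$, and columns $F,G,H_1,H_2$ together with the two separating facets — does not work, because it leaves a $3\times 3$ block with two columns having no forced zeros, whose determinant always has an even number of terms. The fix is to take the three auxiliary vertices on the $F$ side (one off both $2$-faces through $e$, one on each) and to use the single vertex $z\in G$ only to kill the column $F$; the key structural input is that the two $2$-faces $F_1,F_2$ of $F$ through $e$ meet exactly in $e$, which is precisely what produces the staircase of zeros and hence a trinomial. Everything else — existence of $v_1,v_2,z,K,K'$ and the incidence checks from $F\cap G=e$ and $F_1\cap F_2=e$ — is routine.
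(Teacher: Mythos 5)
Your proof is correct, and it is essentially the paper's own argument: after relabeling ($F,G,H_1,H_2,K,K'$ and $a,b,w,v_1,v_2,z$ correspond to the paper's $F_1,F_2,F_5,F_6,F_3,F_4$ and $v,u,v_4,v_3,v_2,v_1$), your $6\times 6$ submatrix is the paper's up to permutation of rows and columns, and the reduction to the same $3\times 3$ staircase block yielding a trinomial minor is identical. The only cosmetic difference is that you take the isolating vertex $z$ inside $G$ (the extra zero in column $G$ is harmless but unused), whereas the paper takes an arbitrary vertex outside $F$.
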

\begin{proof}
Suppose the contrary: there is a facet $F_1$ of $P$ which intersects another facet $F_2$ of $P$ at an edge $[u,v]$ that is not a wedge of $F_1$.

Let $v_1$ be a vertex of $P$ not contained in $F_1$. Let $F_3$ ($F_4$ respectively) be a facet of $P$ which contains $u$ but not $v$ (contains $v$ but not $u$ respectively). Let $F_5$ and $F_6$ be two facets of $P$, which contain $u$ and $v$ and induce two different facets of $F_1$.  Let $v_2$ be a vertex of both $F_6$ and $F_1$, but not of $F_5$; let $v_3$ be a vertex of both $F_5$ and $F_1$, but not of $F_6$. Moreover, since $[u,v]$ is not a wedge of $F_1$ there is a vertex $v_4$ of $F_1$, which does not lie in $F_5$ or $F_6$.

Consider the $6 \times 6$ submatrix of $S_P(x)$ with rows indexed by $v_1,v,u,v_2,v_3,v_4$ and columns by 
$F_1, F_3, F_4, F_5, F_6,F_2$. This matrix has the form
$$
\begin{bmatrix}
 x_1 & * & * & * & * & * \\
 0 & x_2 & 0 & 0 & 0 & 0 \\
 0 & 0 & x_3 & 0 & 0 & 0 \\
 0 & * & * & x_4 & 0 & x_5 \\
 0 & * & * & 0 & x_6 & x_7 \\
 0 & * & * & x_8 & x_9 & x_{10}
\end{bmatrix},$$
and its determinant is a trinomial, contradicting Lemma~\ref{lem:trinomial obstruction}.
\end{proof} 

\begin{lemma}\label{lem:vertex_exclusion}
If two facets of a psd-minimal $4$-polytope $P$ intersect at a vertex, none of their facets containing that vertex is a square.
\end{lemma}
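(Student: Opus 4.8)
The plan is to follow the pattern of Lemmas~\ref{lem:edge_exclusion} and~\ref{lem:no square facet incident to a degree 4 vertex}: assume the conclusion fails and exhibit a trinomial $6$-minor of $S_P(x)$, contradicting Lemma~\ref{lem:trinomial obstruction}. So suppose $F_1,F_2$ are facets of the psd-minimal $4$-polytope $P$ with $F_1\cap F_2=\{w\}$. Since the hypothesis is symmetric in $F_1$ and $F_2$, we may assume that some facet $G$ of $F_1$ with $w\in G$ is a square. As $G$ is a facet of the facet $F_1$, it is a ridge of $P$, so $G=F_1\cap F_3$ for a unique facet $F_3\ne F_1$. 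Write the vertices of $G$ in cyclic order as $w,p,q,r$, so $[w,p]$ and $[w,r]$ are the two edges of $G$ at $w$ and $q$ is the vertex of $G$ not adjacent to $w$; note that any three of $w,p,q,r$ are affinely independent.

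Next I would assemble the rest of a $6\times 6$ submatrix. For the extra columns: $[w,p]$ and $[w,r]$ are edges of $P$, hence each lies on at least three facets, so there are facets $F_4\supseteq[w,p]$ and $F_5\supseteq[w,r]$ with $F_4,F_5\notin\{F_1,F_3\}$. One checks $F_4\ne F_5$ (otherwise this facet would contain $w,p,r$, so $F_4\cap F_1$ would be a facet of $F_1$ containing the triangle $\conv\{w,p,r\}\subseteq G$, forcing $F_4\cap F_1=G$ and hence $F_4=F_3$), and $F_4,F_5\ne F_2$ (since $p,r\notin F_2$, because $F_1\cap F_2=\{w\}$). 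Finally pick any facet $F_6$ of $P$ with $w\notin F_6$. For the extra rows I would take a vertex $v_1\in F_1\setminus F_3$ and a vertex $v_2\in F_3\setminus F_1$; these exist since $G$ is a proper face of each of $F_1$ and $F_3$. The six facets and the six vertices $w,v_1,v_2,p,q,r$ are then all distinct.

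The heart of the argument is the zero pattern of the submatrix $M$ of $S_P(x)$ with rows $w,v_1,v_2,p,q,r$ and columns $F_1,F_3,F_6,F_2,F_4,F_5$. The row of $w$ is $(0,0,*,0,0,0)$, since $w$ lies on $F_1,F_3,F_2,F_4,F_5$ but not $F_6$; each of $p,q,r$ lies in $G\subseteq F_1\cap F_3$ but not in $F_2$; and the incidences of $p,q,r$ with $F_4,F_5$ are read off from the local structure of the $3$-polytope $F_1$ at $w$, giving $p\in F_4$, $r\in F_5$, $p\notin F_5$, $r\notin F_4$, $q\notin F_4$, $q\notin F_5$. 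Expanding $\det M$ along the row of $w$ yields $\pm\,x\cdot\det M'$, where $M'$ has rows $v_1,v_2,p,q,r$ and columns $F_1,F_3,F_2,F_4,F_5$. The $F_1$- and $F_3$-columns of $M'$ are supported only on rows $v_1,v_2$, where by the choice of $v_1,v_2$ they form a $2\times2$ matrix with exactly one zero in each row; so a Laplace expansion along those two columns gives $\det M'=\pm\,(\text{monomial})\cdot\Delta$, where $\Delta$ is the $3\times3$ minor on rows $p,q,r$ and columns $F_2,F_4,F_5$ with zero pattern
$$\begin{bmatrix} x & 0 & x \\ x & x & x \\ x & x & 0 \end{bmatrix}.$$
Expanding $\Delta$ shows it is a trinomial, so $\det M$ is, up to sign, a nonzero monomial times a trinomial, hence a trinomial $6$-minor of $S_P(x)$; this contradicts Lemma~\ref{lem:trinomial obstruction}. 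The case where the square facet lies on $F_2$ is identical.

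I expect the main obstacle to be the incidence bookkeeping of the previous paragraph, in particular $p\notin F_5$, $r\notin F_4$, $q\notin F_4$ and $q\notin F_5$. Each of these follows from one combinatorial fact applied inside the $3$-polytope $F_1$: two distinct facets of a $3$-polytope that share an edge meet in exactly that edge. Indeed, for $i\in\{4,5\}$, $F_i\cap F_1$ is a facet of $F_1$ distinct from $G$ containing one of the edges $[w,p]$, $[w,r]$, so it meets $G$ only in that edge, and none of $p,q,r$ lying outside that edge can belong to it. The same fact yields $F_4\ne F_5$, and the existence of $v_1,v_2$ and the distinctness of all chosen faces are then routine, after which the determinant computation is immediate.
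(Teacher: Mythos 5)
Your proof is correct and takes essentially the same route as the paper: both exhibit a trinomial $6$-minor of $S_P(x)$ built around the square $2$-face at the intersection vertex, reducing by expansion to the same $3\times 3$ trinomial pattern on the three remaining vertices of the square versus $F_2$ and the two facets through the edges of the square at that vertex. Your auxiliary rows and columns differ slightly from the paper's (you pick $v_2$ inside the gluing facet and do a two-column Laplace expansion, where the paper uses a generic vertex off $F_1$ and sequential single expansions), but this is cosmetic.
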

\begin{proof}
Let $F_1$ be a facet of $P$ which intersects another facet $F_2$ of $P$ at a vertex $v$. Suppose $v$ is contained in a square facet $F'$ of $F_1$. 

Let $F_3$ be a facet of $P$ that does not contain $v$. Pick $v_1$ as  a vertex of $P$ not contained in $F_1$.
Let $F_4$ be the facet of $P$ which intersects $F_1$ at $F'$. Take $v_2$ as a vertex of $F_1$ not contained in $F'$. Let $v_3$ and $v_4$ be the neighbors of $v$ in the square $F'$, and let $F_5$ and $F_6$ be facets of $P$ which intersect $F'$ at the edge $[v_4,v]$ and $[v_3,v]$, respectively. Denote by $v_5$ the vertex of $F'$ different from $v$, $v_3$ and $v_4$. 

Consider the $6 \times 6$ submatrix of $S_P(x)$ with rows indexed by $v,v_1,v_2,v_3,v_4,v_5$ and columns by 
$F_3, F_1, F_4, F_5, F_6,F_2$. This matrix has the form
$$\begin{bmatrix}
 x_1 & 0 & 0 & 0 & 0 & 0 \\
 * & x_2 & * & * & * & * \\
 * & 0 & x_3 & * & * & * \\
 * & 0 & 0 & x_4 & 0 & x_5 \\
 * & 0 & 0 & 0 & x_6 & x_7\\
 * & 0 & 0 & x_8 & x_9 & x_{10}
\end{bmatrix},$$
and its determinant is a trinomial, contradicting Lemma~\ref{lem:trinomial obstruction}.
\end{proof}

While the above two lemmas are general, the next one deals specifically with octahedral facets. Recall that a combinatorial octahedron is psd-minimal if and only if it is biplanar (see Theorem~\ref{thm:classification in 3-space}). 

\begin{lemma} \label{lem:vertex_exclusion_octahedron}
An octahedral facet of a psd-minimal $4$-polytope cannot intersect another facet of the polytope at a vertex.
\end{lemma}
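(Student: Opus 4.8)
The plan is to mimic the proof strategy of Lemmas~\ref{lem:edge_exclusion} and \ref{lem:vertex_exclusion}: assume for contradiction that an octahedral facet $F_1$ of a psd-minimal $4$-polytope $P$ meets another facet $F_2$ at a single vertex $v$, and then exhibit a $6\times 6$ submatrix of the symbolic slack matrix $S_P(x)$ whose determinant is a trinomial, contradicting the Trinomial Obstruction Lemma (Lemma~\ref{lem:trinomial obstruction}). The main work is combinatorial: I need to locate the right vertices and facets of $P$ to index the rows and columns, using the fact that $F_1$ is combinatorially an octahedron and, being a face of a psd-minimal polytope, is itself psd-minimal, hence biplanar (Theorem~\ref{thm:classification in 3-space}).

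First I would set up notation for the octahedron $F_1$. In a combinatorial octahedron, the six vertices split into three pairs of non-adjacent (``antipodal'') vertices; $v$ is one of them, and let $v'$ denote its antipode in $F_1$. The remaining four vertices $w_1, w_2, w_3, w_4$ each are adjacent to $v$, and they form a $4$-cycle. Biplanarity gives two distinguished ``equatorial'' quadrilaterals; I would pick the octahedral facets $G_1, G_2, G_3, G_4$ of $F_1$ that contain $v$ — these are the four triangles $v w_i w_{i+1}$ — and for each $G_i$ let $H_i$ be the facet of $P$ other than $F_1$ that contains the $2$-face $G_i$. I would also choose: a vertex $v_1$ of $P$ not on $F_1$; the facet $H_0$ of $P$ through the $2$-face of $F_1$ opposite to $v$ (the triangle $v' w_j w_k$ for an appropriate pair), or alternatively a facet $F_3$ of $P$ avoiding $v$ to play the role of the ``$x_1$'' pivot column; and vertices among $w_1,\dots,w_4,v'$ to fill out the rows. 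The zero pattern I am aiming for is the same block shape as in the previous two lemmas: one row (indexed by $v$) that is zero except in the column of a facet avoiding $v$, one ``generic'' row (indexed by $v_1$) that is nonzero everywhere, and then a $4\times 4$ lower-right block whose structure — coming from the incidences of $w_1,\dots,w_4$ (and $v'$) with $H_1, H_2, H_3, H_4$ and $F_2$ — forces the overall $6\times 6$ determinant to expand into exactly three monomials.

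The key incidence facts I would verify and then exploit are: (i) each $w_i$ lies on exactly two of the $H_j$'s (namely $H_{i-1}$ and $H_i$), since in the octahedron $w_i$ is on exactly two of the triangles through $v$, and (ii) $v'$ lies on none of the $H_j$ through $v$, while (iii) $F_2$ meets $F_1$ only at $v$, so none of $w_1,\dots,w_4,v'$ lies on $F_2$ — this last point is exactly where the hypothesis ``intersects at a vertex'' is used, and it is what makes the $F_2$-column of the lower block entirely nonzero in the relevant rows, producing the extra terms needed for a trinomial rather than a binomial. After recording these, the determinant computation is a routine cofactor expansion along the sparse first row and then along the column with the single entry in the pivot row, reducing to a $4\times 4$ determinant of a matrix with a cyclic $\{0,*\}$-pattern whose expansion I would check has exactly three terms.

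The hard part will be the bookkeeping: making sure all the chosen facets $H_0, H_1, \dots, H_4, F_2, F_3$ are genuinely distinct facets of $P$ and that the chosen vertices are distinct and have the claimed (non)incidences — in particular confirming that biplanarity of $F_1$ is what guarantees a consistent labelling of the equatorial $4$-cycle $w_1 w_2 w_3 w_4$ so that the lower-right block really has the cyclic zero pattern, and checking that a vertex $v_1 \notin F_1$ and a facet $F_3 \not\ni v$ with the needed nonzero entries actually exist (they do, since $P$ is a $4$-polytope with more than one facet and $v$ is not on every facet). Once the $6\times 6$ matrix is written down with the verified zero pattern, invoking Lemma~\ref{lem:trinomial obstruction} closes the argument.
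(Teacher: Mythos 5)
Your overall strategy --- exhibit a trinomial $6$-minor of $S_P(x)$ and invoke Lemma~\ref{lem:trinomial obstruction} --- is not the paper's strategy for this lemma, and it breaks down precisely at the step you defer to a ``routine'' check. With your setup (rows $v$, $v_1$, and four of $w_1,\dots,w_4,v'$; columns a facet missing $v$, the octahedral facet itself, and four facets through $v$ chosen among $H_1,\dots,H_4,F_2$), the determinant does reduce after the two pivots to a $4\times 4$ block, but that block never has three terms. Each $w_i$ lies on exactly two consecutive $H_j$'s, so the block formed by $w_1,\dots,w_4$ against $H_1,\dots,H_4$ has the cyclic zero pattern whose determinant is a \emph{binomial} (two terms --- consistent with the fact that an octahedron can be psd-minimal); replacing one $H_j$-column by the $F_2$-column (which is nonzero on all of $w_1,\dots,w_4$) or one $w_i$-row by the $v'$-row (nonzero on all of $H_1,\dots,H_4$) produces \emph{four} terms. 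So the claimed trinomial does not materialize, and there is good reason to believe no trinomial $6$-minor exists for this configuration at all: the authors, who use exactly such minors in Lemmas~\ref{lem:edge_exclusion} and~\ref{lem:vertex_exclusion}, switch to a different argument here.

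The missing idea is that psd-minimality of the octahedral facet must enter as a \emph{rank condition}, not merely as combinatorial bookkeeping. The paper takes the full $6\times 9$ submatrix $M$ indexed by the six vertices of the octahedral facet and by the eight facets meeting it in a $2$-face together with $G$, observes that both $M$ and some Hadamard square root of it have rank $4$, and then uses biplanarity of the (necessarily psd-minimal) octahedron to conclude that four specific rows are linearly dependent in $M$ and in the rank-$4$ square root. Both matrices therefore lie in the variety of the ideal generated by the $4$-minors of that $4\times 8$ block together with the $5$-minors of $M(x)$, and a computer algebra computation shows this \emph{larger} ideal contains a trinomial such as $x_{11}x_{16}x_{17}z_5-x_1x_{11}x_{16}x_{18}z_5-x_3x_9x_{16}x_{19}z_5$; the contradiction then follows as in Lemma~\ref{lem:trinomial obstruction}. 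Your proposal invokes biplanarity only to ``label the equatorial $4$-cycle,'' which is purely combinatorial and available without any psd-minimality hypothesis; without the extra linear dependency and the enlarged ideal it generates, the trinomial obstruction is simply not there.
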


\begin{proof}
Suppose there are facets $F$ and $G$ of a psd-minimal $4$-polytope $P$, such that $F$ is octahedral and $F\cap G=\{v_1\}$, where $v_1$ is a vertex of $P$. 

Consider the submatrix $M$ of $S_P$ with rows  indexed by the vertices $v_1,\ldots, v_6$ of the octahedral facet and columns indexed by the facets  $F_1,\ldots, F_8$ and $G$ of $P$, such that $F_1\cap F$, \ldots, $F_8\cap F$ are different facets of $F$. The symbolic form of $M$ is 
$$M(x) = \begin{bmatrix}
 x_1& x_2& x_3& x_4& 0& 0& 0& 0& 0  \\
 0& 0& 0& 0& x_5& x_6& x_7& x_8& z_1 \\
 x_9& x_{10}& 0& 0& x_{11}& x_{12}& 0& 0& z_2 \\
 0& 0& x_{13}& x_{14}& 0& 0& x_{15}& x_{16}& z_3 \\
 x_{17}& 0& x_{18}& 0& x_{19}& 0& x_{20}& 0& z_4 \\
 0& x_{21}& 0& x_{22} & 0& x_{23}& 0& x_{24}& z_5
\end{bmatrix}\,.$$

Observe that $\sqrtrank{M} = 4$, since otherwise $M$ extended by $F$ and a vertex of $P$, which is not contained in $F$, has square root rank bigger than five, contradicting the psd-minimality of the $4$-polytope $P$. On the other hand, 
$\rank{M} = 4$ because the rows of $M$ are indexed by the vertices of the $3$-polytope $F$.

Let $M'$ be the matrix obtained from $M$ by dropping the column indexed by $G$.
Since the octahedron $F$ is psd-minimal, without loss of generality we may assume that the first four rows are linearly dependent in both $M'$ and each of its Hadamard square roots of 
rank four. For a justification of this assumption, we refer the reader to Remark~\ref{rem:psd minimal cube dependencies}.~\footnote{This fact can be proved independently but follows easily from the algebraic machinery developed in the second half of the paper.}
Thus both $M$ and each of its square roots of rank four lie in the variety of the ideal generated by  the 
$4$-minors of the upper left $4\times 8$ submatrix of $M(x)$, and the $5$-minors of $M(x)$. 
Using a computer algebra system one can verify that this ideal contains trinomials, such as 
$$x_{11}x_{16}x_{17}z_5-x_1x_{11}x_{16}x_{18}z_5-x_3x_9x_{16}x_{19}z_5.$$ 
As in Lemma~\ref{lem:trinomial obstruction}, since both $M$ and its square root must satisfy this trinomial, no  
polytope combinatorially equivalent to $P$ can be psd-minimal.
\end{proof}

\subsection{Possible facet intersections of a psd-minimal $4$-polytope}

Based on the three lemmas above we can now provide a short list of allowed intersections among the facets of a psd-minimal $4$-polytope. This is the key tool in finding the 
combinatorial classes of psd-minimal $4$-polytopes.

\begin{proposition}\label{prop:intersections}
Let $P$ be a psd-minimal $4$-polytope and $F$ and $G$ be facets of $P$ intersecting at a vertex or an edge. Then, either $F$ is a simplex or one of the following conditions (illustrated in Figure~\ref{fig:intersections}) hold:
\begin{enumerate}
\item $F$ is a bisimplex and $F \cap G$ is a vertex;
\item $F$ is a triangular prism and $F \cap G$ is one of the edges linking the two triangular faces;
\item $F$ is a square pyramid and $F \cap G$ is the apex or an edge of the base. 
\end{enumerate}
\end{proposition}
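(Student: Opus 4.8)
The plan is to combine the three combinatorial obstruction lemmas (Lemmas~\ref{lem:edge_exclusion}, \ref{lem:vertex_exclusion}, \ref{lem:vertex_exclusion_octahedron}) with Proposition~\ref{prop:combinatorial classification in 3-space}, which tells us that the facet $F$, being a psd-minimal $3$-polytope by Lemma~\ref{lem:face_inheritance}, is combinatorially a simplex, a square pyramid, a bisimplex, an octahedron, a triangular prism, or a cube (the latter two being the duals of the bisimplex and octahedron). So the first step is: assume $F$ is not a simplex, and run through this finite list of remaining combinatorial types for $F$, in each case determining which faces of $F$ (vertices or edges) are permitted to be the intersection $F \cap G$.

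The case analysis proceeds as follows. If $F$ is a \emph{cube}, every $2$-face of $F$ is a square; an edge of $F$ lies in two square facets, neither of which can have all vertices of $F$ (a cube has $8$ vertices, a square facet has $4$), so no edge of $F$ is a wedge, and Lemma~\ref{lem:edge_exclusion} rules out edge intersections; similarly every facet of $F$ through any vertex is a square, so Lemma~\ref{lem:vertex_exclusion} rules out vertex intersections — hence $F$ cannot be a cube. If $F$ is an \emph{octahedron}, Lemma~\ref{lem:vertex_exclusion_octahedron} rules out vertex intersections directly, and for an edge $[u,v]$ the two facets of $F$ meeting at it are triangles, which contain only $3$ of the $6$ vertices, so the edge is not a wedge and Lemma~\ref{lem:edge_exclusion} applies — hence $F$ cannot be an octahedron either. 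If $F$ is a \emph{bisimplex}, its facets are all triangles, so again no edge can be a wedge and edge intersections are excluded by Lemma~\ref{lem:edge_exclusion}; a vertex of the bisimplex lies only in triangular facets, so Lemma~\ref{lem:vertex_exclusion} imposes no constraint, leaving vertex intersections as the only possibility — this is case~(1). If $F$ is a \emph{triangular prism}, its facets are two triangles and three squares; a vertex lies in one triangle and two squares, so by Lemma~\ref{lem:vertex_exclusion} no vertex intersection is allowed; an edge is a wedge precisely when the two facets meeting at it together contain all $6$ vertices, which happens exactly for the three ``vertical'' edges joining the two triangles (each such edge lies in two squares whose union is all of $F$), and not for the $6$ edges of the two triangular faces (each lies in a triangle and a square, whose union misses a vertex) — so by Lemma~\ref{lem:edge_exclusion} the only allowed edge intersections are the three vertical edges, which is case~(2). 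If $F$ is a \emph{square pyramid}, its facets are four triangles and one square (the base); the apex lies only in triangular facets, so a vertex intersection at the apex is allowed by Lemma~\ref{lem:vertex_exclusion}, while a base vertex lies in the square base, so vertex intersections at base vertices are forbidden; for edges, the four lateral edges each lie in two triangles whose union misses a base vertex, hence are not wedges and are excluded, while the four base edges each lie in a triangle and the square base whose union is all of $F$, hence are wedges and are allowed by Lemma~\ref{lem:edge_exclusion} — together this gives case~(3).

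Assembling these cases yields exactly the three alternatives in the statement (plus the escape clause ``$F$ is a simplex''), and I would include a sentence pointing to Figure~\ref{fig:intersections} for the pictures. The only genuinely non-routine point is the bookkeeping of wedges: for each combinatorial type of $F$ one must correctly identify, for every edge, the two facets of $F$ meeting there and check whether their vertex sets cover $V(F)$. This is elementary but must be done carefully and symmetrically (using the face-lattice symmetry of each prism/pyramid to reduce the number of edge-orbits to check), and it is the step where an oversight would most easily creep in. Everything else is a direct invocation of the three lemmas and of Proposition~\ref{prop:combinatorial classification in 3-space}.
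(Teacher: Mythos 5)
Your proof is correct and follows essentially the same route as the paper's: the paper likewise deduces the list by running through the psd-minimal $3$-polytope types and applying Lemma~\ref{lem:edge_exclusion} (edges must be wedges), Lemma~\ref{lem:vertex_exclusion} (no square face at a vertex intersection), and Lemma~\ref{lem:vertex_exclusion_octahedron} for octahedral vertices. Your write-up simply makes explicit the wedge and square-face bookkeeping that the paper leaves to the reader.
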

\begin{proof}
This follows from the previous lemmas as all the other possible edge intersections are not wedges and hence are not intersections by Lemma~\ref{lem:edge_exclusion}. Similarly, all the other possible vertex intersections lie in a square face and hence are not intersections by Lemma~\ref{lem:vertex_exclusion}, except for the vertices of the octahedron, which are not intersections directly from Lemma~\ref{lem:vertex_exclusion_octahedron}.
\end{proof}

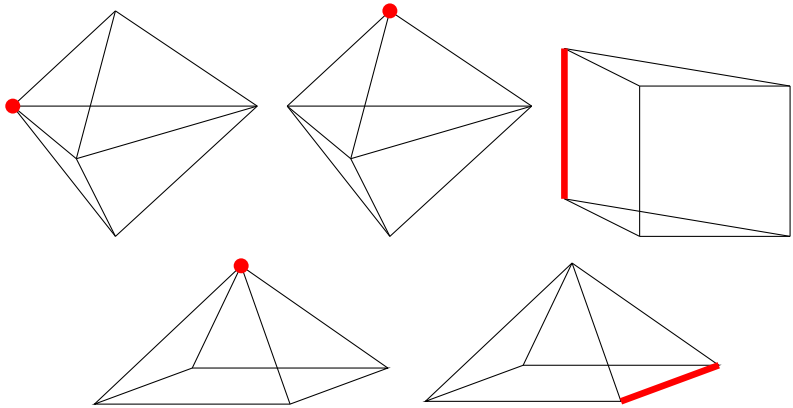
\begin{figure}[h]

 		    \begin{tikzpicture}[xscale=.65, yscale=0.5, x={(1cm,0cm)},y={(-.3cm,-0.35cm)}, z={(0cm,2cm)}]
			 \coordinate (v1) at (-2.5, -1, 0);
			 \coordinate (v2) at (2.5, -1, 0);
			 \coordinate (v3) at (0, 3, 0);
			 \coordinate (v4) at ($1/3*(v1)+1/3*(v2)+1/3*(v3)+(0,0,1.5)$);
   			 \coordinate (v5) at  ($1/3*(v1)+1/3*(v2)+1/3*(v3)-(0,0,1.5)$);
			 \draw (v1)--(v2);
			 \draw (v1)--(v3);
			 \draw (v2)--(v3);
			 \draw (v1)--(v4);
			 \draw (v2)--(v4);
			 \draw (v3)--(v4);
			 \draw (v1)--(v5);
			 \draw (v2)--(v5);
			 \draw (v3)--(v5);
			 \node[scale=0.6,circle,fill=red!] () at (v1){}; 
		   \end{tikzpicture} 		   
		   \hspace{0.15cm}
 		    \begin{tikzpicture}[xscale=.65, yscale=0.5, x={(1cm,0cm)},y={(-.3cm,-0.35cm)}, z={(0cm,2cm)}]
			 \coordinate (v1) at (-2.5, -1, 0);
			 \coordinate (v2) at (2.5, -1, 0);
			 \coordinate (v3) at (0, 3, 0);
			 \coordinate (v4) at ($1/3*(v1)+1/3*(v2)+1/3*(v3)+(0,0,1.5)$);
   			 \coordinate (v5) at  ($1/3*(v1)+1/3*(v2)+1/3*(v3)-(0,0,1.5)$);
			 \draw (v1)--(v2);
			 \draw (v1)--(v3);
			 \draw (v2)--(v3);
			 \draw (v1)--(v4);
			 \draw (v2)--(v4);
			 \draw (v3)--(v4);
			 \draw (v1)--(v5);
			 \draw (v2)--(v5);
			 \draw (v3)--(v5);
			 \node[scale=0.6,circle,fill=red!] () at (v4){}; 
		   \end{tikzpicture}  
		   		   \hspace{0.15cm}
		   \begin{tikzpicture}[yscale=1, xscale=2, x={(1cm,0cm)},y={(-.5cm,0.5cm)}, z={(0cm,2cm)}]
			 \coordinate (v3) at (0, 1, 0);
			 \coordinate (v2) at (1, 0, 0);
			 \coordinate (v1) at (0, 0, 0);
			 \coordinate (v6) at (0, 1, 1);
   			 \coordinate (v5) at (1, 0, 1);
			 \coordinate (v4) at (0, 0, 1);
			 \draw (v1)--(v2);
			 \draw (v4)--(v5);
			 \draw (v2)--(v3);
			 \draw (v5)--(v6);
			 \draw (v3)--(v1);
			 \draw (v6)--(v4);
			 \draw(v1)--(v4);
			 \draw (v2)--(v5);
			 \draw (v3)--(v6);
			 \draw[red, line width=2.5pt] (v3)--(v6);
		   \end{tikzpicture}
		   \hspace{0.15cm} 
       
\vspace{0.25cm}

		    \begin{tikzpicture}[yscale=.8, xscale=1.3, x={(1cm,0cm)},y={(-.5cm,-.3cm)}, z={(0cm,2cm)}]
			 \coordinate (v1) at (-1, -1, 0);
			 \coordinate (v2) at (1, -1, 0);
			 \coordinate (v3) at (1, 1, 0);
			 \coordinate (v4) at (-1, 1, 0);
   			 \coordinate (v5) at (0, 0, 1);
			 \draw (v1)--(v2)--(v3)--(v4)--(v1);
			 \draw (v1)--(v5);
			 \draw (v2)--(v5);
			 \draw (v3)--(v5);
			 \draw (v4)--(v5);
			 \node[scale=0.6,circle,fill=red!] () at (v5){}; 
		   \end{tikzpicture}       
		   \hspace{0.25cm}
		    \begin{tikzpicture}[yscale=.8, xscale=1.3, x={(1cm,0cm)},y={(-.5cm,-.3cm)}, z={(0cm,2cm)}]
			 \coordinate (v1) at (-1, -1, 0);
			 \coordinate (v2) at (1, -1, 0);
			 \coordinate (v3) at (1, 1, 0);
			 \coordinate (v4) at (-1, 1, 0);
   			 \coordinate (v5) at (0, 0, 1);
			 \draw (v1)--(v2)--(v3)--(v4)--(v1);
			 \draw (v1)--(v5);
			 \draw (v2)--(v5);
			 \draw (v3)--(v5);
			 \draw (v4)--(v5);
			 \draw[red, line width=2.5pt] (v3)--(v2);
		   \end{tikzpicture}
    \caption{All possible vertex or edge intersections among facets of a psd-minimal $4$-polytope (excluding the simplex).} \label{fig:intersections}

\end{figure}
 
We state one more result about the combinatorics of a psd-minimal $4$-polytope.
 
\begin{lemma}\label{lem:edge_in_three_four_facets}
An edge of a psd-minimal $4$-polytope is contained in at most four facets.
\end{lemma}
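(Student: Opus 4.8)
The plan is to show that if an edge $e$ of a psd-minimal $4$-polytope $P$ is contained in five or more facets, then the symbolic slack matrix $S_P(x)$ has a trinomial minor of size $6$, contradicting Lemma~\ref{lem:trinomial obstruction}. The edge $e$ is a $1$-face, so the facets containing $e$ correspond to the facets of the $2$-dimensional quotient $P/e$, which is a polygon; if $e$ lies in $k$ facets, that polygon is a $k$-gon. The key structural fact I would use is that, locally around $e$, the facets $F_1,\ldots,F_k$ containing $e$ are arranged cyclically so that consecutive ones $F_i, F_{i+1}$ meet in a $2$-face (a ridge) containing $e$.

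First I would set up the submatrix. Let $u,v$ be the endpoints of $e$, and suppose $e$ lies in facets $F_1,\ldots,F_5$ (taking exactly five suffices, since more only helps). For each pair of consecutive facets $F_i, F_{i+1}$ (cyclically), there is a $2$-face $G_i$ containing $e$; since $P$ is a $4$-polytope, $G_i$ is a polygon with at least three vertices, so besides $u$ and $v$ it has at least one further vertex $w_i$ that lies on $F_i$ and $F_{i+1}$ but (for an appropriate choice) on none of the other $F_j$. I would pick such vertices $w_1,\ldots,w_5$. Finally, pick a vertex $v_0$ of $P$ not on $F_1$ (which exists since $F_1$ is a proper face). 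Then the $6\times 6$ submatrix of $S_P(x)$ with rows indexed by $v_0, u, v, w_1, w_2, w_3$ and columns indexed by some auxiliary facet $F$ not containing $u$, together with $F_1, F_2, F_3, F_4$ in a suitable order, should have the cyclic ``staircase'' zero pattern analogous to the $n$-gon matrix in Proposition~\ref{prop:psd minimal in the plane} and Lemma~\ref{lem:no square facet incident to a degree 4 vertex}: a single nonzero in the $v_0$ row, and in the remaining five rows a $5\times 5$ pentagon-type pattern whose determinant is a trinomial.

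The main obstacle will be bookkeeping the incidences so that the $5\times5$ core really is the pentagon pattern, i.e. checking that each chosen $w_i$ misses exactly the right facets and that $u,v$ each miss exactly one of $F_1,\ldots,F_5$ — equivalently, exploiting that $u$ and $v$ are the two endpoints of $e$ and hence lie on all five $F_j$, so one should instead index columns by the ridges $G_i$ or by carefully chosen facets not containing $e$. Concretely I expect to use columns indexed by five facets $H_1,\ldots,H_5$ where $H_i$ contains $w_i$ and one of $u,v$ but not the other, mimicking the $F_3,F_4$-type columns in Lemma~\ref{lem:edge_exclusion}; this is what forces the staircase of zeros. Once the zero pattern is pinned down, the determinant computation is the same as in the $2$-polytope case: expanding along the row with a single nonzero entry reduces it to a $5\times 5$ pentagon determinant, which is a trinomial up to sign.

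Alternatively, and perhaps more cleanly, I would deduce this directly from the already-proved results: by Lemma~\ref{lem:edge_inheritance-type} reasoning, the quotient $P/e$ behaves like a polygon and applying Lemma~\ref{lem:face_inheritance} together with Proposition~\ref{prop:psd minimal in the plane} to a suitable $2$-face or to the vertex figure at $u$ should already rule out too many facets through $e$. Indeed the vertex figure $P/u$ is a psd-minimal $3$-polytope (by face/quotient inheritance and polarity), and $e$ becomes a vertex of $P/u$ whose degree equals the number of ridges of $P$ through $e$, i.e. the number of facets of $P$ through $e$; since psd-minimal $3$-polytopes have all vertices of degree at most four (Proposition~\ref{prop:combinatorial classification in 3-space}), $e$ lies in at most four facets. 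I would present whichever of these two routes is shortest, but flag the vertex-figure argument as the conceptual one and the explicit $6\times6$ trinomial as the self-contained backup.
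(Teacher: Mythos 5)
Your ``conceptual'' route is exactly the paper's proof: dually, the number of facets of $P$ containing an edge $e$ equals the number of vertices of the $2$-face of $P^\circ$ corresponding to $e$, and by Lemma~\ref{lem:face_inheritance} and Proposition~\ref{prop:psd minimal in the plane} that $2$-face is psd-minimal and hence has at most four vertices (your phrasing via the vertex figure $P/u$ and the degree bound for psd-minimal $3$-polytopes is just this same duality argument one step removed). The explicit $6\times 6$ trinomial minor you sketch as a backup is unnecessary and is left with unresolved bookkeeping, so present only the duality argument.
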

\begin{proof}
This follows dually from the fact that faces of dimension two of a psd-minimal $4$-polytope are themselves psd-minimal and hence have at most four vertices. 
\end{proof}

\section{Classification of combinatorial types} \label{sec:31 combinatorial classes}

In this section, we prove our main theorem.

\begin{theorem}\label{thm:31classes}
There are exactly $31$ combinatorial classes of psd-minimal $4$-polytopes.
\end{theorem}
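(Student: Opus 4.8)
The strategy is to perform an exhaustive combinatorial search, pruned aggressively by the facet-intersection obstructions of Section~\ref{sec:facet intersection obstructions}. Since faces of psd-minimal polytopes are psd-minimal (Lemma~\ref{lem:face_inheritance}) and psd-minimality is preserved under polarity, the facets of a psd-minimal $4$-polytope $P$ are themselves psd-minimal $3$-polytopes: by Proposition~\ref{prop:combinatorial classification in 3-space} each facet is (combinatorially) a simplex, a square pyramid, a bisimplex, an octahedron, a triangular prism, or a cube (the duals of the $3$-polytopes in that list). Proposition~\ref{prop:intersections} then severely restricts how two non-simplex facets may meet along an edge or a vertex, and Lemma~\ref{lem:edge_in_three_four_facets} caps the number of facets on an edge at four. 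First I would separate the analysis by the number $m$ of facets of $P$: the case $m = 5$ gives exactly the simplex-like situation ($d+2$ facets), and one treats small $m$ directly. For larger $m$, the plan is to case-split on which "big" (non-simplex) facets occur and to use Proposition~\ref{prop:intersections} to glue facets together ridge by ridge, much as in the proof of Proposition~\ref{prop:combinatorial classification in 3-space} but one dimension up.

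Concretely, the key steps, in order, are: (1) show that a psd-minimal $4$-polytope with only simplex facets is a $4$-simplex; (2) handle the polytopes with few vertices or few facets — in particular those with $d+2 = 6$ vertices or $6$ facets — using the classification of polytopes with few vertices and the obstruction lemmas, and check each against the trinomial obstruction directly; (3) for each admissible combinatorial type of a "big" facet $F$, enumerate the ways that the facets of $P$ adjacent to $F$ can attach to the ridges of $F$ subject to Proposition~\ref{prop:intersections}, propagating the constraints to second-neighbor facets; (4) whenever the partial incidence data forces a $(d+2)$-minor of the symbolic slack matrix $S_P(x)$ to be a trinomial, discard that branch via Lemma~\ref{lem:trinomial obstruction} (this is the $4$-dimensional analogue of Lemma~\ref{lem:no square facet incident to a degree 4 vertex}); (5) collect the surviving types, identify them with the $31$ entries of Table~\ref{table:list}, and confirm each is genuinely realizable and psd-minimal by exhibiting the explicit realization recorded there (which also uses Theorem~\ref{thm:square root rank}). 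Closing the list requires checking that no two entries are combinatorially isomorphic, which one does via $f$-vectors and vertex-facet incidence data.

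The main obstacle is the sheer combinatorial bookkeeping in step (3)–(4): even with the strong constraints from Proposition~\ref{prop:intersections}, there are many ways to assemble facets, and one must be confident that every branch has either been pruned by a trinomial obstruction or landed in Table~\ref{table:list}. This is where a computer search is essentially unavoidable, and indeed the paper points to Sage/Macaulay2 code for the minor computations; the human part of the argument is organizing the case analysis so that the search is provably exhaustive — for instance, rooting it at a vertex or facet of maximal complexity and arguing, as in Proposition~\ref{prop:combinatorial classification in 3-space}, that once the local picture is fixed the global type is determined. A secondary subtlety is that some surviving combinatorial types contain both psd-minimal and non-psd-minimal polytopes, so step (5) must only assert the existence of one psd-minimal representative per class; the finer question of which polytopes in each class are psd-minimal is deferred to the later sections.
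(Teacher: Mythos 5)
Your plan is essentially the paper's own proof: reduce to facets that are psd-minimal $3$-polytopes, use Proposition~\ref{prop:intersections} (together with Lemma~\ref{lem:edge_in_three_four_facets} and duality) to constrain how facets glue along ridges, enumerate case by case rooted at a facet of maximal complexity, and certify each surviving class by the explicit realization in Table~\ref{table:list}. The only differences are organizational — the paper orders the case analysis by facet type (cube, prism, octahedron, bisimplex, pyramid, simplex) rather than by facet count, front-loads all trinomial-obstruction arguments into the Section~\ref{sec:facet intersection obstructions} lemmas so the subsequent enumeration is purely combinatorial and done by hand with diagrams, and reserves the computer algebra for the later slack-ideal computations.
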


\afterpage{ \clearpage %

\begin{landscape}
\begin{small}
\begin{table}[H]
\begin{tabular}{l|l|l|l|l|l|l|l} 
 $\sharp$   & Fig   & Construction                                         & Vertices of a psd-minimal embedding                                    & Facet Types             &Dual                     & $f$-vector       & Condition     \\ \hline
$1$ &       & $\Delta_4$                                                 & $\{-e_{1234},e_1,e_2,e_3,e_4\}$                                        & $5$S                    &Self                     & $(5,10,10,5)$   &Cor.~\ref{cor:classes 1-11}     \\ \hline
$2$ &       & $(\Delta_1 \times \Delta_1) \ast \Delta_1$                 & $\{\pm e_1, \pm e_2, e_3,e_4\}$                                        & $4$S,$2$Py              &Self                     & $(6,13,13,6)$   &Cor.~\ref{cor:classes 1-11}       \\ \hline
$3$ &\ref{fig:sec4_bisimplex_result2}&                                                            & $\{0,2e_1,2e_2,2e_3,e_{12}-e_3,e_4,e_{34}\}$                           & $3$S,$2$Py,$2$B         &Self                      & $(7,17,17,7)$  &Cor.~\ref{cor:classes 1-11}       \\ \hline
$4$ &\ref{fig:sec4_trprism_result3} & $\Delta_3 \times \Delta_1$                                 & $\{-e_{123},e_1,e_2,e_3\}+\{\pm e_4\}$                                 & $2$S,$4$Pr              &$5$                      & $(8,16,14,6)$   &Cor.~\ref{cor:classes 1-11}       \\ \hline
$5$ &       & $\Delta_3 \oplus \Delta_1$                                 & $\{-e_{123},e_1,e_2,e_3,\pm e_4\}$                                     & $8$S                    &$4$                      & $(6,14,16,8)$   &Cor.~\ref{cor:classes 1-11}       \\ \hline
$6$ &\ref{fig:sec4_trprism_result1} & $\Delta_2 \times \Delta_2$                                 & $\{-e_{12},e_1,e_2\}+\{-e_{34},e_3,e_4\}$                              & $6$Pr                   &$7$                       & $(9,18,15,6)$  &Cor.~\ref{cor:classes 1-11}       \\ \hline
$7$ &       & $\Delta_2 \oplus \Delta_2$                                 & $\{-e_{12},e_1,e_2,-e_{34},e_3,e_4\}$                                  & $9$S                    &$6$                      & $(6,15,18,9)$   &Cor.~\ref{cor:classes 1-11}       \\ \hline
$8$ &\ref{fig:sec4_trprism_result10} & $(\Delta_2 \times \Delta_1) \ast \Delta_0$                 & $\{e_4\} \cup(\{-e_{12},e_1,e_2\} + \{\pm e_3\})$                      & $2$S,$1$Pr,$3$Py        &$9$                     & $(7,15,14,6)$  &Cor.~\ref{cor:classes 1-11}         \\ \hline
$9$ && $(\Delta_2 \oplus \Delta_1) \ast \Delta_0$                 & $\{-e_{12},e_1,e_2,\pm e_3, e_4\}$                                     & $6$S,$1$B               &$8$                      & $(6,14,15,7)$          &Cor.~\ref{cor:classes 1-11}  \\ \hline
$10$&\ref{fig:sec4_trprism_result2} &                                                            & $\{0,e_1,e_2,e_3,e_{13},e_{23},e_4,e_{14}\}$                           & $1$S,$2$Pr,$4$Py        &$11$                     & $(8,18,17,7)$   &Cor.~\ref{cor:classes 1-11}       \\ \hline
$11$&       &                                                            & $\{e_1,e_2,e_3,e_4,-2e_1-e_{24},-e_{13}-2e_2,-2e_{12}\}$               & $4$S,$4$Py              &$10$                     & $(7,17,18,8)$   &Cor.~\ref{cor:classes 1-11}     \\ \hline
$12$&\ref{fig:sec4_trprism_result6}&                                                            & $\{0,e_1,e_2/2,e_3,e_4,e_{14},e_{12}/2,e_{13},e_2+4e_{34}\}$           & $3$Pr,$3$Py,$2$B        &$13$                     & $(9,22,21,8)$    &Prop.~\ref{prop:polytope 12 13}     \\ \hline
$13$&\ref{fig:sec4_bisimplex_result3}&                                                            & $\{e_1,e_2,9/4e_3,e_4,e_{124}/2,e_{13},e_2+e_3/4,e_{34}\}$             & $2$S,$6$Py,$1$B         &$12$                     & $(8,21,22,9)$  &Prop.~\ref{prop:polytope 12 13}       \\ \hline
$14$&\ref{fig:sec4_trprism_result4} & $(\Delta_2 \oplus \Delta_1)\times\Delta_1$                 & $\{0,e_1,e_2,e_3,e_4,e_{12},e_{23},e_{24},2e_{13}+e_4,2e_{13}+e_{24}\}$& $6$Pr,$2$B              &$15$                     & $(10,23,21,8)$   &Prop.~\ref{prop:polytope 14 15}     \\ \hline
$15$&       & $(\Delta_2 \times \Delta_1)\oplus\Delta_1$                 & $\{e_1,2e_2,e_3,2e_4,e_2+2e_3,e_2+4e_4,2e_1+e_2,e_{134}\}$             & $4$S,$6$Py              &$14$                     & $(8,21,23,10)$       &Prop.~\ref{prop:polytope 14 15} \\ \hline
$16$&\ref{fig:sec4_cube_case4} & $(\Delta_1 \times \Delta_1 \times \Delta_1) \ast \Delta_0$ & $(\{\pm e_1\}+ \{\pm e_2\} +\{\pm e_3\})\cup\{e_4\}$                   & $1$C,$6$Py              &$17$                     & $(9,20,18,7)$  & Prop.~\ref{prop:classes 16-29}     \\ \hline
$17$& & $(\Delta_1 \oplus \Delta_1 \oplus \Delta_1) \ast \Delta_0$ & $\{\pm e_1, \pm e_2, \pm e_3, e_4\}$                                   & $1$O,$8$S               &$16$                     & $(7,18,20,9)$      & Prop.~\ref{prop:classes 16-29}   \\ \hline
$18$&\ref{fig:sec4_trprism_result8}&                                                            & $\{0,e_1,e_2/2 e_4,e_{234},e_{23},e_{24}/2,e_{134},e_{13}\}$            & $2$Pr,$4$Py,$2$B        &$19$                     & $(9,22,21,8)$      & Prop.~\ref{prop:classes 16-29}    \\ \hline
$19$&\ref{fig:sec4_octahedron_result2}&                                                            & $\{0,e_1,e_3,e_4,e_{14},e_{23},e_{24},e_{234}\}$                       & $1$O,$4$S,$4$Py         &$18$                     & $(8,21,22,9)$    & Prop.~\ref{prop:classes 16-29}      \\ \hline
$20$&\ref{fig:sec4_cube_case3} & $((\Delta_1\times\Delta_1)\ast\Delta_0) \times \Delta_1$   & $\{\pm e_1, \pm e_2, e_3\} + \{\pm e_4\}$                              & $1$C,$4$Pr,$2$Py        &$21$                     & $(10,21,18,7)$  & Prop.~\ref{prop:classes 16-29}       \\ \hline
$21$&       & $((\Delta_1\times\Delta_1)\ast\Delta_0) \oplus \Delta_1$   & $\{\pm e_1, \pm e_2, e_3, e_3/2 \pm e_4\}$                             & $8$S,$2$Py              &$20$                     & $(7,18,21,10)$     & Prop.~\ref{prop:classes 16-29}    \\ \hline
$22$&       &                                                            & $\{0,2e_1,2e_3,2e_4,e_{12},e_{123},e_{1234},2e_{24},2e_{34}\}$         & $6$Py,$3$B              &$23$                     & $(9,24,24,9)$     & Prop.~\ref{prop:classes 16-29}     \\ \hline
$23$&\ref{fig:sec4_trprism_result12}&                                                            & $\{0,e_1,e_3,e_4,e_{12},e_{123},e_{23},e_{24},e_{234}\}$               & $2$O,$3$S,$1$Pr,$3$Py   &$22$                     & $(9,24,24,9)$     & Prop.~\ref{prop:classes 16-29}     \\ \hline
{$24$}&       &                                                            & $\{0,2e_1,2e_2,2e_3,2e_4,e_{123},e_{124},e_{134},e_{1234},e_{234}\}$   & $10$B                   &$25$                     & $(10,30,30,10)$     & Prop.~\ref{prop:classes 16-29}   \\ \hline
{$25$}&\ref{fig:sec4_octahedron_result1}&                                                            & $\{e_1,e_2,e_3,e_4,e_{12},e_{13},e_{14},e_{23},e_{24},e_{34}\}$        & $5$O,$5$S               &$24$                     & $(10,30,30,10)$   & Prop.~\ref{prop:classes 16-29}     \\ \hline
$26$&       & $(\Delta_1 \times \Delta_1 \times \Delta_1)\oplus \Delta_1$& $(\{\pm e_1\}+\{\pm e_2\}+\{\pm e_3\}) \cup \{\pm e_4\}$               & $12$Py                  &$27$                     & $(10,28,30,12)$     & Prop.~\ref{prop:classes 16-29}   \\ \hline
$27$&\ref{fig:sec4_trprism_result5}& $(\Delta_1 \oplus \Delta_1 \oplus \Delta_1)\times \Delta_1$& $\{\pm e_1,\pm e_2, \pm e_3\} + \{\pm e_4\}$                           & $2$O,$8$Pr              &$26$                     & $(12,30,28,10)$     & Prop.~\ref{prop:classes 16-29}   \\ \hline
{$28$}&\ref{fig:sec4_cube_case2} & $\Delta_1 \times \Delta_1 \times \Delta_2$                 & $\{\pm e_1\} + \{\pm e_2\} + \{-e_{34},e_3,e_4\}$                      & $3$C,$4$Pr              &$29$                     & $(12,24,19,7)$    & Prop.~\ref{prop:polys 28-29}    \\ \hline
{$29$}&       & $\Delta_1 \oplus \Delta_1 \oplus \Delta_2$                 & $\{\pm e_1, \pm e_2, -e_{34},e_3,e_4\}$                                & $12$S                   &$28$                     & $(7,19,24,12)$    & Prop.~\ref{prop:polys 28-29}     \\ \hline
$30$&\ref{fig:sec4_cube_case1} & $\Delta_1 \times \Delta_1 \times \Delta_1 \times \Delta_1$ & $\{\pm e_1\} + \{\pm e_2\} + \{\pm e_3\} + \{\pm e_4\}$                & $8$C                    &$31$                     & $(16,32,24,8)$    & Prop.~\ref{prop:polytope 30 31}    \\ \hline
$31$&       & $\Delta_1 \oplus \Delta_1 \oplus \Delta_1 \oplus \Delta_1$ & $\{\pm e_1,\pm e_2,\pm e_3\pm e_4\}$                                   & $16$S                   &$30$                     & $(8,24,32,16)$    & Prop.~\ref{prop:polytope 30 31}    \\ \hline
\end{tabular}~
\caption{Combinatorial classes of psd-minimal $4$-polytopes} \label{table:list}
\end{table}
\end{small}
\end{landscape}

}

In Table~\ref{table:list} we list all these classes with the following information in each column:
\begin{enumerate}
\item The number which we use to refer to the class.
\item The reference to its Schlegel diagram, when present in the paper.
\item A construction (if one is known) of a polytope in the class using standard operations on simplices. In the table, $\Delta_d$ denotes a simplex with $d+1$ vertices, and  $\times$, $\oplus$ and $\ast$ denote the product, free sum and join operations.
\item The vertices of a psd-minimal polytope in the combinatorial class, where psd-minimality of the polytope can be verified by constructing a slack matrix and
checking its square root rank. For $S\subseteq \{1,2,3,4\}$, $e_S$ denotes the zero-one vector with a one in position $i$ if and only if $i\in S$. 
\item Information about the combinatorial types of facets, i.e. how many facets of a polytope in this class are cubes~($C$), triangular prisms~($Pr$), octahedra~($O$), bisimplices~($B$), square pyramids~($Py$) and simplices~($S$).
\item The number of the dual of the combinatorial class, or \emph{Self} if it is self dual. Recall that psd-minimality is closed under polarity, and we list 
dual pairs of combinatorial classes consecutively.
\item The $f$-vector of the polytopes in the the combinatorial class.
\item The reference to the result characterizing psd-minimality in the combinatorial class. 
\end{enumerate}

Since we provide a psd-minimal polytope in each of the $31$ classes of Table~\ref{table:list}, to prove Theorem~\ref{thm:31classes}, it suffices to argue that 
there are no further combinatorial classes of psd-minimal $4$-polytopes.
To enumerate the possible combinatorial classes of psd-minimal $4$-polytopes we systematically study the combinatorics of their facets. For example, we start by listing all combinatorial classes of psd-minimal $4$-polytopes with a cubical facet. We assume that a psd-minimal $4$-polytope $P$ has a facet combinatorially equivalent to a cube, and then using combinatorial arguments we leverage the facet intersection results of Proposition~\ref{prop:intersections} to dramatically restrict the possible combinatorial classes of~$P$. After all classes of psd-minimal $4$-polytopes with a cubical facet are obtained, we add them and their dual classes to our list of possibilities, and move on to study polytopes with a different combinatorial type of facet, until we cover all cases. Naturally, when studying a new type of facet, we can assume that neither the polytope $P$ nor its dual has facets of the combinatorial types previously studied. Otherwise, we would be double counting. The combinatorial types of facets to consider are those of the psd-minimal $3$-polytopes, i.e., cubes, triangular prisms, octahedra, bisimplices, square pyramids and simplices in this order. We treat each case in a separate subsection. 

\subsection{Cube}

Let $P$ be a psd-minimal $4$-polytope with a cubical facet $F$. We provide a detailed 
discussion of this case, skipping similar arguments in later subsections.
By Proposition~\ref{prop:intersections}, any other facet of $P$ either has an empty intersection with $F$ or intersects $F$ in one of its facets. This fact is crucial for the analysis below.  Figure~\ref{fig:sec4_cube_marked_facets} shows the graph of $F$, along with some marked facets, $F'_1$, $F'_2$, $F'_3$ and $F'_4$ of $F$.

\begin{figure}[h]
  		    \begin{tikzpicture}[scale=0.45]
   			 \coordinate (v1) at (-1.5,-1.5);
			 \coordinate (v2) at (1.5,-1.5);
			 \coordinate (v3) at (1.5,1.2);
			 \coordinate (v4) at (-1.5,1.2);
   			 \coordinate (v5) at (-3,-2);
			 \coordinate (v6) at (3,-2);
			 \coordinate (v7) at (3,3);
			 \coordinate (v8) at (-3,3);
			 \draw[very thick] (v1)--(v2)--(v3)--(v4)--(v1);
			 \draw[very thick] (v5)--(v6)--(v7)--(v8)--(v5);
			 \draw[very thick] (v1)--(v5);
			 \draw[very thick] (v2)--(v6);
			 \draw[very thick] (v3)--(v7);
			 \draw[very thick] (v4)--(v8);
			 \node at (0,2) {$F'_1$};
			 \node at (0,0) {$F'_2$};
			 \node at (-2.2,0) {$F'_3$};
			 \node at (2.2,0) {$F'_4$};
   		    \end{tikzpicture} 
		    \caption{}
		    \label{fig:sec4_cube_marked_facets}
\end{figure}

 Let $F_1$ and $F_2$ be the facets of $P$ such that $F\cap F_1=F'_1$ and $F\cap F_2=F'_2$.  The facets $F_1$ and $F_2$ are cubes, triangular prisms or square pyramids since they must have the square facets $F'_1$ and $F'_2$, respectively. By Proposition~\ref{prop:intersections}, the edge $F'_1\cap F'_2$ is not the intersection of $F$ and some other facet of $P$. Hence $F_1\cap F_2$ is a two dimensional face of $P$, i.e. $F_1\cap F_2$ is a facet of both $F_1$ and $F_2$.  
 
 If there is a cube attached to $F$, we may assume that it is  $F_2$. Then $F_1$ must be either a cube or a triangular prism attached to $F_2$ by a square face.  If there are no cubes attached to $F$, but there are triangular prisms, then again we may assume that $F_2$ is a triangular prism and then all square faces of $F_2$ other than $F'_2$ must be attached to triangular prisms. 
 Finally, if there are no cubes or triangular prisms attached to $F$, then all facets attached to $F$ are square pyramids.
  These possibilities are represented by the diagrams in the upper row of Figure~\ref{fig:sec4_cube_cases} where we have drawn the Schlegel diagrams of $F_1$ and $F_2$ inside their faces $F_1'$ and $F_2'$.
 
\begin{figure}[h!]	
	  \begin{center}
	  	\begin{subfigure}{.23\linewidth}
  		    \begin{tikzpicture}[scale=0.45]
   			 \coordinate (v1) at (-1.5,-1.5);
			 \coordinate (v2) at (1.5,-1.5);
			 \coordinate (v3) at (1.5,1.2);
			 \coordinate (v4) at (-1.5,1.2);
   			 \coordinate (v5) at (-3,-2);
			 \coordinate (v6) at (3,-2);
			 \coordinate (v7) at (3,3);
			 \coordinate (v8) at (-3,3);
			 \draw[very thick] (v1)--(v2)--(v3)--(v4)--(v1);
			 \draw[very thick] (v5)--(v6)--(v7)--(v8)--(v5);
			 \draw[very thick] (v1)--(v5);
			 \draw[very thick] (v2)--(v6);
			 \draw[very thick] (v3)--(v7);
			 \draw[very thick] (v4)--(v8);
			 \coordinate (u1) at (-1,-1);
			 \coordinate (u2) at (1,-1);
			 \coordinate (u3) at (1,.7);
			 \coordinate (u4) at (-1,.7);	
			 \draw[red, very thin] (u1)--(u2)--(u3)--(u4)--(u1); 		 
			 \draw[blue, very thin] (v1)--(u1);
			 \draw[blue, very thin] (v2)--(u2);
			 \draw[blue, very thin] (v3)--(u3);
			 \draw[blue, very thin] (v4)--(u4);
		         \coordinate (w1) at (-1.4,1.7);
			 \coordinate (w2) at (1.4,1.7);
			 \coordinate (w3) at (2,2.5);
			 \coordinate (w4) at (-2,2.5);	
			 \draw[red, very thin] (w1)--(w2)--(w3)--(w4)--(w1); 
			 \draw[blue, very thin] (v4)--(w1);
			 \draw[blue, very thin] (v3)--(w2);
			 \draw[blue, very thin] (v7)--(w3);
			 \draw[blue, very thin] (v8)--(w4);
   		    \end{tikzpicture} 
		    
		    \vspace*{.5cm} 
		     		 
		   \begin{tikzpicture}[yscale=1., xscale=1.8, x={(1cm,0cm)},y={(-.5cm,-.5cm)}, z={(0cm,2cm)}]
			 \coordinate (v1) at (0, 0, 0);
			 \coordinate (v2) at (1, 0, 0);
			 \coordinate (v3) at (1, 1, 0);
			 \coordinate (v4) at (0, 1, 0);
			 \coordinate (v5) at (0, 0, 1);
			 \coordinate (v6) at (1, 0, 1);
			 \coordinate (v7) at (1, 1, 1);
			 \coordinate (v8) at (0, 1, 1);
			 \coordinate (center) at ($(v1)!0.5!(v7)$);
			 \draw[very thick] (v1)--(v2)--(v3)--(v4)--(v1);
			 \draw[very thick] (v5)--(v6)--(v7)--(v8)--(v5);
			 \draw[very thick] (v1)--(v2)--(v6)--(v5)--(v1);
			 \draw[very thick] (v3)--(v4)--(v8)--(v7)--(v3);
			 \foreach \x in {1,...,8} {\coordinate (w\x) at ($(v\x)!0.4!(center)$);
			  \draw[blue] (w\x)--(v\x);}
			 \draw[red] (w1)--(w2)--(w3)--(w4)--(w1);
			 \draw[red] (w5)--(w6)--(w7)--(w8)--(w5);
			 \draw[red] (w1)--(w2)--(w6)--(w5)--(w1);
			 \draw[red] (w3)--(w4)--(w8)--(w7)--(w3);
		   \end{tikzpicture}
		   \subcaption{}
		   \label{fig:sec4_cube_case1}
		\end{subfigure}
		\begin{subfigure}{.23\linewidth}
  		    \begin{tikzpicture}[scale=0.45]
   			 \coordinate (v1) at (-1.5,-1.5);
			 \coordinate (v2) at (1.5,-1.5);
			 \coordinate (v3) at (1.5,1.2);
			 \coordinate (v4) at (-1.5,1.2);
   			 \coordinate (v5) at (-3,-2);
			 \coordinate (v6) at (3,-2);
			 \coordinate (v7) at (3,3);
			 \coordinate (v8) at (-3,3);
			 \draw[very thick] (v1)--(v2)--(v3)--(v4)--(v1);
			 \draw[very thick] (v5)--(v6)--(v7)--(v8)--(v5);
			 \draw[very thick] (v1)--(v5);
			 \draw[very thick] (v2)--(v6);
			 \draw[very thick] (v3)--(v7);
			 \draw[very thick] (v4)--(v8);
			 \coordinate (u1) at (-1,-1);
			 \coordinate (u2) at (1,-1);
			 \coordinate (u3) at (1,.7);
			 \coordinate (u4) at (-1,.7);	
			 \draw[red, very thin] (u1)--(u2)--(u3)--(u4)--(u1); 		 
			 \draw[blue, very thin] (v1)--(u1);
			 \draw[blue, very thin] (v2)--(u2);
			 \draw[blue, very thin] (v3)--(u3);
			 \draw[blue, very thin] (v4)--(u4);
		         \coordinate (w1) at (-1.2,2);
			 \coordinate (w2) at (1.2,2);	
			 \draw[red, very thin] (w1)--(w2); 
			 \draw[blue, very thin] (v4)--(w1);
			 \draw[blue, very thin] (v3)--(w2);
			 \draw[blue, very thin] (v7)--(w2);
			 \draw[blue, very thin] (v8)--(w1);
   		    \end{tikzpicture} 
		    
		    \vspace*{.5cm} 
		     
		   \begin{tikzpicture}[yscale=1., xscale=1.8, x={(1cm,0cm)},y={(-.5cm,-.5cm)}, z={(0cm,2cm)}]
			 \coordinate (v1) at (0, 0, 0);
			 \coordinate (v2) at (1, 0, 0);
			 \coordinate (v3) at (1, 1, 0);
			 \coordinate (v4) at (0, 1, 0);
			 \coordinate (v5) at (0, 0, 1);
			 \coordinate (v6) at (1, 0, 1);
			 \coordinate (v7) at (1, 1, 1);
			 \coordinate (v8) at (0, 1, 1);
			 \coordinate (center) at ($(v1)!0.5!(v7)$);
			 \draw[very thick] (v1)--(v2)--(v3)--(v4)--(v1);
			 \draw[very thick] (v5)--(v6)--(v7)--(v8)--(v5);
			 \draw[very thick] (v1)--(v2)--(v6)--(v5)--(v1);
			 \draw[very thick] (v3)--(v4)--(v8)--(v7)--(v3);
			 \foreach \x/\y in {1/4,3/2,5/8,7/6} {\coordinate (w\x) at ($.15*(v\x)+.15*(v\y)+.7*(center)$);
			  \draw[blue] (w\x)--(v\x);
			  \draw[blue] (w\x)--(v\y);}
			 \draw[red] (w5)--(w7)--(w3)--(w1)--(w5);
		   \end{tikzpicture}
		   \subcaption{}
		   \label{fig:sec4_cube_case2}
		 \end{subfigure}				
		\begin{subfigure}{.23\linewidth}
  		    \begin{tikzpicture}[scale=0.45]
   			 \coordinate (v1) at (-1.5,-1.5);
			 \coordinate (v2) at (1.5,-1.5);
			 \coordinate (v3) at (1.5,1.2);
			 \coordinate (v4) at (-1.5,1.2);
   			 \coordinate (v5) at (-3,-2);
			 \coordinate (v6) at (3,-2);
			 \coordinate (v7) at (3,3);
			 \coordinate (v8) at (-3,3);
			 \draw[very thick] (v1)--(v2)--(v3)--(v4)--(v1);
			 \draw[very thick] (v5)--(v6)--(v7)--(v8)--(v5);
			 \draw[very thick] (v1)--(v5);
			 \draw[very thick] (v2)--(v6);
			 \draw[very thick] (v3)--(v7);
			 \draw[very thick] (v4)--(v8);
			 \coordinate (u1) at (-1,0);
			 \coordinate (u2) at (1,0);
			 \draw[red, very thin] (u1)--(u2); 		 
			 \draw[blue, very thin] (v1)--(u1);
			 \draw[blue, very thin] (v2)--(u2);
			 \draw[blue, very thin] (v3)--(u2);
			 \draw[blue, very thin] (v4)--(u1);
		         \coordinate (w1) at (-1.2,2);
			 \coordinate (w2) at (1.2,2);	
			 \draw[red, very thin] (w1)--(w2); 
			 \draw[blue, very thin] (v4)--(w1);
			 \draw[blue, very thin] (v3)--(w2);
			 \draw[blue, very thin] (v7)--(w2);
			 \draw[blue, very thin] (v8)--(w1);
   		    \end{tikzpicture}		    
		     \vspace*{.5cm} 
		     
		   \begin{tikzpicture}[yscale=1., xscale=1.8, x={(1cm,0cm)},y={(-.5cm,-.5cm)}, z={(0cm,2cm)}]
			 \coordinate (v1) at (0, 0, 0);
			 \coordinate (v2) at (1, 0, 0);
			 \coordinate (v3) at (1, 1, 0);
			 \coordinate (v4) at (0, 1, 0);
			 \coordinate (v5) at (0, 0, 1);
			 \coordinate (v6) at (1, 0, 1);
			 \coordinate (v7) at (1, 1, 1);
			 \coordinate (v8) at (0, 1, 1);
			 \coordinate (center) at ($(v1)!0.5!(v7)$);
			 \draw[very thick] (v1)--(v2)--(v3)--(v4)--(v1);
			 \draw[very thick] (v5)--(v6)--(v7)--(v8)--(v5);
			 \draw[very thick] (v1)--(v2)--(v6)--(v5)--(v1);
			 \draw[very thick] (v3)--(v4)--(v8)--(v7)--(v3);
			 \foreach \x/\y/\s/\t in {1/4/5/8, 3/2/7/6} {\coordinate (w\x) at ($.05*(v\x)+.05*(v\y)+.05*(v\s)+.05*(v\t)+.8*(center)$);
			  \draw[blue] (v\y)--(w\x)--(v\x);
			  \draw[blue] (v\s)--(w\x)--(v\t);}
			 \draw[red] (w1)--(w3);
		   \end{tikzpicture}
		   \subcaption{}
		   \label{fig:sec4_cube_case3}
		 \end{subfigure} 			
		\begin{subfigure}{.23\linewidth}
  		    \begin{tikzpicture}[scale=0.45]
   			 \coordinate (v1) at (-1.5,-1.5);
			 \coordinate (v2) at (1.5,-1.5);
			 \coordinate (v3) at (1.5,1.2);
			 \coordinate (v4) at (-1.5,1.2);
   			 \coordinate (v5) at (-3,-2);
			 \coordinate (v6) at (3,-2);
			 \coordinate (v7) at (3,3);
			 \coordinate (v8) at (-3,3);
			 \draw[very thick] (v1)--(v2)--(v3)--(v4)--(v1);
			 \draw[very thick] (v5)--(v6)--(v7)--(v8)--(v5);
			 \draw[very thick] (v1)--(v5);
			 \draw[very thick] (v2)--(v6);
			 \draw[very thick] (v3)--(v7);
			 \draw[very thick] (v4)--(v8);
			 \coordinate (u1) at (0,0);	 
			 \draw[blue, very thin] (v1)--(u1);
			 \draw[blue, very thin] (v2)--(u1);
			 \draw[blue, very thin] (v3)--(u1);
			 \draw[blue,very thin] (v4)--(u1);
		         \coordinate (w1) at (0,2);	
			 \draw[blue, very thin] (v4)--(w1);
			 \draw[blue, very thin] (v3)--(w1);
			 \draw[blue, very thin] (v7)--(w1);
			 \draw[blue, very thin] (v8)--(w1);
   		    \end{tikzpicture}
		    \vspace*{.5cm}
		    
		   \begin{tikzpicture}[yscale=1., xscale=1.8, x={(1cm,0cm)},y={(-.5cm,-.5cm)}, z={(0cm,2cm)}]
			 \coordinate (v1) at (0, 0, 0);
			 \coordinate (v2) at (1, 0, 0);
			 \coordinate (v3) at (1, 1, 0);
			 \coordinate (v4) at (0, 1, 0);
			 \coordinate (v5) at (0, 0, 1);
			 \coordinate (v6) at (1, 0, 1);
			 \coordinate (v7) at (1, 1, 1);
			 \coordinate (v8) at (0, 1, 1);
			 \coordinate (center) at ($(v1)!0.5!(v7)$);
			 \draw[very thick] (v1)--(v2)--(v3)--(v4)--(v1);
			 \draw[very thick] (v5)--(v6)--(v7)--(v8)--(v5);
			 \draw[very thick] (v1)--(v2)--(v6)--(v5)--(v1);
			 \draw[very thick] (v3)--(v4)--(v8)--(v7)--(v3);
			 \foreach \x in {1,...,8} {
			  \draw[blue] (center)--(v\x);}
		   \end{tikzpicture}
		   \subcaption{} 
		   \label{fig:sec4_cube_case4}
		 \end{subfigure} 	 
		 \caption{}
	 \label{fig:sec4_cube_cases}
	\end{center}
\end{figure}

The analysis of the four cases depicted in the upper row of Figure~\ref{fig:sec4_cube_cases} leads us to unique ways of completing them to potentially psd-minimal polytopes. The corresponding Schlegel diagrams are  shown in the lower row of Figure~\ref{fig:sec4_cube_cases}. We elaborate on 
Figure~\ref{fig:sec4_cube_case1}, when both $F_1$ and $F_2$ are cubes. 

Observe that the cube is the only psd-minimal $3$-polytope with a vertex contained in three square facets. This observation applied to the vertices of $F_1\cap F_2\cap F$, shows that $F_3$ and $F_4$ are cubes, where $F_3$ and $F_4$ are the facets of $P$ such that  $F'_3=F_3\cap F$ and $F'_4=F_4\cap F$ (see Figure~\ref{fig:sec4_cube_marked_facets}). Repeating this argument, we deduce that all facets of $P$ having a nonempty intersection with $F$ are cubes. 

Consider the vertex $v$ of $P$ such that  $v\in (F_1\cap F_2\cap F_3)\smallsetminus F$. There exists a facet $G$ of $P$ different from $F_1$, $F_2$ and $F_3$ such that $v\in G$, because every vertex lies in at least four facets of $P$. 
Since $F_1$, $F_2$ and $F_3$ are cubes and they intersect $G$, by Proposition~\ref{prop:intersections}, 
each of the intersections $G \cap F_i$, $i=1,\ldots,3$ must be a square face of $G$ containing $v$. Therefore, $G$ must be a cube as it is the only psd-minimal $3$-polytope with three square facets meeting at a vertex. 
This implies that the four vertices of $F_i, i=1,\ldots,3$ that are not in $F$ are in $G$. Applying the same argument to different triples of facets of $P$ intersecting $F$ we conclude that they all meet $G$ in a square face, giving rise to the Schlegel diagram in~\ref{fig:sec4_cube_case1}.
 The other cases of Figure~\ref{fig:sec4_cube_cases} are obtained by a similar and simpler application of Proposition~\ref{prop:intersections}.

We conclude that there are four distinct combinatorial classes of psd-minimal polytopes with cubical facets, namely classes $16$, $20$, $28$ and $30$. 
Since none of them are self dual, we obtain eight classes in Table~\ref{table:list}.

\subsection{Triangular Prism} 
Suppose $P$ has a facet $F$ that is a triangular prism. We may assume that $P$ has no cubical facets since otherwise we would be in the previous case. Consider the facets 
$F_1$, $F_2$ and $F_3$ of $P$ such that $F'_1=F_1\cap F$, $F'_2=F_2\cap F$ and $F'_3=F_3\cap F$ are the square faces of $F$(see Figure~\ref{fig:sec4_trprism_marked_facets}).
\begin{figure}[h!]	
	  \begin{center}
  		    \begin{tikzpicture}[scale=0.37]
   			 \coordinate (v1) at (0,4);
			 \coordinate (v2) at (4,-4);
			 \coordinate (v3) at (-4,-4);
			 \coordinate (v4) at ($(v1)!0.75!(0,-1)$);
   			 \coordinate (v5) at ($(v2)!0.75!(0,-1)$);
			 \coordinate (v6) at ($(v3)!0.75!(0,-1)$);
			 \draw[very thick] (v1)--(v2)--(v3)--(v1);
			 \draw[very thick] (v5)--(v6)--(v4)--(v5);
			 \draw (v1)--(v4);
			 \draw (v2)--(v5);
			 \draw (v3)--(v6);
			 \node at (0,-3) {$F'_1$};
			 \node at (-1.4,-.5) {$F'_2$};
			 \node at (1.4,-.5) {$F'_3$};
   		    \end{tikzpicture}
	\caption{}	     
	 \label{fig:sec4_trprism_marked_facets}
	\end{center}
\end{figure}

The facet $F_1$ is a triangular prism or a square pyramid, since its facet $F'_1$ is a square.
Thus, we have three possibilities for 
$F_1$, shown in Figure~\ref{fig:sec4_trprism_lower_facet_embeddings}: two ways in which $F_1$ can be a triangular prism and one way in which $F_1$ can be a square pyramid.

\begin{figure}[h!]	
	  \begin{center}
	  	\begin{subfigure}{.3\linewidth}
  		    \begin{tikzpicture}[scale=0.4]
   			 \coordinate (v1) at (0,4);
			 \coordinate (v2) at (4,-4);
			 \coordinate (v3) at (-4,-4);
			 \coordinate (v4) at ($(v1)!0.75!(0,-1)$);
   			 \coordinate (v5) at ($(v2)!0.75!(0,-1)$);
			 \coordinate (v6) at ($(v3)!0.75!(0,-1)$);
			 \draw[very thick] (v1)--(v2)--(v3)--(v1);
			 \draw[very thick] (v5)--(v6)--(v4)--(v5);
			 \draw (v1)--(v4);
			 \draw (v2)--(v5);
			 \draw (v3)--(v6);
			 \coordinate (w1) at (-1.5,-3);
			 \coordinate (w2) at (1.5,-3);
			 \draw[very thin] (w1)--(v3);
			 \draw[very thin] (w1)--(v6);
			 \draw[very thin] (w2)--(v2);
			 \draw[very thin] (w2)--(v5);	
			 \draw[very thin] (w2)--(w1);		 
   		    \end{tikzpicture}
		   \end{subfigure} 
		   \begin{subfigure}{.3\linewidth}
  		    \begin{tikzpicture}[scale=0.4]
   			 \coordinate (v1) at (0,4);
			 \coordinate (v2) at (4,-4);
			 \coordinate (v3) at (-4,-4);
			 \coordinate (v4) at ($(v1)!0.75!(0,-1)$);
   			 \coordinate (v5) at ($(v2)!0.75!(0,-1)$);
			 \coordinate (v6) at ($(v3)!0.75!(0,-1)$);
			 \draw[very thick] (v1)--(v2)--(v3)--(v1);
			 \draw[very thick] (v5)--(v6)--(v4)--(v5);
			 \draw (v1)--(v4);
			 \draw (v2)--(v5);
			 \draw (v3)--(v6);
			 \coordinate (w1) at (0,-3.5);
			 \coordinate (w2) at (0,-2.4);
			 \draw[very thin] (w1)--(v3);
			 \draw[very thin] (w2)--(v6);
			 \draw[very thin] (w1)--(v2);
			 \draw[very thin] (w2)--(v5);	
			 \draw[very thin] (w2)--(w1);
   		    \end{tikzpicture}
		\end{subfigure}
		\begin{subfigure}{.3\linewidth}
  		    \begin{tikzpicture}[scale=0.4]
   			 \coordinate (v1) at (0,4);
			 \coordinate (v2) at (4,-4);
			 \coordinate (v3) at (-4,-4);
			 \coordinate (v4) at ($(v1)!0.75!(0,-1)$);
   			 \coordinate (v5) at ($(v2)!0.75!(0,-1)$);
			 \coordinate (v6) at ($(v3)!0.75!(0,-1)$);
			 \draw[very thick] (v1)--(v2)--(v3)--(v1);
			 \draw[very thick] (v5)--(v6)--(v4)--(v5);
			 \draw (v1)--(v4);
			 \draw (v2)--(v5);
			 \draw (v3)--(v6);
			 \coordinate (w1) at (0,-2.7);
			 \draw[very thin] (w1)--(v3);
			 \draw[very thin] (w1)--(v6);
			 \draw[very thin] (w1)--(v2);
			 \draw[very thin] (w1)--(v5);
   		    \end{tikzpicture}
		\end{subfigure} 
		\caption{}	     
	 \label{fig:sec4_trprism_lower_facet_embeddings}
	\end{center}
\end{figure}

We start by considering all the possibilities when $F_1$ is a triangular prism. To do this we have to list all possible combinatorial types and positions of $F_1$, $F_2$ and $F_3$, and draw the corresponding diagrams as in the case of the cube. By Lemma~\ref{lem:edge_in_three_four_facets} we know that every edge of $F$ is in at most four facets of $P$, while by Proposition \ref{prop:intersections} we know that certain edges of prisms and pyramids can only appear in three facets of $P$. Therefore, besides combinatorial types, we also consider all the possibilities for edges of $F$ to be contained in exactly three facets of $P$ (in which case we draw the edge thick) or in exactly four facets of $P$ (in which case we draw the edge dashed). The diagrams obtained are shown in Figure~\ref{fig:sec4_triangular_prism_trprism_cases}. 

The rules to obtain these diagrams are simply that if $F_i$ and $F_j$ share a thick edge, their facets that contain this edge, and are not facets of $F$, must be combinatorially the same, as they are identified. If these face identifications force distinct vertices from the same facet to be identified, we  discard the underlying diagram, as it does not correspond to a polytope. The diagrams in the upper rows in Figure~\ref{fig:sec4_triangular_prism_trprism_cases} are all the cases that result from this process. 
We explain in detail how we obtain the diagrams in Figure~\ref{fig:sec4_triangular_prism_trprism_cases} that come from the first diagram in Figure~\ref{fig:sec4_trprism_lower_facet_embeddings}. 

Since $F_2$ and $F_3$ must share a triangular face with $F_1$, there are three cases to consider. Both $F_2$ and $F_3$ can be triangular prisms sharing a triangular face which gives immediately the case in Figure~\ref{fig:sec4_trprism_result1}.
The second possibility is that both $F_2$ and $F_3$ are square pyramids. In this case, either they intersect in a triangular face or an edge. 
If they intersected in a triangular face, then two vertices of $F_1$ would be identified with each other which cannot happen. Therefore, we can obtain only the case in Figure~\ref{fig:sec4_trprism_result2}. Lastly, suppose $F_2$ is a triangular prism and $F_3$ is a square pyramid. Then Proposition~\ref{prop:intersections} implies that each pair of $F_1,F_2, F_3$ intersect in a triangular face which would force the identification of two vertices of $F_1$ and $F_2$ not belonging to their already common triangular face which is a contradiction. The remaining seven cases in Figure~\ref{fig:sec4_triangular_prism_trprism_cases} arise similarly from the second 
case in Figure~\ref{fig:sec4_trprism_lower_facet_embeddings}.

As in the case of cubical facets, it is easy to show that for each diagram there is a unique way to complete it to a potentially psd-minimal combinatorial polytope. Below each case in the upper rows of Figure~\ref{fig:sec4_triangular_prism_trprism_cases} we present the corresponding Schlegel diagram. Next we assume that none of the facets $F_1$, $F_2$ and $F_3$ are triangular prisms. All possible diagrams and the corresponding Schlegel diagrams are shown in Figure~\ref{fig:sec4_triangular_prism_pyramid_cases}.

\begin{figure}	
	  \begin{center}
	  	\begin{subfigure}{.3\linewidth}
  		    \begin{tikzpicture}[scale=0.4]
   			 \coordinate (v1) at (0,4);
			 \coordinate (v2) at (4,-4);
			 \coordinate (v3) at (-4,-4);
			 \coordinate (v4) at ($(v1)!0.75!(0,-1)$);
   			 \coordinate (v5) at ($(v2)!0.75!(0,-1)$);
			 \coordinate (v6) at ($(v3)!0.75!(0,-1)$);
			 \draw[very thick] (v1)--(v2)--(v3)--(v1);
			 \draw[very thick] (v5)--(v6)--(v4)--(v5);
			 \draw[very thick] (v1)--(v4);
			 \draw[very thick] (v2)--(v5);
			 \draw[very thick] (v3)--(v6);
			 \coordinate (w1) at (-1.5,-3);
			 \coordinate (w2) at (1.5,-3);
			 \draw[very thin, blue] (w1)--(v3);
			 \draw[very thin, blue] (w1)--(v6);
			 \draw[very thin, blue] (w2)--(v2);
			 \draw[very thin, blue] (w2)--(v5);	
			 \draw[very thin, red] (w2)--(w1);
			 \coordinate(t1) at (-1,.5);
			 \coordinate(t2) at (-2,-1.5);
			 \draw[very thin, red] (t1)--(t2);	
			 \draw[very thin, blue] (t1)--(v1);
			 \draw[very thin, blue] (t1)--(v4);	
			 \draw[very thin, blue] (t2)--(v3);
			 \draw[very thin, blue] (t2)--(v6);	
			 \coordinate(u1) at (1,.5);
			 \coordinate(u2) at (2,-1.5);
			 \draw[very thin,red] (u1)--(u2);	
			 \draw[very thin, blue] (u1)--(v1);
			 \draw[very thin, blue] (u1)--(v4);	
			 \draw[very thin, blue] (u2)--(v2);
			 \draw[very thin, blue] (u2)--(v5);			 
   		    \end{tikzpicture}
		    
		    \vspace*{.5cm}
		    		    
		    \begin{tikzpicture}[yscale=1, xscale=2, x={(1cm,0cm)},y={(-.5cm,0.5cm)}, z={(0cm,2cm)}]
			 \coordinate (v1) at (0, 1, 0);
			 \coordinate (v2) at (1, 0, 0);
			 \coordinate (v3) at (0, 0, 0);
			 \coordinate (v4) at (0, 1, 1);
   			 \coordinate (v5) at (1, 0, 1);
			 \coordinate (v6) at (0, 0, 1);
			 \draw[very thick] (v1)--(v2)--(v3)--(v1);
			 \draw[very thick] (v4)--(v5)--(v6)--(v4);
			 \draw[very thick] (v1)--(v4);
			 \draw[very thick] (v2)--(v5);
			 \draw[very thick] (v3)--(v6);
			 \coordinate (u1) at ($.5*(v1)+.25*(v2)+.25*(v3)+(0,0,.5)$);
			 \coordinate (u2) at ($.25*(v1)+.5*(v2)+.25*(v3)+(0,0,.5)$);
			 \coordinate (u3) at ($.25*(v1)+.25*(v2)+.5*(v3)+(0,0,.5)$);
			 \draw[red] (u1)--(u2)--(u3)--(u1);
			 \draw[blue] (u1)--(v1);
			 \draw[blue] (u2)--(v2);
			 \draw[blue] (u3)--(v3);
			 \draw[blue] (u1)--(v4);
			 \draw[blue] (u2)--(v5);
			 \draw[blue] (u3)--(v6);
		   \end{tikzpicture}
		   \subcaption{}	 
		   \label{fig:sec4_trprism_result1}	 
		 \end{subfigure}   		   	   
		  \begin{subfigure}{.3\linewidth}
  		    \begin{tikzpicture}[scale=0.4]
   			 \coordinate (v1) at (0,4);
			 \coordinate (v2) at (4,-4);
			 \coordinate (v3) at (-4,-4);
			 \coordinate (v4) at ($(v1)!0.75!(0,-1)$);
   			 \coordinate (v5) at ($(v2)!0.75!(0,-1)$);
			 \coordinate (v6) at ($(v3)!0.75!(0,-1)$);
			 \draw[very thick] (v1)--(v2)--(v3)--(v1);
			 \draw[very thick] (v5)--(v6)--(v4)--(v5);
			 \draw[dashed] (v1)--(v4);
			 \draw[very thick] (v2)--(v5);
			 \draw[very thick] (v3)--(v6);
			 \coordinate (w1) at (-1.5,-3);
			 \coordinate (w2) at (1.5,-3);
			 \draw[very thin, blue] (w1)--(v3);
			 \draw[very thin, blue] (w1)--(v6);
			 \draw[very thin, blue] (w2)--(v2);
			 \draw[very thin, blue] (w2)--(v5);	
			 \draw[very thin, red] (w2)--(w1);
			 \coordinate(u1) at (-1.4,-.5);
			 \draw[very thin, blue] (u1)--(v3);
			 \draw[very thin, blue] (u1)--(v6);
			 \draw[very thin, blue] (u1)--(v1);
			 \draw[very thin, blue] (u1)--(v4);
			 \coordinate (t1) at (1.4,-.5);
			 \draw[very thin, blue] (t1)--(v2);
			 \draw[very thin, blue] (t1)--(v5);
			 \draw[very thin, blue] (t1)--(v1);
			 \draw[very thin, blue] (t1)--(v4);		 
   		    \end{tikzpicture}
		    
		    \vspace*{.5cm}
		    
		    \begin{tikzpicture}[yscale=1, xscale=2, x={(1cm,0cm)},y={(-.5cm,0.5cm)}, z={(0cm,2cm)}]
			 \coordinate (v3) at (0, 1, 0);
			 \coordinate (v2) at (1, 0, 0);
			 \coordinate (v1) at (0, 0, 0);
			 \coordinate (v6) at (0, 1, 1);
   			 \coordinate (v5) at (1, 0, 1);
			 \coordinate (v4) at (0, 0, 1);
			 \draw[very thick] (v1)--(v2);
			 \draw[very thick] (v4)--(v5);
			 \draw[very thick] (v2)--(v3);
			 \draw[very thick] (v5)--(v6);
			 \draw[very thick] (v3)--(v1);
			 \draw[very thick] (v6)--(v4);
			 \draw[very thick] (v1)--(v4);
			 \draw[very thick] (v2)--(v5);
			 \draw[dashed] (v3)--(v6);
			 \coordinate (u1) at ($.5*(v1)+.25*(v2)+.25*(v3)+(0,0,.5)$);
			 \coordinate (u2) at ($.25*(v1)+.5*(v2)+.25*(v3)+(0,0,.5)$);
			 \draw[red]  (u1)--(u2);
			 \draw[blue] (u1)--(v1);
			 \draw[blue] (u2)--(v2);
			 \draw[blue] (u1)--(v4);
			 \draw[blue] (u2)--(v5);
			 \draw[blue] (u1)--(v3);
			 \draw[blue] (u2)--(v3);
			 \draw[blue] (u1)--(v6);
			 \draw[blue] (u2)--(v6);
		   \end{tikzpicture}
		   \subcaption{}
		   \label{fig:sec4_trprism_result2}	
		\end{subfigure}
	  	\begin{subfigure}{.27\linewidth}
  		    \begin{tikzpicture}[scale=0.4]
   			 \coordinate (v1) at (0,4);
			 \coordinate (v2) at (4,-4);
			 \coordinate (v3) at (-4,-4);
			 \coordinate (v4) at ($(v1)!0.75!(0,-1)$);
   			 \coordinate (v5) at ($(v2)!0.75!(0,-1)$);
			 \coordinate (v6) at ($(v3)!0.75!(0,-1)$);
			 \draw[very thick] (v1)--(v2)--(v3)--(v1);
			 \draw[very thick] (v5)--(v6)--(v4)--(v5);
			 \draw[very thick] (v1)--(v4);
			 \draw[very thick] (v2)--(v5);
			 \draw[very thick] (v3)--(v6);
			 \coordinate (w1) at (0,-3.5);
			 \coordinate (w2) at (0,-2.4);
			 \draw[very thin, blue] (w1)--(v3);
			 \draw[very thin, blue] (w2)--(v6);
			 \draw[very thin, blue] (w1)--(v2);
			 \draw[very thin, blue] (w2)--(v5);	
			 \draw[very thin, red] (w2)--(w1);	
			 \coordinate (s1) at ($0.25*(v1)+0.25*(v3)+0.5*(0,-1)$);
   			 \coordinate (s2) at ($0.4*(v1)+0.4*(v3)+0.2*(0,-1)$);
			 \draw[very thin, red] (s1)--(s2);
			 \draw[very thin, blue] (s2)--(v1);
			 \draw[very thin, blue] (s1)--(v4);
			 \draw[very thin, blue] (s2)--(v3);
			 \draw[very thin, blue] (s1)--(v6);
			 \coordinate (t1) at ($0.25*(v1)+0.25*(v2)+0.5*(0,-1)$);
   			 \coordinate (t2) at ($0.4*(v1)+0.4*(v2)+0.2*(0,-1)$);
			 \draw[very thin, red] (t1)--(t2);
			 \draw[very thin, blue] (t2)--(v1);
			 \draw[very thin, blue] (t1)--(v4);
			 \draw[very thin, blue] (t2)--(v2);
			 \draw[very thin, blue] (t1)--(v5);		 
   		    \end{tikzpicture}
		    
		    \vspace*{.5cm}
		    		    
		    \begin{tikzpicture}[yscale=1, xscale=2, x={(1cm,0cm)},y={(-.5cm,0.5cm)}, z={(0cm,2cm)}]
			 \coordinate (v1) at (0, 1, 0);
			 \coordinate (v2) at (1, 0, 0);
			 \coordinate (v3) at (0, 0, 0);
			 \coordinate (v4) at (0, 1, 1);
   			 \coordinate (v5) at (1, 0, 1);
			 \coordinate (v6) at (0, 0, 1);
			 \draw[very thick] (v1)--(v2)--(v3)--(v1);
			 \draw[very thick] (v4)--(v5)--(v6)--(v4);
			 \draw[very thick] (v1)--(v4);
			 \draw[very thick] (v2)--(v5);
			 \draw[very thick] (v3)--(v6);
			 \coordinate (u1) at ($1/3*(v1)+1/3*(v2)+1/3*(v3)+(0,0,.4)$);
			 \coordinate (u2) at ($1/3*(v1)+1/3*(v2)+1/3*(v3)+(0,0,.6)$);
			 \draw[red] (u1)--(u2);
			 \draw[blue] (u1)--(v1);
			 \draw[blue] (u1)--(v2);
			 \draw[blue] (u1)--(v3);
			 \draw[blue] (u2)--(v4);
			 \draw[blue] (u2)--(v5);
			 \draw[blue] (u2)--(v6);
		   \end{tikzpicture}	
		   \subcaption{}
		   \label{fig:sec4_trprism_result3}	
		  \end{subfigure}
 		   
	  	\begin{subfigure}{.3\linewidth}
  		    \begin{tikzpicture}[scale=0.4]
   			 \coordinate (v1) at (0,4);
			 \coordinate (v2) at (4,-4);
			 \coordinate (v3) at (-4,-4);
			 \coordinate (v4) at ($(v1)!0.75!(0,-1)$);
   			 \coordinate (v5) at ($(v2)!0.75!(0,-1)$);
			 \coordinate (v6) at ($(v3)!0.75!(0,-1)$);
			 \draw[very thick] (v1)--(v2)--(v3)--(v1);
			 \draw[very thick] (v5)--(v6)--(v4)--(v5);
			 \draw[very thick] (v1)--(v4);
			 \draw[dashed] (v2)--(v5);
			 \draw[dashed] (v3)--(v6);
			 \coordinate (w1) at (0,-3.5);
			 \coordinate (w2) at (0,-2.4);
			 \draw[very thin, blue] (w1)--(v3);
			 \draw[very thin, blue] (w2)--(v6);
			 \draw[very thin, blue] (w1)--(v2);
			 \draw[very thin, blue] (w2)--(v5);	
			 \draw[very thin, red] (w2)--(w1);	
			 \coordinate (s1) at ($0.25*(v1)+0.25*(v3)+0.5*(0,-1)$);
   			 \coordinate (s2) at ($0.4*(v1)+0.4*(v3)+0.2*(0,-1)$);
			 \draw[very thin, red] (s1)--(s2);
			 \draw[very thin, blue] (s2)--(v1);
			 \draw[very thin, blue] (s1)--(v4);
			 \draw[very thin, blue] (s2)--(v3);
			 \draw[very thin, blue] (s1)--(v6);
			 \coordinate (t1) at ($0.25*(v1)+0.25*(v2)+0.5*(0,-1)$);
   			 \coordinate (t2) at ($0.4*(v1)+0.4*(v2)+0.2*(0,-1)$);
			 \draw[very thin, red] (t1)--(t2);
			 \draw[very thin, blue] (t2)--(v1);
			 \draw[very thin, blue] (t1)--(v4);
			 \draw[very thin, blue] (t2)--(v2);
			 \draw[very thin, blue] (t1)--(v5);				 
   		    \end{tikzpicture}
		    
		    \vspace*{.5cm}
		    
		    \begin{tikzpicture}[yscale=1, xscale=2, x={(1cm,0cm)},y={(-.5cm,0.5cm)}, z={(0cm,2cm)}]
			 \coordinate (v1) at (0, 1, 0);
			 \coordinate (v2) at (1, 0, 0);
			 \coordinate (v3) at (0, 0, 0);
			 \coordinate (v4) at (0, 1, 1);
   			 \coordinate (v5) at (1, 0, 1);
			 \coordinate (v6) at (0, 0, 1);
			 \draw[very thick] (v1)--(v2)--(v3)--(v1);
			 \draw[very thick] (v4)--(v5)--(v6)--(v4);
			 \draw[very thick] (v1)--(v4);
			 \draw[dashed] (v2)--(v5);
			 \draw[dashed] (v3)--(v6);
			 \coordinate (u1) at ($.2*(v1)+.4*(v2)+.4*(v3)+(0,0,.4)$);
			 \coordinate (u2) at ($.2*(v1)+.4*(v2)+.4*(v3)+(0,0,.6)$);
			 \coordinate (t1) at ($.6*(v1)+.2*(v2)+.2*(v3)+(0,0,.4)$);
			 \coordinate (t2) at ($.6*(v1)+.2*(v2)+.2*(v3)+(0,0,.6)$);
			 \draw[red] (u1)--(u2);
			 \draw[red] (t1)--(t2);
			 \draw[blue] (t1)--(v1);
			 \draw[blue] (t1)--(v2);
			 \draw[blue] (t1)--(v3);
			 \draw[blue] (t2)--(v4);
			 \draw[blue] (t2)--(v5);
			 \draw[blue] (t2)--(v6);
			 \draw[blue] (u1)--(v2);
			 \draw[blue] (u1)--(v3);
			 \draw[blue] (u2)--(v5);
			 \draw[blue] (u2)--(v6);
			 \draw[orange] (u2)--(t2);
			 \draw[orange] (u1)--(t1); 
		   \end{tikzpicture}
		   \subcaption{}	
		   \label{fig:sec4_trprism_result4}		    
	\end{subfigure} 		   
	\begin{subfigure}{.3\linewidth}
  		    \begin{tikzpicture}[scale=0.4]
   			 \coordinate (v1) at (0,4);
			 \coordinate (v2) at (4,-4);
			 \coordinate (v3) at (-4,-4);
			 \coordinate (v4) at ($(v1)!0.75!(0,-1)$);
   			 \coordinate (v5) at ($(v2)!0.75!(0,-1)$);
			 \coordinate (v6) at ($(v3)!0.75!(0,-1)$);
			 \draw[very thick] (v1)--(v2)--(v3)--(v1);
			 \draw[very thick] (v5)--(v6)--(v4)--(v5);
			 \draw[dashed] (v1)--(v4);
			 \draw[dashed] (v2)--(v5);
			 \draw[dashed] (v3)--(v6);
			 \coordinate (w1) at (0,-3.5);
			 \coordinate (w2) at (0,-2.4);
			 \draw[very thin, blue] (w1)--(v3);
			 \draw[very thin, blue] (w2)--(v6);
			 \draw[very thin, blue] (w1)--(v2);
			 \draw[very thin, blue] (w2)--(v5);	
			 \draw[very thin, red] (w2)--(w1);	
			 \coordinate (s1) at ($0.25*(v1)+0.25*(v3)+0.5*(0,-1)$);
   			 \coordinate (s2) at ($0.4*(v1)+0.4*(v3)+0.2*(0,-1)$);
			 \draw[very thin, red] (s1)--(s2);
			 \draw[very thin, blue] (s2)--(v1);
			 \draw[very thin, blue] (s1)--(v4);
			 \draw[very thin, blue] (s2)--(v3);
			 \draw[very thin, blue] (s1)--(v6);
			 \coordinate (t1) at ($0.25*(v1)+0.25*(v2)+0.5*(0,-1)$);
   			 \coordinate (t2) at ($0.4*(v1)+0.4*(v2)+0.2*(0,-1)$);
			 \draw[very thin, red] (t1)--(t2);
			 \draw[very thin, blue] (t2)--(v1);
			 \draw[very thin, blue] (t1)--(v4);
			 \draw[very thin, blue] (t2)--(v2);
			 \draw[very thin, blue] (t1)--(v5);				 
   		    \end{tikzpicture}
		    
		    \vspace*{.5cm}
		    
		    \begin{tikzpicture}[yscale=1, xscale=2, x={(1cm,0cm)},y={(-.5cm,0.5cm)}, z={(0cm,2cm)}]
			 \coordinate (v1) at (0, 1, 0);
			 \coordinate (v2) at (1, 0, 0);
			 \coordinate (v3) at (0, 0, 0);
			 \coordinate (v4) at (0, 1, 1);
   			 \coordinate (v5) at (1, 0, 1);
			 \coordinate (v6) at (0, 0, 1);
			 \draw[very thick] (v1)--(v2)--(v3)--(v1);
			 \draw[very thick] (v4)--(v5)--(v6)--(v4);
			 \draw[dashed] (v1)--(v4);
			 \draw[dashed] (v2)--(v5);
			 \draw[dashed] (v3)--(v6);
			 \coordinate (u1) at ($.1*(v1)+.45*(v2)+.45*(v3)+(0,0,.4)$);
			 \coordinate (u2) at ($.1*(v1)+.45*(v2)+.45*(v3)+(0,0,.6)$);
			 \coordinate (t1) at ($.45*(v1)+.1*(v2)+.45*(v3)+(0,0,.4)$);
			 \coordinate (t2) at ($.45*(v1)+.1*(v2)+.45*(v3)+(0,0,.6)$);
			 \coordinate (s1) at ($.45*(v1)+.45*(v2)+.1*(v3)+(0,0,.4)$);
			 \coordinate (s2) at ($.45*(v1)+.45*(v2)+.1*(v3)+(0,0,.6)$); 
			 \draw[red] (u1)--(u2);
			 \draw[red] (t1)--(t2);
			 \draw[red] (s1)--(s2);
			 \draw[blue] (u1)--(v2);
			 \draw[blue] (u1)--(v3);
			 \draw[blue] (u2)--(v5);
			 \draw[blue] (u2)--(v6);
			 \draw[blue] (t1)--(v1);
			 \draw[blue] (t1)--(v3);
			 \draw[blue] (t2)--(v4);
			 \draw[blue] (t2)--(v6);
			 \draw[blue] (s1)--(v2);
			 \draw[blue] (s1)--(v1);
			 \draw[blue] (s2)--(v5);
			 \draw[blue] (s2)--(v4);
			 \draw[orange] (u2)--(t2)--(s2)--(u2);
			 \draw[orange] (u1)--(t1)--(s1)--(u1); 
		   \end{tikzpicture}	
		   \subcaption{}
		   \label{fig:sec4_trprism_result5}	
		\end{subfigure} 
	  	\begin{subfigure}{.27\linewidth}
  		    \begin{tikzpicture}[scale=0.4]
   			 \coordinate (v1) at (0,4);
			 \coordinate (v2) at (4,-4);
			 \coordinate (v3) at (-4,-4);
			 \coordinate (v4) at ($(v1)!0.75!(0,-1)$);
   			 \coordinate (v5) at ($(v2)!0.75!(0,-1)$);
			 \coordinate (v6) at ($(v3)!0.75!(0,-1)$);
			 \draw[very thick] (v1)--(v2)--(v3)--(v1);
			 \draw[very thick] (v5)--(v6)--(v4)--(v5);
			 \draw[dashed] (v1)--(v4);
			 \draw[dashed] (v2)--(v5);
			 \draw[very thick] (v3)--(v6);
			 \coordinate (w1) at (0,-3.5);
			 \coordinate (w2) at (0,-2.4);
			 \draw[very thin, blue] (w1)--(v3);
			 \draw[very thin, blue] (w2)--(v6);
			 \draw[very thin, blue] (w1)--(v2);
			 \draw[very thin, blue] (w2)--(v5);	
			 \draw[very thin, red] (w2)--(w1);	
			 \coordinate (s1) at ($0.25*(v1)+0.25*(v3)+0.5*(0,-1)$);
   			 \coordinate (s2) at ($0.4*(v1)+0.4*(v3)+0.2*(0,-1)$);
			 \draw[very thin, red] (s1)--(s2);
			 \draw[very thin, blue] (s2)--(v1);
			 \draw[very thin, blue] (s1)--(v4);
			 \draw[very thin, blue] (s2)--(v3);
			 \draw[very thin, blue] (s1)--(v6);
			 \coordinate (t1) at (1.4,-.5);
			 \draw[very thin, blue] (t1)--(v2);
			 \draw[very thin, blue] (t1)--(v5);
			 \draw[very thin, blue] (t1)--(v1);
			 \draw[very thin, blue] (t1)--(v4);		 
   		    \end{tikzpicture}
		    
		    \vspace*{.5cm}
		    		    
		    \begin{tikzpicture}[yscale=1, xscale=2, x={(1cm,0cm)},y={(-.5cm,0.5cm)}, z={(0cm,2cm)}]
			 \coordinate (v1) at (0, 1, 0);
			 \coordinate (v2) at (1, 0, 0);
			 \coordinate (v3) at (0, 0, 0);
			 \coordinate (v4) at (0, 1, 1);
   			 \coordinate (v5) at (1, 0, 1);
			 \coordinate (v6) at (0, 0, 1);
			 \draw[very thick] (v1)--(v2)--(v3)--(v1);
			 \draw[very thick] (v4)--(v5)--(v6)--(v4);
			 \draw[dashed] (v1)--(v4);
			 \draw[dashed] (v2)--(v5);
			 \draw[very thick] (v3)--(v6);
			 \coordinate (u1) at ($.35*(v1)+.3*(v2)+.35*(v3)+(0,0,.4)$);
			 \coordinate (u2) at ($.35*(v1)+.3*(v2)+.35*(v3)+(0,0,.6)$);
			 \coordinate (t) at ($.2*(v1)+.7*(v2)+.1*(v3)+(0,0,.5)$);
			 \draw[red] (u1)--(u2);
			 \draw[blue] (u1)--(v1);
			 \draw[blue] (u1)--(v2);
			 \draw[blue] (u1)--(v3);
			 \draw[blue] (u2)--(v4);
			 \draw[blue] (u2)--(v5);
			 \draw[blue] (u2)--(v6);
			 \draw[blue] (t)--(v1);
			 \draw[blue] (t)--(v4);
			 \draw[blue] (t)--(v2);
			 \draw[blue] (t)--(v5);
			 \draw[orange] (u1)--(t)--(u2);
		   \end{tikzpicture}	
		   \subcaption{}
		   \label{fig:sec4_trprism_result6}	
		  \end{subfigure}   		   
	  	\begin{subfigure}{.3\linewidth}
  		    \begin{tikzpicture}[scale=0.4]
   			 \coordinate (v1) at (0,4);
			 \coordinate (v2) at (4,-4);
			 \coordinate (v3) at (-4,-4);
			 \coordinate (v4) at ($(v1)!0.75!(0,-1)$);
   			 \coordinate (v5) at ($(v2)!0.75!(0,-1)$);
			 \coordinate (v6) at ($(v3)!0.75!(0,-1)$);
			 \draw[very thick] (v1)--(v2)--(v3)--(v1);
			 \draw[very thick] (v5)--(v6)--(v4)--(v5);
			 \draw[dashed] (v1)--(v4);
			 \draw[dashed] (v2)--(v5);
			 \draw[dashed] (v3)--(v6);
			 \coordinate (w1) at (0,-3.5);
			 \coordinate (w2) at (0,-2.4);
			 \draw[very thin, blue] (w1)--(v3);
			 \draw[very thin, blue] (w2)--(v6);
			 \draw[very thin, blue] (w1)--(v2);
			 \draw[very thin, blue] (w2)--(v5);	
			 \draw[very thin, red] (w2)--(w1);	
			 \coordinate (s1) at ($0.25*(v1)+0.25*(v3)+0.5*(0,-1)$);
   			 \coordinate (s2) at ($0.4*(v1)+0.4*(v3)+0.2*(0,-1)$);
			 \draw[very thin, red] (s1)--(s2);
			 \draw[very thin, blue] (s2)--(v1);
			 \draw[very thin, blue] (s1)--(v4);
			 \draw[very thin, blue] (s2)--(v3);
			 \draw[very thin, blue] (s1)--(v6);
			 \coordinate (t1) at (1.4,-.5);
			 \draw[very thin, blue] (t1)--(v2);
			 \draw[very thin, blue] (t1)--(v5);
			 \draw[very thin, blue] (t1)--(v1);
			 \draw[very thin, blue] (t1)--(v4);		 
   		    \end{tikzpicture}
		    
		    \vspace*{.5cm}
		    
		    \begin{tikzpicture}[yscale=1, xscale=2, x={(1cm,0cm)},y={(-.5cm,0.5cm)}, z={(0cm,2cm)}]
			 \coordinate (v1) at (0, 1, 0);
			 \coordinate (v2) at (1, 0, 0);
			 \coordinate (v3) at (0, 0, 0);
			 \coordinate (v4) at (0, 1, 1);
   			 \coordinate (v5) at (1, 0, 1);
			 \coordinate (v6) at (0, 0, 1);
			 \draw[very thick] (v1)--(v2)--(v3)--(v1);
			 \draw[very thick] (v4)--(v5)--(v6)--(v4);
			 \draw[dashed] (v1)--(v4);
			 \draw[dashed] (v2)--(v5);
			 \draw[dashed] (v3)--(v6);
			 \coordinate (u1) at ($.7*(v1)+.25*(v2)+.25*(v3)+(0,0,.4)$);
			 \coordinate (u2) at ($.7*(v1)+.25*(v2)+.25*(v3)+(0,0,.6)$);
			 \coordinate (t1) at ($.3*(v1)+.55*(v2)+.15*(v3)+(0,0,.4)$);
			 \coordinate (t2) at ($.3*(v1)+.55*(v2)+.15*(v3)+(0,0,.6)$);
			 \coordinate (t) at ($.1*(v1)+.8*(v2)+.1*(v3)+(0,0,.5)$); 
			 \draw[red] (u1)--(u2);
			 \draw[red] (t1)--(t2); 
			 \draw[blue] (u1)--(v1);
			 \draw[blue] (u1)--(v3);
			 \draw[blue] (u2)--(v4);
			 \draw[blue] (u2)--(v6);
			 \draw[blue] (t1)--(v2);
			 \draw[blue] (t1)--(v3);
			 \draw[blue] (t2)--(v5);
			 \draw[blue] (t2)--(v6);
			 \draw[orange] (u1)--(t1);
			 \draw[orange] (u2)--(t2);
			 \draw[orange] (u1)--(t)--(t1);
			 \draw[orange] (u2)--(t)--(t2);
			 \draw[blue] (t)--(v1);
			 \draw[blue] (t)--(v4);
			 \draw[blue] (t)--(v2);
			 \draw[blue] (t)--(v5); 
		   \end{tikzpicture}	
		   \subcaption{}
		   \label{fig:sec4_trprism_result7}		    
		   \end{subfigure} 		   
		   \begin{subfigure}{.3\linewidth}
  		    \begin{tikzpicture}[scale=0.4]
   			 \coordinate (v1) at (0,4);
			 \coordinate (v2) at (4,-4);
			 \coordinate (v3) at (-4,-4);
			 \coordinate (v4) at ($(v1)!0.75!(0,-1)$);
   			 \coordinate (v5) at ($(v2)!0.75!(0,-1)$);
			 \coordinate (v6) at ($(v3)!0.75!(0,-1)$);
			 \draw[very thick] (v1)--(v2)--(v3)--(v1);
			 \draw[very thick] (v5)--(v6)--(v4)--(v5);
			 \draw[very thick] (v1)--(v4);
			 \draw[dashed] (v2)--(v5);
			 \draw[dashed] (v3)--(v6);
			 \coordinate (w1) at (0,-3.5);
			 \coordinate (w2) at (0,-2.4);
			 \draw[very thin, blue] (w1)--(v3);
			 \draw[very thin, blue] (w2)--(v6);
			 \draw[very thin, blue] (w1)--(v2);
			 \draw[very thin, blue] (w2)--(v5);	
			 \draw[very thin, red] (w2)--(w1);	
			 \coordinate(u1) at (-1.4,-.5);
			 \draw[very thin, blue] (u1)--(v3);
			 \draw[very thin, blue] (u1)--(v6);
			 \draw[very thin, blue] (u1)--(v1);
			 \draw[very thin, blue] (u1)--(v4);
			 \coordinate (t1) at (1.4,-.5);
			 \draw[very thin, blue] (t1)--(v2);
			 \draw[very thin, blue] (t1)--(v5);
			 \draw[very thin, blue] (t1)--(v1);
			 \draw[very thin, blue] (t1)--(v4);	
   		    \end{tikzpicture}
		    
		    \vspace*{.5cm}
		    
		    \begin{tikzpicture}[yscale=1, xscale=2, x={(1cm,0cm)},y={(-.5cm,0.5cm)}, z={(0cm,2cm)}]
			 \coordinate (v1) at (0, 1, 0);
			 \coordinate (v2) at (1, 0, 0);
			 \coordinate (v3) at (0, 0, 0);
			 \coordinate (v4) at (0, 1, 1);
   			 \coordinate (v5) at (1, 0, 1);
			 \coordinate (v6) at (0, 0, 1);
			 \draw[very thick] (v1)--(v2)--(v3)--(v1);
			 \draw[very thick] (v4)--(v5)--(v6)--(v4);
			 \draw[very thick] (v1)--(v4);
			 \draw[dashed] (v2)--(v5);
			 \draw[dashed] (v3)--(v6);
			 \coordinate (u1) at ($.2*(v1)+.7*(v2)+.1*(v3)+(0,0,.4)$);
			 \coordinate (u2) at ($.2*(v1)+.7*(v2)+.1*(v3)+(0,0,.6)$);
			 \coordinate (t) at ($.35*(v1)+.3*(v2)+.35*(v3)+(0,0,.5)$);
			 \draw[red] (u1)--(u2);
			 \draw[blue] (u1)--(v2);
			 \draw[blue] (u1)--(v3);
			 \draw[blue] (u2)--(v5);
			 \draw[blue] (u2)--(v6);
			 \draw[blue] (t)--(v1);
			 \draw[blue] (t)--(v2);
			 \draw[blue] (t)--(v3);
			 \draw[blue] (t)--(v4);
			 \draw[blue] (t)--(v5);
			 \draw[blue] (t)--(v6);
			 \draw[orange] (u1)--(t)--(u2);
		   \end{tikzpicture}
		   \subcaption{}
		   \label{fig:sec4_trprism_result8}	
		 \end{subfigure} 		   
		 \begin{subfigure}{.3\linewidth}
  		    \begin{tikzpicture}[scale=0.4]
   			 \coordinate (v1) at (0,4);
			 \coordinate (v2) at (4,-4);
			 \coordinate (v3) at (-4,-4);
			 \coordinate (v4) at ($(v1)!0.75!(0,-1)$);
   			 \coordinate (v5) at ($(v2)!0.75!(0,-1)$);
			 \coordinate (v6) at ($(v3)!0.75!(0,-1)$);
			 \draw[very thick] (v1)--(v2)--(v3)--(v1);
			 \draw[very thick] (v5)--(v6)--(v4)--(v5);
			 \draw[dashed] (v1)--(v4);
			 \draw[dashed] (v2)--(v5);
			 \draw[dashed] (v3)--(v6);
			 \coordinate (w1) at (0,-3.5);
			 \coordinate (w2) at (0,-2.4);
			 \draw[very thin, blue] (w1)--(v3);
			 \draw[very thin, blue] (w2)--(v6);
			 \draw[very thin, blue] (w1)--(v2);
			 \draw[very thin, blue] (w2)--(v5);	
			 \draw[very thin, red] (w2)--(w1);	
			 \coordinate(u1) at (-1.4,-.5);
			 \draw[very thin, blue] (u1)--(v3);
			 \draw[very thin, blue] (u1)--(v6);
			 \draw[very thin, blue] (u1)--(v1);
			 \draw[very thin, blue] (u1)--(v4);
			 \coordinate (t1) at (1.4,-.5);
			 \draw[very thin, blue] (t1)--(v2);
			 \draw[very thin, blue] (t1)--(v5);
			 \draw[very thin, blue] (t1)--(v1);
			 \draw[very thin, blue] (t1)--(v4);	
   		    \end{tikzpicture}
		    
		    \vspace*{.5cm}
		    
		    \begin{tikzpicture}[yscale=1, xscale=2, x={(1cm,0cm)},y={(-.5cm,0.5cm)}, z={(0cm,2cm)}]
			 \coordinate (v1) at (0, 1, 0);
			 \coordinate (v2) at (1, 0, 0);
			 \coordinate (v3) at (0, 0, 0);
			 \coordinate (v4) at (0, 1, 1);
   			 \coordinate (v5) at (1, 0, 1);
			 \coordinate (v6) at (0, 0, 1);
			 \draw[very thick] (v1)--(v2)--(v3)--(v1);
			 \draw[very thick] (v4)--(v5)--(v6)--(v4);
			 \draw[dashed] (v1)--(v4);
			 \draw[dashed] (v2)--(v5);
			 \draw[dashed] (v3)--(v6);
			 \coordinate (u) at ($.7*(v1)+.25*(v2)+.25*(v3)+(0,0,.5)$);
			 \coordinate (t1) at ($.3*(v1)+.55*(v2)+.15*(v3)+(0,0,.4)$);
			 \coordinate (t2) at ($.3*(v1)+.55*(v2)+.15*(v3)+(0,0,.6)$);
			 \coordinate (t) at ($.1*(v1)+.8*(v2)+.1*(v3)+(0,0,.5)$); 
			 \draw[red] (t1)--(t2); 
			 \draw[blue] (u)--(v1);
			 \draw[blue] (u)--(v3);
			 \draw[blue] (u)--(v4);
			 \draw[blue] (u)--(v6);
			 \draw[blue] (t1)--(v2);
			 \draw[blue] (t1)--(v3);
			 \draw[blue] (t2)--(v5);
			 \draw[blue] (t2)--(v6);
			 \draw[orange] (u)--(t1);
			 \draw[orange] (u)--(t2);
			 \draw[orange] (u)--(t)--(t1);
			 \draw[orange] (u)--(t)--(t2);
			 \draw[blue] (t)--(v1);
			 \draw[blue] (t)--(v4);
			 \draw[blue] (t)--(v2);
			 \draw[blue] (t)--(v5); 
		   \end{tikzpicture}
		   \subcaption{}
		   \label{fig:sec4_trprism_result9}	
		   \end{subfigure} 
	\end{center}
	\caption{}
	\label{fig:sec4_triangular_prism_trprism_cases}
\end{figure}

\begin{figure}[h!]	
	  \begin{center}  		   
		   \begin{subfigure}{.3\linewidth}
  		    \begin{tikzpicture}[scale=0.4]
   			 \coordinate (v1) at (0,4);
			 \coordinate (v2) at (4,-4);
			 \coordinate (v3) at (-4,-4);
			 \coordinate (v4) at ($(v1)!0.75!(0,-1)$);
   			 \coordinate (v5) at ($(v2)!0.75!(0,-1)$);
			 \coordinate (v6) at ($(v3)!0.75!(0,-1)$);
			 \draw[very thick] (v1)--(v2)--(v3)--(v1);
			 \draw[very thick] (v5)--(v6)--(v4)--(v5);
			 \draw[very thick] (v1)--(v4);
			 \draw[very thick] (v2)--(v5);
			 \draw[very thick] (v3)--(v6);
			 \coordinate (w1) at (0,-2.7);
			 \draw[very thin, blue] (w1)--(v3);
			 \draw[very thin, blue] (w1)--(v6);
			 \draw[very thin, blue] (w1)--(v2);
			 \draw[very thin, blue] (w1)--(v5);	
			 \coordinate(u1) at (-1.4,-.5);
			 \draw[very thin, blue] (u1)--(v3);
			 \draw[very thin, blue] (u1)--(v6);
			 \draw[very thin, blue] (u1)--(v1);
			 \draw[very thin, blue] (u1)--(v4);
			 \coordinate (t1) at (1.4,-.5);
			 \draw[very thin, blue] (t1)--(v2);
			 \draw[very thin, blue] (t1)--(v5);
			 \draw[very thin, blue] (t1)--(v1);
			 \draw[very thin, blue] (t1)--(v4);	
   		    \end{tikzpicture}
		    
		    \vspace*{.5cm}
		    
		    \begin{tikzpicture}[yscale=1, xscale=2, x={(1cm,0cm)},y={(-.5cm,0.5cm)}, z={(0cm,2cm)}]
			 \coordinate (v1) at (0, 1, 0);
			 \coordinate (v2) at (1, 0, 0);
			 \coordinate (v3) at (0, 0, 0);
			 \coordinate (v4) at (0, 1, 1);
   			 \coordinate (v5) at (1, 0, 1);
			 \coordinate (v6) at (0, 0, 1);
			 \draw[very thick] (v1)--(v2)--(v3)--(v1);
			 \draw[very thick] (v4)--(v5)--(v6)--(v4);
			 \draw[very thick] (v1)--(v4);
			 \draw[very thick] (v2)--(v5);
			 \draw[very thick] (v3)--(v6);
			 \coordinate (t) at ($.35*(v1)+.3*(v2)+.35*(v3)+(0,0,.5)$);
			 \draw[blue] (t)--(v1);
			 \draw[blue] (t)--(v2);
			 \draw[blue] (t)--(v3);
			 \draw[blue] (t)--(v4);
			 \draw[blue] (t)--(v5);
			 \draw[blue] (t)--(v6);
		   \end{tikzpicture}
		   \subcaption{}
		   \label{fig:sec4_trprism_result10}	
		   \end{subfigure} 
		   \begin{subfigure}{.3\linewidth}
  		    \begin{tikzpicture}[scale=0.4]
   			 \coordinate (v1) at (0,4);
			 \coordinate (v2) at (4,-4);
			 \coordinate (v3) at (-4,-4);
			 \coordinate (v4) at ($(v1)!0.75!(0,-1)$);
   			 \coordinate (v5) at ($(v2)!0.75!(0,-1)$);
			 \coordinate (v6) at ($(v3)!0.75!(0,-1)$);
			 \draw[very thick] (v1)--(v2)--(v3)--(v1);
			 \draw[very thick] (v5)--(v6)--(v4)--(v5);
			 \draw[very thick] (v1)--(v4);
			 \draw[dashed] (v2)--(v5);
			 \draw[dashed] (v3)--(v6);
			 \coordinate (w1) at (0,-2.7);
			 \draw[very thin, blue] (w1)--(v3);
			 \draw[very thin, blue] (w1)--(v6);
			 \draw[very thin, blue] (w1)--(v2);
			 \draw[very thin, blue] (w1)--(v5);	
			 \coordinate(u1) at (-1.4,-.5);
			 \draw[very thin, blue] (u1)--(v3);
			 \draw[very thin, blue] (u1)--(v6);
			 \draw[very thin, blue] (u1)--(v1);
			 \draw[very thin, blue] (u1)--(v4);
			 \coordinate (t1) at (1.4,-.5);
			 \draw[very thin, blue] (t1)--(v2);
			 \draw[very thin, blue] (t1)--(v5);
			 \draw[very thin, blue] (t1)--(v1);
			 \draw[very thin, blue] (t1)--(v4);	
   		    \end{tikzpicture}
		    
		    \vspace*{.5cm}
		    
		    \begin{tikzpicture}[yscale=1, xscale=2, x={(1cm,0cm)},y={(-.5cm,0.5cm)}, z={(0cm,2cm)}]
			 \coordinate (v1) at (0, 1, 0);
			 \coordinate (v2) at (1, 0, 0);
			 \coordinate (v3) at (0, 0, 0);
			 \coordinate (v4) at (0, 1, 1);
   			 \coordinate (v5) at (1, 0, 1);
			 \coordinate (v6) at (0, 0, 1);
			 \draw[very thick] (v1)--(v2)--(v3)--(v1);
			 \draw[very thick] (v4)--(v5)--(v6)--(v4);
			 \draw[very thick] (v1)--(v4);
			 \draw[dashed] (v2)--(v5);
			 \draw[dashed] (v3)--(v6);
			 \coordinate (u) at ($.1*(v1)+.45*(v2)+.45*(v3)+(0,0,.5)$);
			 \coordinate (t) at ($.45*(v1)+.1*(v2)+.45*(v3)+(0,0,.5)$);
			 \draw[blue] (t)--(v1);
			 \draw[blue] (t)--(v2);
			 \draw[blue] (t)--(v3);
			 \draw[blue] (t)--(v4);
			 \draw[blue] (t)--(v5);
			 \draw[blue] (t)--(v6);
			 \draw[blue] (u)--(v2);
			 \draw[blue] (u)--(v3);
			 \draw[blue] (u)--(v5);
			 \draw[blue] (u)--(v6);
			 \draw[orange] (t)--(u);
		   \end{tikzpicture}
		   \subcaption{}
		   \label{fig:sec4_trprism_result11}	
		   \end{subfigure}
		   \begin{subfigure}{.3\linewidth}
  		    \begin{tikzpicture}[scale=0.4]
   			 \coordinate (v1) at (0,4);
			 \coordinate (v2) at (4,-4);
			 \coordinate (v3) at (-4,-4);
			 \coordinate (v4) at ($(v1)!0.75!(0,-1)$);
   			 \coordinate (v5) at ($(v2)!0.75!(0,-1)$);
			 \coordinate (v6) at ($(v3)!0.75!(0,-1)$);
			 \draw[very thick] (v1)--(v2)--(v3)--(v1);
			 \draw[very thick] (v5)--(v6)--(v4)--(v5);
			 \draw[dashed] (v1)--(v4);
			 \draw[dashed] (v2)--(v5);
			 \draw[dashed] (v3)--(v6);
			 \coordinate (w1) at (0,-2.7);
			 \draw[very thin, blue] (w1)--(v3);
			 \draw[very thin, blue] (w1)--(v6);
			 \draw[very thin, blue] (w1)--(v2);
			 \draw[very thin, blue] (w1)--(v5);	
			 \coordinate(u1) at (-1.4,-.5);
			 \draw[very thin, blue] (u1)--(v3);
			 \draw[very thin, blue] (u1)--(v6);
			 \draw[very thin, blue] (u1)--(v1);
			 \draw[very thin, blue] (u1)--(v4);
			 \coordinate (t1) at (1.4,-.5);
			 \draw[very thin, blue] (t1)--(v2);
			 \draw[very thin, blue] (t1)--(v5);
			 \draw[very thin, blue] (t1)--(v1);
			 \draw[very thin, blue] (t1)--(v4);	
   		    \end{tikzpicture}
		    
		    \vspace*{.5cm}
		    
		    \begin{tikzpicture}[yscale=1, xscale=2, x={(1cm,0cm)},y={(-.5cm,0.5cm)}, z={(0cm,2cm)}]
			 \coordinate (v1) at (0, 1, 0);
			 \coordinate (v2) at (1, 0, 0);
			 \coordinate (v3) at (0, 0, 0);
			 \coordinate (v4) at (0, 1, 1);
   			 \coordinate (v5) at (1, 0, 1);
			 \coordinate (v6) at (0, 0, 1);
			 \draw[very thick] (v1)--(v2)--(v3)--(v1);
			 \draw[very thick] (v4)--(v5)--(v6)--(v4);
			 \draw[dashed] (v1)--(v4);
			 \draw[dashed] (v2)--(v5);
			 \draw[dashed] (v3)--(v6);
			 \coordinate (u) at ($.1*(v1)+.45*(v2)+.45*(v3)+(0,0,.5)$);
			 \coordinate (t) at ($.45*(v1)+.1*(v2)+.45*(v3)+(0,0,.5)$); 
			 \coordinate (s) at ($.45*(v1)+.45*(v2)+.1*(v3)+(0,0,.5)$);
			 \draw[blue] (u)--(v2);
			 \draw[blue] (u)--(v3);
			 \draw[blue] (u)--(v5);
			 \draw[blue] (u)--(v6);
			 \draw[blue] (t)--(v1);
			 \draw[blue] (t)--(v3);
			 \draw[blue] (t)--(v4);
			 \draw[blue] (t)--(v6);
			 \draw[blue] (s)--(v1);
			 \draw[blue] (s)--(v2);
			 \draw[blue] (s)--(v4);
			 \draw[blue] (s)--(v5);
			 \draw[orange] (s)--(t)--(u)--(s);
		   \end{tikzpicture}
		   \subcaption{}
		   \label{fig:sec4_trprism_result12}	
		   \end{subfigure}

		\caption{}	
		\label{fig:sec4_triangular_prism_pyramid_cases}     
	\end{center}
\end{figure}

Further analysis allows us to rule out some of these diagrams. In Figures~\ref{fig:sec4_trprism_result7}  and \ref{fig:sec4_trprism_result9}  two octahedra intersect in a vertex and in an edge respectively, contradicting Proposition \ref{prop:intersections}.
We can exclude the case in Figure~\ref{fig:sec4_trprism_result11} as well because there two bisimplices intersect in an edge. What remains are $9$ distinct combinatorial classes, namely classes $4$, $6$, $8$, $10$, $12$, $14$, $18$, $23$ and $27$ in Table~\ref{table:list}. Along with their duals we get $18$ new classes.

\subsection{Octahedron}
We may now assume that no facet of $P$ or its dual is a 
cube or a triangular prism. Let $F$ be an octahedral facet of $P$ and let $F'_1$ be one of its facets (see Figure~\ref{fig:sec4_case_octahedron_marked}). Take $F_1$ to be the facet of $P$ such that $F'_1=F_1\cap F$. 

\begin{figure}[h!]	
	  \begin{center}
  		    \begin{tikzpicture}[scale=0.5]
   			 \coordinate (v1) at (0,4);
			 \coordinate (v2) at (4,-4);
			 \coordinate (v3) at (-4,-4);
			 \coordinate (v4) at ($-.15*(v1)+1.15*(0,-1)-(0,.8)$);
   			 \coordinate (v5) at ($-.25*(v2)+1.25*(0,-1)-(0,.5)$);
			 \coordinate (v6) at ($-.25*(v3)+1.25*(0,-1)-(0,.5)$);
			 \draw[very thick] (v1)--(v2)--(v3)--(v1);
			 \draw[very thick] (v5)--(v6)--(v4)--(v5);
			 \draw[very thick] (v1)--(v5);
			 \draw[very thick] (v1)--(v6);
			 \draw[very thick] (v2)--(v4);
			 \draw[very thick] (v2)--(v6);
			 \draw[very thick] (v3)--(v4);
			 \draw[very thick] (v3)--(v5);
			 \node at (0,-1.4) {$F'_1$};
   		    \end{tikzpicture}
	\caption{}
	\label{fig:sec4_case_octahedron_marked}
	\end{center}
\end{figure}

Every vertex of $F'_1$ is contained in three or four facets of $F_1$, because $F_1$ is a psd-minimal $3$-polytope. We use this fact for the classification and introduce a new type of diagram:  the pair consisting of $F'_1$ and its vertex $v$ is marked by a blue angle when $v$ is contained in three facets of $F_1$ and by a red cut angle if $v$ is contained in four facets of $F_1$ (see Figure~\ref{fig:sec4_case_octahedron_triangle_labelings} for all possible markings of $F'_1$). Moreover, given the labeling of $F'_1$, we can now uniquely determine the combinatorial type of $F_1$, since the triangular faces of the four remaining psd-minimal $3$-polytopes available to use as facets all have different vertex degree distributions.

\begin{figure}[h!]	
	  \begin{center}
	   \begin{subfigure}{0.23\linewidth}
  		    \begin{tikzpicture}[scale=0.3]
   			 \coordinate (v1) at (0,4);
			 \coordinate (v2) at (4,-4);
			 \coordinate (v3) at (-4,-4);
			 \draw[very thick] (v1)--(v2)--(v3)--(v1);
			 
			 \coordinate (v1_1) at ($.7*(v1)+.05*(v2)+.25*(v3)$);
			 \coordinate (v1_2) at ($.8*(v1)+.05*(v2)+.15*(v3)$);
			 \coordinate (v1_3) at ($.8*(v1)+.15*(v2)+.05*(v3)$);
			 \coordinate (v1_4) at ($.7*(v1)+.25*(v2)+.05*(v3)$);
		 	 \coordinate (w1) at (intersection of v1_1--v1_2 and v1_3--v1_4);		 
			 \draw[blue,thick] (v1_1) --(w1) --(v1_4) ;

			 \coordinate (v2_1) at ($.7*(v2)+.05*(v1)+.25*(v3)$);
			 \coordinate (v2_2) at ($.8*(v2)+.05*(v1)+.15*(v3)$);
			 \coordinate (v2_3) at ($.8*(v2)+.15*(v1)+.05*(v3)$);
			 \coordinate (v2_4) at ($.7*(v2)+.25*(v1)+.05*(v3)$);
		 	 \coordinate (w2) at (intersection of v2_1--v2_2 and v2_3--v2_4);		 
			 \draw[blue,thick] (v2_1) --(w2) --(v2_4) ;
			 
			 \coordinate (v3_1) at ($.7*(v3)+.05*(v2)+.25*(v1)$);
			 \coordinate (v3_2) at ($.8*(v3)+.05*(v2)+.15*(v1)$);
			 \coordinate (v3_3) at ($.8*(v3)+.15*(v2)+.05*(v1)$);
			 \coordinate (v3_4) at ($.7*(v3)+.25*(v2)+.05*(v1)$);
		 	 \coordinate (w3) at (intersection of v3_1--v3_2 and v3_3--v3_4);		 
			 \draw[blue,thick] (v3_1) --(w3) --(v3_4) ;		 			 	
   		    \end{tikzpicture}
		    \subcaption{Simplex}
	\end{subfigure}
	   \begin{subfigure}{0.23\linewidth}
  		    \begin{tikzpicture}[scale=0.3]
   			 \coordinate (v1) at (0,4);
			 \coordinate (v2) at (4,-4);
			 \coordinate (v3) at (-4,-4);
			 \draw[very thick] (v1)--(v2)--(v3)--(v1);
			 
			 \coordinate (v1_1) at ($.7*(v1)+.05*(v2)+.25*(v3)$);
			 \coordinate (v1_2) at ($.8*(v1)+.05*(v2)+.15*(v3)$);
			 \coordinate (v1_3) at ($.8*(v1)+.15*(v2)+.05*(v3)$);
			 \coordinate (v1_4) at ($.7*(v1)+.25*(v2)+.05*(v3)$);
		 	 \coordinate (w1) at (intersection of v1_1--v1_2 and v1_3--v1_4);		 
			 \draw[red,thick] (v1_1)--(v1_2) --(v1_3)  --(v1_4) ;

			 \coordinate (v2_1) at ($.7*(v2)+.05*(v1)+.25*(v3)$);
			 \coordinate (v2_2) at ($.8*(v2)+.05*(v1)+.15*(v3)$);
			 \coordinate (v2_3) at ($.8*(v2)+.15*(v1)+.05*(v3)$);
			 \coordinate (v2_4) at ($.7*(v2)+.25*(v1)+.05*(v3)$);
		 	 \coordinate (w2) at (intersection of v2_1--v2_2 and v2_3--v2_4);		 
			 \draw[blue,thick] (v2_1) --(w2) --(v2_4) ;
			 
			 \coordinate (v3_1) at ($.7*(v3)+.05*(v2)+.25*(v1)$);
			 \coordinate (v3_2) at ($.8*(v3)+.05*(v2)+.15*(v1)$);
			 \coordinate (v3_3) at ($.8*(v3)+.15*(v2)+.05*(v1)$);
			 \coordinate (v3_4) at ($.7*(v3)+.25*(v2)+.05*(v1)$);
		 	 \coordinate (w3) at (intersection of v3_1--v3_2 and v3_3--v3_4);		 
			 \draw[blue,thick] (v3_1) --(w3) --(v3_4) ;		 			 	
   		    \end{tikzpicture}
		    \subcaption{ Square pyramid}
	\end{subfigure}
	   \begin{subfigure}{0.23\linewidth}
  		    \begin{tikzpicture}[scale=0.3]
   			 \coordinate (v1) at (0,4);
			 \coordinate (v2) at (4,-4);
			 \coordinate (v3) at (-4,-4);
			 \draw[very thick] (v1)--(v2)--(v3)--(v1);
			 
			 \coordinate (v1_1) at ($.7*(v1)+.05*(v2)+.25*(v3)$);
			 \coordinate (v1_2) at ($.8*(v1)+.05*(v2)+.15*(v3)$);
			 \coordinate (v1_3) at ($.8*(v1)+.15*(v2)+.05*(v3)$);
			 \coordinate (v1_4) at ($.7*(v1)+.25*(v2)+.05*(v3)$);
		 	 \coordinate (w1) at (intersection of v1_1--v1_2 and v1_3--v1_4);		 
			 \draw[blue,thick] (v1_1) --(w1) --(v1_4) ;

			 \coordinate (v2_1) at ($.7*(v2)+.05*(v1)+.25*(v3)$);
			 \coordinate (v2_2) at ($.8*(v2)+.05*(v1)+.15*(v3)$);
			 \coordinate (v2_3) at ($.8*(v2)+.15*(v1)+.05*(v3)$);
			 \coordinate (v2_4) at ($.7*(v2)+.25*(v1)+.05*(v3)$);
		 	 \coordinate (w2) at (intersection of v2_1--v2_2 and v2_3--v2_4);		 
			 \draw[red,thick] (v2_1)--(v2_2)--(v2_3) --(v2_4) ;
			 
			 \coordinate (v3_1) at ($.7*(v3)+.05*(v2)+.25*(v1)$);
			 \coordinate (v3_2) at ($.8*(v3)+.05*(v2)+.15*(v1)$);
			 \coordinate (v3_3) at ($.8*(v3)+.15*(v2)+.05*(v1)$);
			 \coordinate (v3_4) at ($.7*(v3)+.25*(v2)+.05*(v1)$);
		 	 \coordinate (w3) at (intersection of v3_1--v3_2 and v3_3--v3_4);		 
			 \draw[red,thick] (v3_1)--(v3_2)--(v3_3) --(v3_4) ;		 			 	
   		    \end{tikzpicture}
		    \subcaption{Bisimplex}
	\end{subfigure}
	   \begin{subfigure}{0.23\linewidth}
  		    \begin{tikzpicture}[scale=0.3]
   			 \coordinate (v1) at (0,4);
			 \coordinate (v2) at (4,-4);
			 \coordinate (v3) at (-4,-4);
			 \draw[very thick] (v1)--(v2)--(v3)--(v1);
			 
			 \coordinate (v1_1) at ($.7*(v1)+.05*(v2)+.25*(v3)$);
			 \coordinate (v1_2) at ($.8*(v1)+.05*(v2)+.15*(v3)$);
			 \coordinate (v1_3) at ($.8*(v1)+.15*(v2)+.05*(v3)$);
			 \coordinate (v1_4) at ($.7*(v1)+.25*(v2)+.05*(v3)$);
		 	 \coordinate (w1) at (intersection of v1_1--v1_2 and v1_3--v1_4);		 
			 \draw[red,thick] (v1_1)--(v1_2) --(v1_3)  --(v1_4) ;

			 \coordinate (v2_1) at ($.7*(v2)+.05*(v1)+.25*(v3)$);
			 \coordinate (v2_2) at ($.8*(v2)+.05*(v1)+.15*(v3)$);
			 \coordinate (v2_3) at ($.8*(v2)+.15*(v1)+.05*(v3)$);
			 \coordinate (v2_4) at ($.7*(v2)+.25*(v1)+.05*(v3)$);
		 	 \coordinate (w2) at (intersection of v2_1--v2_2 and v2_3--v2_4);		 
			 \draw[red,thick] (v2_1)--(v2_2)--(v2_3) --(v2_4) ;
			 
			 \coordinate (v3_1) at ($.7*(v3)+.05*(v2)+.25*(v1)$);
			 \coordinate (v3_2) at ($.8*(v3)+.05*(v2)+.15*(v1)$);
			 \coordinate (v3_3) at ($.8*(v3)+.15*(v2)+.05*(v1)$);
			 \coordinate (v3_4) at ($.7*(v3)+.25*(v2)+.05*(v1)$);
		 	 \coordinate (w3) at (intersection of v3_1--v3_2 and v3_3--v3_4);		 
			 \draw[red,thick] (v3_1)--(v3_2)--(v3_3) --(v3_4) ;		 			 	
   		    \end{tikzpicture}
		    \subcaption{Octahedron}
	\end{subfigure}			    
	\caption{}
	\label{fig:sec4_case_octahedron_triangle_labelings}
	\end{center}
\end{figure}

To enumerate all possible labelings of the facets of $F$ we use the three labeling rules in Figure~\ref{fig:sec4_case_octahedron_rules}. Rule~\ref{subfig:sec4_labeling_rule1} comes from the fact that by Proposition~\ref{prop:intersections} no facet of $P$ intersects $F$ in a vertex or an edge. 
To elaborate, let $(v,F_1')$ be the pair marked with a red cut angle and $F_2',F_3',F_4'$ be the facets of $F$ that touch $v$ numbered clockwise from $F_1'$ (see Figure~\ref{subfig:sec4_labeling_rule1_exp}). As usual, $F_i$ denotes the facet of $P$ that intersects $F$ in $F_i'$. 
Let $G'$ be the facet of $F_1$ that intersects $F$ in only $v$ and $G$ be the facet of $P$ that intersects $F_1$ in $G'$.  Since the intersection of $G$ and $F$ is nonempty, $G$ intersects $F$ in one of its facets. Hence $G$ intersects $F$ in $F'_2$, $F'_3$ or $F'_4$, i.e. $G$ equals $F_2$, $F_3$ or $F_4$. Note that $G$ cannot be either $F_2$ or $F_4$ as they already contain facets of $F_1$ distinct from $G'$. Thus, $G$ equals $F_3$ and $v$ has degree four in $F_3$, showing that 
$(v,F_3')$ is marked by a red cut angle. It is now easy to see that $v$ must have degree three in $F_2$ as its intersections with 
$F_3$ and $F_1$ share an edge of $G'$. Similarly for $F_4$ and we get that $(v,F_2')$ and $(v,F_4')$ must be blue angles.

Let us prove that rule~\ref{subfig:sec4_labeling_rule2} is valid. Indeed, let $v,F_1,F_2,F_3,F_4$ be as in Figure~\ref{subfig:sec4_labeling_rule2_exp} with $F_1$ being the bisimplicial facet.
By Rule~\ref{subfig:sec4_labeling_rule1} we know that $(v,F_3')$ must be a blue angle and $(v,F_2')$ and $(v,F_4')$ are either both blue angles or both red cut angles. If all are blue, then $F_1,\ldots,F_4$ all share a common edge. { In particular, the bisimplicial facet $F_1$ and facet $F_3$ intersect in an edge, contradicting Proposition~\ref{prop:intersections}.

For Rule~\ref{subfig:sec4_labeling_rule3} we use the fact that only square pyramids among facets of $P$ can have a square face. Thus, if $F_1$ is a square pyramid, the triangular facet of $F$ not containing the apex of $F_1$ but containing an edge of $F'_1$ must also be a facet of a square pyramid.

\begin{figure}	
	  \begin{center}
	   \begin{subfigure}{0.32\linewidth}
  		    \begin{tikzpicture}[scale=0.17]
   			 \coordinate (v1) at (-4,-4);
			 \coordinate (v2) at (4,-4);
			 \coordinate (v3) at (4,4);
			 \coordinate (v4) at (-4,4);
			 \coordinate (w) at (0,0);
			 \draw[very thick] (v1)--(v3);
			 \draw[very thick] (v2)--(v4);
			 
			 \coordinate (v34_1) at ($.5*(w)+.1*(v4)+.4*(v3)$);
			 \coordinate (v34_2) at ($.7*(w)+.1*(v4)+.2*(v3)$);
			 \coordinate (v34_3) at ($.7*(w)+.2*(v4)+.1*(v3)$);
			 \coordinate (v34_4) at ($.5*(w)+.4*(v4)+.1*(v3)$);
		 	 \coordinate (w34) at (intersection of v34_1--v34_2 and v34_3--v34_4);		 
			 \draw[red,thick] (v34_1)--(v34_2)--(v34_3)--(v34_4) ;

           		 \draw[->,double, >=angle 90] (5.5,0) --(7.5,0);

			\begin{scope}[xshift=13cm]
   			 \coordinate (v1) at (-4,-4);
			 \coordinate (v2) at (4,-4);
			 \coordinate (v3) at (4,4);
			 \coordinate (v4) at (-4,4);
			 \coordinate (w) at (0,0);
			 \draw[very thick] (v1)--(v3);
			 \draw[very thick] (v2)--(v4);
			
			 \coordinate (v34_1) at ($.5*(w)+.1*(v4)+.4*(v3)$);
			 \coordinate (v34_2) at ($.7*(w)+.1*(v4)+.2*(v3)$);
			 \coordinate (v34_3) at ($.7*(w)+.2*(v4)+.1*(v3)$);
			 \coordinate (v34_4) at ($.5*(w)+.4*(v4)+.1*(v3)$);
		 	 \coordinate (w34) at (intersection of v34_1--v34_2 and v34_3--v34_4);		 
			 \draw[red,thick] (v34_1)--(v34_2)--(v34_3)--(v34_4) ;
			 
			 \coordinate (v14_1) at ($.5*(w)+.1*(v4)+.4*(v1)$);
			 \coordinate (v14_2) at ($.7*(w)+.1*(v4)+.2*(v1)$);
			 \coordinate (v14_3) at ($.7*(w)+.2*(v4)+.1*(v1)$);
			 \coordinate (v14_4) at ($.5*(w)+.4*(v4)+.1*(v1)$);
		 	 \coordinate (w14) at (intersection of v14_1--v14_2 and v14_3--v14_4);
			 \draw[blue,thick] (v14_1)--(w14)--(v14_4) ; 	
			 
			 \coordinate (v32_1) at ($.5*(w)+.1*(v2)+.4*(v3)$);
			 \coordinate (v32_2) at ($.7*(w)+.1*(v2)+.2*(v3)$);
			 \coordinate (v32_3) at ($.7*(w)+.2*(v2)+.1*(v3)$);
			 \coordinate (v32_4) at ($.5*(w)+.4*(v2)+.1*(v3)$);
		 	 \coordinate (w32) at (intersection of v32_1--v32_2 and v32_3--v32_4);
			 \draw[blue,thick] (v32_1)--(w32)--(v32_4) ;	
			 
			 \coordinate (v12_1) at ($.5*(w)+.1*(v1)+.4*(v2)$);
			 \coordinate (v12_2) at ($.7*(w)+.1*(v1)+.2*(v2)$);
			 \coordinate (v12_3) at ($.7*(w)+.2*(v1)+.1*(v2)$);
			 \coordinate (v12_4) at ($.5*(w)+.4*(v1)+.1*(v2)$);
		 	 \coordinate (w12) at (intersection of v12_1--v12_2 and v12_3--v12_4);	
			 \draw[red,thick] (v12_1)--(v12_2)--(v12_3)--(v12_4) ;	   
			 
			\end{scope}
	 			 	
   		    \end{tikzpicture}
		    \subcaption{}
		    \label{subfig:sec4_labeling_rule1}
	\end{subfigure}
	   \begin{subfigure}{0.32\linewidth}
  		    \begin{tikzpicture}[scale=0.17]
   			 \coordinate (v1) at (-3,-3);
			 \coordinate (v2) at (3,-3);
			 \coordinate (v3) at (5,5);
			 \coordinate (v4) at (-5,5);
			 \coordinate (v5) at (v3);
			 \coordinate (v6) at (v4);
			 \coordinate (w) at (0,0);
			 \draw[very thick] (v1)--(v3);
			 \draw[very thick] (v2)--(v4);
			 \draw[very thick] (-5,5)--(5,5);
			 
			 \coordinate (v56_1) at ($.5*(w)+.1*(v5)+.4*(v6)$);
			 \coordinate (v56_2) at ($.7*(w)+.1*(v5)+.2*(v6)$);
			 \coordinate (v56_3) at ($.7*(w)+.2*(v5)+.1*(v6)$);
			 \coordinate (v56_4) at ($.5*(w)+.4*(v5)+.1*(v6)$);
		 	 \coordinate (w56) at (intersection of v56_1--v56_2 and v56_3--v56_4);		 
			 \draw[blue,thick] (v56_1)--(w56)--(v56_4) ; 
			 
			 \coordinate (v5_1) at ($.5*(v5)+.1*(w)+.4*(v6)$);
			 \coordinate (v5_2) at ($.7*(v5)+.1*(w)+.2*(v6)$);
			 \coordinate (v5_3) at ($.7*(v5)+.2*(w)+.1*(v6)$);
			 \coordinate (v5_4) at ($.5*(v5)+.4*(w)+.1*(v6)$);
		 	 \coordinate (w5) at (intersection of v5_1--v5_2 and v5_3--v5_4);		 
			 \draw[red,thick] (v5_1)--(v5_2)--(v5_3)--(v5_4) ; 
			 
			 \coordinate (v6_1) at ($.5*(v6)+.1*(v5)+.4*(w)$);
			 \coordinate (v6_2) at ($.7*(v6)+.1*(v5)+.2*(w)$);
			 \coordinate (v6_3) at ($.7*(v6)+.2*(v5)+.1*(w)$);
			 \coordinate (v6_4) at ($.5*(v6)+.4*(v5)+.1*(w)$);
		 	 \coordinate (w6) at (intersection of v6_1--v6_2 and v6_3--v6_4);		 
			 \draw[red,thick] (v6_1)--(v6_2)--(v6_3)--(v6_4) ; 

           		 \draw[->,double, >=angle 90] (5.5,0) --(7.5,0);			 

			\begin{scope}[xshift=13cm]
			
    			 \coordinate (v1) at (-3,-3);
			 \coordinate (v2) at (3,-3);
			 \coordinate (v3) at (5,5);
			 \coordinate (v4) at (-5,5);
			 \coordinate (v5) at (v3);
			 \coordinate (v6) at (v4);
			 \coordinate (w) at (0,0);
			 \draw[very thick] (v1)--(v3);
			 \draw[very thick] (v2)--(v4);
			 \draw[very thick] (-5,5)--(5,5);
			 
			 \coordinate (v56_1) at ($.5*(w)+.1*(v5)+.4*(v6)$);
			 \coordinate (v56_2) at ($.7*(w)+.1*(v5)+.2*(v6)$);
			 \coordinate (v56_3) at ($.7*(w)+.2*(v5)+.1*(v6)$);
			 \coordinate (v56_4) at ($.5*(w)+.4*(v5)+.1*(v6)$);
		 	 \coordinate (w56) at (intersection of v56_1--v56_2 and v56_3--v56_4);		 
			 \draw[blue,thick] (v56_1)--(w56)--(v56_4) ; 
			 
			 \coordinate (v5_1) at ($.5*(v5)+.1*(w)+.4*(v6)$);
			 \coordinate (v5_2) at ($.7*(v5)+.1*(w)+.2*(v6)$);
			 \coordinate (v5_3) at ($.7*(v5)+.2*(w)+.1*(v6)$);
			 \coordinate (v5_4) at ($.5*(v5)+.4*(w)+.1*(v6)$);
		 	 \coordinate (w5) at (intersection of v5_1--v5_2 and v5_3--v5_4);		 
			 \draw[red,thick] (v5_1)--(v5_2)--(v5_3)--(v5_4) ; 
			 
			 \coordinate (v6_1) at ($.5*(v6)+.1*(v5)+.4*(w)$);
			 \coordinate (v6_2) at ($.7*(v6)+.1*(v5)+.2*(w)$);
			 \coordinate (v6_3) at ($.7*(v6)+.2*(v5)+.1*(w)$);
			 \coordinate (v6_4) at ($.5*(v6)+.4*(v5)+.1*(w)$);
		 	 \coordinate (w6) at (intersection of v6_1--v6_2 and v6_3--v6_4);		 
			 \draw[red,thick] (v6_1)--(v6_2)--(v6_3)--(v6_4) ; 
			 
			 \coordinate (v14_1) at ($.4*(w)+.1*(v4)+.5*(v1)$);
			 \coordinate (v14_2) at ($.7*(w)+.1*(v4)+.2*(v1)$);
			 \coordinate (v14_3) at ($.7*(w)+.2*(v4)+.1*(v1)$);
			 \coordinate (v14_4) at ($.5*(w)+.4*(v4)+.1*(v1)$);
		 	 \coordinate (w14) at (intersection of v14_1--v14_2 and v14_3--v14_4);
			 \draw[red,thick] (v14_1)--(v14_2)--(v14_3)--(v14_4) ; 	
			 
			 \coordinate (v32_1) at ($.5*(w)+.1*(v2)+.4*(v3)$);
			 \coordinate (v32_2) at ($.7*(w)+.1*(v2)+.2*(v3)$);
			 \coordinate (v32_3) at ($.7*(w)+.2*(v2)+.1*(v3)$);
			 \coordinate (v32_4) at ($.4*(w)+.5*(v2)+.1*(v3)$);
		 	 \coordinate (w32) at (intersection of v32_1--v32_2 and v32_3--v32_4);
			 \draw[red,thick] (v32_1)--(v32_2)--(v32_3)--(v32_4) ;	
			 
			 \coordinate (v12_1) at ($.4*(w)+.1*(v1)+.5*(v2)$);
			 \coordinate (v12_2) at ($.7*(w)+.1*(v1)+.2*(v2)$);
			 \coordinate (v12_3) at ($.7*(w)+.2*(v1)+.1*(v2)$);
			 \coordinate (v12_4) at ($.4*(w)+.5*(v1)+.1*(v2)$);
		 	 \coordinate (w12) at (intersection of v12_1--v12_2 and v12_3--v12_4);	
			 \draw[blue,thick] (v12_1)--(w12)--(v12_4) ;	   			
		    \end{scope}
	 			 	
   		    \end{tikzpicture}
		    \subcaption{}
		    \label{subfig:sec4_labeling_rule2}
	\end{subfigure}	
	\begin{subfigure}{0.32\linewidth}
  		    \begin{tikzpicture}[scale=0.17]
    			 \coordinate (v1) at (0, 4);
			 \coordinate (v2) at (0,-4);
			 \coordinate (v3) at (4,0);
			 \coordinate (v4) at (-4,0);
			 \draw[very thick] (v1)--(v3)--(v4)--(v1);
			 \draw[very thick] (v2)--(v3)--(v4)--(v2);
			 
			 \coordinate (v43_1) at ($.5*(v1)+.1*(v3)+.4*(v4)$);
			 \coordinate (v43_2) at ($.7*(v1)+.1*(v3)+.2*(v4)$);
			 \coordinate (v43_3) at ($.7*(v1)+.2*(v3)+.1*(v4)$);
			 \coordinate (v43_4) at ($.5*(v1)+.4*(v3)+.1*(v4)$); 
			 \draw[red,thick] (v43_1)--(v43_2)--(v43_3)--(v43_4) ; 
			 
			 \coordinate (v13_1) at ($.5*(v4)+.1*(v3)+.4*(v1)$);
			 \coordinate (v13_2) at ($.6*(v4)+.1*(v3)+.3*(v1)$);
			 \coordinate (v13_3) at ($.6*(v4)+.2*(v3)+.2*(v1)$);
			 \coordinate (v13_4) at ($.5*(v4)+.3*(v3)+.2*(v1)$);
		 	 \coordinate (w13) at (intersection of v13_1--v13_2 and v13_3--v13_4);		 
			 \draw[blue,thick] (v13_1)--(w13)--(v13_4) ; 
			 
			 \coordinate (v41_1) at ($.5*(v3)+.1*(v4)+.4*(v1)$);
			 \coordinate (v41_2) at ($.6*(v3)+.1*(v4)+.3*(v1)$);
			 \coordinate (v41_3) at ($.6*(v3)+.2*(v4)+.2*(v1)$);
			 \coordinate (v41_4) at ($.5*(v3)+.3*(v4)+.2*(v1)$);
		 	 \coordinate (w41) at (intersection of v41_1--v41_2 and v41_3--v41_4);		 
			 \draw[blue,thick] (v41_1)--(w41)--(v41_4) ; 

           		 \draw[->,double, >=angle 90] (5.5,0) --(7.5,0);			 

			\begin{scope}[xshift=13cm]
			
    			 \coordinate (v1) at (0, 4);
			 \coordinate (v2) at (0,-4);
			 \coordinate (v3) at (4,0);
			 \coordinate (v4) at (-4,0);
			 \draw[very thick] (v1)--(v3)--(v4)--(v1);
			 \draw[very thick] (v2)--(v3)--(v4)--(v2);
			 
			\coordinate (v43_1) at ($.5*(v1)+.1*(v3)+.4*(v4)$);
			 \coordinate (v43_2) at ($.7*(v1)+.1*(v3)+.2*(v4)$);
			 \coordinate (v43_3) at ($.7*(v1)+.2*(v3)+.1*(v4)$);
			 \coordinate (v43_4) at ($.5*(v1)+.4*(v3)+.1*(v4)$); 
			 \draw[red,thick] (v43_1)--(v43_2)--(v43_3)--(v43_4) ; 
			 
			 \coordinate (v13_1) at ($.5*(v4)+.1*(v3)+.4*(v1)$);
			 \coordinate (v13_2) at ($.6*(v4)+.1*(v3)+.3*(v1)$);
			 \coordinate (v13_3) at ($.6*(v4)+.2*(v3)+.2*(v1)$);
			 \coordinate (v13_4) at ($.5*(v4)+.3*(v3)+.2*(v1)$);
		 	 \coordinate (w13) at (intersection of v13_1--v13_2 and v13_3--v13_4);		 
			 \draw[blue,thick] (v13_1)--(w13)--(v13_4) ; 
			 
			 \coordinate (v41_1) at ($.5*(v3)+.1*(v4)+.4*(v1)$);
			 \coordinate (v41_2) at ($.6*(v3)+.1*(v4)+.3*(v1)$);
			 \coordinate (v41_3) at ($.6*(v3)+.2*(v4)+.2*(v1)$);
			 \coordinate (v41_4) at ($.5*(v3)+.3*(v4)+.2*(v1)$);
		 	 \coordinate (w41) at (intersection of v41_1--v41_2 and v41_3--v41_4);		 
			 \draw[blue,thick] (v41_1)--(w41)--(v41_4) ; 
			 
			 \coordinate (v43_1) at ($.5*(v2)+.1*(v3)+.4*(v4)$);
			 \coordinate (v43_2) at ($.7*(v2)+.1*(v3)+.2*(v4)$);
			 \coordinate (v43_3) at ($.7*(v2)+.2*(v3)+.1*(v4)$);
			 \coordinate (v43_4) at ($.5*(v2)+.4*(v3)+.1*(v4)$); 
			 \draw[red,thick] (v43_1)--(v43_2)--(v43_3)--(v43_4) ; 
			 
			 \coordinate (v23_1) at ($.5*(v4)+.1*(v3)+.4*(v2)$);
			 \coordinate (v23_2) at ($.6*(v4)+.1*(v3)+.3*(v2)$);
			 \coordinate (v23_3) at ($.6*(v4)+.2*(v3)+.2*(v2)$);
			 \coordinate (v23_4) at ($.5*(v4)+.3*(v3)+.2*(v2)$);
		 	 \coordinate (w23) at (intersection of v23_1--v23_2 and v23_3--v23_4);		 
			 \draw[blue,thick] (v23_1)--(w23)--(v23_4) ; 
			 
			 \coordinate (v42_1) at ($.5*(v3)+.1*(v4)+.4*(v2)$);
			 \coordinate (v42_2) at ($.6*(v3)+.1*(v4)+.3*(v2)$);
			 \coordinate (v42_3) at ($.6*(v3)+.2*(v4)+.2*(v2)$);
			 \coordinate (v42_4) at ($.5*(v3)+.3*(v4)+.2*(v2)$);
		 	 \coordinate (w42) at (intersection of v42_1--v42_2 and v42_3--v42_4);		 
			 \draw[blue,thick] (v42_1)--(w42)--(v42_4) ; 
			    			
		    \end{scope}
	 			 	
   		    \end{tikzpicture}
		    \subcaption{}
		    \label{subfig:sec4_labeling_rule3}
	\end{subfigure}			    
			    
	\caption{}
	\label{fig:sec4_case_octahedron_rules}
	\end{center}
\end{figure}

\begin{figure}	
	  \begin{center}
	   \begin{subfigure}{0.32\linewidth}
  		    \begin{tikzpicture}[scale=0.2]
   			 \coordinate (v1) at (-4,-4);
			 \coordinate (v2) at (4,-4);
			 \coordinate (v3) at (4,4);
			 \coordinate (v4) at (-4,4);
			 \coordinate (w) at (0,0);
			 \draw[very thick] (v1)--(v3);
			 \draw[very thick] (v2)--(v4);
			 
			 \coordinate (v34_1) at ($.5*(w)+.1*(v4)+.4*(v3)$);
			 \coordinate (v34_2) at ($.7*(w)+.1*(v4)+.2*(v3)$);
			 \coordinate (v34_3) at ($.7*(w)+.2*(v4)+.1*(v3)$);
			 \coordinate (v34_4) at ($.5*(w)+.4*(v4)+.1*(v3)$);
		 	 \coordinate (w34) at (intersection of v34_1--v34_2 and v34_3--v34_4);		 
			 \draw[red,thick] (v34_1)--(v34_2)--(v34_3)--(v34_4) ; 
			 \node at (1.5,0) {$v$};
			 \node at (0,3) {$F'_1$};
			 \node at (5,0) {$F'_2$};
			 \node at (0,-3) {$F'_3$};
			 \node at (-5,0) {$F'_4$};	 			 	
   		    \end{tikzpicture}
		    \subcaption{}
		    \label{subfig:sec4_labeling_rule1_exp}
	\end{subfigure}
	 \begin{subfigure}{0.4\linewidth}
  		    \begin{tikzpicture}[scale=0.2]
   			 \coordinate (v1) at (-3,-3);
			 \coordinate (v2) at (3,-3);
			 \coordinate (v3) at (5,5);
			 \coordinate (v4) at (-5,5);
			 \coordinate (v5) at (v3);
			 \coordinate (v6) at (v4);
			 \coordinate (w) at (0,0);
			 \draw[very thick] (v1)--(v3);
			 \draw[very thick] (v2)--(v4);
			 \draw[very thick] (-5,5)--(5,5);
			 
			 \coordinate (v56_1) at ($.5*(w)+.1*(v5)+.4*(v6)$);
			 \coordinate (v56_2) at ($.7*(w)+.1*(v5)+.2*(v6)$);
			 \coordinate (v56_3) at ($.7*(w)+.2*(v5)+.1*(v6)$);
			 \coordinate (v56_4) at ($.5*(w)+.4*(v5)+.1*(v6)$);
		 	 \coordinate (w56) at (intersection of v56_1--v56_2 and v56_3--v56_4);		 
			 \draw[blue,thick] (v56_1)--(w56)--(v56_4) ; 
			 
			 \coordinate (v5_1) at ($.5*(v5)+.1*(w)+.4*(v6)$);
			 \coordinate (v5_2) at ($.7*(v5)+.1*(w)+.2*(v6)$);
			 \coordinate (v5_3) at ($.7*(v5)+.2*(w)+.1*(v6)$);
			 \coordinate (v5_4) at ($.5*(v5)+.4*(w)+.1*(v6)$);
		 	 \coordinate (w5) at (intersection of v5_1--v5_2 and v5_3--v5_4);		 
			 \draw[red,thick] (v5_1)--(v5_2)--(v5_3)--(v5_4) ; 
			 
			 \coordinate (v6_1) at ($.5*(v6)+.1*(v5)+.4*(w)$);
			 \coordinate (v6_2) at ($.7*(v6)+.1*(v5)+.2*(w)$);
			 \coordinate (v6_3) at ($.7*(v6)+.2*(v5)+.1*(w)$);
			 \coordinate (v6_4) at ($.5*(v6)+.4*(v5)+.1*(w)$);
		 	 \coordinate (w6) at (intersection of v6_1--v6_2 and v6_3--v6_4);		 
			 \draw[red,thick] (v6_1)--(v6_2)--(v6_3)--(v6_4) ; 
			 
			 \node at (1.5,0) {$v$};
			 \node at (0,3) {$F'_1$};
			 \node at (5,0) {$F'_2$};
			 \node at (0,-3) {$F'_3$};
			 \node at (-5,0) {$F'_4$};		 			 	
   		    \end{tikzpicture}
		    \subcaption{}
		    \label{subfig:sec4_labeling_rule2_exp}
	\end{subfigure}	
	\caption{}
	\end{center}
\end{figure}

We now use these rules to produce all valid diagrams. If we start by assuming $F_1$ is an octahedron we get a unique valid diagram, shown in Figure~\ref{subfig:sec4_octahedron_case_octahedron}.  If we assume $F_1$ to be a square pyramid we only obtain the diagram in Figure~\ref{subfig:sec4_octahedron_case_square_pyramid}. The assumption that $F_1$ is a bisimplex does not lead to any valid diagram. As before, there is a unique way to produce a polytope that complies with each diagram and respects the facet intersection properties of psd-minimal $4$-polytopes. The Schlegel diagrams of polytopes in these classes are presented in Figure~\ref{fig:sec4_octahedron_result}. It turns out that the class shown in Figure \ref{fig:sec4_octahedron_result1} is dual to class $18$, so it is already accounted for.

\begin{figure}[h!]	
	  \begin{center}
	  \begin{subfigure}{0.4\linewidth}
  		    \begin{tikzpicture}[scale=0.4]
   			 \coordinate (v1) at (0,4);
			 \coordinate (v2) at (4,-4);
			 \coordinate (v3) at (-4,-4);
			 \coordinate (v4) at ($-.2*(v1)+1.2*(0,-1)-(0,.5)$);
   			 \coordinate (v5) at ($-.2*(v2)+1.2*(0,-1)-(0,.5)$);
			 \coordinate (v6) at ($-.2*(v3)+1.2*(0,-1)-(0,.5)$);
			 \draw[very thick] (v1)--(v2)--(v3)--(v1);
			 \draw[very thick] (v5)--(v6)--(v4)--(v5);
			 \draw[very thick] (v1)--(v5);
			 \draw[very thick] (v1)--(v6);
			 \draw[very thick] (v2)--(v4);
			 \draw[very thick] (v2)--(v6);
			 \draw[very thick] (v3)--(v4);
			 \draw[very thick] (v3)--(v5);

			 \coordinate (v1_1_1) at ($.6*(v1)+.1*(v5)+.3*(v6)$);
			 \coordinate (v1_1_2) at ($.7*(v1)+.1*(v5)+.2*(v6)$);
			 \coordinate (v1_1_3) at ($.7*(v1)+.2*(v5)+.1*(v6)$);
			 \coordinate (v1_1_4) at ($.6*(v1)+.3*(v5)+.1*(v6)$);
		 	 \coordinate (v1_1) at (intersection of v1_1_1--v1_1_2 and v1_1_3--v1_1_4);		 
			 \draw[blue,thick] (v1_1_1) --(v1_1) --(v1_1_4) ;
			 
 			 \coordinate (v1_2_1) at ($.6*(v1)+.1*(v6)+.3*(v5)+.2*(v5)-.2*(v6)$);
			 \coordinate (v1_2_2) at ($.7*(v1)+.1*(v6)+.2*(v5)+.2*(v5)-.2*(v6)$);
			 \coordinate (v1_2_3) at ($.7*(v1)+.1*(v6)+.2*(v5)+.3*(v5)-.3*(v6)$);
			 \coordinate (v1_2_4) at ($.7*(v1)+.1*(v6)+.2*(v5)+.3*(v5)-.3*(v6)+.07*(v3)-.07*(v1)$);
			 \draw[red,thick] (v1_2_1) --(v1_2_2) --(v1_2_3) --(v1_2_4) ;

 			 \coordinate (v1_3_1) at ($.6*(v1)+.1*(v5)+.3*(v6)+.2*(v6)-.2*(v5)$);
			 \coordinate (v1_3_2) at ($.7*(v1)+.1*(v5)+.2*(v6)+.2*(v6)-.2*(v5)$);
			 \coordinate (v1_3_3) at ($.7*(v1)+.1*(v5)+.2*(v6)+.3*(v6)-.3*(v5)$);
			 \coordinate (v1_3_4) at ($.7*(v1)+.1*(v5)+.2*(v6)+.3*(v6)-.3*(v5)+.07*(v2)-.07*(v1)$);
			 \draw[red,thick] (v1_3_1) --(v1_3_2) --(v1_3_3) --(v1_3_4) ;			 
			 
			 \coordinate (v1_4_1) at ($1.1*(v1)+.1*(v5)+.2*(v6)+.1*(v2)-.1*(v1)-.1*(v5)-.1*(v6)$);
			 \coordinate (v1_4_2) at ($1.1*(v1)+.1*(v5)+.2*(v6)-.1*(v5)-.1*(v6)$);
			 \coordinate (v1_4_3) at ($1.1*(v1)+.2*(v5)+.1*(v6)-.1*(v5)-.1*(v6)$);
			 \coordinate (v1_4_4) at ($1.1*(v1)+.2*(v5)+.1*(v6)+.1*(v3)-.1*(v1)-.1*(v5)-.1*(v6)$);
		 	 \coordinate (v1_4) at (intersection of v1_4_1--v1_4_2 and v1_4_3--v1_4_4);		 
			 \draw[blue,thick] (v1_4_1) --(v1_4) --(v1_4_4) ;
			 
			 \coordinate (v2_1_1) at ($.6*(v2)+.1*(v4)+.3*(v6)$);
			 \coordinate (v2_1_2) at ($.7*(v2)+.1*(v4)+.2*(v6)$);
			 \coordinate (v2_1_3) at ($.7*(v2)+.2*(v4)+.1*(v6)$);
			 \coordinate (v2_1_4) at ($.6*(v2)+.3*(v4)+.1*(v6)$);
			 \coordinate (v2_1) at (intersection of v2_1_1--v2_1_2 and v2_1_3--v2_1_4);	
			 \draw[blue,thick] (v2_1_1) --(v2_1) --(v2_1_4) ;
			 
 			 \coordinate (v2_2_1) at ($.6*(v2)+.1*(v6)+.3*(v4)+.2*(v4)-.2*(v6)$);
			 \coordinate (v2_2_2) at ($.7*(v2)+.1*(v6)+.2*(v4)+.2*(v4)-.2*(v6)$);
			 \coordinate (v2_2_3) at ($.7*(v2)+.1*(v6)+.2*(v4)+.3*(v4)-.3*(v6)$);
			 \coordinate (v2_2_4) at ($.7*(v2)+.1*(v6)+.2*(v4)+.3*(v4)-.3*(v6)+.07*(v3)-.07*(v2)$);
			 \draw[red,thick] (v2_2_1) --(v2_2_2) --(v2_2_3) --(v2_2_4) ;

 			 \coordinate (v2_3_1) at ($.6*(v2)+.1*(v4)+.3*(v6)+.2*(v6)-.2*(v4)$);
			 \coordinate (v2_3_2) at ($.7*(v2)+.1*(v4)+.2*(v6)+.2*(v6)-.2*(v4)$);
			 \coordinate (v2_3_3) at ($.7*(v2)+.1*(v4)+.2*(v6)+.3*(v6)-.3*(v4)$);
			 \coordinate (v2_3_4) at ($.7*(v2)+.1*(v4)+.2*(v6)+.3*(v6)-.3*(v4)+.07*(v1)-.07*(v2)$);
			 \draw[red,thick] (v2_3_1) --(v2_3_2) --(v2_3_3) --(v2_3_4) ;			 
			 
			 \coordinate (v2_4_1) at ($.7*(v2)+.1*(v6)+.2*(v4)-.1*(v4)+.1*(v6)+.4*(v2)-.2*(v6)-0.2*(v4)+.1*(v1)-.1*(v2)$);
			 \coordinate (v2_4_2) at ($.7*(v2)+.1*(v6)+.2*(v4)-.1*(v4)+.1*(v6)+.4*(v2)-.2*(v6)-0.2*(v4)$);
			 \coordinate (v2_4_3) at ($.7*(v2)+.1*(v6)+.2*(v4)+.01*(v4)-.01*(v6)+.4*(v2)-.2*(v6)-0.2*(v4)$);
			 \coordinate (v2_4_4) at ($.7*(v2)+.1*(v6)+.2*(v4)+.01*(v4)-.01*(v6)+.4*(v2)-.2*(v6)-0.2*(v4)+.1*(v3)-.1*(v2)$);
			 \coordinate (v2_4) at (intersection of v2_4_1--v2_4_2 and v2_4_3--v2_4_4);	
			 \draw[blue,thick] (v2_4_1) --(v2_4) --(v2_4_4) ;
			 
			 \coordinate (v3_1_1) at ($.6*(v3)+.1*(v4)+.3*(v5)$);
			 \coordinate (v3_1_2) at ($.7*(v3)+.1*(v4)+.2*(v5)$);
			 \coordinate (v3_1_3) at ($.7*(v3)+.2*(v4)+.1*(v5)$);
			 \coordinate (v3_1_4) at ($.6*(v3)+.3*(v4)+.1*(v5)$);
			 \coordinate (v3_1) at (intersection of v3_1_1--v3_1_2 and v3_1_3--v3_1_4);	
			 \draw[blue,thick] (v3_1_1) --(v3_1) --(v3_1_4) ;
			 
 			 \coordinate (v3_2_1) at ($.6*(v3)+.1*(v5)+.3*(v4)+.2*(v4)-.2*(v5)$);
			 \coordinate (v3_2_2) at ($.7*(v3)+.1*(v5)+.2*(v4)+.2*(v4)-.2*(v5)$);
			 \coordinate (v3_2_3) at ($.7*(v3)+.1*(v5)+.2*(v4)+.3*(v4)-.3*(v5)$);
			 \coordinate (v3_2_4) at ($.7*(v3)+.1*(v5)+.2*(v4)+.3*(v4)-.3*(v5)+.07*(v2)-.07*(v3)$);
			 \draw[red,thick] (v3_2_1) --(v3_2_2) --(v3_2_3) --(v3_2_4) ;

 			 \coordinate (v3_3_1) at ($.6*(v3)+.1*(v4)+.3*(v5)+.2*(v5)-.2*(v4)$);
			 \coordinate (v3_3_2) at ($.7*(v3)+.1*(v4)+.2*(v5)+.2*(v5)-.2*(v4)$);
			 \coordinate (v3_3_3) at ($.7*(v3)+.1*(v4)+.2*(v5)+.3*(v5)-.3*(v4)$);
			 \coordinate (v3_3_4) at ($.7*(v3)+.1*(v4)+.2*(v5)+.3*(v5)-.3*(v4)+.07*(v1)-.07*(v3)$);
			 \draw[red,thick] (v3_3_1) --(v3_3_2) --(v3_3_3) --(v3_3_4) ;			 
			 
			 \coordinate (v3_4_1) at ($.7*(v3)+.1*(v5)+.2*(v4)-.1*(v4)+.1*(v5)+.4*(v3)-.2*(v5)-0.2*(v4)+.1*(v1)-.1*(v3)$);
			 \coordinate (v3_4_2) at ($.7*(v3)+.1*(v5)+.2*(v4)-.1*(v4)+.1*(v5)+.4*(v3)-.2*(v5)-0.2*(v4)$);
			 \coordinate (v3_4_3) at ($.7*(v3)+.1*(v5)+.2*(v4)+.01*(v4)-.01*(v5)+.4*(v3)-.2*(v5)-0.2*(v4)$);
			 \coordinate (v3_4_4) at ($.7*(v3)+.1*(v5)+.2*(v4)+.01*(v4)-.01*(v5)+.4*(v3)-.2*(v5)-0.2*(v4)+.1*(v2)-.1*(v3)$);
			 \coordinate (v3_4) at (intersection of v3_4_1--v3_4_2 and v3_4_3--v3_4_4);	
			 \draw[blue,thick] (v3_4_1) --(v3_4) --(v3_4_4) ;

			 \coordinate (v4_1_1) at ($.5*(v4)+.1*(v5)+.4*(v6)$);
			 \coordinate (v4_1_2) at ($.7*(v4)+.1*(v5)+.2*(v6)$);
			 \coordinate (v4_1_3) at ($.7*(v4)+.2*(v5)+.1*(v6)$);
			 \coordinate (v4_1_4) at ($.5*(v4)+.4*(v5)+.1*(v6)$);
			 \draw[red,thick] (v4_1_1) --(v4_1_2) --(v4_1_3) --(v4_1_4) ;
			 
			 \coordinate (v4_2_1) at ($.7*(v4)+.1*(v6)+.2*(v2)$);
			 \coordinate (v4_2_2) at ($.8*(v4)+.1*(v6)+.1*(v2)$);
			 \coordinate (v4_2_3) at ($.7*(v4)+.25*(v6)+.05*(v2)$);
			 \coordinate (v4_2_4) at ($.5*(v4)+.45*(v6)+.05*(v2)$);
			 \coordinate (v4_2) at (intersection of v4_2_1--v4_2_2 and v4_2_3--v4_2_4);		 
			 \draw[blue,thick] (v4_2_1) --(v4_2) --(v4_2_4) ;

			 \coordinate (v4_3_1) at ($.7*(v4)+.1*(v5)+.2*(v3)$);
			 \coordinate (v4_3_2) at ($.8*(v4)+.1*(v5)+.1*(v3)$);
			 \coordinate (v4_3_3) at ($.7*(v4)+.25*(v5)+.05*(v3)$);
			 \coordinate (v4_3_4) at ($.5*(v4)+.45*(v5)+.05*(v3)$);
			 \coordinate (v4_3) at (intersection of v4_3_1--v4_3_2 and v4_3_3--v4_3_4);		 
			 \draw[blue,thick] (v4_3_1) --(v4_3) --(v4_3_4) ;
			 
			 \coordinate (v4_4_1) at ($.72*(v4)+.06*(v2)+.23*(v3)$);
			 \coordinate (v4_4_2) at ($.84*(v4)+.06*(v2)+.11*(v3)$);
			 \coordinate (v4_4_3) at ($.84*(v4)+.11*(v2)+.06*(v3)$);
			 \coordinate (v4_4_4) at ($.72*(v4)+.23*(v2)+.06*(v3)$);
			 \draw[red,thick] (v4_4_1) --(v4_4_2) --(v4_4_3) --(v4_4_4) ;
			 
			 \coordinate (v5_1_1) at ($.5*(v5)+.1*(v4)+.4*(v6)$);
			 \coordinate (v5_1_2) at ($.7*(v5)+.1*(v4)+.2*(v6)$);
			 \coordinate (v5_1_3) at ($.7*(v5)+.2*(v4)+.1*(v6)$);
			 \coordinate (v5_1_4) at ($.5*(v5)+.4*(v4)+.1*(v6)$);
			 \draw[red,thick] (v5_1_1) --(v5_1_2) --(v5_1_3) --(v5_1_4) ;
			 
			 \coordinate (v5_2_1) at ($.7*(v5)+.1*(v4)+.2*(v3)$);
			 \coordinate (v5_2_2) at ($.8*(v5)+.1*(v4)+.1*(v3)$);
			 \coordinate (v5_2_3) at ($.6*(v5)+.23*(v4)+.07*(v3)$);
			 \coordinate (v5_2_4) at ($.4*(v5)+.43*(v4)+.07*(v3)$);
			 \coordinate (v5_2) at (intersection of v5_2_1--v5_2_2 and v5_2_3--v5_2_4);
			 \draw[blue,thick] (v5_2_1) --(v5_2) --(v5_2_4) ;
			 
			 \coordinate (v5_3_1) at ($.72*(v5)+.06*(v1)+.23*(v3)$);
			 \coordinate (v5_3_2) at ($.84*(v5)+.06*(v1)+.11*(v3)$);
			 \coordinate (v5_3_3) at ($.84*(v5)+.11*(v1)+.06*(v3)$);
			 \coordinate (v5_3_4) at ($.72*(v5)+.23*(v1)+.06*(v3)$);
			 \draw[red,thick] (v5_3_1) --(v5_3_2) --(v5_3_3) --(v5_3_4) ;
			 
			 \coordinate (v5_4_1) at ($.7*(v5)+.1*(v6)+.2*(v1)$);
			 \coordinate (v5_4_2) at ($.8*(v5)+.1*(v6)+.1*(v1)$);
			 \coordinate (v5_4_3) at ($.7*(v5)+.25*(v6)+.05*(v1)$);
			 \coordinate (v5_4_4) at ($.5*(v5)+.45*(v6)+.05*(v1)$);
			 \coordinate (v5_4) at (intersection of v5_4_1--v5_4_2 and v5_4_3--v5_4_4);
			 \draw[blue,thick] (v5_4_1) --(v5_4) --(v5_4_4) ;
			 
			 \coordinate (v6_1_1) at ($.5*(v6)+.1*(v5)+.4*(v4)$);
			 \coordinate (v6_1_2) at ($.7*(v6)+.1*(v5)+.2*(v4)$);
			 \coordinate (v6_1_3) at ($.7*(v6)+.2*(v5)+.1*(v4)$);
			 \coordinate (v6_1_4) at ($.5*(v6)+.4*(v5)+.1*(v4)$);
			 \draw[red,thick] (v6_1_1) --(v6_1_2) --(v6_1_3) --(v6_1_4) ;
			 
			 \coordinate (v6_2_1) at ($.7*(v6)+.1*(v4)+.2*(v2)$);
			 \coordinate (v6_2_2) at ($.8*(v6)+.1*(v4)+.1*(v2)$);
			 \coordinate (v6_2_3) at ($.6*(v6)+.23*(v4)+.07*(v2)$);
			 \coordinate (v6_2_4) at ($.4*(v6)+.43*(v4)+.07*(v2)$);
			\coordinate (v6_2) at (intersection of v6_2_1--v6_2_2 and v6_2_3--v6_2_4);
			 \draw[blue,thick] (v6_2_1) --(v6_2)--(v6_2_4) ;
			 
			 \coordinate (v6_3_1) at ($.72*(v6)+.06*(v1)+.23*(v2)$);
			 \coordinate (v6_3_2) at ($.84*(v6)+.06*(v1)+.11*(v2)$);
			 \coordinate (v6_3_3) at ($.84*(v6)+.11*(v1)+.06*(v2)$);
			 \coordinate (v6_3_4) at ($.72*(v6)+.23*(v1)+.06*(v2)$);
			 \draw[red,thick] (v6_3_1) --(v6_3_2) --(v6_3_3) --(v6_3_4) ;
			 
			 \coordinate (v6_4_1) at ($.7*(v6)+.1*(v5)+.2*(v1)$);
			 \coordinate (v6_4_2) at ($.8*(v6)+.1*(v5)+.1*(v1)$);
			 \coordinate (v6_4_3) at ($.7*(v6)+.25*(v5)+.05*(v1)$);
			 \coordinate (v6_4_4) at ($.5*(v6)+.45*(v5)+.05*(v1)$);
			 \coordinate (v6_4) at (intersection of v6_4_1--v6_4_2 and v6_4_3--v6_4_4);
			 \draw[blue,thick] (v6_4_1) --(v6_4) --(v6_4_4) ;
   		    \end{tikzpicture}
		    \subcaption{}
		     \label{subfig:sec4_octahedron_case_octahedron}
	\end{subfigure}
	  \begin{subfigure}{0.45\linewidth}
 		    \begin{tikzpicture}[scale=0.4]
   			 \coordinate (v1) at (0,4);
			 \coordinate (v2) at (4,-4);
			 \coordinate (v3) at (-4,-4);
			 \coordinate (v4) at ($-.2*(v1)+1.2*(0,-1)-(0,.5)$);
   			 \coordinate (v5) at ($-.2*(v2)+1.2*(0,-1)-(0,.5)$);
			 \coordinate (v6) at ($-.2*(v3)+1.2*(0,-1)-(0,.5)$);
			 \draw[very thick] (v1)--(v2)--(v3)--(v1);
			 \draw[very thick] (v5)--(v6)--(v4)--(v5);
			 \draw[very thick] (v1)--(v5);
			 \draw[very thick] (v1)--(v6);
			 \draw[very thick] (v2)--(v4);
			 \draw[very thick] (v2)--(v6);
			 \draw[very thick] (v3)--(v4);
			 \draw[very thick] (v3)--(v5);

			 \coordinate (v1_1_1) at ($.6*(v1)+.1*(v5)+.3*(v6)$);
			 \coordinate (v1_1_2) at ($.7*(v1)+.1*(v5)+.2*(v6)$);
			 \coordinate (v1_1_3) at ($.7*(v1)+.2*(v5)+.1*(v6)$);
			 \coordinate (v1_1_4) at ($.6*(v1)+.3*(v5)+.1*(v6)$);	 
			 \draw[red,thick] (v1_1_1)--(v1_1_2)--(v1_1_3)--(v1_1_4) ;
			 
 			 \coordinate (v1_2_1) at ($.6*(v1)+.1*(v6)+.3*(v5)+.2*(v5)-.2*(v6)$);
			 \coordinate (v1_2_2) at ($.7*(v1)+.1*(v6)+.2*(v5)+.2*(v5)-.2*(v6)$);
			 \coordinate (v1_2_3) at ($.7*(v1)+.1*(v6)+.2*(v5)+.3*(v5)-.3*(v6)$);
			 \coordinate (v1_2_4) at ($.7*(v1)+.1*(v6)+.2*(v5)+.3*(v5)-.3*(v6)+.07*(v3)-.07*(v1)$);
		 	 \coordinate (v1_2) at (intersection of v1_2_1--v1_2_2 and v1_2_3--v1_2_4);		 
			 \draw[blue,thick] (v1_2_1) --(v1_2) --(v1_2_4) ;

 			 \coordinate (v1_3_1) at ($.6*(v1)+.1*(v5)+.3*(v6)+.2*(v6)-.2*(v5)$);
			 \coordinate (v1_3_2) at ($.7*(v1)+.1*(v5)+.2*(v6)+.2*(v6)-.2*(v5)$);
			 \coordinate (v1_3_3) at ($.7*(v1)+.1*(v5)+.2*(v6)+.3*(v6)-.3*(v5)$);
			 \coordinate (v1_3_4) at ($.7*(v1)+.1*(v5)+.2*(v6)+.3*(v6)-.3*(v5)+.07*(v2)-.07*(v1)$);
		 	 \coordinate (v1_3) at (intersection of v1_3_1--v1_3_2 and v1_3_3--v1_3_4);		 
			 \draw[blue,thick] (v1_3_1) --(v1_3) --(v1_3_4) ;			 
			 
			 \coordinate (v1_4_1) at ($1.1*(v1)+.1*(v5)+.2*(v6)+.1*(v2)-.1*(v1)-.1*(v5)-.1*(v6)$);
			 \coordinate (v1_4_2) at ($1.1*(v1)+.1*(v5)+.2*(v6)-.1*(v5)-.1*(v6)$);
			 \coordinate (v1_4_3) at ($1.1*(v1)+.2*(v5)+.1*(v6)-.1*(v5)-.1*(v6)$);
			 \coordinate (v1_4_4) at ($1.1*(v1)+.2*(v5)+.1*(v6)+.1*(v3)-.1*(v1)-.1*(v5)-.1*(v6)$);	 
			 \draw[red,thick] (v1_4_1)--(v1_4_2)--(v1_4_3)--(v1_4_4);
			 
			 \coordinate (v2_1_1) at ($.6*(v2)+.1*(v4)+.3*(v6)$);
			 \coordinate (v2_1_2) at ($.7*(v2)+.1*(v4)+.2*(v6)$);
			 \coordinate (v2_1_3) at ($.7*(v2)+.2*(v4)+.1*(v6)$);
			 \coordinate (v2_1_4) at ($.6*(v2)+.3*(v4)+.1*(v6)$);
			 \coordinate (v2_1) at (intersection of v2_1_1--v2_1_2 and v2_1_3--v2_1_4);	
			 \draw[blue,thick] (v2_1_1)--(v2_1)--(v2_1_4) ;
			 
 			 \coordinate (v2_2_1) at ($.6*(v2)+.1*(v6)+.3*(v4)+.2*(v4)-.2*(v6)$);
			 \coordinate (v2_2_2) at ($.7*(v2)+.1*(v6)+.2*(v4)+.2*(v4)-.2*(v6)$);
			 \coordinate (v2_2_3) at ($.7*(v2)+.1*(v6)+.2*(v4)+.3*(v4)-.3*(v6)$);
			 \coordinate (v2_2_4) at ($.7*(v2)+.1*(v6)+.2*(v4)+.3*(v4)-.3*(v6)+.07*(v3)-.07*(v2)$);
			 \coordinate (v2_2) at (intersection of v2_2_1--v2_2_2 and v2_2_3--v2_2_4);	
			 \draw[blue,thick] (v2_2_1)--(v2_2)--(v2_2_4) ;

 			 \coordinate (v2_3_1) at ($.6*(v2)+.1*(v4)+.3*(v6)+.2*(v6)-.2*(v4)$);
			 \coordinate (v2_3_2) at ($.7*(v2)+.1*(v4)+.2*(v6)+.2*(v6)-.2*(v4)$);
			 \coordinate (v2_3_3) at ($.7*(v2)+.1*(v4)+.2*(v6)+.3*(v6)-.3*(v4)$);
			 \coordinate (v2_3_4) at ($.7*(v2)+.1*(v4)+.2*(v6)+.3*(v6)-.3*(v4)+.07*(v1)-.07*(v2)$);
			 \coordinate (v2_3) at (intersection of v2_3_1--v2_3_2 and v2_3_3--v2_3_4);	
			 \draw[blue,thick] (v2_3_1)--(v2_3)--(v2_3_4) ;			 
			 
			 \coordinate (v2_4_1) at ($.7*(v2)+.1*(v6)+.2*(v4)-.1*(v4)+.1*(v6)+.4*(v2)-.2*(v6)-0.2*(v4)+.1*(v1)-.1*(v2)$);
			 \coordinate (v2_4_2) at ($.7*(v2)+.1*(v6)+.2*(v4)-.1*(v4)+.1*(v6)+.4*(v2)-.2*(v6)-0.2*(v4)$);
			 \coordinate (v2_4_3) at ($.7*(v2)+.1*(v6)+.2*(v4)+.01*(v4)-.01*(v6)+.4*(v2)-.2*(v6)-0.2*(v4)$);
			 \coordinate (v2_4_4) at ($.7*(v2)+.1*(v6)+.2*(v4)+.01*(v4)-.01*(v6)+.4*(v2)-.2*(v6)-0.2*(v4)+.1*(v3)-.1*(v2)$);
			 \coordinate (v2_4) at (intersection of v2_4_1--v2_4_2 and v2_4_3--v2_4_4);	
			 \draw[blue,thick] (v2_4_1)--(v2_4)--(v2_4_4) ;
			 
			 \coordinate (v3_1_1) at ($.6*(v3)+.1*(v4)+.3*(v5)$);
			 \coordinate (v3_1_2) at ($.7*(v3)+.1*(v4)+.2*(v5)$);
			 \coordinate (v3_1_3) at ($.7*(v3)+.2*(v4)+.1*(v5)$);
			 \coordinate (v3_1_4) at ($.6*(v3)+.3*(v4)+.1*(v5)$);
			 \coordinate (v3_1) at (intersection of v3_1_1--v3_1_2 and v3_1_3--v3_1_4);	
			 \draw[blue,thick] (v3_1_1)--(v3_1)--(v3_1_4) ;
			 
 			 \coordinate (v3_2_1) at ($.6*(v3)+.1*(v5)+.3*(v4)+.2*(v4)-.2*(v5)$);
			 \coordinate (v3_2_2) at ($.7*(v3)+.1*(v5)+.2*(v4)+.2*(v4)-.2*(v5)$);
			 \coordinate (v3_2_3) at ($.7*(v3)+.1*(v5)+.2*(v4)+.3*(v4)-.3*(v5)$);
			 \coordinate (v3_2_4) at ($.7*(v3)+.1*(v5)+.2*(v4)+.3*(v4)-.3*(v5)+.07*(v2)-.07*(v3)$);
			 \coordinate (v3_2) at (intersection of v3_2_1--v3_2_2 and v3_2_3--v3_2_4);	
			 \draw[blue,thick] (v3_2_1)--(v3_2)--(v3_2_4) ;

 			 \coordinate (v3_3_1) at ($.6*(v3)+.1*(v4)+.3*(v5)+.2*(v5)-.2*(v4)$);
			 \coordinate (v3_3_2) at ($.7*(v3)+.1*(v4)+.2*(v5)+.2*(v5)-.2*(v4)$);
			 \coordinate (v3_3_3) at ($.7*(v3)+.1*(v4)+.2*(v5)+.3*(v5)-.3*(v4)$);
			 \coordinate (v3_3_4) at ($.7*(v3)+.1*(v4)+.2*(v5)+.3*(v5)-.3*(v4)+.07*(v1)-.07*(v3)$);
			 \coordinate (v3_3) at (intersection of v3_3_1--v3_3_2 and v3_3_3--v3_3_4);	
			 \draw[blue,thick] (v3_3_1)--(v3_3)--(v3_3_4) ;			 
			 
			 \coordinate (v3_4_1) at ($.7*(v3)+.1*(v5)+.2*(v4)-.1*(v4)+.1*(v5)+.4*(v3)-.2*(v5)-0.2*(v4)+.1*(v1)-.1*(v3)$);
			 \coordinate (v3_4_2) at ($.7*(v3)+.1*(v5)+.2*(v4)-.1*(v4)+.1*(v5)+.4*(v3)-.2*(v5)-0.2*(v4)$);
			 \coordinate (v3_4_3) at ($.7*(v3)+.1*(v5)+.2*(v4)+.01*(v4)-.01*(v5)+.4*(v3)-.2*(v5)-0.2*(v4)$);
			 \coordinate (v3_4_4) at ($.7*(v3)+.1*(v5)+.2*(v4)+.01*(v4)-.01*(v5)+.4*(v3)-.2*(v5)-0.2*(v4)+.1*(v2)-.1*(v3)$);
			 \coordinate (v3_4) at (intersection of v3_4_1--v3_4_2 and v3_4_3--v3_4_4);	
			 \draw[blue,thick] (v3_4_1)--(v3_4)--(v3_4_4) ;			 
			 
			 \coordinate (v4_1_1) at ($.5*(v4)+.1*(v5)+.4*(v6)$);
			 \coordinate (v4_1_2) at ($.7*(v4)+.1*(v5)+.2*(v6)$);
			 \coordinate (v4_1_3) at ($.7*(v4)+.2*(v5)+.1*(v6)$);
			 \coordinate (v4_1_4) at ($.5*(v4)+.4*(v5)+.1*(v6)$);
			 \draw[red,thick] (v4_1_1) --(v4_1_2) --(v4_1_3) --(v4_1_4) ;

			 \coordinate (v4_2_1) at ($.7*(v4)+.1*(v6)+.2*(v2)$);
			 \coordinate (v4_2_2) at ($.8*(v4)+.1*(v6)+.1*(v2)$);
			 \coordinate (v4_2_3) at ($.7*(v4)+.25*(v6)+.05*(v2)$);
			 \coordinate (v4_2_4) at ($.5*(v4)+.45*(v6)+.05*(v2)$);
			 \coordinate (v4_2) at (intersection of v4_2_1--v4_2_2 and v4_2_3--v4_2_4);		 
			 \draw[blue,thick] (v4_2_1) --(v4_2) --(v4_2_4) ;

			 \coordinate (v4_3_1) at ($.7*(v4)+.1*(v5)+.2*(v3)$);
			 \coordinate (v4_3_2) at ($.8*(v4)+.1*(v5)+.1*(v3)$);
			 \coordinate (v4_3_3) at ($.7*(v4)+.25*(v5)+.05*(v3)$);
			 \coordinate (v4_3_4) at ($.5*(v4)+.45*(v5)+.05*(v3)$);
			 \coordinate (v4_3) at (intersection of v4_3_1--v4_3_2 and v4_3_3--v4_3_4);		 
			 \draw[blue,thick] (v4_3_1) --(v4_3) --(v4_3_4) ;

			 \coordinate (v4_4_1) at ($.72*(v4)+.06*(v2)+.23*(v3)$);
			 \coordinate (v4_4_2) at ($.84*(v4)+.06*(v2)+.11*(v3)$);
			 \coordinate (v4_4_3) at ($.84*(v4)+.11*(v2)+.06*(v3)$);
			 \coordinate (v4_4_4) at ($.72*(v4)+.23*(v2)+.06*(v3)$);
			 \draw[red,thick] (v4_4_1) --(v4_4_2) --(v4_4_3) --(v4_4_4) ;

			 \coordinate (v5_1_1) at ($.5*(v5)+.1*(v4)+.4*(v6)$);
			 \coordinate (v5_1_2) at ($.7*(v5)+.1*(v4)+.2*(v6)$);
			 \coordinate (v5_1_3) at ($.7*(v5)+.2*(v4)+.1*(v6)$);
			 \coordinate (v5_1_4) at ($.5*(v5)+.4*(v4)+.1*(v6)$);
			 \coordinate (v5_1) at (intersection of v5_1_1--v5_1_2 and v5_1_3--v5_1_4);
			 \draw[blue,thick] (v5_1_1) --(v5_1)--(v5_1_4);
			 
			 \coordinate (v5_2_1) at ($.7*(v5)+.1*(v4)+.2*(v3)$);
			 \coordinate (v5_2_2) at ($.8*(v5)+.1*(v4)+.1*(v3)$);
			 \coordinate (v5_2_3) at ($.6*(v5)+.23*(v4)+.07*(v3)$);
			 \coordinate (v5_2_4) at ($.4*(v5)+.43*(v4)+.07*(v3)$);
			 \coordinate (v5_2) at (intersection of v5_2_1--v5_2_2 and v5_2_3--v5_2_4);
			 \draw[blue,thick] (v5_2_1) --(v5_2)--(v5_2_4);
			 
			 \coordinate (v5_3_1) at ($.72*(v5)+.06*(v1)+.23*(v3)$);
			 \coordinate (v5_3_2) at ($.84*(v5)+.06*(v1)+.11*(v3)$);
			 \coordinate (v5_3_3) at ($.84*(v5)+.11*(v1)+.06*(v3)$);
			 \coordinate (v5_3_4) at ($.72*(v5)+.23*(v1)+.06*(v3)$);
			 \coordinate (v5_3) at (intersection of v5_3_1--v5_3_2 and v5_3_3--v5_3_4);
			 \draw[blue,thick] (v5_3_1) --(v5_3)--(v5_3_4);
			 
			 \coordinate (v5_4_1) at ($.7*(v5)+.1*(v6)+.2*(v1)$);
			 \coordinate (v5_4_2) at ($.8*(v5)+.1*(v6)+.1*(v1)$);
			 \coordinate (v5_4_3) at ($.7*(v5)+.25*(v6)+.05*(v1)$);
			 \coordinate (v5_4_4) at ($.5*(v5)+.45*(v6)+.05*(v1)$);
			 \coordinate (v5_4) at (intersection of v5_4_1--v5_4_2 and v5_4_3--v5_4_4);
			 \draw[blue,thick] (v5_4_1) --(v5_4)--(v5_4_4);
			 
			 \coordinate (v6_1_1) at ($.5*(v6)+.1*(v5)+.4*(v4)$);
			 \coordinate (v6_1_2) at ($.7*(v6)+.1*(v5)+.2*(v4)$);
			 \coordinate (v6_1_3) at ($.7*(v6)+.2*(v5)+.1*(v4)$);
			 \coordinate (v6_1_4) at ($.5*(v6)+.4*(v5)+.1*(v4)$);
			 \coordinate (v6_1) at (intersection of v6_1_1--v6_1_2 and v6_1_3--v6_1_4);
			 \draw[blue,thick] (v6_1_1) --(v6_1)--(v6_1_4) ;
			 
			 \coordinate (v6_2_1) at ($.7*(v6)+.1*(v4)+.2*(v2)$);
			 \coordinate (v6_2_2) at ($.8*(v6)+.1*(v4)+.1*(v2)$);
			 \coordinate (v6_2_3) at ($.6*(v6)+.23*(v4)+.07*(v2)$);
			 \coordinate (v6_2_4) at ($.4*(v6)+.43*(v4)+.07*(v2)$);
			 \coordinate (v6_2) at (intersection of v6_2_1--v6_2_2 and v6_2_3--v6_2_4);
			 \draw[blue,thick] (v6_2_1) --(v6_2)--(v6_2_4) ;
			 
			 \coordinate (v6_3_1) at ($.72*(v6)+.06*(v1)+.23*(v2)$);
			 \coordinate (v6_3_2) at ($.84*(v6)+.06*(v1)+.11*(v2)$);
			 \coordinate (v6_3_3) at ($.84*(v6)+.11*(v1)+.06*(v2)$);
			 \coordinate (v6_3_4) at ($.72*(v6)+.23*(v1)+.06*(v2)$);
			 \coordinate (v6_3) at (intersection of v6_3_1--v6_3_2 and v6_3_3--v6_3_4);
			 \draw[blue,thick] (v6_3_1) --(v6_3)--(v6_3_4) ;
			 
			 \coordinate (v6_4_1) at ($.7*(v6)+.1*(v5)+.2*(v1)$);
			 \coordinate (v6_4_2) at ($.8*(v6)+.1*(v5)+.1*(v1)$);
			 \coordinate (v6_4_3) at ($.7*(v6)+.25*(v5)+.05*(v1)$);
			 \coordinate (v6_4_4) at ($.5*(v6)+.45*(v5)+.05*(v1)$);
			 \coordinate (v6_4) at (intersection of v6_4_1--v6_4_2 and v6_4_3--v6_4_4);
			 \draw[blue,thick] (v6_4_1) --(v6_4) --(v6_4_4) ;

   		    \end{tikzpicture}
		    \subcaption{}
		    \label{subfig:sec4_octahedron_case_square_pyramid}
	\end{subfigure}
	\end{center}
	\caption{}
	\label{fig:sec4_octahedron_label}
\end{figure}

\begin{figure}[h!]	
	  \begin{center} 	  	
	\begin{subfigure}{.4\linewidth}
		    \begin{tikzpicture}[xscale=.8, yscale=.5, x={(1cm,0cm)},y={(-.4cm,-0.6cm)}, z={(0cm,2cm)}]
			 \coordinate (v1) at (0, 0, 0);
			 \coordinate (v2) at (0, 3, 0);
			 \coordinate (v3) at (3, 3, 0);
			 \coordinate (v4) at (3, 0, 0);
			 \coordinate (v5) at ($.5*(v1)+.5*(v3)+(0,0,1.5)$);
   			 \coordinate (v6) at  ($.5*(v1)+.5*(v3)-(0,0,1.5)$);
			 \draw[very thick] (v1)--(v2)--(v3)--(v4)--(v1);
			 \draw[very thick] (v5)--(v1);
			 \draw[very thick] (v5)--(v2);
			 \draw[very thick] (v5)--(v3);
			 \draw[very thick] (v5)--(v4);
			 \draw[very thick] (v6)--(v1);
			 \draw[very thick] (v6)--(v2);
			 \draw[very thick] (v6)--(v3);
			 \draw[very thick] (v6)--(v4);
			 \coordinate (u) at ($.25*(v1)+.25*(v2)+.15*(v3)+.15*(v4)+.2*(v5)$);
   			 \coordinate (w) at  ($.25*(v3)+.25*(v4)+.15*(v1)+.15*(v2)+.2*(v5)$);
			 \coordinate (s) at ($.3*(v1)+.3*(v4)+.05*(v2)+.05*(v3)+.3*(v6)$);
   			 \coordinate (t) at  ($.3*(v2)+.3*(v3)+.05*(v1)+.05*(v4)+.3*(v6)$);
			 \draw[red] (u)--(w);
			 \draw[red] (u)--(s);
			 \draw[red] (u)--(t);
			 \draw[red] (w)--(s);
			 \draw[red] (w)--(t);
			 \draw[red] (s)--(t);
			 \draw[blue] (u)--(v5);
			 \draw[blue] (w)--(v5);
			 \draw[blue] (s)--(v6);
			 \draw[blue] (t)--(v6);
			 \draw[blue] (u)--(v1);
			 \draw[blue] (u)--(v2);
			 \draw[blue] (w)--(v3);
			 \draw[blue] (w)--(v4);
			 \draw[blue] (s)--(v1);
			 \draw[blue] (s)--(v4); 
			 \draw[blue] (t)--(v3);
			 \draw[blue] (t)--(v2); 
		   \end{tikzpicture}
		    \subcaption{}
		    \label{fig:sec4_octahedron_result1}
	\end{subfigure}  
	  	\begin{subfigure}{.4\linewidth}
 		    \begin{tikzpicture}[xscale=.8, yscale=.5, x={(1cm,0cm)},y={(-.4cm,-0.6cm)}, z={(0cm,2cm)}]
			 \coordinate (v1) at (0, 0, 0);
			 \coordinate (v2) at (0, 3, 0);
			 \coordinate (v3) at (3, 3, 0);
			 \coordinate (v4) at (3, 0, 0);
			 \coordinate (v5) at ($.5*(v1)+.5*(v3)+(0,0,1.5)$);
   			 \coordinate (v6) at  ($.5*(v1)+.5*(v3)-(0,0,1.5)$);
			 \draw[very thick] (v1)--(v2)--(v3)--(v4)--(v1);
			 \draw[very thick] (v5)--(v1);
			 \draw[very thick] (v5)--(v2);
			 \draw[very thick] (v5)--(v3);
			 \draw[very thick] (v5)--(v4);
			 \draw[very thick] (v6)--(v1);
			 \draw[very thick] (v6)--(v2);
			 \draw[very thick] (v6)--(v3);
			 \draw[very thick] (v6)--(v4);
			 \coordinate (u) at ($.3*(v1)+.3*(v2)+.2*(v3)+.2*(v4)$);
   			 \coordinate (w) at  ($.3*(v3)+.3*(v4)+.2*(v1)+.2*(v2)$);
			 \draw[red] (u)--(w);
			 \draw[blue] (u)--(v5);
			 \draw[blue] (u)--(v6);
			 \draw[blue] (u)--(v1);
			 \draw[blue] (u)--(v2);
			 \draw[blue] (w)--(v5);
			 \draw[blue] (w)--(v6);
			 \draw[blue] (w)--(v3);
			 \draw[blue] (w)--(v4);
		   \end{tikzpicture}   
		    \subcaption{}
		    \label{fig:sec4_octahedron_result2}
	\end{subfigure}  
	\caption{}
	\label{fig:sec4_octahedron_result}
	\end{center}
\end{figure}

The other option left to explore is if $F_1$ is not an octahedron, square pyramid or a bisimplex. Without loss of generality we may then assume that all facets of $P$ that intersect $F$ in a facet are simplices, otherwise we would consider a non-simplex facet as the new $F_1$ and be in one of the previous cases. However, this case immediately gives a pyramid over an octahedron, a case that is already accounted for, as it is the dual of the pyramid over the cube, namely class $16$. Therefore we only get one new class of psd-minimal polytopes which is class $25$, and its dual.
	
\subsection{Bisimplex}

Suppose now that $P$ has a bisimplicial facet $F$. We can assume that neither $P$ nor its dual has a cube, triangular prism or octahedron as a facet. In particular, every vertex of $P$ is contained in at most five facets.
\begin{figure}
  		    \begin{tikzpicture}[yscale=0.4, xscale =0.7]
   			 \coordinate (v1) at (0,4.5);
			 \coordinate (v2) at (4,-4);
			 \coordinate (v3) at (-4,-4);
			 \coordinate (v5) at (0, 1);
   			 \coordinate (v4) at (0,-2);
			 \coordinate (v6) at ($-.2*(v3)+1.2*(0,-1)-(0,.5)$);
			 \draw[very thick] (v1)--(v2)--(v3)--(v1);
			 \draw[very thick] (v5)--(v1);
			 \draw[very thick] (v4)--(v2);
			 \draw[very thick] (v4)--(v3);
			 \draw[very thick] (v5)--(v2);
			 \draw[very thick] (v5)--(v3);
			 \draw[very thick] (v5)--(v4);
			 \node at (-1,0.9) {$F'_1$};
			 \node at (1,0.9) {$F'_2$};
			 \node at (-1,-1.5) {$F'_3$};
			 \node at (1,-1.5) {$F'_4$};
			 \node[label=$v$] at ($(v5)+(.3,-.4)$) {};
		\end{tikzpicture}
		\caption{}	     
	        \label{fig:sec4_bisimplex_marked_facets}	 
\end{figure}

Consider the graph of $F$ with a marked vertex of degree four (with respect to the graph of $F$) and four marked facets of $F$ as shown in Figure~\ref{fig:sec4_bisimplex_marked_facets}. Let $F_1$, $F_2$, $F_3$ and $F_4$ be the facets of $P$ such that $F'_1=F_1\cap F$, $F'_2=F_2\cap F$, $F'_3=F_3\cap F$ and $F'_4=F_4\cap F$. Clearly, the vertex $v$ is contained in at least five facets of $P$, namely in $F_1$, $F_2$, $F_3$, $F_4$ and $F$. Hence, there is no other facet of $P$ containing $v$, otherwise $v$ is contained in at least six facets of $P$. Additionally note that every edge of $F$ contains a vertex of degree four. By these observations, we can conclude that no facet of $P$ intersects $F$ in an edge or in a vertex of degree four.

This strengthened version of Proposition~\ref{prop:intersections} allows us to apply all the labeling rules in Figure~\ref{fig:sec4_case_octahedron_rules} to the bisimplicial case as well. Note that the graph of $F$ contains also vertices of degree three. 

For these vertices we introduce the two additional rules in Figure~\ref{fig:sec4_case_bisimplex_rules} 
which are saying that we cannot mix blue angles and red cut angles around a degree three vertex of $F$. 
Suppose we have a red cut angle $(v,F_1')$ and two blue angles. Then the two edges of $F_1$ that contain $v$ and are not contained in $F$ have to be the same which is a contradiction. If there were two red cut angles $(v,F_1')$ and $(v,F_2')$ they share an edge that contains $v$ and is not contained in $F$ and the remaining such edge of each would both get identified with the only such edge of the blue angle face, again a contradiction.

\begin{figure}	
	  \begin{center}
	   \begin{subfigure}{0.4\linewidth}
  		    \begin{tikzpicture}[scale=0.17]
   			 \coordinate (v1) at (0,0);
			 \coordinate (v2) at (0,4);
			 \coordinate (v3) at (4,-3);
			 \coordinate (v4) at (-4,-3);
			 \draw[very thick] (v2)--(v1)--(v3);
			 \draw[very thick] (v1)--(v4);
			 
			 \coordinate (v34_1) at ($.5*(v1)+.1*(v4)+.4*(v3)$);
			 \coordinate (v34_2) at ($.7*(v1)+.1*(v4)+.2*(v3)$);
			 \coordinate (v34_3) at ($.7*(v1)+.2*(v4)+.1*(v3)$);
			 \coordinate (v34_4) at ($.5*(v1)+.4*(v4)+.1*(v3)$);
		 	 \coordinate (w34) at (intersection of v34_1--v34_2 and v34_3--v34_4);		 
			 \draw[red,thick] (v34_1)--(v34_2)--(v34_3)--(v34_4) ; 
			 
			 \draw[->,double, >=angle 90] (5.5,0) --(7.5,0);

			\begin{scope}[xshift=13cm]
   			 \coordinate (v1) at (0,0);
			 \coordinate (v2) at (0,4);
			 \coordinate (v3) at (4,-3);
			 \coordinate (v4) at (-4,-3);
			 \draw[very thick] (v2)--(v1)--(v3);
			 \draw[very thick] (v1)--(v4);
			 
			 \coordinate (v34_1) at ($.5*(v1)+.1*(v4)+.4*(v3)$);
			 \coordinate (v34_2) at ($.7*(v1)+.1*(v4)+.2*(v3)$);
			 \coordinate (v34_3) at ($.7*(v1)+.2*(v4)+.1*(v3)$);
			 \coordinate (v34_4) at ($.5*(v1)+.4*(v4)+.1*(v3)$);
		 	 \coordinate (w34) at (intersection of v34_1--v34_2 and v34_3--v34_4);		 
			 \draw[red,thick] (v34_1)--(v34_2)--(v34_3)--(v34_4) ; 
				 
			 \coordinate (v32_1) at ($.4*(v1)+.15*(v2)+.45*(v3)$);
			 \coordinate (v32_2) at ($.6*(v1)+.15*(v2)+.25*(v3)$);
			 \coordinate (v32_3) at ($.6*(v1)+.25*(v2)+.15*(v3)$);
			 \coordinate (v32_4) at ($.4*(v1)+.45*(v2)+.15*(v3)$);
		 	 \coordinate (w32) at (intersection of v32_1--v32_2 and v32_3--v32_4);
			 \draw[red,thick] (v32_1)--(v32_2)--(v32_3)--(v32_4) ;	
			 
			 \coordinate (v42_1) at ($.4*(v1)+.15*(v2)+.45*(v4)$);
			 \coordinate (v42_2) at ($.6*(v1)+.15*(v2)+.25*(v4)$);
			 \coordinate (v42_3) at ($.6*(v1)+.25*(v2)+.15*(v4)$);
			 \coordinate (v42_4) at ($.4*(v1)+.45*(v2)+.15*(v4)$);
		 	 \coordinate (w42) at (intersection of v42_1--v42_2 and v42_3--v42_4);
			 \draw[red,thick] (v42_1)--(v42_2)--(v42_3)--(v42_4) ;	   
			 
			\end{scope}
	 			 	
   		    \end{tikzpicture}
		    \subcaption{}
		    \label{subfig:sec4_labeling_rule4}
	\end{subfigure}
	   \begin{subfigure}{0.4\linewidth}
  		    \begin{tikzpicture}[scale=0.17]
   			 \coordinate (v1) at (0,0);
			 \coordinate (v2) at (0,4);
			 \coordinate (v3) at (4,-3);
			 \coordinate (v4) at (-4,-3);
			 \draw[very thick] (v2)--(v1)--(v3);
			 \draw[very thick] (v1)--(v4);
			 
			 \coordinate (v34_1) at ($.5*(v1)+.1*(v4)+.4*(v3)$);
			 \coordinate (v34_2) at ($.7*(v1)+.1*(v4)+.2*(v3)$);
			 \coordinate (v34_3) at ($.7*(v1)+.2*(v4)+.1*(v3)$);
			 \coordinate (v34_4) at ($.5*(v1)+.4*(v4)+.1*(v3)$);
		 	 \coordinate (w34) at (intersection of v34_1--v34_2 and v34_3--v34_4);		 
			 \draw[blue,thick] (v34_1)--(w34)--(v34_4) ; 
			 
           		 \draw[->,double, >=angle 90] (5.5,0) --(7.5,0);

			\begin{scope}[xshift=13cm]
   			 \coordinate (v1) at (0,0);
			 \coordinate (v2) at (0,4);
			 \coordinate (v3) at (4,-3);
			 \coordinate (v4) at (-4,-3);
			 \draw[very thick] (v2)--(v1)--(v3);
			 \draw[very thick] (v1)--(v4);
			 
			 \coordinate (v34_1) at ($.5*(v1)+.1*(v4)+.4*(v3)$);
			 \coordinate (v34_2) at ($.7*(v1)+.1*(v4)+.2*(v3)$);
			 \coordinate (v34_3) at ($.7*(v1)+.2*(v4)+.1*(v3)$);
			 \coordinate (v34_4) at ($.5*(v1)+.4*(v4)+.1*(v3)$);
		 	 \coordinate (w34) at (intersection of v34_1--v34_2 and v34_3--v34_4);		 
			 \draw[blue,thick] (v34_1)--(w34)--(v34_4) ; 
				 
			 \coordinate (v32_1) at ($.4*(v1)+.15*(v2)+.45*(v3)$);
			 \coordinate (v32_2) at ($.6*(v1)+.15*(v2)+.25*(v3)$);
			 \coordinate (v32_3) at ($.6*(v1)+.25*(v2)+.15*(v3)$);
			 \coordinate (v32_4) at ($.4*(v1)+.45*(v2)+.15*(v3)$);
		 	 \coordinate (w32) at (intersection of v32_1--v32_2 and v32_3--v32_4);
			 \draw[blue,thick] (v32_1)--(w32)--(v32_4) ;
			 
			 \coordinate (v42_1) at ($.4*(v1)+.15*(v2)+.45*(v4)$);
			 \coordinate (v42_2) at ($.6*(v1)+.15*(v2)+.25*(v4)$);
			 \coordinate (v42_3) at ($.6*(v1)+.25*(v2)+.15*(v4)$);
			 \coordinate (v42_4) at ($.4*(v1)+.45*(v2)+.15*(v4)$);
		 	 \coordinate (w42) at (intersection of v42_1--v42_2 and v42_3--v42_4);
			  \draw[blue,thick] (v42_1)--(w42)--(v42_4) ;	   
			 
			\end{scope}
	 			 	
   		    \end{tikzpicture}
		    \subcaption{}
		    \label{subfig:sec4_labeling_rule5}
	\end{subfigure}
	\caption{}
	\label{fig:sec4_case_bisimplex_rules}
	\end{center}
\end{figure}

We obtain four valid diagrams up to symmetry, by considering all the different possibilities for the bottom 
triangle. The first is gotten by making all other facets simplices, which gives us the pyramid over a bisimplex. This is however dual to the pyramid over a prism,
which is already accounted for in class $8$. The remaining three possible diagrams for the graph of $F$ are shown in Figure~\ref{fig:sec4_bisimplex_label}.

\begin{figure}[h!]	
	  \begin{center}
	  	\begin{subfigure}{.3\linewidth}
  		    \begin{tikzpicture}[scale=0.4]
   			 \coordinate (v1) at (0,4);
			 \coordinate (v2) at (4,-4);
			 \coordinate (v3) at (-4,-4);
			 \coordinate (v5) at (0,0);
   			 \coordinate (v4) at (0,-2);
			 \coordinate (v6) at ($-.2*(v3)+1.2*(0,-1)-(0,.5)$);
			 \draw[very thick] (v1)--(v2)--(v3)--(v1);
			 \draw[very thick] (v5)--(v1);
			 \draw[very thick] (v4)--(v2);
			 \draw[very thick] (v4)--(v3);
			 \draw[very thick] (v5)--(v2);
			 \draw[very thick] (v5)--(v3);
			 \draw[very thick] (v5)--(v4);
			 
			 \coordinate (v1_1_1) at ($.84*(v1)+.08*(v5)+.08*(v3)-.02*(v3)+.02*(v5)+.1*(v3)-.1*(v1)$);
			 \coordinate (v1_1_2) at ($.84*(v1)+.08*(v5)+.08*(v3)-.02*(v3)-+02*(v5)$);
			 \coordinate (v1_1_3) at ($.8*(v1)+.1*(v5)+.1*(v3)-.07*(v3)+.07*(v5)$);
			 \coordinate (v1_1_4) at ($.8*(v1)+.1*(v5)+.1*(v3)-.07*(v3)+.07*(v5)+.2*(v5)-.2*(v1)$);
			 \draw[red,thick] (v1_1_1) --(v1_1_2) --(v1_1_3) --(v1_1_4) ;
			 
			 \coordinate (v1_2_1) at ($.84*(v1)+.08*(v5)+.08*(v2)-.02*(v2)+.02*(v5)+.1*(v2)-.1*(v1)$);
			 \coordinate (v1_2_2) at ($.84*(v1)+.08*(v5)+.08*(v2)-.02*(v2)-+02*(v5)$);
			 \coordinate (v1_2_3) at ($.8*(v1)+.1*(v5)+.1*(v2)-.07*(v2)+.07*(v5)$);
			 \coordinate (v1_2_4) at ($.8*(v1)+.1*(v5)+.1*(v2)-.07*(v2)+.07*(v5)+.2*(v5)-.2*(v1)$);
			 \draw[red,thick] (v1_2_1) --(v1_2_2) --(v1_2_3) --(v1_2_4) ;
			 
			 \coordinate (v1_3_1) at ($1.04*(v1)-.02*(v3)-.02*(v2)-.015*(v2)+.015*(v3)+.1*(v3)-.1*(v1)$);
			 \coordinate (v1_3_2) at ($1.04*(v1)-.02*(v3)-.02*(v2)-.015*(v2)+.015*(v3)$);
			 \coordinate (v1_3_3) at ($1.04*(v1)-.02*(v3)-.02*(v2)+.015*(v2)-.015*(v3)$);
			 \coordinate (v1_3_4) at ($1.04*(v1)-.02*(v3)-.02*(v2)+.015*(v2)-.015*(v3)+.1*(v2)-.1*(v1)$);
			 \draw[red,thick] (v1_3_1)--(v1_3_2)--(v1_3_3)--(v1_3_4);

			 \coordinate (v2_1_1) at ($.7*(v2)+.1*(v5)+.2*(v4)-.015*(v3)+.015*(v1)+.07*(v5)-.07*(v2)$);
			 \coordinate (v2_1_2) at ($.7*(v2)+.1*(v5)+.2*(v4)-.015*(v3)+.015*(v1)$);
			 \coordinate (v2_1_3) at ($.7*(v2)+.1*(v5)+.2*(v4)+.005*(v3)-.005*(v1)$);
			 \coordinate (v2_1_4) at ($.7*(v2)+.1*(v5)+.2*(v4)+.005*(v3)-.005*(v1)+.07*(v4)-.07*(v2)$);
			 \draw[red,thick] (v2_1_1) --(v2_1_2) --(v2_1_3) --(v2_1_4) ;
			 
 			 \coordinate (v2_2_1) at ($.7*(v2)+.1*(v5)+.2*(v4)+.04*(v3)-.04*(v1)+.07*(v4)-.07*(v2)$);
			 \coordinate (v2_2_2) at ($.7*(v2)+.1*(v5)+.2*(v4)+.04*(v3)-.04*(v1)$);
			 \coordinate (v2_2_3) at ($.7*(v2)+.1*(v5)+.2*(v4)+.08*(v3)-.08*(v1)$);
			 \coordinate (v2_2_4) at ($.7*(v2)+.1*(v5)+.2*(v4)+.08*(v3)-.08*(v1)+.07*(v3)-.07*(v2)$);
			 \coordinate (v2_2) at (intersection of v2_2_1--v2_2_2 and v2_2_3--v2_2_4);
			 \draw[blue,thick] (v2_2_1) --(v2_2)--(v2_2_4);

 			 \coordinate (v2_3_1) at ($.7*(v2)+.1*(v4)+.2*(v5)-.04*(v3)+.04*(v1)+.07*(v5)-.07*(v2)$);
			 \coordinate (v2_3_2) at ($.7*(v2)+.1*(v4)+.2*(v5)-.04*(v3)+.04*(v1)$);
			 \coordinate (v2_3_3) at ($.7*(v2)+.1*(v4)+.2*(v5)-.06*(v3)+.06*(v1)$);
			 \coordinate (v2_3_4) at ($.7*(v2)+.1*(v4)+.2*(v5)-.06*(v3)+.06*(v1)+.07*(v1)-.07*(v2)$);
			 \coordinate (v2_3) at (intersection of v2_3_1--v2_3_2 and v2_3_3--v2_3_4);
			 \draw[blue,thick] (v2_3_1) --(v2_3)--(v2_3_4);	
			 
			 \coordinate (v2_4_1) at ($.7*(v2)+.1*(v5)+.2*(v4)-.02*(v3)+.02*(v1)+.4*(v2)-.2*(v5)-0.2*(v4)+.1*(v1)-.1*(v2)$);
			 \coordinate (v2_4_2) at ($.7*(v2)+.1*(v5)+.2*(v4)-.02*(v3)+.02*(v1)+.4*(v2)-.2*(v5)-0.2*(v4)$);
			 \coordinate (v2_4_3) at ($.7*(v2)+.1*(v5)+.2*(v4)+.0*(v3)-.0*(v1)+.4*(v2)-.2*(v5)-0.2*(v4)$);
			 \coordinate (v2_4_4) at ($.7*(v2)+.1*(v5)+.2*(v4)+.0*(v3)-.0*(v1)+.4*(v2)-.2*(v5)-0.2*(v4)+.1*(v3)-.1*(v2)$);
			 \draw[red,thick] (v2_4_1) --(v2_4_2) --(v2_4_3) --(v2_4_4) ;

			 \coordinate (v3_1_1) at ($.7*(v3)+.1*(v5)+.2*(v4)-.015*(v2)+.015*(v1)+.07*(v5)-.07*(v3)$);
			 \coordinate (v3_1_2) at ($.7*(v3)+.1*(v5)+.2*(v4)-.015*(v2)+.015*(v1)$);
			 \coordinate (v3_1_3) at ($.7*(v3)+.1*(v5)+.2*(v4)+.005*(v2)-.005*(v1)$);
			 \coordinate (v3_1_4) at ($.7*(v3)+.1*(v5)+.2*(v4)+.005*(v2)-.005*(v1)+.07*(v4)-.07*(v3)$);
			 \coordinate (v3_1) at (intersection of v3_1_1--v3_1_2 and v3_1_3--v3_1_4);
			 \draw[blue,thick] (v3_1_1) --(v3_1)--(v3_1_4);
			 
 			 \coordinate (v3_2_1) at ($.7*(v3)+.1*(v5)+.2*(v4)+.04*(v2)-.04*(v1)+.07*(v4)-.07*(v3)$);
			 \coordinate (v3_2_2) at ($.7*(v3)+.1*(v5)+.2*(v4)+.04*(v2)-.04*(v1)$);
			 \coordinate (v3_2_3) at ($.7*(v3)+.1*(v5)+.2*(v4)+.08*(v2)-.08*(v1)$);
			 \coordinate (v3_2_4) at ($.7*(v3)+.1*(v5)+.2*(v4)+.08*(v2)-.08*(v1)+.07*(v2)-.07*(v3)$);
			 \draw[red,thick] (v3_2_1) --(v3_2_2) --(v3_2_3) --(v3_2_4) ;

 			 \coordinate (v3_3_1) at ($.7*(v3)+.1*(v4)+.2*(v5)-.04*(v2)+.04*(v1)+.07*(v5)-.07*(v3)$);
			 \coordinate (v3_3_2) at ($.7*(v3)+.1*(v4)+.2*(v5)-.04*(v2)+.04*(v1)$);
			 \coordinate (v3_3_3) at ($.7*(v3)+.1*(v4)+.2*(v5)-.06*(v2)+.06*(v1)$);
			 \coordinate (v3_3_4) at ($.7*(v3)+.1*(v4)+.2*(v5)-.06*(v2)+.06*(v1)+.07*(v1)-.07*(v3)$);
			 \draw[red,thick] (v3_3_1) --(v3_3_2) --(v3_3_3) --(v3_3_4) ;

			 \coordinate (v3_4_1) at ($.7*(v3)+.1*(v5)+.2*(v4)-.02*(v2)+.02*(v1)+.4*(v3)-.2*(v5)-0.2*(v4)+.1*(v1)-.1*(v3)$);
			 \coordinate (v3_4_2) at ($.7*(v3)+.1*(v5)+.2*(v4)-.02*(v2)+.02*(v1)+.4*(v3)-.2*(v5)-0.2*(v4)$);
			 \coordinate (v3_4_3) at ($.7*(v3)+.1*(v5)+.2*(v4)+.0*(v2)-.0*(v1)+.4*(v3)-.2*(v5)-0.2*(v4)$);
			 \coordinate (v3_4_4) at ($.7*(v3)+.1*(v5)+.2*(v4)+.0*(v2)-.0*(v1)+.4*(v3)-.2*(v5)-0.2*(v4)+.1*(v2)-.1*(v3)$);
			 \coordinate (v3_4) at (intersection of v3_4_1--v3_4_2 and v3_4_3--v3_4_4);
			 \draw[blue,thick] (v3_4_1) --(v3_4)--(v3_4_4);
			 
			 \coordinate (v4_1_1) at ($.72*(v4)+.06*(v2)+.23*(v3)$);
			 \coordinate (v4_1_2) at ($.84*(v4)+.06*(v2)+.11*(v3)$);
			 \coordinate (v4_1_3) at ($.84*(v4)+.11*(v2)+.06*(v3)$);
			 \coordinate (v4_1_4) at ($.72*(v4)+.23*(v2)+.06*(v3)$);
			 \draw[red,thick] (v4_1_1) --(v4_1_2) --(v4_1_3) --(v4_1_4) ;
			 
			 \coordinate (v4_2_1) at ($.66*(v4)+.03*(v2)+.32*(v5)$);
			 \coordinate (v4_2_2) at ($.8*(v4)+.03*(v2)+.18*(v5)$);
			 \coordinate (v4_2_3) at ($.84*(v4)+.08*(v2)+.09*(v5)$);
			 \coordinate (v4_2_4) at ($.71*(v4)+.19*(v2)+.09*(v5)$);
			 \draw[red,thick] (v4_2_1) --(v4_2_2) --(v4_2_3) --(v4_2_4) ;
			 
			 \coordinate (v4_3_1) at ($.66*(v4)+.03*(v3)+.32*(v5)$);
			 \coordinate (v4_3_2) at ($.8*(v4)+.03*(v3)+.18*(v5)$);
			 \coordinate (v4_3_3) at ($.84*(v4)+.08*(v3)+.09*(v5)$);
			 \coordinate (v4_3_4) at ($.71*(v4)+.19*(v3)+.09*(v5)$);
			 \draw[red,thick] (v4_3_1) --(v4_3_2) --(v4_3_3) --(v4_3_4) ;
			 
			 \coordinate (v5_1_1) at ($.72*(v5)+.04*(v2)+.25*(v1)$);
			 \coordinate (v5_1_2) at ($.84*(v5)+.04*(v2)+.13*(v1)$);
			 \coordinate (v5_1_3) at ($.85*(v5)+.08*(v2)+.08*(v1)$);
			 \coordinate (v5_1_4) at ($.73*(v5)+.2*(v2)+.08*(v1)$);
			 \draw[red,thick] (v5_1_1) --(v5_1_2) --(v5_1_3) --(v5_1_4) ;
			 
			 \coordinate (v5_2_1) at ($.6*(v5)+.03*(v2)+.46*(v4)$);
			 \coordinate (v5_2_2) at ($.83*(v5)+.03*(v2)+.23*(v4)$);
			 \coordinate (v5_2_3) at ($.81*(v5)+.07*(v2)+.14*(v4)$);
			 \coordinate (v5_2_4) at ($.63*(v5)+.2*(v2)+.14*(v4)$);
			 \coordinate (v5_2) at (intersection of v5_2_1--v5_2_2 and v5_2_3--v5_2_4);
			 \draw[blue,thick] (v5_2_1) --(v5_2)--(v5_2_4);
			 
			 \coordinate (v5_3_1) at ($.72*(v5)+.04*(v3)+.25*(v1)$);
			 \coordinate (v5_3_2) at ($.84*(v5)+.04*(v3)+.13*(v1)$);
			 \coordinate (v5_3_3) at ($.85*(v5)+.08*(v3)+.08*(v1)$);
			 \coordinate (v5_3_4) at ($.73*(v5)+.2*(v3)+.08*(v1)$);
			 \coordinate (v5_3) at (intersection of v5_3_1--v5_3_2 and v5_3_3--v5_3_4);
			 \draw[blue,thick] (v5_3_1) --(v5_3)--(v5_3_4);
			 
			 \coordinate (v5_4_1) at ($.6*(v5)+.03*(v3)+.46*(v4)$);
			 \coordinate (v5_4_2) at ($.83*(v5)+.03*(v3)+.23*(v4)$);
			 \coordinate (v5_4_3) at ($.81*(v5)+.07*(v3)+.14*(v4)$);
			 \coordinate (v5_4_4) at ($.63*(v5)+.2*(v3)+.14*(v4)$); 
			 \draw[red,thick] (v5_4_1) --(v5_4_2) --(v5_4_3) --(v5_4_4) ;
   		    \end{tikzpicture}
		    \subcaption{}
		    \label{fig:sec4_bisimplex_label1}
	\end{subfigure}	
	  	\begin{subfigure}{.3\linewidth}
  		    \begin{tikzpicture}[scale=0.4]
   			 \coordinate (v1) at (0,4);
			 \coordinate (v2) at (4,-4);
			 \coordinate (v3) at (-4,-4);
			 \coordinate (v5) at (0,0);
   			 \coordinate (v4) at (0,-2);
			 \coordinate (v6) at ($-.2*(v3)+1.2*(0,-1)-(0,.5)$);
			 \draw[very thick] (v1)--(v2)--(v3)--(v1);
			 \draw[very thick] (v5)--(v1);
			 \draw[very thick] (v4)--(v2);
			 \draw[very thick] (v4)--(v3);
			 \draw[very thick] (v5)--(v2);
			 \draw[very thick] (v5)--(v3);
			 \draw[very thick] (v5)--(v4);
			 
			 \coordinate (v1_1_1) at ($.84*(v1)+.08*(v5)+.08*(v3)-.02*(v3)+.02*(v5)+.1*(v3)-.1*(v1)$);
			 \coordinate (v1_1_2) at ($.84*(v1)+.08*(v5)+.08*(v3)-.02*(v3)-+02*(v5)$);
			 \coordinate (v1_1_3) at ($.8*(v1)+.1*(v5)+.1*(v3)-.07*(v3)+.07*(v5)$);
			 \coordinate (v1_1_4) at ($.8*(v1)+.1*(v5)+.1*(v3)-.07*(v3)+.07*(v5)+.2*(v5)-.2*(v1)$);
			 \coordinate (v1_1) at (intersection of v1_1_1--v1_1_2 and v1_1_3--v1_1_4);
			 \draw[blue,thick] (v1_1_1) --(v1_1)--(v1_1_4);
			 
			 \coordinate (v1_2_1) at ($.84*(v1)+.08*(v5)+.08*(v2)-.02*(v2)+.02*(v5)+.1*(v2)-.1*(v1)$);
			 \coordinate (v1_2_2) at ($.84*(v1)+.08*(v5)+.08*(v2)-.02*(v2)-+02*(v5)$);
			 \coordinate (v1_2_3) at ($.8*(v1)+.1*(v5)+.1*(v2)-.07*(v2)+.07*(v5)$);
			 \coordinate (v1_2_4) at ($.8*(v1)+.1*(v5)+.1*(v2)-.07*(v2)+.07*(v5)+.2*(v5)-.2*(v1)$);
			 \coordinate (v1_2) at (intersection of v1_2_1--v1_2_2 and v1_2_3--v1_2_4);
			 \draw[blue,thick] (v1_2_1) --(v1_2)--(v1_2_4);
			 
			 \coordinate (v1_3_1) at ($1.04*(v1)-.02*(v3)-.02*(v2)-.015*(v2)+.015*(v3)+.1*(v3)-.1*(v1)$);
			 \coordinate (v1_3_2) at ($1.04*(v1)-.02*(v3)-.02*(v2)-.015*(v2)+.015*(v3)$);
			 \coordinate (v1_3_3) at ($1.04*(v1)-.02*(v3)-.02*(v2)+.015*(v2)-.015*(v3)$);
			 \coordinate (v1_3_4) at ($1.04*(v1)-.02*(v3)-.02*(v2)+.015*(v2)-.015*(v3)+.1*(v2)-.1*(v1)$);
			 \coordinate (v1_3) at (intersection of v1_3_1--v1_3_2 and v1_3_3--v1_3_4);
			 \draw[blue,thick] (v1_3_1) --(v1_3)--(v1_3_4);

			 \coordinate (v2_1_1) at ($.7*(v2)+.1*(v5)+.2*(v4)-.015*(v3)+.015*(v1)+.07*(v5)-.07*(v2)$);
			 \coordinate (v2_1_2) at ($.7*(v2)+.1*(v5)+.2*(v4)-.015*(v3)+.015*(v1)$);
			 \coordinate (v2_1_3) at ($.7*(v2)+.1*(v5)+.2*(v4)+.005*(v3)-.005*(v1)$);
			 \coordinate (v2_1_4) at ($.7*(v2)+.1*(v5)+.2*(v4)+.005*(v3)-.005*(v1)+.07*(v4)-.07*(v2)$);
			 \coordinate (v2_1) at (intersection of v2_1_1--v2_1_2 and v2_1_3--v2_1_4);
			 \draw[blue,thick] (v2_1_1) --(v2_1)--(v2_1_4);
			 
 			 \coordinate (v2_2_1) at ($.7*(v2)+.1*(v5)+.2*(v4)+.04*(v3)-.04*(v1)+.07*(v4)-.07*(v2)$);
			 \coordinate (v2_2_2) at ($.7*(v2)+.1*(v5)+.2*(v4)+.04*(v3)-.04*(v1)$);
			 \coordinate (v2_2_3) at ($.7*(v2)+.1*(v5)+.2*(v4)+.08*(v3)-.08*(v1)$);
			 \coordinate (v2_2_4) at ($.7*(v2)+.1*(v5)+.2*(v4)+.08*(v3)-.08*(v1)+.07*(v3)-.07*(v2)$);
			 \coordinate (v2_2) at (intersection of v2_2_1--v2_2_2 and v2_2_3--v2_2_4);
			 \draw[blue,thick] (v2_2_1) --(v2_2)--(v2_2_4);

 			 \coordinate (v2_3_1) at ($.7*(v2)+.1*(v4)+.2*(v5)-.04*(v3)+.04*(v1)+.07*(v5)-.07*(v2)$);
			 \coordinate (v2_3_2) at ($.7*(v2)+.1*(v4)+.2*(v5)-.04*(v3)+.04*(v1)$);
			 \coordinate (v2_3_3) at ($.7*(v2)+.1*(v4)+.2*(v5)-.06*(v3)+.06*(v1)$);
			 \coordinate (v2_3_4) at ($.7*(v2)+.1*(v4)+.2*(v5)-.06*(v3)+.06*(v1)+.07*(v1)-.07*(v2)$);
			 \coordinate (v2_3) at (intersection of v2_3_1--v2_3_2 and v2_3_3--v2_3_4);
			 \draw[blue,thick] (v2_3_1) --(v2_3)--(v2_3_4);	
			 
			 \coordinate (v2_4_1) at ($.7*(v2)+.1*(v5)+.2*(v4)-.02*(v3)+.02*(v1)+.4*(v2)-.2*(v5)-0.2*(v4)+.1*(v1)-.1*(v2)$);
			 \coordinate (v2_4_2) at ($.7*(v2)+.1*(v5)+.2*(v4)-.02*(v3)+.02*(v1)+.4*(v2)-.2*(v5)-0.2*(v4)$);
			 \coordinate (v2_4_3) at ($.7*(v2)+.1*(v5)+.2*(v4)+.0*(v3)-.0*(v1)+.4*(v2)-.2*(v5)-0.2*(v4)$);
			 \coordinate (v2_4_4) at ($.7*(v2)+.1*(v5)+.2*(v4)+.0*(v3)-.0*(v1)+.4*(v2)-.2*(v5)-0.2*(v4)+.1*(v3)-.1*(v2)$);
			 \coordinate (v2_4) at (intersection of v2_4_1--v2_4_2 and v2_4_3--v2_4_4);
			 \draw[blue,thick] (v2_4_1) --(v2_4)--(v2_4_4);

			 \coordinate (v3_1_1) at ($.7*(v3)+.1*(v5)+.2*(v4)-.015*(v2)+.015*(v1)+.07*(v5)-.07*(v3)$);
			 \coordinate (v3_1_2) at ($.7*(v3)+.1*(v5)+.2*(v4)-.015*(v2)+.015*(v1)$);
			 \coordinate (v3_1_3) at ($.7*(v3)+.1*(v5)+.2*(v4)+.005*(v2)-.005*(v1)$);
			 \coordinate (v3_1_4) at ($.7*(v3)+.1*(v5)+.2*(v4)+.005*(v2)-.005*(v1)+.07*(v4)-.07*(v3)$);
			 \draw[red,thick] (v3_1_1) --(v3_1_2) --(v3_1_3) --(v3_1_4) ;
			 
 			 \coordinate (v3_2_1) at ($.7*(v3)+.1*(v5)+.2*(v4)+.04*(v2)-.04*(v1)+.07*(v4)-.07*(v3)$);
			 \coordinate (v3_2_2) at ($.7*(v3)+.1*(v5)+.2*(v4)+.04*(v2)-.04*(v1)$);
			 \coordinate (v3_2_3) at ($.7*(v3)+.1*(v5)+.2*(v4)+.08*(v2)-.08*(v1)$);
			 \coordinate (v3_2_4) at ($.7*(v3)+.1*(v5)+.2*(v4)+.08*(v2)-.08*(v1)+.07*(v2)-.07*(v3)$);
			 \coordinate (v3_2) at (intersection of v3_2_1--v3_2_2 and v3_2_3--v3_2_4);
			 \draw[blue,thick] (v3_2_1) --(v3_2)--(v3_2_4);

 			 \coordinate (v3_3_1) at ($.7*(v3)+.1*(v4)+.2*(v5)-.04*(v2)+.04*(v1)+.07*(v5)-.07*(v3)$);
			 \coordinate (v3_3_2) at ($.7*(v3)+.1*(v4)+.2*(v5)-.04*(v2)+.04*(v1)$);
			 \coordinate (v3_3_3) at ($.7*(v3)+.1*(v4)+.2*(v5)-.06*(v2)+.06*(v1)$);
			 \coordinate (v3_3_4) at ($.7*(v3)+.1*(v4)+.2*(v5)-.06*(v2)+.06*(v1)+.07*(v1)-.07*(v3)$);
			 \coordinate (v3_3) at (intersection of v3_3_1--v3_3_2 and v3_3_3--v3_3_4);
			 \draw[blue,thick] (v3_3_1) --(v3_3)--(v3_3_4);

			 \coordinate (v3_4_1) at ($.7*(v3)+.1*(v5)+.2*(v4)-.02*(v2)+.02*(v1)+.4*(v3)-.2*(v5)-0.2*(v4)+.1*(v1)-.1*(v3)$);
			 \coordinate (v3_4_2) at ($.7*(v3)+.1*(v5)+.2*(v4)-.02*(v2)+.02*(v1)+.4*(v3)-.2*(v5)-0.2*(v4)$);
			 \coordinate (v3_4_3) at ($.7*(v3)+.1*(v5)+.2*(v4)+.0*(v2)-.0*(v1)+.4*(v3)-.2*(v5)-0.2*(v4)$);
			 \coordinate (v3_4_4) at ($.7*(v3)+.1*(v5)+.2*(v4)+.0*(v2)-.0*(v1)+.4*(v3)-.2*(v5)-0.2*(v4)+.1*(v2)-.1*(v3)$);
			 \draw[red,thick] (v3_4_1) --(v3_4_2) --(v3_4_3) --(v3_4_4) ;
			 
			 \coordinate (v4_1_1) at ($.72*(v4)+.06*(v2)+.23*(v3)$);
			 \coordinate (v4_1_2) at ($.84*(v4)+.06*(v2)+.11*(v3)$);
			 \coordinate (v4_1_3) at ($.84*(v4)+.11*(v2)+.06*(v3)$);
			 \coordinate (v4_1_4) at ($.72*(v4)+.23*(v2)+.06*(v3)$);
			 \coordinate (v4_1) at (intersection of v4_1_1--v4_1_2 and v4_1_3--v4_1_4);
			 \draw[blue,thick] (v4_1_1) --(v4_1)--(v4_1_4);
			 
			 \coordinate (v4_2_1) at ($.66*(v4)+.03*(v2)+.32*(v5)$);
			 \coordinate (v4_2_2) at ($.8*(v4)+.03*(v2)+.18*(v5)$);
			 \coordinate (v4_2_3) at ($.84*(v4)+.08*(v2)+.09*(v5)$);
			 \coordinate (v4_2_4) at ($.71*(v4)+.19*(v2)+.09*(v5)$);
			 \coordinate (v4_2) at (intersection of v4_2_1--v4_2_2 and v4_2_3--v4_2_4);
			 \draw[blue,thick] (v4_2_1) --(v4_2)--(v4_2_4);
			 
			 \coordinate (v4_3_1) at ($.66*(v4)+.03*(v3)+.32*(v5)$);
			 \coordinate (v4_3_2) at ($.8*(v4)+.03*(v3)+.18*(v5)$);
			 \coordinate (v4_3_3) at ($.84*(v4)+.08*(v3)+.09*(v5)$);
			 \coordinate (v4_3_4) at ($.71*(v4)+.19*(v3)+.09*(v5)$);
			 \coordinate (v4_3) at (intersection of v4_3_1--v4_3_2 and v4_3_3--v4_3_4);
			 \draw[blue,thick] (v4_3_1) --(v4_3)--(v4_3_4);
			 
			 \coordinate (v5_1_1) at ($.72*(v5)+.04*(v2)+.25*(v1)$);
			 \coordinate (v5_1_2) at ($.84*(v5)+.04*(v2)+.13*(v1)$);
			 \coordinate (v5_1_3) at ($.85*(v5)+.08*(v2)+.08*(v1)$);
			 \coordinate (v5_1_4) at ($.73*(v5)+.2*(v2)+.08*(v1)$);
			 \draw[red,thick] (v5_1_1) --(v5_1_2) --(v5_1_3) --(v5_1_4) ;
			 
			 \coordinate (v5_2_1) at ($.6*(v5)+.03*(v2)+.46*(v4)$);
			 \coordinate (v5_2_2) at ($.83*(v5)+.03*(v2)+.23*(v4)$);
			 \coordinate (v5_2_3) at ($.81*(v5)+.07*(v2)+.14*(v4)$);
			 \coordinate (v5_2_4) at ($.63*(v5)+.2*(v2)+.14*(v4)$);
			 \coordinate (v5_2) at (intersection of v5_2_1--v5_2_2 and v5_2_3--v5_2_4);
			 \draw[blue,thick] (v5_2_1) --(v5_2)--(v5_2_4);
			 
			 \coordinate (v5_3_1) at ($.72*(v5)+.04*(v3)+.25*(v1)$);
			 \coordinate (v5_3_2) at ($.84*(v5)+.04*(v3)+.13*(v1)$);
			 \coordinate (v5_3_3) at ($.85*(v5)+.08*(v3)+.08*(v1)$);
			 \coordinate (v5_3_4) at ($.73*(v5)+.2*(v3)+.08*(v1)$);
			 \coordinate (v5_3) at (intersection of v5_3_1--v5_3_2 and v5_3_3--v5_3_4);
			 \draw[blue,thick] (v5_3_1) --(v5_3)--(v5_3_4);
			 
			 \coordinate (v5_4_1) at ($.6*(v5)+.03*(v3)+.46*(v4)$);
			 \coordinate (v5_4_2) at ($.83*(v5)+.03*(v3)+.23*(v4)$);
			 \coordinate (v5_4_3) at ($.81*(v5)+.07*(v3)+.14*(v4)$);
			 \coordinate (v5_4_4) at ($.63*(v5)+.2*(v3)+.14*(v4)$); 
			 \draw[red,thick] (v5_4_1) --(v5_4_2) --(v5_4_3) --(v5_4_4) ;
   		    \end{tikzpicture}
		    \subcaption{}
		    \label{fig:sec4_bisimplex_label2}
	\end{subfigure}
	  	\begin{subfigure}{.3\linewidth}
  		    \begin{tikzpicture}[scale=0.4]
   			 \coordinate (v1) at (0,4);
			 \coordinate (v2) at (4,-4);
			 \coordinate (v3) at (-4,-4);
			 \coordinate (v5) at (0,0);
   			 \coordinate (v4) at (0,-2);
			 \coordinate (v6) at ($-.2*(v3)+1.2*(0,-1)-(0,.5)$);
			 \draw[very thick] (v1)--(v2)--(v3)--(v1);
			 \draw[very thick] (v5)--(v1);
			 \draw[very thick] (v4)--(v2);
			 \draw[very thick] (v4)--(v3);
			 \draw[very thick] (v5)--(v2);
			 \draw[very thick] (v5)--(v3);
			 \draw[very thick] (v5)--(v4);
			 
			 \coordinate (v1_1_1) at ($.84*(v1)+.08*(v5)+.08*(v3)-.02*(v3)+.02*(v5)+.1*(v3)-.1*(v1)$);
			 \coordinate (v1_1_2) at ($.84*(v1)+.08*(v5)+.08*(v3)-.02*(v3)-+02*(v5)$);
			 \coordinate (v1_1_3) at ($.8*(v1)+.1*(v5)+.1*(v3)-.07*(v3)+.07*(v5)$);
			 \coordinate (v1_1_4) at ($.8*(v1)+.1*(v5)+.1*(v3)-.07*(v3)+.07*(v5)+.2*(v5)-.2*(v1)$);
			 \draw[red,thick] (v1_1_1) --(v1_1_2) --(v1_1_3) --(v1_1_4) ;
			 
			 \coordinate (v1_2_1) at ($.84*(v1)+.08*(v5)+.08*(v2)-.02*(v2)+.02*(v5)+.1*(v2)-.1*(v1)$);
			 \coordinate (v1_2_2) at ($.84*(v1)+.08*(v5)+.08*(v2)-.02*(v2)-+02*(v5)$);
			 \coordinate (v1_2_3) at ($.8*(v1)+.1*(v5)+.1*(v2)-.07*(v2)+.07*(v5)$);
			 \coordinate (v1_2_4) at ($.8*(v1)+.1*(v5)+.1*(v2)-.07*(v2)+.07*(v5)+.2*(v5)-.2*(v1)$);
			 \draw[red,thick] (v1_2_1) --(v1_2_2) --(v1_2_3) --(v1_2_4) ;
			 
			 \coordinate (v1_3_1) at ($1.04*(v1)-.02*(v3)-.02*(v2)-.015*(v2)+.015*(v3)+.1*(v3)-.1*(v1)$);
			 \coordinate (v1_3_2) at ($1.04*(v1)-.02*(v3)-.02*(v2)-.015*(v2)+.015*(v3)$);
			 \coordinate (v1_3_3) at ($1.04*(v1)-.02*(v3)-.02*(v2)+.015*(v2)-.015*(v3)$);
			 \coordinate (v1_3_4) at ($1.04*(v1)-.02*(v3)-.02*(v2)+.015*(v2)-.015*(v3)+.1*(v2)-.1*(v1)$);
			 \draw[red,thick] (v1_3_1)--(v1_3_2)--(v1_3_3)--(v1_3_4);

			 \coordinate (v2_1_1) at ($.7*(v2)+.1*(v5)+.2*(v4)-.015*(v3)+.015*(v1)+.07*(v5)-.07*(v2)$);
			 \coordinate (v2_1_2) at ($.7*(v2)+.1*(v5)+.2*(v4)-.015*(v3)+.015*(v1)$);
			 \coordinate (v2_1_3) at ($.7*(v2)+.1*(v5)+.2*(v4)+.005*(v3)-.005*(v1)$);
			 \coordinate (v2_1_4) at ($.7*(v2)+.1*(v5)+.2*(v4)+.005*(v3)-.005*(v1)+.07*(v4)-.07*(v2)$);
			 \coordinate (v2_1) at (intersection of v2_1_1--v2_1_2 and v2_1_3--v2_1_4);
			 \draw[blue,thick] (v2_1_1) --(v2_1)--(v2_1_4);
			 
 			 \coordinate (v2_2_1) at ($.7*(v2)+.1*(v5)+.2*(v4)+.04*(v3)-.04*(v1)+.07*(v4)-.07*(v2)$);
			 \coordinate (v2_2_2) at ($.7*(v2)+.1*(v5)+.2*(v4)+.04*(v3)-.04*(v1)$);
			 \coordinate (v2_2_3) at ($.7*(v2)+.1*(v5)+.2*(v4)+.08*(v3)-.08*(v1)$);
			 \coordinate (v2_2_4) at ($.7*(v2)+.1*(v5)+.2*(v4)+.08*(v3)-.08*(v1)+.07*(v3)-.07*(v2)$);
			 \coordinate (v2_2) at (intersection of v2_2_1--v2_2_2 and v2_2_3--v2_2_4);
			 \draw[blue,thick] (v2_2_1) --(v2_2)--(v2_2_4);

 			 \coordinate (v2_3_1) at ($.7*(v2)+.1*(v4)+.2*(v5)-.04*(v3)+.04*(v1)+.07*(v5)-.07*(v2)$);
			 \coordinate (v2_3_2) at ($.7*(v2)+.1*(v4)+.2*(v5)-.04*(v3)+.04*(v1)$);
			 \coordinate (v2_3_3) at ($.7*(v2)+.1*(v4)+.2*(v5)-.06*(v3)+.06*(v1)$);
			 \coordinate (v2_3_4) at ($.7*(v2)+.1*(v4)+.2*(v5)-.06*(v3)+.06*(v1)+.07*(v1)-.07*(v2)$);
			 \coordinate (v2_3) at (intersection of v2_3_1--v2_3_2 and v2_3_3--v2_3_4);
			 \draw[blue,thick] (v2_3_1) --(v2_3)--(v2_3_4);	
			 
			 \coordinate (v2_4_1) at ($.7*(v2)+.1*(v5)+.2*(v4)-.02*(v3)+.02*(v1)+.4*(v2)-.2*(v5)-0.2*(v4)+.1*(v1)-.1*(v2)$);
			 \coordinate (v2_4_2) at ($.7*(v2)+.1*(v5)+.2*(v4)-.02*(v3)+.02*(v1)+.4*(v2)-.2*(v5)-0.2*(v4)$);
			 \coordinate (v2_4_3) at ($.7*(v2)+.1*(v5)+.2*(v4)+.0*(v3)-.0*(v1)+.4*(v2)-.2*(v5)-0.2*(v4)$);
			 \coordinate (v2_4_4) at ($.7*(v2)+.1*(v5)+.2*(v4)+.0*(v3)-.0*(v1)+.4*(v2)-.2*(v5)-0.2*(v4)+.1*(v3)-.1*(v2)$);
			 \coordinate (v2_4) at (intersection of v2_4_1--v2_4_2 and v2_4_3--v2_4_4);
			 \draw[blue,thick] (v2_4_1) --(v2_4)--(v2_4_4);

			 \coordinate (v3_1_1) at ($.7*(v3)+.1*(v5)+.2*(v4)-.015*(v2)+.015*(v1)+.07*(v5)-.07*(v3)$);
			 \coordinate (v3_1_2) at ($.7*(v3)+.1*(v5)+.2*(v4)-.015*(v2)+.015*(v1)$);
			 \coordinate (v3_1_3) at ($.7*(v3)+.1*(v5)+.2*(v4)+.005*(v2)-.005*(v1)$);
			 \coordinate (v3_1_4) at ($.7*(v3)+.1*(v5)+.2*(v4)+.005*(v2)-.005*(v1)+.07*(v4)-.07*(v3)$);
			 \coordinate (v3_1) at (intersection of v3_1_1--v3_1_2 and v3_1_3--v3_1_4);
			 \draw[blue,thick] (v3_1_1) --(v3_1)--(v3_1_4);
			 
 			 \coordinate (v3_2_1) at ($.7*(v3)+.1*(v5)+.2*(v4)+.04*(v2)-.04*(v1)+.07*(v4)-.07*(v3)$);
			 \coordinate (v3_2_2) at ($.7*(v3)+.1*(v5)+.2*(v4)+.04*(v2)-.04*(v1)$);
			 \coordinate (v3_2_3) at ($.7*(v3)+.1*(v5)+.2*(v4)+.08*(v2)-.08*(v1)$);
			 \coordinate (v3_2_4) at ($.7*(v3)+.1*(v5)+.2*(v4)+.08*(v2)-.08*(v1)+.07*(v2)-.07*(v3)$);
			 \coordinate (v3_2) at (intersection of v3_2_1--v3_2_2 and v3_2_3--v3_2_4);
			 \draw[blue,thick] (v3_2_1) --(v3_2)--(v3_2_4);

 			 \coordinate (v3_3_1) at ($.7*(v3)+.1*(v4)+.2*(v5)-.04*(v2)+.04*(v1)+.07*(v5)-.07*(v3)$);
			 \coordinate (v3_3_2) at ($.7*(v3)+.1*(v4)+.2*(v5)-.04*(v2)+.04*(v1)$);
			 \coordinate (v3_3_3) at ($.7*(v3)+.1*(v4)+.2*(v5)-.06*(v2)+.06*(v1)$);
			 \coordinate (v3_3_4) at ($.7*(v3)+.1*(v4)+.2*(v5)-.06*(v2)+.06*(v1)+.07*(v1)-.07*(v3)$);
			 \coordinate (v3_3) at (intersection of v3_3_1--v3_3_2 and v3_3_3--v3_3_4);
			 \draw[blue,thick] (v3_3_1) --(v3_3)--(v3_3_4);

			 \coordinate (v3_4_1) at ($.7*(v3)+.1*(v5)+.2*(v4)-.02*(v2)+.02*(v1)+.4*(v3)-.2*(v5)-0.2*(v4)+.1*(v1)-.1*(v3)$);
			 \coordinate (v3_4_2) at ($.7*(v3)+.1*(v5)+.2*(v4)-.02*(v2)+.02*(v1)+.4*(v3)-.2*(v5)-0.2*(v4)$);
			 \coordinate (v3_4_3) at ($.7*(v3)+.1*(v5)+.2*(v4)+.0*(v2)-.0*(v1)+.4*(v3)-.2*(v5)-0.2*(v4)$);
			 \coordinate (v3_4_4) at ($.7*(v3)+.1*(v5)+.2*(v4)+.0*(v2)-.0*(v1)+.4*(v3)-.2*(v5)-0.2*(v4)+.1*(v2)-.1*(v3)$);
			 \coordinate (v3_4) at (intersection of v3_4_1--v3_4_2 and v3_4_3--v3_4_4);
			 \draw[blue,thick] (v3_4_1) --(v3_4)--(v3_4_4);
			 
			 \coordinate (v4_1_1) at ($.72*(v4)+.06*(v2)+.23*(v3)$);
			 \coordinate (v4_1_2) at ($.84*(v4)+.06*(v2)+.11*(v3)$);
			 \coordinate (v4_1_3) at ($.84*(v4)+.11*(v2)+.06*(v3)$);
			 \coordinate (v4_1_4) at ($.72*(v4)+.23*(v2)+.06*(v3)$);
			 \draw[red,thick] (v4_1_1) --(v4_1_2) --(v4_1_3) --(v4_1_4) ;
			 
			 \coordinate (v4_2_1) at ($.66*(v4)+.03*(v2)+.32*(v5)$);
			 \coordinate (v4_2_2) at ($.8*(v4)+.03*(v2)+.18*(v5)$);
			 \coordinate (v4_2_3) at ($.84*(v4)+.08*(v2)+.09*(v5)$);
			 \coordinate (v4_2_4) at ($.71*(v4)+.19*(v2)+.09*(v5)$);
			 \draw[red,thick] (v4_2_1) --(v4_2_2) --(v4_2_3) --(v4_2_4) ;
			 
			 \coordinate (v4_3_1) at ($.66*(v4)+.03*(v3)+.32*(v5)$);
			 \coordinate (v4_3_2) at ($.8*(v4)+.03*(v3)+.18*(v5)$);
			 \coordinate (v4_3_3) at ($.84*(v4)+.08*(v3)+.09*(v5)$);
			 \coordinate (v4_3_4) at ($.71*(v4)+.19*(v3)+.09*(v5)$);
			 \draw[red,thick] (v4_3_1) --(v4_3_2) --(v4_3_3) --(v4_3_4) ;
			 
			 \coordinate (v5_1_1) at ($.72*(v5)+.04*(v2)+.25*(v1)$);
			 \coordinate (v5_1_2) at ($.84*(v5)+.04*(v2)+.13*(v1)$);
			 \coordinate (v5_1_3) at ($.85*(v5)+.08*(v2)+.08*(v1)$);
			 \coordinate (v5_1_4) at ($.73*(v5)+.2*(v2)+.08*(v1)$);
			 \coordinate (v5_1) at (intersection of v5_1_1--v5_1_2 and v5_1_3--v5_1_4);
			 \draw[blue,thick] (v5_1_1) --(v5_1)--(v5_1_4);
			 
			 \coordinate (v5_2_1) at ($.6*(v5)+.03*(v2)+.46*(v4)$);
			 \coordinate (v5_2_2) at ($.83*(v5)+.03*(v2)+.23*(v4)$);
			 \coordinate (v5_2_3) at ($.81*(v5)+.07*(v2)+.14*(v4)$);
			 \coordinate (v5_2_4) at ($.63*(v5)+.2*(v2)+.14*(v4)$);
			 \coordinate (v5_2) at (intersection of v5_2_1--v5_2_2 and v5_2_3--v5_2_4);
			 \draw[blue,thick] (v5_2_1) --(v5_2)--(v5_2_4);
			 
			 \coordinate (v5_3_1) at ($.72*(v5)+.04*(v3)+.25*(v1)$);
			 \coordinate (v5_3_2) at ($.84*(v5)+.04*(v3)+.13*(v1)$);
			 \coordinate (v5_3_3) at ($.85*(v5)+.08*(v3)+.08*(v1)$);
			 \coordinate (v5_3_4) at ($.73*(v5)+.2*(v3)+.08*(v1)$);
			 \coordinate (v5_3) at (intersection of v5_3_1--v5_3_2 and v5_3_3--v5_3_4);
			 \draw[blue,thick] (v5_3_1) --(v5_3)--(v5_3_4);
			 
			 \coordinate (v5_4_1) at ($.6*(v5)+.03*(v3)+.46*(v4)$);
			 \coordinate (v5_4_2) at ($.83*(v5)+.03*(v3)+.23*(v4)$);
			 \coordinate (v5_4_3) at ($.81*(v5)+.07*(v3)+.14*(v4)$);
			 \coordinate (v5_4_4) at ($.63*(v5)+.2*(v3)+.14*(v4)$); 
			 \coordinate (v5_4) at (intersection of v5_4_1--v5_4_2 and v5_4_3--v5_4_4);
			 \draw[blue,thick] (v5_4_1) --(v5_4)--(v5_4_4);
   		    \end{tikzpicture}
		    \subcaption{}
		    \label{fig:sec4_bisimplex_label3}
	\end{subfigure}  
	\caption{}
	\label{fig:sec4_bisimplex_label}
	\end{center}
\end{figure}

The labeling in Figure~\ref{fig:sec4_bisimplex_label1} does not correspond to any $4$-polytope. To see this just note that the straightforward identification of nodes implied by the facet intersection rules leads to Figure~\ref{fig:sec4_bisimplex_result1}, where nodes of the same shape represent the same vertex. This identification implies the existence of two facets of $P$ whose intersection is not a face of $P$, for instance the outer facet and the middle left facet in the diagram intersect at two vertices not sharing an edge, which would be impossible if $P$ was actually a polytope.

The diagram in Figure~\ref{fig:sec4_bisimplex_label2} leads to a unique psd-minimal $4$-polytope, whose Schlegel diagram is in Figure~\ref{fig:sec4_bisimplex_result2}. 
It is also straightforward to obtain the Schlegel diagram in Figure~\ref{fig:sec4_bisimplex_result3} of the polytope corresponding to the diagram in Figure~\ref{fig:sec4_bisimplex_label3}. However, we can see in that Schlegel digram that the vertices of the central triangle are contained in six facets of $P$, which means that this must be dual to a previously discovered polytope, and in fact, it is dual to class $12$. So again we only get one new class of polytopes, class $3$, which turns out to be self dual.

\begin{figure}[h!]	
	  \begin{center}
	  	\begin{subfigure}{.3\linewidth}
  		    \begin{tikzpicture}[scale=0.4]
   			 \coordinate (v1) at (0,4);
			 \coordinate (v2) at (4,-4);
			 \coordinate (v3) at (-4,-4);
			 \coordinate (v5) at (0,0);
   			 \coordinate (v4) at (0,-2);
			 \coordinate (v6) at ($-.2*(v3)+1.2*(0,-1)-(0,.5)$);
			 \draw[very thick] (v1)--(v2)--(v3)--(v1);
			 \draw[very thick]  (v5)--(v1);
			 \draw[very thick]  (v4)--(v2);
			 \draw[very thick]  (v4)--(v3);
			 \draw[very thick]  (v5)--(v2);
			 \draw[very thick]  (v5)--(v3);
			 \draw[very thick]  (v5)--(v4);
			 
   			 \coordinate (u) at ($.6*(v5)+.25*(v3)+.15*(v1)$);
			 \coordinate (w) at ($.2*(v5)+.5*(v3)+.3*(v1)$);
			  \draw[very thin](u)--(v5);
			  \draw[very thin](u)--(v3);
			  \draw[very thin](u)--(v1);
			  \draw[very thin](w)--(v3);
			  \draw[very thin](w)--(v1);
			  \draw[very thin](w)--(u);
		          \node[scale=0.4,circle,fill=red!] () at (u){}; 
		          \node[scale=0.3,regular polygon,regular polygon sides=3,fill=blue!] () at (w){}; 
			  
   			 \coordinate (u) at ($.6*(v2)+.25*(v5)+.15*(v1)$);
			 \coordinate (w) at ($.2*(v2)+.5*(v5)+.3*(v1)$);
			  \draw[very thin](u)--(v5);
			  \draw[very thin](u)--(v2);
			  \draw[very thin](u)--(v1);
			  \draw[very thin](w)--(v5);
			  \draw[very thin](w)--(v1);
			  \draw[very thin](w)--(u);
		          \node[scale=0.4,fill=green!, star, star points=7] () at (u){}; 
		          \node[scale=0.4,circle,fill=red!] () at (w){}; 
			  
			  \coordinate (w) at ($.5*(v5)+.5*(v1)+(2,0)$);
			  \coordinate (u) at ($.5*(v5)+.5*(v1)+(2,2)$);
			  \draw[very thin](w)--(u);
			  \draw[very thin](u)--(v2);
			  \draw[very thin](u)--(v1);
			  \draw[very thin](w)--(v2);
			  \draw[very thin](w)--(v1); 
		          \draw[very thin] (v3) edge[out=100,in=130] (u);	
		          \node[scale=0.3,regular polygon,regular polygon sides=3,fill=blue!] () at (u){}; 
		          \node[scale=0.4,fill=green!, star, star points=7] () at (w){}; 
		          
   			 \coordinate (u) at ($.6*(v2)+.2*(v3)+.2*(v4)$);
			 \coordinate (w) at ($.2*(v2)+.4*(v3)+.4*(v4)$);
			  \draw[very thin](u)--(v2);
			  \draw[very thin](u)--(v3);
			  \draw[very thin](u)--(v4);
			  \draw[very thin](w)--(v3);
			  \draw[very thin](w)--(v4);
			  \draw[very thin](w)--(u);
		          \node[scale=0.3,regular polygon,regular polygon sides=3, fill=blue!] () at (u){}; 
		          \node[scale=0.4,circle,fill=red!] () at (w){}; 
			  
			  \coordinate (u) at ($.5*(v3)+.25*(v4)+.25*(v5)$);
			 \coordinate (w) at ($.2*(v3)+.4*(v4)+.4*(v5)$);
			  \draw[very thin](u)--(v3);
			  \draw[very thin](u)--(v4);
			  \draw[very thin](u)--(v5);
			  \draw[very thin](w)--(v4);
			  \draw[very thin](w)--(v5);
			  \draw[very thin](w)--(u);
		          \node[scale=0.4,circle,fill=red!] () at (u){}; 
		          \node[scale=0.4,fill=green!, star, star points=7] () at (w){};  
			  
			 \coordinate (u) at ($.6*(v5)+.25*(v4)+.15*(v2)$);
			 \coordinate (w) at ($.3*(v5)+.4*(v4)+.3*(v2)$);
			  \draw[very thin](u)--(v4);
			  \draw[very thin](u)--(v5);
			  \draw[very thin](u)--(v2);
			  \draw[very thin](w)--(v2);
			  \draw[very thin](w)--(v4);
			  \draw[very thin](w)--(u);
		          \node[scale=0.4,fill=green!, star, star points=7] () at (u){}; 
		          \node[scale=0.3,regular polygon,regular polygon sides=3,fill=blue!] () at (w){};

   		    \end{tikzpicture}
		    \subcaption{}
		    \label{fig:sec4_bisimplex_result1}
	\end{subfigure}  	  	
	\begin{subfigure}{.3\linewidth}
		    \begin{tikzpicture}[xscale=.7, yscale=0.55, x={(1cm,0cm)},y={(-.3cm,-0.35cm)}, z={(0cm,2cm)}]
			 \coordinate (v1) at (-2.5, -1, 0);
			 \coordinate (v2) at (2.5, -1, 0);
			 \coordinate (v3) at (0, 3, 0);
			 \coordinate (v4) at ($1/3*(v1)+1/3*(v2)+1/3*(v3)+(0,0,1.5)$);
   			 \coordinate (v5) at  ($1/3*(v1)+1/3*(v2)+1/3*(v3)-(0,0,1.5)$);
			 \draw[very thick] (v1)--(v2);
			 \draw[very thick] (v1)--(v3);
			 \draw[very thick] (v2)--(v3);
			 \draw[very thick] (v1)--(v4);
			 \draw[very thick] (v2)--(v4);
			 \draw[very thick] (v3)--(v4);
			 \draw[very thick] (v1)--(v5);
			 \draw[very thick] (v2)--(v5);
			 \draw[very thick] (v3)--(v5);
			 \coordinate (u) at ($.15*(v1)+.15*(v3)+.5*(v4)+.2*(v2)$);
			 \coordinate (t) at ($.15*(v1)+.15*(v3)+.0*(v4)+.7*(v2)$); 
			 \draw[red] (u)--(t);
			 \draw[blue] (u)--(v1);
			 \draw[blue] (u)--(v3);
			 \draw[blue] (u)--(v4);
			 \draw[blue] (t)--(v1);
			 \draw[blue] (t)--(v2);
			 \draw[blue] (t)--(v3);
			 \draw[blue] (t)--(v5);
		   \end{tikzpicture}
		    \subcaption{}
		    \label{fig:sec4_bisimplex_result2}
	\end{subfigure}  
	  	\begin{subfigure}{.3\linewidth}
 		    \begin{tikzpicture}[xscale=.7, yscale=0.55, x={(1cm,0cm)},y={(-.3cm,-0.35cm)}, z={(0cm,2cm)}]
			 \coordinate (v1) at (-2.5, -1, 0);
			 \coordinate (v2) at (2.5, -1, 0);
			 \coordinate (v3) at (0, 3, 0);
			 \coordinate (v4) at ($1/3*(v1)+1/3*(v2)+1/3*(v3)+(0,0,1.5)$);
   			 \coordinate (v5) at  ($1/3*(v1)+1/3*(v2)+1/3*(v3)-(0,0,1.5)$);
			 \draw[very thick] (v1)--(v2);
			 \draw[very thick] (v1)--(v3);
			 \draw[very thick] (v2)--(v3);
			 \draw[very thick] (v1)--(v4);
			 \draw[very thick] (v2)--(v4);
			 \draw[very thick] (v3)--(v4);
			 \draw[very thick] (v1)--(v5);
			 \draw[very thick] (v2)--(v5);
			 \draw[very thick] (v3)--(v5);
			 \coordinate (u1) at ($.7*(v1)+.15*(v3)+.15*(v2)$);
			 \coordinate (u2) at ($.15*(v1)+.15*(v3)+.7*(v2)$); 
			 \coordinate (u3) at ($.15*(v1)+.7*(v3)+.15*(v2)$); 
			 \draw[red] (u1)--(u2);
			 \draw[red] (u3)--(u2);
			 \draw[red] (u1)--(u3);
			 \draw[blue] (u1)--(v1);
			 \draw[blue] (u2)--(v2);
			 \draw[blue] (u3)--(v3);
			 \draw[blue] (u1)--(v4);
			 \draw[blue] (u2)--(v4);
			 \draw[blue] (u3)--(v4);
			 \draw[blue] (u1)--(v5);
			 \draw[blue] (u2)--(v5);
			 \draw[blue] (u3)--(v5);
		   \end{tikzpicture}   
		    \subcaption{}
		    \label{fig:sec4_bisimplex_result3}
	\end{subfigure}  
	\caption{}
	\label{fig:sec4_bisimplex_result}
	\end{center}
\end{figure}

\subsection{Square Pyramid} Now we assume that the facets of $P$ are simplices and square pyramids. By duality we can additionally assume that every vertex of $P$ is contained in at most five facets of $P$, and that every vertex of $P$ is adjacent to at most five other vertices of $P$.

Take a facet $F$ of $P$, where $F$ is a square pyramid. Let one of the triangular faces of $F$ belong to another facet of $P$, which is a square pyramid.  Figure~\ref{fig:sec4_square_pyramid} is the only possibility under this assumption. 
Indeed, note that if two square pyramids would have the same apex then this apex would be adjacent to six other vertices. Furthermore, by Proposition~\ref{prop:intersections} no facet of $P$ can intersect $F$ in an edge containing the apex of $F$. 
Now it is easy to see that if $u$ is not identified with the triangle-nodes or circle-nodes then the vertex $v_4$ is adjacent to at least six other vertices of $P$. If $u$ is a circle-node then $v_3$ is contained in at least six facets and if $u$ is triangle-node then $v_1$ is contained in at least six facets of $P$. All these cases have been 
accounted for already.

\begin{figure}[h!]	
	  \begin{center}
  		    \begin{tikzpicture}[scale=0.7]
   			 \coordinate (v1) at (0,0);
			 \coordinate (v2) at (4,0);
			 \coordinate (v3) at (4,4);
			 \coordinate (v4) at (0,4);
   			 \coordinate (v5) at ($.5*(v1)+.5*(v3)$);
			 \draw (v1)--(v2)--(v3)--(v4)--(v1);
			 \draw[very thick]  (v5)--(v1);
			 \draw[very thick]  (v5)--(v2);
			 \draw[very thick]  (v5)--(v3);
			 \draw[very thick]  (v5)--(v4);
			 \node[label=$v_1$, left] at ($(v1)-(.2,0)$) {};
			 \node[label=$v_2$, right] at ($(v2)+(.2,.2)$) {};
			 \node[label=$v_3$, below] at (v3) {};
			 \node[label=$v_4$, left] at (v4) {};
			 
   			 \coordinate (u) at ($.2*(v2)+.6*(v1)+.2*(v5)$);
			 \coordinate (w) at ($.2*(v2)+.2*(v1)+.6*(v5)$);
			  \draw[very thin](u)--(v1);
			  \draw[very thin](u)--(v2);
			  \draw[very thin](w)--(v5);
			  \draw[very thin](w)--(v2);
			  \draw[very thin](w)--(u);
		          \node[scale=0.4,circle,fill=red!] () at (u){}; 
		          \node[scale=0.3,regular polygon,regular polygon sides=3,fill=blue!] () at (w){}; 
			  
   			 \coordinate (u) at ($.2*(v4)+.6*(v1)+.2*(v5)$);
			 \coordinate (w) at ($.2*(v4)+.2*(v1)+.6*(v5)$);
			  \draw[very thin](u)--(v1);
			  \draw[very thin](u)--(v4);
			  \draw[very thin](w)--(v5);
			  \draw[very thin](w)--(v4);
			  \draw[very thin](w)--(u); 
		          \node[scale=0.4,circle,fill=red!] () at (u){}; 
		          \node[scale=0.3,regular polygon,regular polygon sides=3,fill=blue!] () at (w){};

			  \coordinate (u) at ($.3*(v4)+.4*(v3)+.3*(v5)$);
			  \draw[very thin](u)--(v4);
			  \draw[very thin](u)--(v3);
			  \draw[very thin](u)--(v5);
		          \node[scale=0.3,regular polygon,regular polygon sides=3,fill=blue!] () at (u){};   
			  
			 \coordinate (u) at ($.3*(v2)+.4*(v3)+.3*(v5)$);
			  \draw[very thin](u)--(v2);
			  \draw[very thin](u)--(v3);
			  \draw[very thin](u)--(v5);
		          \node[scale=0.3,regular polygon,regular polygon sides=3,fill=blue!] () at (u){};   
			  
			  \coordinate (u) at ($1.2*(v2)+1.2*(v3)-1.4*(v5)$);
			  \draw[very thin](u)--(v2);
			  \draw[very thin](u)--(v3);
		          \draw[very thin] (v1) edge[out=-30,in=-90] (u);	
		          \draw[very thin] (v4) edge[out=30,in=90] (u);	
			 \node[label=$u$, left] at ($(u)+(.4,0)$) {};
          			 
   		    \end{tikzpicture} 	  	
	\caption{}
	\label{fig:sec4_square_pyramid}
	\end{center}
\end{figure}

Thus we can now assume that triangular facets of $F$ are not contained in any other square pyramid facet. In this case a similar identification of vertices shows that $P$ is a pyramid over $F$, giving us the self dual polytope class $2$.

\subsection{Simplex}
Finally, we can assume that all facets of $P$ are simplices, and by duality, that every vertex of $P$ is contained in exactly four facets. Then $P$ is a simplex, and we obtain the last class of psd-minimal polytopes, class $1$.

\section{Slack ideals and the binomial condition (Classes 1--11)} \label{sec:polys1-11}
\label{sec:slackideals}

\subsection{Slack ideals of polytopes} \label{subsec:slackideals}
We now define the {\em slack ideal} of a $d$-polytope which we use
to understand psd-minimality in the rest of this paper. 

\begin{definition} \label{def:slack ideal}
The {\em slack ideal} $I_P$ of a $d$-polytope $P$ is the ideal of all 
$(d+2)$-minors of $S_P(x)$ saturated with respect to all the variables in $S_P(x)$. In mathematical notation,
if the variables in $S_P(x)$ are $x_1, \ldots, x_t$, then 
$$I_P = \langle (d+2) \textup{-minors of } S_P(x) \rangle \,:\, (\Pi_{i=1}^t x_i)^\infty.$$
\end{definition}

Recall that the saturation of an ideal $I$ with all its variables is the ideal generated by all 
polynomials $f$ for which a monomial multiple of $f$ lies in $I$.
Recall also that in this paper, a binomial is a polynomial of the form $\pm x^a \pm x^b$. We say that an ideal is binomial if 
it is generated by binomials. Since the variety of $I_P$ contains positive points, namely 
the vector of positive 
elements in the slack matrix $S_P$,
any binomial in $I_P$ is of the form $x^a-x^b$.

\begin{example} \label{ex:slack ideal examples}
For a square,
$$S_P(x) = \begin{bmatrix} 
0&x_1&x_2&0\\
0&0&x_3&x_4\\
x_5&0&0&x_6\\
x_7&x_8&0&0
\end{bmatrix} $$
and $I_P =  \langle x_2x_4x_5x_8-x_1x_3x_6x_7 \rangle$ is a binomial ideal.

We now compute the slack ideal of a polytope in class 3. One can compute the 
slack matrix of the specific polytope in class 3 given in the table and check that the following is the 
symbolic slack matrix of a polytope in this class:
$$S_P(x) = \begin{bmatrix}
0&x_1&0&0&0&x_2&0\\
x_3&0&0&0&0&x_4&0\\
x_5&0&x_6&0&0&0&x_7\\
0&x_8&x_9&0&0&0&x_{10}\\
0&0&0&0&x_{11}&0&x_{12}\\
0&0&0&x_{13}&x_{14}&x_{15}&0\\
0&0&x_{16}&x_{17}&0&0&0
\end{bmatrix}.$$

The ideal of all $6$-minors of $S_P(x)$ is generated by $49$ polynomials all of which are binomials 
except the following four:
\begin{equation}\label{eq:generators6x6}
\begin{split}
     x_4x_5x_{10}x_{11}x_{13}x_{16}-x_4x_5x_9x_{12}x_{14}x_{17}+x_3x_7x_9x_{11}x_{15}x_{17}-x_3x_6x_{10}x_{11}x_{15}x_{17},\\
     x_2x_7x_8x_{11}x_{13}x_{16}-x_2x_6x_8x_{12}x_{14}x_{17}-x_1x_7x_9x_{11}x_{15}x_{17}+x_1x_6x_{10}x_{11}x_{15}x_{17},\\
x_2x_3x_7x_8x_{13}x_{16}-x_1x_4x_5x_{10}x_{13}x_{16}-x_1x_3x_7x_9x_{15}x_{17}+x_1x_3x_6x_{10}x_{15}x_{17},\\
x_2x_3x_6x_8x_{12}x_{14}-x_1x_4x_5x_9x_{12}x_{14}+x_1x_3x_7x_9x_{11}x_{15}-x_1x_3x_6x_{10}x_{11}x_{15}.
\end{split}
\end{equation}

Saturating  the ideal of minors with the product of all variables we obtain the binomial ideal:\\
$ I_P = \langle 
 x_{7}x_{9}-x_{6}x_{10}, x_{10}x_{11}x_{13}x_{16}-x_{9}x_{12}x_{14}x_{17}, 
 x_{7}x_{11}x_{13}x_{16}-x_{6}x_{12}x_{14}x_{17},\\
x_{2}x_{8}x_{13}x_{16}-x_{1}x_{9}x_{15}x_{17}, x_{4}x_{5}x_{13}x_{16}-x_{3}x_{6}x_{15}x_{17}, 
x_{2}x_{8}x_{12}x_{14}-x_{1}x_{10}x_{11}x_{15},\\
x_{4}x_{5}x_{12}x_{14}-x_{3}x_{7}x_{11}x_{15}, 
x_{2}x_{3}x_{7}x_{8}-x_{1}x_{4}x_{5}x_{10}, 
x_{2}x_{3}x_{6}x_{8}-x_{1}x_{4}x_{5}x_{9} \rangle.
$\\

Notice that saturation changed the ideal of minors into a binomial ideal. For example, the first generator in \eqref{eq:generators6x6}, which is the $6$-minor of $S_P(x)$ obtained by dropping the first row and second column, can be written as 
$$x_4x_5(x_{10}x_{11}x_{13}x_{16} - x_9x_{12}x_{14}x_{17} ) + x_3x_{11}x_{15}x_{17}(x_7x_9 -x_6x_{10}).$$
Saturation puts $x_7x_9- x_6x_{10}$ and $x_{10}x_{11}x_{13}x_{16} - x_9x_{12} x_{14}x_{17}$ in the slack ideal and hence prevents this four-term polynomial from being a minimal generator 
 of $I_P$. Similarly, all the other four-term polynomials are also unnecessary to generate 
 $I_P$.
 This example highlights the fact that the slack ideal may be different from the 
ideal of $(d+2)$-minors of $S_P(x)$ and, in particular, that $I_P$ can be binomial even if the ideal generated by the $(d+2)$-minors is not. 
\end{example}

Matrices with a fixed zero pattern and all non-zero entries being variables are known in the literature as {\em sparse generic matrices}. Furthermore, {\em sparse determinantal ideals} are the ideals of fixed size minors of a sparse generic matrix, and have been studied in different situations \cite{GiustiMerle}, \cite{Boocher}. The slack ideal of a polytope is not exactly a sparse determinantal ideal, but is the saturation of one, i.e., the saturation of  the ideal of all $(d+2)$-minors of the sparse generic matrix $S_P(x)$. In this paper, we define and use slack ideals for psd-minimality computations, but they could be of independent interest to both algebraists and combinatorialists.

Let $\mathcal{V}(I_P)$ denote the real zeros of the slack ideal $I_P$. It is convenient to identify $s \in \mathcal{V}(I_P)$ with the matrix $S_P(s)$. Then, by construction, $\rank(S_P(s)) \leq d+1$.  If $Q$ is a polytope that is combinatorially equivalent to $P$, then a slack matrix of $Q$ is a positive element of $\mathcal{V}(I_P)$. Conversely, by \cite[Theorem~24]{slackmatrixpaper}, { every positive element of $\mathcal{V}(I_P)$ is}, up to row scaling, { the slack matrix of a polytope in the combinatorial class of $P$}. In fact, Corollary~\ref{cor:proj_equivalent slack matrix}, gives a more precise description of $\mathcal{V}(I_P)$.

\begin{theorem}
Given a polytope $P$ there is a one to one correspondence between the positive elements of $\mathcal{V}(I_P)$, modulo row and column scalings, and the classes of projectively equivalent polytopes of the same combinatorial
type as $P$.
\end{theorem}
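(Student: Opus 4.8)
The plan is to reduce the statement to Corollary~\ref{cor:proj_equivalent slack matrix} together with the description of $\mathcal V(I_P)$ already recalled in the paragraph preceding the theorem. First I would fix the two sets between which I want a bijection: on one side, the set $\mathcal{P}$ of positive elements of $\mathcal V(I_P)$ modulo the action of scaling rows and columns by positive reals; on the other side, the set $\mathcal{Q}$ of projective equivalence classes of polytopes combinatorially equivalent to $P$. The map from $\mathcal{Q}$ to $\mathcal{P}$ is the obvious one: given such a polytope $Q$, its slack matrix $S_Q$ has the same zero pattern as $S_P$, so it determines a point $s$ with $S_P(s)=S_Q$; since $\rank S_Q=d+1$ all $(d+2)$-minors vanish, and as $S_Q$ is positive it lies in $\mathcal V(I_P)$ (saturation by the variables does not remove positive points). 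By Corollary~\ref{cor:proj_equivalent slack matrix} the slack matrix of $Q$ is well-defined up to row/column permutations and positive scalings; I would note that here the combinatorial type is fixed, so the labels of rows and columns are pinned down and only the positive scalings remain, giving a well-defined element of $\mathcal P$. Conversely, projectively equivalent polytopes have, by the same corollary, a common slack matrix up to scalings and permutations, hence the same class in $\mathcal P$, so the map $\mathcal Q\to\mathcal P$ is well-defined.

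Next I would construct the inverse. Given a positive $s\in\mathcal V(I_P)$, the cited result \cite[Theorem~24]{slackmatrixpaper} says that $S_P(s)$ is, up to row scaling, the slack matrix of a polytope $Q_s$ in the combinatorial class of $P$; I would send the scaling class of $s$ to the projective equivalence class of $Q_s$. To see this is well-defined I must check that (a) scaling rows or columns of $S_P(s)$ by positive reals does not change the projective equivalence class of the resulting polytope, which is exactly the content of Corollary~\ref{cor:proj_equivalent slack matrix}, and (b) if $s$ and $s'$ differ by row/column scalings then $S_P(s)$ and $S_P(s')$ are slack matrices of projectively equivalent polytopes — again immediate from the corollary since they are the same matrix up to positive scalings. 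Then I would verify the two composites are the identity: starting from a polytope $Q$, forming $S_Q$, and recovering a polytope from $S_Q$ via \cite[Theorem~24]{slackmatrixpaper} returns a polytope with the same slack matrix up to row scaling, hence (by the corollary) one projectively equivalent to $Q$; starting from a scaling class $[s]$, building $Q_s$ and then taking its slack matrix returns $S_P(s)$ up to scalings, i.e. the same class $[s]$.

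The one genuinely delicate point — and the step I expect to be the main obstacle to state cleanly rather than to prove — is the bookkeeping of \emph{which} scalings and permutations are allowed on each side. On the polytope side, Corollary~\ref{cor:proj_equivalent slack matrix} allows permutations of rows and columns, but since we work within a fixed combinatorial class these permutations are exactly the combinatorial automorphisms of $P$; I would need to observe that such an automorphism permutes the variables $x_1,\dots,x_t$ and hence acts on $\mathcal V(I_P)$, and that ``modulo row and column scalings'' in the theorem statement is meant relative to this ambient labeling — equivalently, that the correspondence is between scaling-orbits of \emph{labeled} slack matrices and projective classes of \emph{labeled} combinatorial polytopes, with the automorphism group acting compatibly on both. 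Once this identification of the group actions is made explicit, injectivity and surjectivity follow formally from the two lemmas, and I would close the argument by remarking that no positive point is lost in passing from the determinantal ideal to its saturation, so $\mathcal V(I_P)$ and the zero set of the $(d+2)$-minors have the same positive points, making the statement independent of that subtlety.
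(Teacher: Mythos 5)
Your proposal is correct and follows essentially the same route as the paper, which states this theorem without a separate proof precisely because it is the combination of the three facts you use: slack matrices of polytopes combinatorially equivalent to $P$ are positive points of $\mathcal{V}(I_P)$ (positive points being unaffected by saturation), \cite[Theorem~24]{slackmatrixpaper} for the converse, and Corollary~\ref{cor:proj_equivalent slack matrix} for matching projective classes with scaling classes. Your explicit attention to the permutation/automorphism bookkeeping is a reasonable clarification of a point the paper leaves implicit, but it does not constitute a different argument.
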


\subsection{Binomial slack ideals} \label{subsec:binomial condition}

\begin{definition} We say that  a $d$-polytope $P$ is {\em combinatorially psd-minimal} if all $d$-polytopes
that are combinatorially equivalent to $P$ are psd-minimal.
\end{definition}

A $d$-simplex is combinatorially psd-minimal. By Proposition~\ref{prop:psd minimal in the plane}, a 
square is also combinatorially psd-minimal.

\begin{lemma} \label{lem:binomial}
If the slack ideal $I_P$ of a polytope $P$ is binomial then $P$ is combinatorially psd-minimal.
\end{lemma}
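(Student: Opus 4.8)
The plan is to connect binomiality of the slack ideal directly to square root rank via Theorem~\ref{thm:square root rank}. Fix any polytope $Q$ combinatorially equivalent to $P$; since combinatorial equivalence means the symbolic slack matrices agree up to relabeling, I may assume $S_Q(x) = S_P(x)$, and that $S_Q$ is obtained from $S_P(x)$ by a positive substitution $x \mapsto v$, with $v$ a positive real point of $\mathcal{V}(I_P)$. By Theorem~\ref{thm:square root rank}, to show $Q$ is psd-minimal it suffices to produce a Hadamard square root of $S_Q$ of rank $d+1$; equivalently, a real point $u \in \RR^t$ with no zero coordinate such that $u_i^2 = v_i$ for all $i$ and $\rank S_P(u) = d+1$. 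The natural candidate is the positive Hadamard square root, $u_i = \sqrt{v_i}$, so that $S_P(u) = \sqrt[+]{S_Q}$.

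The key step is to verify that this $u$ lies in the real variety of $I_P$, which forces $\rank S_P(u) \le d+1$; combined with the general lower bound $\rank(S_P(u)) \ge d+1$ coming from the fact that any such $S_P(u)$ with the correct zero pattern has rank at least $d+1$ (this requires a small argument — see below), we would get equality. Here is where binomiality is essential: if $I_P = \langle x^{a_1} - x^{b_1}, \ldots, x^{a_r} - x^{b_r} \rangle$, then for each generator, evaluating at $v$ gives $v^{a_k} = v^{b_k}$, and since $v$ is positive we may take positive square roots of both sides to conclude $u^{a_k} = (v^{1/2})^{a_k} = (v^{a_k})^{1/2} = (v^{b_k})^{1/2} = u^{b_k}$. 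Hence every generator of $I_P$ vanishes at $u$, so $u \in \mathcal{V}(I_P)$. Since $I_P$ contains the $(d+2)$-minors of $S_P(x)$ (the minors lie in the ideal before saturation, hence in $I_P$), all $(d+2)$-minors of $S_P(u)$ vanish, giving $\rank S_P(u) \le d+1$.

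For the matching lower bound I would argue as follows: $S_P(u) = \sqrt[+]{S_Q}$ is a nonnegative matrix with exactly the zero pattern of the slack matrix $S_Q$ of a $d$-polytope. The row space of a slack matrix of a $d$-polytope is $(d+1)$-dimensional, and more to the point the support pattern alone forces $\rank \ge d+1$: a nonnegative matrix whose zero pattern is that of a polytope slack matrix cannot have rank $\le d$, since $d$ affinely independent vertices together with one more vertex and the facets they miss produce a nonsingular submatrix structure. Concretely, one can cite that any Hadamard square root of a $d$-polytope slack matrix has rank at least $d+1$ — this is exactly the inequality $\sqrtrank(S_Q) \ge d+1$ used implicitly throughout, and it follows because $S_Q$ itself has rank $d+1$ and a nonnegative matrix and its positive Hadamard square root have the same zero pattern, while a rank-$\le d$ matrix with that zero pattern would contradict the existence of $d+1$ vertices in general position. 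So $\rank S_P(u) = d+1$, i.e. $\sqrtrank(S_Q) = d+1$, and $Q$ is psd-minimal by Theorem~\ref{thm:square root rank}. Since $Q$ was an arbitrary polytope in the combinatorial class of $P$, this shows $P$ is combinatorially psd-minimal.

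The main obstacle I anticipate is the lower-bound step $\rank S_P(u) \ge d+1$: one must be careful that taking positive square roots of a rank-$(d{+}1)$ matrix does not accidentally drop the rank. The clean way around this is to observe that rank can only drop, never needs to be separately bounded below, if one invokes the standard fact (implicit in the definition of $\sqrtrank$ and Theorem~\ref{thm:square root rank}) that every Hadamard square root of the slack matrix of a $d$-polytope has rank at least $d+1$ — so the real content of the lemma is precisely the upper bound $\rank S_P(u)\le d+1$, which is exactly what binomiality delivers. Everything else is bookkeeping: matching up the combinatorial-equivalence relabeling, and noting that saturation only enlarges the ideal so $I_P$ still contains the $(d+2)$-minors.
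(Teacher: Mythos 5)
Your proposal is correct and follows essentially the same route as the paper: take the positive point of $\mathcal{V}(I_P)$ given by the slack matrix of any polytope in the combinatorial class, observe that its entrywise positive square root still kills every binomial generator $x^a - x^b$ (since $s^a = s^b$ with $s$ positive implies $(\sqrt[+]{s})^a = (\sqrt[+]{s})^b$), hence all $(d+2)$-minors vanish and the square root has rank at most $d+1$, so Theorem~\ref{thm:square root rank} applies. The lower bound $\sqrtrank(S_Q) \geq d+1$ that you worry about is indeed the standard fact $\sqrtrank \geq \rankpsd \geq d+1$, which the paper leaves implicit.
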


\begin{proof}
If $I_P$ is a binomial ideal then for every positive $s \in \mathcal{V}(I_P)$ 
and generator $x^a-x^b$ of $I_P$, we have that $s^a=s^b$, which implies $(\sqrt[+]{s})^a = (\sqrt[+]{s})^b$ where $\sqrt[+]{s}$ is the entry-wise positive square root of $s$. Thus $\sqrt[+]{s}$ annihilates every generator of $I_P$, so $\rank(\sqrt[+]{S_P(s)}) \leq d+1$.  
\end{proof}

It was shown in Theorem~4.3 in \cite{gouveia2013polytopes} that every $d$-polytope with 
$d+2$ vertices is combinatorially psd-minimal. In view of Lemma~\ref{lem:binomial}, this fact is a corollary of the following result.

\begin{theorem} \label{thm:n+2}
If a $d$-polytope $P$ has $d+2$ vertices or facets then $I_P$ is binomial.
\end{theorem}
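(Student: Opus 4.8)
The plan is to reduce to the dual case and then analyze the symbolic slack matrix of a $d$-polytope with exactly $d+2$ vertices directly. Since $\rankpsd(P)=\rankpsd(P^\circ)$ and the slack matrix of $P^\circ$ is (up to scaling and transpose) the slack matrix of $P$, the ideals $I_P$ and $I_{P^\circ}$ agree up to renaming variables; hence it suffices to treat the case where $P$ has $n=d+2$ vertices. By a theorem of Gale (the standard structure theory of polytopes with few vertices), such a $P$ is either a simplicial polytope — in which case one checks the claim directly — or, more usefully, its combinatorics is governed by a \emph{Gale diagram}: the affine dependences among the $d+2$ vertices span a $1$-dimensional space, so there is (up to scaling) a unique affine dependence $\sum_{i} \lambda_i (v_i,1) = 0$, and the combinatorial type is encoded by the signs of the $\lambda_i$. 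First I would fix notation: label the vertices so that $\lambda_i>0$ for $i$ in a set $A$, $\lambda_i<0$ for $i$ in a set $B$, and $\lambda_i=0$ for $i$ in a set $Z$, with $|A|,|B|\ge 1$ (and $|A|+|B|\ge 3$ in the non-simplex case). The facets of $P$ are then exactly the complements of the ``cofaces'': a subset $F$ of vertices is a facet iff its complement $\bar F$ is a minimal positive circuit in the Gale picture, equivalently $\bar F\subseteq A\cup Z$ or $\bar F\subseteq B\cup Z$ is not possible and both $A\cap\bar F$ and $B\cap\bar F$ are nonempty, etc. I would make this precise and then read off the zero pattern of $S_P(x)$: entry $(i,j)$ of $S_P$ is zero iff vertex $v_i$ lies on facet $F_j$, i.e.\ iff $i\notin \bar{F_j}$.

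Next I would exhibit the binomial generators of $I_P$ explicitly. The key point is that a $(d+2)$-minor of $S_P(x)$ uses all $d+2$ rows and some $d+2$ columns $F_1,\dots,F_{d+2}$. Expanding such a determinant and using that each complement $\bar{F_j}$ has size at least $2$ (it is a circuit), I expect each term of the expansion to be a product of $x$-variables coming from a system of distinct representatives avoiding the zero pattern; the circuit structure from the Gale diagram should force at most two surviving terms, so that every $(d+2)$-minor is itself (up to sign and a monomial factor) a binomial $x^a - x^b$ or a monomial. More carefully: I would argue that for any choice of $d+2$ columns, the bipartite "support" graph between the $d+2$ rows and those columns — an edge for each nonzero entry — has exactly zero or two perfect matchings, because two of the chosen facet-complements must coincide or be "nested" in a controlled way governed by the unique affine dependence. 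This is really a statement about the permanent/determinant of a structured $0/1$-incidence matrix built from a Gale circuit, and I'd phrase it combinatorially: the nonzero $(d+2)\times(d+2)$ submatrices of the vertex–facet incidence matrix of a polytope with $d+2$ vertices have permanent $\le 2$. Granting this, every generator of the ideal of $(d+2)$-minors is a binomial or a monomial, and saturating with respect to the variables only removes monomials and redundant binomial factors, leaving $I_P$ binomial. (The worked Example~\ref{ex:slack ideal examples} in the excerpt, where the $6\times 6$ minors are four-term but the saturation is binomial, is a sanity check that the ``binomial after saturation'' phenomenon is exactly what we expect; note however that class $3$ has $7$ vertices in dimension $4$, i.e.\ $d+3$, not $d+2$ — so there the saturation is genuinely doing work, whereas in the $d+2$ case I expect the minors themselves to be essentially binomial already.)

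The main obstacle will be the combinatorial lemma that every $(d+2)\times(d+2)$ nonzero submatrix of the vertex–facet incidence matrix of a $(d+2)$-vertex polytope has at most two perfect matchings (equivalently, that the corresponding minor of $S_P(x)$ is at most a binomial up to a monomial factor). I would attack this via the Gale diagram: since all $d+2$ vertices are used as rows, a column is a facet $F_j$ with $\bar{F_j}$ a positive circuit, and after deleting the (at most $d+2$) rows in $Z$-type positions the problem localizes to the sign vector on $A\sqcup B$; a perfect matching must pair each vertex with a facet not containing it, and the constraint "$\bar{F_j}$ meets both $A$ and $B$" together with a counting argument on how many chosen columns can avoid a given row should pin the matching down to a two-element set (the second obtained from the first by a single transposition along a $2$-cycle, which is exactly the source of the $\pm$ sign and the two monomials). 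Handling the $Z$-positions (non-vertices of the Gale diagram, i.e.\ vertices lying on the affine hull of the rest) and the simplicial sub-case requires care but should be routine. If the direct matching count proves awkward, a fallback is to invoke the known description of the face lattice of $d+2$-vertex polytopes to reduce to repeated "pyramid" and "multiplex/free sum" operations, for which the slack ideal transforms in a controlled way (a pyramid over $Q$ adds a row/column of a prescribed shape and does not destroy binomiality), and then to bootstrap from the base cases of simplices and the polygon cases already handled in Proposition~\ref{prop:psd minimal in the plane}.
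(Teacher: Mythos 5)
Your reduction to the $(d+2)$-vertex case via polarity matches the paper, and your Gale-diagram description of these polytopes (the sign classes $A$, $B$, $Z$) is equivalent to the structure the paper actually uses, namely that $P$ is a repeated pyramid over a free sum $\Delta_k\oplus\Delta_l$, with $Z$ the pyramid apexes. But your central combinatorial lemma --- that every nonzero $(d+2)\times(d+2)$ submatrix of the support of $S_P(x)$ has permanent at most $2$, so that every $(d+2)$-minor is a binomial up to a monomial factor --- is false in general. After stripping pyramid apexes, the support of $S_P(x)$ for $\Delta_k\oplus\Delta_l$ is the vertex--edge incidence matrix of $K_{k+1,l+1}$ (each facet omits exactly one vertex of each simplex), and the reduction you describe (peeling off rows and columns with a single nonzero entry) leaves a square submatrix with exactly two nonzeros in every row and column, i.e.\ the incidence matrix of a \emph{disjoint union} of simple cycles. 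For $d\le 4$ only a single cycle can occur, which is why the claim looks plausible from the $4$-dimensional examples; but already for $P=\Delta_3\oplus\Delta_3$, a $6$-polytope with $8$ vertices, one can choose $8$ columns corresponding to two vertex-disjoint $4$-cycles of $K_{4,4}$, and the resulting $8\times 8$ minor is a product of two binomials with four terms, so its support has permanent $4$. Your parenthetical expectation that in the $d+2$ case ``the minors themselves are essentially binomial already'' therefore does not hold, and the proof as outlined does not go through.

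The repair is essentially your own fallback, which is the paper's route: reduce via pyramids to $\Delta_k\oplus\Delta_l$ and let $J$ be the ideal generated by the binomial determinants of the cycle-supported submatrices. This $J$ is the toric ideal of a Lawrence lifting, hence saturated. Every $(d+2)$-minor is a monomial times a product of such cycle binomials and so lies in $J$, giving $I_P\subseteq J$; conversely, each cycle extends to a $(d+2)\times(d+2)$ submatrix whose determinant is a monomial times that cycle's binomial, so saturation forces each cycle binomial into $I_P$ and $J\subseteq I_P$. Both inclusions are needed: without the reverse one you cannot conclude that the saturation of an ideal generated by \emph{products} of binomials is binomial, and your remark that saturation ``only removes monomials and redundant binomial factors'' is itself an unproved step (the fact that saturations of genuinely binomial ideals are binomial, in the sense of Eisenbud--Sturmfels, only applies once you already know the generators are binomial).
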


By polarity it suffices to prove the result for the case when $P$ has $d+2$ vertices. We recall the structure of such
polytopes from \cite[Section 6.1]{grunbaum}. Any $d$-polytope with $d+2$ vertices is combinatorially equivalent to a repeated pyramid over a free
sum of two simplices, $\textup{pyr}_r(\Delta_k \oplus \Delta_l)$, with $k,l \geq 1$, $r \geq 0$ and $r+k+l=d$. Since taking pyramids preserves the slack ideal,
it is enough to study the slack ideals of free sums of simplices or, dually, products of simplices. This class of polytopes has a very simple slack matrix structure.

\begin{lemma} \label{lem:n+2 slack matrix}
If $P=\Delta_k \oplus \Delta_l$, then $S_P(x)$ has the zero pattern 
of the vertex-edge incidence matrix of the complete bipartite graph $K_{k+1,l+1}$.
\end{lemma}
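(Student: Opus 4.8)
The plan is to understand the combinatorial structure of $P = \Delta_k \oplus \Delta_l$ explicitly, describe its vertices and facets, and then read off the zero pattern of the slack matrix directly. First I would recall that the free sum (direct sum) of two polytopes is, up to combinatorial equivalence, $\Delta_k \oplus \Delta_l = \conv((\Delta_k \times \{0\}) \cup (\{0\} \times \Delta_l))$ when both polytopes contain the origin in the relative interior of a suitable embedding; concretely, realize $\Delta_k$ with vertices $a_0, \ldots, a_k$ in $\RR^k$ and $\Delta_l$ with vertices $b_0, \ldots, b_l$ in $\RR^l$, both containing the origin in their interiors, and set $P = \conv\{(a_i, 0) : 0 \le i \le k\} \cup \{(0, b_j) : 0 \le j \le l\} \subseteq \RR^{k+l}$. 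Then the $k+l+2$ vertices of $P$ are exactly these points, indexed by the disjoint union of $\{0,\ldots,k\}$ and $\{0,\ldots,l\}$, which I identify with the two vertex classes of $K_{k+1,l+1}$.

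Next I would identify the facets. The key fact about free sums is that the facets of $\Delta_k \oplus \Delta_l$ are precisely the sets $\conv(F \cup G)$ where $F$ is a facet of $\Delta_k$ and $G$ is a facet of $\Delta_l$; dually this is the statement that the polar of a free sum is the product of the polars, and facets of $P^\circ = \Delta_k^\circ \times \Delta_l^\circ$ (i.e. vertices of... wait, facets of the product) correspond to pairs (facet of $\Delta_k^\circ$, vertex of the other factor) — more carefully, $P^\circ = \Delta_k^\circ \times \Delta_l^\circ$ is a product of two simplices, whose vertices are pairs $(v, w)$ of vertices, hence $P$ has $(k+1)(l+1)$ facets indexed by pairs $(i,j)$ with $0 \le i \le k$, $0 \le j \le l$. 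A facet $F_{ij}$ of $P$ is the one "opposite" to the vertex pair; since a facet of $\Delta_k$ omits exactly one vertex and likewise for $\Delta_l$, the facet $F_{ij}$ of $P$ is $\conv$ of all vertices of $P$ except $(a_i,0)$ and $(0,b_j)$. Thus a vertex of $P$ lies on $F_{ij}$ if and only if it is neither $(a_i,0)$ nor $(0,b_j)$, so the slack $(S_P)_{(\text{vertex}), (i,j)}$ is zero exactly for those two vertices and positive for all others.

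Now I read off the zero pattern. Label the rows of $S_P(x)$ by the $k+l+2$ vertices, split into the "left" class $L = \{\ell_0, \ldots, \ell_k\}$ (the $(a_i,0)$) and the "right" class $R = \{r_0, \ldots, r_l\}$ (the $(0,b_j)$), and label the $(k+1)(l+1)$ columns by pairs $(i,j)$. The entry in row $\ell_p$, column $(i,j)$ is zero iff $p = i$; the entry in row $r_q$, column $(i,j)$ is zero iff $q = j$. Hence column $(i,j)$ has exactly two zeros, one in row $\ell_i$ and one in row $r_j$ — precisely the two endpoints of the edge $\{\ell_i, r_j\}$ of $K_{k+1,l+1}$. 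So the columns of $S_P(x)$, each with its unique pair of zeros, are in bijection with the edges of $K_{k+1,l+1}$, and the support of $S_P(x)$ is the \emph{complement within each column} of the incidence pattern; equivalently, after the standard observation that "which two rows are zero" is the data, $S_P(x)$ has the zero pattern of (the complement of) the vertex-edge incidence matrix. I should double-check orientation conventions: the claim in the lemma is that the \emph{zero pattern} matches the vertex-edge incidence matrix of $K_{k+1,l+1}$, i.e. a $1$ in the incidence matrix corresponds to a $0$ in $S_P(x)$; the argument above shows exactly this, since vertex $v$ is incident to edge $e = \{\ell_i, r_j\}$ iff $v \in \{\ell_i, r_j\}$ iff $(S_P)_{v, e} = 0$.

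The main obstacle — really the only nontrivial point — is justifying the facet description of the free sum, namely that facets of $\Delta_k \oplus \Delta_l$ are exactly joins of a facet of one summand with a facet of the other. I would handle this via polarity: the free sum and direct product are dual operations, $(\Delta_k \oplus \Delta_l)^\circ \cong \Delta_k^\circ \times \Delta_l^\circ \cong \Delta_k \times \Delta_l$ (simplices are self-polar up to combinatorial type), and the faces of a product $P \times Q$ are exactly $F \times G$ for faces $F \subseteq P$, $G \subseteq Q$, with $\dim(F \times G) = \dim F + \dim G$. Vertices of $P^\circ$ (which are facets of $P$) thus correspond to pairs (vertex of $\Delta_k^\circ$, vertex of $\Delta_l^\circ$) = pairs (facet of $\Delta_k$, facet of $\Delta_l$), giving the $(k+1)(l+1)$ facets and their vertex sets as computed. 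Everything else is bookkeeping, and the lemma follows.
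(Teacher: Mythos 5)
The paper states this lemma without proof, so there is no argument to compare against; your overall strategy (realize the free sum explicitly, use $(\Delta_k\oplus\Delta_l)^\circ \cong \Delta_k^\circ\times\Delta_l^\circ$ to index the facets by pairs $(i,j)$, and observe that the facet $F_{ij}$ contains every vertex except $(a_i,0)$ and $(0,b_j)$) is sound and is surely the intended route. However, the final translation into the slack matrix is inverted, and as written your proof asserts the wrong support pattern. From ``a vertex lies on $F_{ij}$ iff it is neither $(a_i,0)$ nor $(0,b_j)$'' the correct conclusion is that the slack is \emph{zero} for all vertices \emph{except} those two (slack vanishes exactly on the facet) and \emph{positive} at $(a_i,0)$ and $(0,b_j)$; you state the opposite, namely that the slack is zero exactly at those two vertices. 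Consequently you conclude that each column of $S_P(x)$ has exactly two zeros and that the support of $S_P(x)$ is the \emph{complement} of the vertex-edge incidence matrix of $K_{k+1,l+1}$, and you then reconcile this with the lemma only by reading ``has the zero pattern of the incidence matrix'' as ``a $1$ in the incidence matrix corresponds to a $0$ in $S_P(x)$.'' That reading is not the intended one and is contradicted by Example~\ref{ex:poly5} immediately following the lemma: for $\Delta_1\oplus\Delta_3$ the displayed $S_P(x)$ has exactly two \emph{nonzero} entries per column, sitting at the two endpoints of the corresponding edge of $K_{2,4}$, so the support of $S_P(x)$ equals the support of the incidence matrix. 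The two-nonzeros-per-column structure is also exactly what Proposition~\ref{prop:simplicial binomial} exploits via the cycle submatrices $M_C$, so the complemented pattern would break the downstream argument.

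The fix is purely local: keep everything up to and including the facet description, then invert the last step. A vertex $v$ satisfies $(S_P)_{v,F_{ij}}>0$ iff $v\in\{(a_i,0),(0,b_j)\}$ iff $v$ is an endpoint of the edge $\{\ell_i,r_j\}$, so the variable entries of $S_P(x)$ occur precisely where the incidence matrix of $K_{k+1,l+1}$ has its $1$'s, which is the assertion of the lemma. Your justification of the facet description via polarity and the face structure of products is fine and needs no change.
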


\begin{example} \label{ex:poly5}
A polytope in class 5 in Table~\ref{table:list} is of the type $\Delta_1 \oplus \Delta_3$ hence its slack matrix is the vertex-edge incidence matrix of $K_{2,4}$. Therefore,
$$S_P(x) = \begin{bmatrix}
x_1 & x_2 & x_3 & x_4 & 0 & 0 & 0 & 0 \\
0 & 0 & 0 & 0  & x_5 & x_6 & x_7 & x_8 \\
x_9 & 0 & 0 & 0 & x_{10} & 0 & 0 & 0 \\
0 & x_{11} & 0 & 0 & 0 & x_{12}  & 0 & 0 \\
0 & 0 & x_{13} & 0 & 0 & 0 & x_{14} & 0 \\
0 & 0 & 0 & x_{15} & 0 & 0 & 0 & x_{16}
\end{bmatrix}.$$
\end{example}

Let $K$ be the complete bipartite graph associated to $P$ as above and consider a simple cycle $C$ in $K$ with $c$ edges and hence $c$ distinct vertices. Let $M_C(x)$ be the $c \times c$ symbolic matrix whose support is the vertex-edge incidence matrix of $C$
$$ 
\begin{bmatrix}
x_1 & 0 & 0 & \ldots & 0 & x_2 \\
x_3 & x_4 & 0 & \ldots & 0 & 0 \\
0 & x_5 & x_6 & \ldots & 0 & 0 \\
\vdots & & &\vdots & & \vdots\\
0 & 0 & 0 & \ldots & x_{2c-2} & 0 \\
0 & 0 & 0 & \ldots & x_{2c-1}  & x_{2c} 
\end{bmatrix}.
$$
Note that $\det(M_{C}(x))$ is a binomial. 

\begin{proposition} \label{prop:simplicial binomial} 
If $P=\Delta_k \oplus \Delta_{l}$ then $I_P$ is 
generated by the binomials $\det(M_C)$ as $C$ varies over the simple cycles of $K_{k+1,l+1}$.
\end{proposition}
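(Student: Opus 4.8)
The plan is to show two containments. Write $J = \langle \det(M_C) : C \text{ a simple cycle of } K_{k+1,l+1}\rangle$ and recall that $I_P$ is the saturation with respect to all variables of the ideal $M$ of $(d+2)$-minors of $S_P(x)$, where by Lemma~\ref{lem:n+2 slack matrix} the matrix $S_P(x)$ is the signed vertex-edge incidence matrix of $K=K_{k+1,l+1}$ with each edge a distinct variable. Since every positive point of $\mathcal{V}(I_P)$ is a slack matrix, the variables never vanish there, so it is harmless to invert variables; I will work over the Laurent polynomial ring localized at $x_1 \cdots x_t$ and only at the end observe that $J$ is already saturated because each $\det(M_C)$ has two terms each of which is a monomial with a different support (indeed $x^a - x^b$ with $\gcd(x^a,x^b)=1$), so $J : (\prod x_i)^\infty = J$.

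First I would prove $J \subseteq I_P$. Fix a simple cycle $C$ in $K$ using vertices, say, $i_1,\dots,i_p$ on one side and $j_1,\dots,j_q$ on the other with $p+q = c \le d+2$ (here $c$ is even since $K$ is bipartite). Extend $C$ to a connected subgraph by adjoining $d+2-c$ further edges forming a forest attached to $C$ and touching $d+2-c$ new vertices; this is possible because $K$ is connected and has $k+l+2 = d+2$ vertices, so a spanning connected subgraph has at least $d+1$ edges. The $(d+2)\times(d+2)$ submatrix of $S_P(x)$ with rows indexed by the $d+2$ vertices touched and columns by the $d+1$ chosen edges together with any one extra column has a determinant that, by Laplace expansion along the forest part (each forest edge column meeting a unique degree-one vertex row), reduces up to a monomial factor to $\pm\det(M_C(x))$ times possibly a factor from the extra column; a cleaner route is to take the $(d+2)$-minor indexed by the $c$ cycle-vertices and $d+2-c$ forest-vertices against the $c$ cycle-edges and $d+2-c$ forest-edges (this is square since a connected graph on $d+2$ vertices with exactly one cycle has $d+2$ edges), and expand: the forest edges strip off as monomials leaving $\pm(\text{monomial})\cdot\det(M_C(x))$. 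Since this $(d+2)$-minor lies in $M \subseteq I_P$ and $I_P$ is saturated, $\det(M_C(x)) \in I_P$.

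Next I would prove $I_P \subseteq J$, which is the main obstacle. It suffices to show every $(d+2)$-minor of $S_P(x)$ lies in $J$ after inverting the variables, since $J$ is saturated and contains $M$-generators would then give $I_P = M:(\prod x_i)^\infty \subseteq J$. Take any $(d+2)$ rows (vertices) $R$ and $(d+2)$ columns (edges) $E$ of $S_P(x)$. If the subgraph $H$ on vertex set $R$ with edge set $E$ has a vertex of degree $\le 1$ among the rows of $R$, Laplace-expand along that row to peel off a monomial and reduce to a smaller minor; iterating, either we reach a minor that vanishes (some vertex isolated), or we arrive at a situation where $H'$ has minimum degree $\ge 2$ on its vertex set. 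A graph on $m$ vertices with $m$ edges and minimum degree $\ge 2$ is a disjoint union of cycles (the forced case here being a single cycle plus isolated issues handled already); more precisely after the reduction the relevant square submatrix is block-diagonal, each block the incidence matrix of a cycle, so its determinant is $\pm\prod_C \det(M_C(x))$, which lies in $J$. The genuinely delicate point is bookkeeping the signs and the monomial factors in the Laplace reductions so that the intermediate "minors" really are (up to a unit monomial) honest minors of $S_P(x)$, and checking that when no degree-$\le 1$ vertex exists the only possibility with $|R|=|E|=d+2$ is a union of cycles — this uses that $K$ is bipartite (so cycles have even length $\le d+2$) and that $|E| = |R|$. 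Finally I would note $\det(M_C(x))$ is manifestly a binomial of the form $x^a - x^b$ (the two perfect matchings of the even cycle $C$), completing the proof that $I_P = J$ is binomial, which in turn re-proves Theorem~\ref{thm:n+2} and hence, via Lemma~\ref{lem:binomial}, the combinatorial psd-minimality of products and free sums of two simplices.
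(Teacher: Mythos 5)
Your overall architecture mirrors the paper's exactly: you prove $J\subseteq I_P$ by extending each cycle $C$ to a unicyclic spanning subgraph of $K_{k+1,l+1}$ whose $(d+2)$-minor is a monomial times $\det(M_C)$, and you prove the reverse containment by peeling off rows and columns with at most one nonzero entry until only a disjoint union of cycles remains. Both of those steps are sound. The gap is at the single point where a real theorem is needed: your claim that $J$ is already saturated. You justify it by observing that each generator is a binomial $x^a-x^b$ with $\gcd(x^a,x^b)=1$. That property of a generating set does not imply saturation. The standard counterexample is a lattice basis ideal: $J'=\langle x_0x_2-x_1^2,\ x_1x_3-x_2^2\rangle$ has both generators of the form ``difference of coprime monomials,'' yet $x_1(x_0x_3-x_1x_2)=x_0(x_1x_3-x_2^2)+x_2(x_0x_2-x_1^2)\in J'$ while $x_0x_3-x_1x_2\notin J'$ (it does not even vanish at the point $(1,0,0,1)$ of $\mathcal V(J')$), so $J'\neq J':(x_0x_1x_2x_3)^\infty$. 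Example~\ref{ex:slack ideal examples} in the paper shows that in this very setting saturation genuinely changes ideals generated by minors, so this is not a point that can be waved away.

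Without saturation of $J$ your second containment collapses: showing that every $(d+2)$-minor lies in $J$ after inverting the variables only yields $I_P\subseteq J:(\prod x_i)^\infty$, not $I_P\subseteq J$. The paper closes this hole with the one nontrivial external input of the whole proof: the binomials $\det(M_C)$, as $C$ ranges over the simple cycles, are exactly the circuit binomials of the Lawrence lifting of the column configuration of the incidence matrix of $K_{k+1,l+1}$, and by \cite[Corollary~14.12]{GBCP} these generate the full toric ideal of that lifting; toric ideals contain no variables and are prime, hence saturated. To repair your argument you must either import that result or give an independent proof that $J$ equals its own saturation; the coprimality of the two terms in each generator does not do it.
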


\begin{proof} Let $J$ denote the ideal generated by the binomials $\det(M_C)$ as $C$ varies over the simple cycles in $K_{k+1,l+1}$. The ideal $J$ is the toric ideal of the Lawrence lifting of the vector configuration consisting of the column vectors of the vertex-edge incidence matrix of $K_{k+1,l+1}$ \cite[Corollary~14.12]{GBCP}. Further, toric ideals are saturated \cite[Chapter 12]{GBCP} and hence, 
$J \,:\, (\prod x_i)^\infty = J$. 

Since $d = \dim(P) = k+l$, by Lemma~\ref{lem:n+2 slack matrix}, a $(d+2)$-minor of $S_P(x)$ is a maximal 
minor of the vertex-edge incidence matrix of $K_{k+1,l+1}$. 
Let $M$ be a $(d+2) \times (d+2)$ submatrix of $S_P(x)$. 
Suppose that a row of $M$ has zero or one non-zero entries. In the former case, $\det(M) =0$  and in the latter case, $\det(M)$ is a variable times a $(d+1)$-minor of $S_P(x)$. Repeating this argument for rows and columns, we see that $\det(M)$ is a product of variables times the minor of a submatrix of $S_P(x)$ with at least two non-zero entries per row and column, and hence, 
exactly two non-zero entries per row and column. 
This submatrix is thus the vertex-edge incidence matrix of a disjoint union of simple cycles in $K_{k+1, l+1}$, hence  block diagonal (after permuting rows and columns) with each block indexed by one such cycle. 
 Therefore, $\det(M)$ lies in $J$ and $I_P \subseteq J \,:\, (\prod x_i)^\infty = J$.
Note that $\det(M)$ is not a product of variables, as this would imply that the corresponding minor in $S_P$ is not zero and hence $\rank(S_P) = d+2$ which is a contradiction.

For the reverse inclusion notice that the incidence matrix of a cycle $C$ can be extended to a $(d+2) \times (d+2)$ submatrix of the incidence matrix of $K_{k+1,l+1}$ by picking a vertex on each side of $K_{k+1,l+1}$ 
contained in $C$ and connecting each of these vertices to all vertices on the other side not already in $C$. As before, 
the resulting $(d+2)$-minor is a product of variables times the minor corresponding to $C$. This shows that 
$J \subseteq I_P$, and we conclude that $J=I_P$.
\end{proof}

This finishes the proof of Theorem~\ref{thm:n+2}.

\begin{example}
As we saw in Example~\ref{ex:poly5}, the symbolic slack matrix of a polytope $P$ in combinatorial class 5 
has support equal to the vertex-edge incidence
matrix of $K_{2,4}$. The graph $K_{2,4}$ has six simple cycles all of length four and the minors of the submatrices they index are 
\begin{align*}
x_{3}x_{8}x_{14}x_{15} - x_{4}x_{7}x_{13}x_{16}, \
x_{2}x_{8}x_{12}x_{15} - x_{4}x_{6}x_{11}x_{16}, \
x_{1}x_{8}x_{10}x_{15} - x_{4}x_{5}x_{9}x_{16},\\
x_{2}x_{7}x_{12}x_{13} - x_{3}x_{6}x_{11}x_{14}, \
x_{1}x_{7}x_{10}x_{13} - x_{3}x_{5}x_{9}x_{14}, \
x_{1}x_{6}x_{10}x_{11} - x_{2}x_{5}x_{9}x_{12}.
\end{align*}
By Proposition~\ref{prop:simplicial binomial}, the slack ideal $I_P$ is generated by these binomials.
\end{example}

\begin{theorem} \label{thm:binomial polytopes}
The slack ideals of classes $2, \ldots, 11$ in Table~\ref{table:list} are binomial. 
\end{theorem}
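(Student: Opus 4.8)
The plan is to verify the claim \emph{class by class}, leveraging as much structural information as possible before resorting to direct computation. First I would dispose of the classes that are covered by the general theory already in place: by Theorem~\ref{thm:n+2}, any $4$-polytope with $6$ vertices or $6$ facets has binomial slack ideal, so all of classes $2$ and the products/free sums of simplices appearing in the table (e.g. class $4 = \Delta_3\times\Delta_1$, class $5 = \Delta_3\oplus\Delta_1$, class $6 = \Delta_2\times\Delta_2$, class $7 = \Delta_2\oplus\Delta_2$) are immediately binomial — indeed Proposition~\ref{prop:simplicial binomial} gives explicit cycle binomials for the free-sum cases and their polars. Also recall that taking pyramids preserves the slack ideal (used in the proof of Theorem~\ref{thm:n+2}), which handles any class that is combinatorially a pyramid over a lower-dimensional polytope with known binomial slack ideal: for example class $8 = (\Delta_2\times\Delta_1)\ast\Delta_0$ is a pyramid over the triangular prism times $\Delta_1$, class $9$ is a pyramid over $\Delta_2\oplus\Delta_1$, and similarly for $16,17$ if they had been in range — here one only needs the sub-polytope's slack ideal to be binomial, which follows from Theorem~\ref{thm:n+2} or from a smaller computation.

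For the remaining classes — $3$, $10$, and $11$ — the slack ideal is genuinely the saturation of a sparse determinantal ideal with no shortcut, and I would compute it directly. The method, exactly as illustrated in Example~\ref{ex:slack ideal examples} for class $3$, is: write down the symbolic slack matrix $S_P(x)$ of the representative polytope given in Table~\ref{table:list} (from its vertex coordinates one reads off the vertex--facet incidences and hence the zero pattern), form the ideal of all $6$-minors, and saturate with respect to the product of all the variables. One then checks that a Gröbner basis of the saturated ideal consists of binomials. This is precisely what the Macaulay2 code referenced in the introduction does, and Example~\ref{ex:slack ideal examples} already records the outcome for class $3$: the ideal of minors has four non-binomial generators but saturation collapses them, yielding the binomial ideal displayed there. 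The same computation is run for classes $10$ and $11$; note that class $11$ is the polar of class $10$, and since $\rankpsd$ and binomiality of slack ideals are preserved under polarity (the symbolic slack matrix of $P^\circ$ is the transpose of that of $P$, up to relabeling), it suffices to do one of the two.

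I would present the proof of Theorem~\ref{thm:binomial polytopes} as: ``For classes covered by Theorem~\ref{thm:n+2} and the pyramid-invariance of slack ideals, binomiality is immediate. For the remaining classes $3$, $10$ (and hence its polar $11$) a direct computation — the saturation of the ideal of $6$-minors of the symbolic slack matrix — yields a binomial ideal; the computation is carried out in the Macaulay2 worksheets cited in Section~\ref{sec:introduction}, and the case of class $3$ is worked out explicitly in Example~\ref{ex:slack ideal examples}.'' The main obstacle is not conceptual but a matter of organizing which classes fall under which general principle so that only a genuinely small number of explicit saturations remain, and then trusting (and documenting) the symbolic computation for those — the saturation step is the delicate one, since, as Example~\ref{ex:slack ideal examples} emphasizes, the unsaturated ideal of minors is \emph{not} binomial and one must verify that passing to $I_P$ actually removes every non-binomial generator. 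A secondary point to get right is bookkeeping: confirming that the representative polytopes listed in the table really do lie in the stated combinatorial classes, so that the computed slack ideal is the slack ideal of the class and not of some accidental specialization.
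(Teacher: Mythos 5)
Your proposal is correct and follows essentially the same route as the paper: classes $2,4,\ldots,9$ are disposed of via Theorem~\ref{thm:n+2} (the paper applies it directly since each of these classes has $6=4+2$ vertices or facets, whereas you invoke pyramid-invariance for $8$ and $9$ — both work), class $3$ is the computation of Example~\ref{ex:slack ideal examples}, and classes $10$/$11$ reduce by duality to a single Macaulay2 saturation. The only slip is cosmetic: class $8$ is a pyramid over the triangular prism $\Delta_2\times\Delta_1$, not over ``the triangular prism times $\Delta_1$.''
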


\begin{proof}
The $4$-polytopes in classes $2,4,5,6,7,8,9$ have $6=4+2$ vertices or facets and hence the result follows in these cases from Theorem~\ref{thm:n+2}. We saw in Example~\ref{ex:slack ideal examples} that 
the slack ideal of a polytope in class 3 is binomial. Since classes $10$ and $11$ are dual, we only need to 
show that the slack ideal of a polytope in class 10 is binomial.

The symbolic slack matrix of a polytope in class 10 is the following:
$$S_P(x) = \begin{bmatrix}
0&0&x_1&x_2&0&0&x_3\\
0&x_4&0&x_5&0&0&0\\
0&0&0&x_6&0&x_7&x_8\\
x_9&0&x_{10}&0&0&0&0\\
x_{11}&0&0&0&0&x_{12}&0\\
0&0&x_{13}&0&x_{14}&0&x_{15}\\
0&x_{16}&0&0&x_{17}&0&0\\
0&0&0&0&x_{18}&x_{19}&x_{20}\\
\end{bmatrix}.$$
Using Macaulay~2 one can compute that $I_P$ is binomial. More precisely,\\
$I_P = \langle 
x_8x_{19}-x_7x_{20}, \,\, x_{15}x_{18}-x_{14}x_{20}, \,\, x_3x_{13}-x_1x_{15}, \,\, x_3x_6-x_2x_8, \,\,\\
      x_{10}x_{11}x_{15}x_{19}-x_9x_{12}x_{13}x_{20}, \,\, x_3x_{10}x_{11}x_{19}-x_1x_9x_{12}x_{20}, \,\,
      x_5x_8x_{16}x_{18}-x_4x_6x_{17}x_{20}, \,\, \\
      x_5x_7x_{16}x_{18}-x_4x_6x_{17}x_{19}, \,\,
      x_3x_5x_{16}x_{18}-x_2x_4x_{17}x_{20}, \,\, x_9x_{12}x_{13}x_{18}-x_{10}x_{11}x_{14}x_{19}, \,\,\\
      x_2x_7x_{13}x_{18}-x_1x_6x_{14}x_{19}, \,\, x_5x_8x_{14}x_{16}-x_4x_6x_{15}x_{17}, \,\,
      x_3x_5x_{14}x_{16}-x_2x_4x_{15}x_{17}, \,\, \\
      x_1x_5x_{14}x_{16}-x_2x_4x_{13}x_{17}, \,\,
      x_8x_9x_{12}x_{13}-x_7x_{10}x_{11}x_{15}, \,\, x_3x_7x_{10}x_{11}-x_1x_8x_9x_{12}, \,\,\\
      x_2x_7x_{10}x_{11}-x_1x_6x_9x_{12}, \,\, x_1x_5x_9x_{12}x_{16}x_{18}-x_2x_4x_{10}x_{11}x_{17}x_{19}, \,\,\\
      x_5x_7x_{10}x_{11}x_{14}x_{16}-x_4x_6x_9x_{12}x_{13}x_{17}
      \rangle$.
\end{proof}

We will see at the end of the paper that no other combinatorial class in Table~\ref{table:list} has a binomial ideal. Since the table lists all possible combinatorial types of psd-minimal $4$-polytopes, using Lemma~\ref{lem:binomial} we can conclude that we 
have identified all $4$-polytopes with a binomial polytope ideal.

\begin{corollary} \label{cor:classes 1-11}
Any $4$-polytope in classes 1-11 is combinatorially psd-minimal.
\end{corollary}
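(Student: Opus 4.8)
The plan is to derive Corollary~\ref{cor:classes 1-11} as an immediate consequence of the work already assembled in this section. First I would recall that by Theorem~\ref{thm:binomial polytopes} the slack ideals $I_P$ of the combinatorial classes $2,\ldots,11$ are binomial, and that the slack ideal of class $1$ (the $4$-simplex) is trivially binomial since a $4$-simplex has no nonzero $(d+2)=6$-minors in its symbolic slack matrix (indeed $S_P(x)$ for $\Delta_4$ is a $5\times 5$ matrix). Thus $I_P$ is binomial for every one of classes $1$ through $11$.

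Next I would invoke Lemma~\ref{lem:binomial}: if the slack ideal of a polytope is binomial, then that polytope is combinatorially psd-minimal, i.e.\ every polytope combinatorially equivalent to it is psd-minimal. Applying this lemma to one representative in each of classes $1,\ldots,11$ gives the conclusion, since combinatorial psd-minimality is by definition a property of the whole combinatorial class, and the slack ideal depends (up to renaming variables and permuting rows and columns) only on the combinatorial type of $P$. This yields the statement of the corollary directly.

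There is essentially no obstacle here: the corollary is a bookkeeping consequence of Lemma~\ref{lem:binomial} and Theorem~\ref{thm:binomial polytopes}, which in turn rests on Theorem~\ref{thm:n+2} (proved via Proposition~\ref{prop:simplicial binomial}) for the classes with $d+2$ vertices or facets, and on the explicit Macaulay2 computations for classes $3$, $10$ and $11$. The only point worth stating carefully is that class $1$ must be handled separately from Theorem~\ref{thm:binomial polytopes}, which as phrased covers only classes $2,\ldots,11$; but the simplex case is immediate. Below I write the short proof.

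\begin{proof}
By Theorem~\ref{thm:binomial polytopes} the slack ideals of classes $2,\ldots,11$ in Table~\ref{table:list} are binomial. For class $1$, a $4$-simplex $\Delta_4$, the symbolic slack matrix $S_P(x)$ is a $5\times 5$ matrix, so it has no $(d+2)=6$-minors; hence $I_P = \langle 0 \rangle$ is (vacuously) binomial as well. Thus $I_P$ is a binomial ideal for every combinatorial class $1,\ldots,11$. Since the slack ideal of a polytope depends only on its combinatorial type, Lemma~\ref{lem:binomial} applied to a representative of each of these classes shows that every $4$-polytope in classes $1$--$11$ is combinatorially psd-minimal.
\end{proof}
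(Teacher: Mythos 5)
Your proof is correct and follows essentially the same route the paper intends: the corollary is stated without proof precisely because it is the immediate combination of Lemma~\ref{lem:binomial} with Theorem~\ref{thm:binomial polytopes}, plus the observation (made explicitly at the start of Section~\ref{subsec:binomial condition}) that a $d$-simplex is combinatorially psd-minimal. Your handling of class $1$ via the vacuously binomial zero slack ideal is a harmless variant of that observation, and the rest matches the paper's argument.
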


\begin{remark} In \cite{McMullen}, McMullen exhibited 11 combinatorial classes of 
$4$-polytopes that are projectively unique and this list is conjectured to be complete. These are precisely the classes $1, \ldots, 11$ in Table~\ref{table:list}. This connection yields an alternate short proof 
of Corollary~\ref{cor:classes 1-11}.  Since psd rank is invariant under projective 
transformations, and there is only one polytope in each of these classes up to projective equivalence, Corollary~\ref{cor:classes 1-11} follows from the psd-minimality of these representative polytopes.
\end{remark}

\section{Four interesting classes of $4$-polytopes (classes 12--15)} \label{sec:polys12-15}

For the remaining classes from Table~\ref{table:list} we want to establish conditions for 
psd-minimality. Since the table lists a psd-minimal instance in each class, every class contains 
psd-minimal polytopes. In this section, we consider the dual pairs 12-13 and 14-15.
In theory, a method to derive conditions for the psd-minimality of polytopes of a fixed combinatorial type is as follows. 
\begin{enumerate}
\item Compute the slack ideal $I_P \subset \RR[x_1,\ldots,x_t]$ of a polytope $P$ in the class 
and let $J_P$ be a copy of $I_P$ in the variables $y_1, \ldots, y_t$. 
\item  Consider the ideal $$K_P = I_P + J_P + \langle y_i^2 - x_i \,\,i=1,\ldots,t \rangle \subset \RR[x_1,\ldots, x_t, y_1, \ldots, y_t].$$  By construction, for any $(x, y)\in \mathcal{V}(K_P)$ the matrix $S_P(y)$ is a Hadamard square root of $S_P(x)$. Thus, the polytope $P$ is psd-minimal if and only if there are $s$, $r\in\RR^t$ such that $(s,r)\in \mathcal{V}(K_P)$ and $S_P=S_P(s)$.

\item Eliminating $y_1,\ldots, y_t$ from $K_P$ we obtain the ideal $K_P \cap \RR[x_1,\ldots,x_t]$. A minimal generating set of this elimination ideal gives the conditions for a polytope in the class of $P$ to be psd-minimal.
\end{enumerate}

In practice, the computation of the slack ideal as defined in Definition~\ref{def:slack ideal} is often prohibitive due to the large number of variables in $S_P(x)$. Therefore, in this section and the next, we rely on various  simplifications. We first illustrate these ideas on the combinatorial class 12.  

\begin{example} [Class 12] \label{ex:poly12}

By scaling the rows and columns of a slack matrix of $P$, an operation that leaves both psd rank and square root rank unchanged, we may assume that several of the variables 
$x_i$ in $S_P(x)$ have been set to one. 
 This allows us to start with the following partially symbolic slack matrix of a polytope in class 12 in only 13 variables
\begin{small}
\begin{align} \label{eq:slack matrix poly 12}
\begin{bmatrix}
 1&  0&  0&  0&  0&  1&  1&  1\\
  x_1&  0&  0&  0&  1&  1&  0&  0\\
   0&  1&  0&  0&  0&  1&x_{12}&x_{13}\\
   0& x_4&  0&  0&x_{11}&  1&  0&  0\\
  x_2&  0&  1&  0&  0&  0&  1&  0\\
   0& x_5& x_7&  0&  0&  0&  1&  0 \\
  x_3&  0&  0&  1&  0&  0&  0&  1\\
   0& x_6&  0& x_9&  0&  0&  0&  1\\
   0&  0& x_8&x_{10}&  1&  0&  0&  0
\end{bmatrix}.
\end{align}
\end{small}

To attain as many ones as we have above, we need to scale rows and columns in a carefully chosen order. 
For instance, in the above matrix, we can start by scaling columns $1,6,7,8$ to get ones in the first row, and 
then rows $2,\ldots,8$ to get the remaining ones in the last three columns. Next we scale columns $2,\ldots,5$ to set the first non-zero entry in each of those columns to one, and finally, we scale the last row to get the remaining one in the matrix. Note that if we are only interested in preserving the usual rank then we may also scale with negative scalars. This allows us to put any matrix with the same zero pattern as the above one into the same form without changing its rank. 
In all remaining examples, we fix variables to one using a similar procedure.

Our goal is to obtain a parametrization of the slack matrices of class 12 with the 
scalings we have fixed above.
Saturating the ideal of all $6$-minors of the above matrix we get the ideal
\begin{align*}
 I & = & \langle x_{13} - 1, x_{12} - 1, x_{11} - 1, x_{10} - 1, x_9 - 1, x_8 - 1, x_7 - 1, \\
 && x_3 - x_6, x_2 - x_5, x_4 + x_5 + x_6 - 1,  x_1 + x_5 + x_6 - 1 \rangle.
\end{align*}
All slack matrices of class 12 of the form \eqref{eq:slack matrix poly 12} must satisfy the 
equations given by the generators of $I$. This fixes all variables except $x_1,x_2,x_3$ yielding
\begin{small}
$$ \begin{bmatrix}
   1&  0&  0&  0&  0&  1&  1&  1\\
  x_1&  0&  0&  0&  1&  1&  0&  0\\
   0&  1&  0&  0&  0&  1&  1&  1\\
   0& x_1&  0&  0&  1&  1&  0&  0\\
  x_2&  0&  1&  0&  0&  0&  1&  0\\
   0& x_2&  1&  0&  0&  0&  1&  0\\
  x_3&  0&  0&  1&  0&  0&  0&  1\\
   0& x_3&  0&  1&  0&  0&  0&  1\\
   0&  0&  1&  1&  1&  0&  0&  0
\end{bmatrix}.$$
\end{small}
Running the  ideal calculation again yields the principal ideal with generator 
$$x_1 + x_2 + x_3 - 1.$$
Thus we can conclude that the matrices of rank $5$ with the same zero-pattern as a slack matrix of class 12 
can be parametrized (up to scalings) as 
\begin{small}
\begin{align} \label{eq:slack matrix parametrization poly 12}
 \begin{bmatrix}
   1&  0&  0&  0&  0&  1&  1&  1\\
  x_1&  0&  0&  0&  1&  1&  0&  0\\
   0&  1&  0&  0&  0&  1&  1&  1\\
   0& x_1&  0&  0&  1&  1&  0&  0\\
  x_2&  0&  1&  0&  0&  0&  1&  0\\
   0& x_2&  1&  0&  0&  0&  1&  0\\
  1-x_1-x_2&  0&  0&  1&  0&  0&  0&  1\\
   0& 1-x_1-x_2&  0&  1&  0&  0&  0&  1\\
   0&  0&  1&  1&  1&  0&  0&  0
\end{bmatrix}\,.
\end{align}
\end{small} 
In particular, for every polytope $P$ in class 12 we can choose a slack matrix $S_P$ of the above form.
If $P$ is psd-minimal then there is a Hadamard square root of $S_P$  of rank at most five. As remarked earlier,  we can scale rows and columns of the Hadamard square root to bring it into the form~\eqref{eq:slack matrix poly 12}. Due to the rank condition, the Hadamard square root has the same symbolic form as~\eqref{eq:slack matrix parametrization poly 12}. 

Let us denote the parameters of $S_P$ by $x_1$ and $x_2$ and the ones of its square root by $y_1$ and $y_2$. 
Therefore, for psd-minimality we must have 
 \begin{align}
 y_1^2 = x_1, \,\, y_2^2 = x_2, \,\, (1-y_1-y_2)^2=1-x_1-x_2.
 \end{align}
 Eliminating $y_1,y_2$ from these equations we obtain the following condition on $x_1,x_2$: 
 \begin{align} \label{eq:conditions poly 12}
 x_1^4 + 2x_1^3x_2 + 3x_1^2x_2^2 + 2x_1x_2^3 + x_2^4 - 2x_1^3 - 2x_2^3 + x_1^2 -  2x_1x_2 + x_2^2 = 0.
 \end{align}

 Thus the psd-minimal polytopes in the combinatorial class 12 are those whose 
 slack matrices can be parametrized (up to scalings) as in \eqref{eq:slack matrix parametrization poly 12} with $x_1,x_2$ satisfying 
 \eqref{eq:conditions poly 12}. The algebraic variety of  \eqref{eq:conditions poly 12} is shown in Figure~\ref{fig:poly12}.
 
 \begin{figure}
 \includegraphics[width=4.5cm]{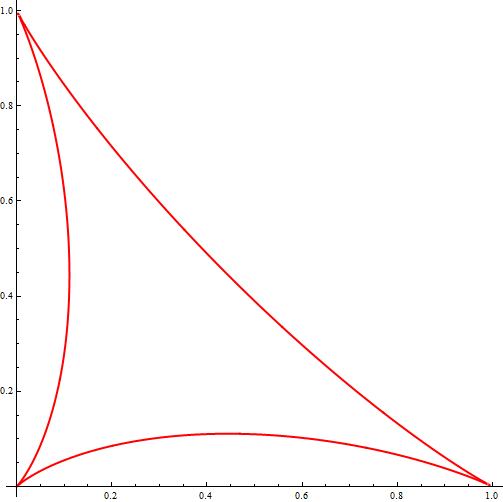} 
 \caption{Set of parameters for which psd-minimality is verified on combinatorial type $12$.}\label{fig:poly12}
 \end{figure}

\end{example}

The above calculation proves the following statement for classes 12 and 13. 

\begin{proposition}\label{prop:polytope 12 13}
A polytope in the combinatorial class 12 (respectively, 13) is psd-minimal if and only if, the entries of its slack matrix 
in the form \eqref{eq:slack matrix parametrization poly 12} (respectively, transpose of \eqref{eq:slack matrix parametrization poly 12}) satisfy the quartic equation \eqref{eq:conditions poly 12}. 
\end{proposition}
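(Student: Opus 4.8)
The plan is to carry out exactly the elimination procedure sketched in the text, now made rigorous. First I would fix a slack matrix representative: for any polytope $P$ in class 12, Corollary~\ref{cor:proj_equivalent slack matrix} allows us to rescale rows and columns of $S_P$, and I would argue (by the explicit ordered scaling described in Example~\ref{ex:poly12}) that every such matrix can be brought to the form \eqref{eq:slack matrix poly 12}. The content here is that the zero pattern of $S_P(x)$ is fixed by the combinatorial class, so the scaling argument only depends on the combinatorics; one checks that the chosen sequence of column scalings (columns $1,6,7,8$, then rows $2$--$8$, then columns $2$--$5$, then the last row) indeed sets the indicated thirteen entries to $1$ and can always be performed.

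Next I would justify the two successive saturations. Since $\rank S_P = d+1 = 5$, the matrix $S_P(x)$ with the above scalings is a positive real point of $\mathcal{V}(\langle 6\text{-minors}\rangle)$, hence of its saturation with respect to the product of the variables; so the entries of $S_P$ must satisfy the generators of the saturated ideal $I$ computed in the example. This forces all variables except $x_1,x_2,x_3$ to the stated values, producing the intermediate matrix; running the saturation again on that matrix forces $x_1+x_2+x_3=1$, giving the parametrization \eqref{eq:slack matrix parametrization poly 12}. Conversely, any matrix of the form \eqref{eq:slack matrix parametrization poly 12} with positive entries has rank $5$ (this is the computation that the saturated ideal of its $6$-minors is zero) and is therefore, by \cite[Theorem~24]{slackmatrixpaper}, up to row scaling the slack matrix of a polytope in class 12. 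Thus \eqref{eq:slack matrix parametrization poly 12} is a faithful parametrization of the slack matrices of the class, up to scalings.

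Now I would invoke Theorem~\ref{thm:square root rank}: $P$ is psd-minimal if and only if some Hadamard square root of $S_P$ has rank $d+1=5$. Replacing every positive entry of $S_P$ by a square root yields a matrix with the same zero pattern; rescaling rows and columns (which only changes signs and magnitudes of entries, not the rank) we may put it in the form \eqref{eq:slack matrix poly 12}, and then by the same two-step saturation argument, any rank-$5$ matrix with that zero pattern and those scalings is of the form \eqref{eq:slack matrix parametrization poly 12} for some values $y_1,y_2$ of the free parameters. Since this rescaled matrix is a Hadamard square root of the rescaled $S_P$ (the rescaling is consistent because we can absorb the scalars), the parameters satisfy $y_1^2=x_1$, $y_2^2=x_2$, $(1-y_1-y_2)^2 = 1-x_1-x_2$. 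Here I should take care with one subtlety: a Hadamard square root chooses, independently, either $+\sqrt{\cdot}$ or $-\sqrt{\cdot}$ in each positive entry, and the rescaling must be shown compatible with the constraint that equal symbolic entries in \eqref{eq:slack matrix parametrization poly 12} (e.g.\ the two copies of $x_1$) get consistent square-root choices; this follows because forcing rank $5$ already forces those entries equal, so any Hadamard square root of rank $5$ automatically respects the pattern. Eliminating $y_1,y_2$ from the three equations gives \eqref{eq:conditions poly 12}: concretely, from the first two $y_1=\pm\sqrt{x_1}$, $y_2=\pm\sqrt{x_2}$, substitute into the third, isolate the cross term $2y_1y_2$, and square again to clear radicals; the resulting polynomial identity is \eqref{eq:conditions poly 12}, and one checks the converse, that any $(x_1,x_2)$ on this quartic admits a choice of signs solving the system.

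Finally, the statement for class 13 is immediate from the fact that $\rankpsd$ is preserved under polarity and that the slack matrix of the polar is the transpose of the slack matrix (Corollary~\ref{cor:proj_equivalent slack matrix} and the discussion of dual pairs): a polytope in class 13 is psd-minimal iff its polar in class 12 is, and the slack matrix of the polar is, up to scalings, the transpose of \eqref{eq:slack matrix parametrization poly 12}, with the same free parameters $x_1,x_2$ subject to the same equation \eqref{eq:conditions poly 12}. I expect the main obstacle to be the bookkeeping in the Hadamard-square-root step: making fully rigorous that an arbitrary rank-$5$ Hadamard square root, after the sign-and-scale normalization, lands in the two-parameter family \eqref{eq:slack matrix parametrization poly 12} rather than just having the right zero pattern. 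Everything else is either the cited realization theorem, invariance of rank under scaling, or the symbolic elimination, which is routine (and is the computation provided in the linked worksheets).
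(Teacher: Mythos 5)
Your proposal follows essentially the same route as the paper's own argument, which is exactly the computation carried out in Example~\ref{ex:poly12}: normalize the slack matrix by row and column scalings, use the saturated ideal of $6$-minors twice to reach the two-parameter form \eqref{eq:slack matrix parametrization poly 12}, apply Theorem~\ref{thm:square root rank} to force a rank-$5$ Hadamard square root into the same parametrized family, eliminate the square-root parameters to obtain \eqref{eq:conditions poly 12}, and pass to class 13 by transposition. Your added care about the compatibility of the scalings with the square root and about real (rather than only complex) lifting of points on the quartic fills in details the paper leaves implicit, and both checks go through as you indicate.
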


This example breaks some conjectures on the behavior of square root rank and psd-minimality which we now discuss.

\begin{enumerate}
\item Up to now, in all known psd-minimal polytopes, the positive square root $\sqrt[+]{S_P}$ 
of the slack matrix $S_P$ had rank $d+1$. It was asked in Problem~$2$ \cite{beasley_et_al:DR:2013:4019} whether this was always the case for psd-minimal polytopes.
If we set $x_1=1/9$ and $x_2=4/9$ in the parametrization \eqref{eq:slack matrix parametrization poly 12}, condition \eqref{eq:conditions poly 12} is verified and the resulting matrix is a slack matrix of a psd-minimal polytope in class 12. However it is easy to see that the positive square root of this slack matrix is not of rank five, in fact, we must take the square root parametrized by $y_1=-1/3$ and $y_2=2/3$ to obtain one of rank five.

\item The support matrix of a matrix $M$ is the $0,1$ matrix with the same zero pattern as $M$. A weaker version of the previous conjecture that is also broken by this example is that the support matrix of a psd-minimal $d$-polytope is always of rank $d+1$.  This was true in all previously known examples, but here it is trivial to check that the support matrix has rank larger than $5$. This, in some sense, provides evidence that the class of psd-minimal polytopes is fundamentally larger than the class of $2$-level polytopes.
\end{enumerate}

 \begin{example} [Class $14$] \label{ex:polytope 14}

We employ the same general ideas that we used for class $12$. 
Scaling the rows and columns of the 
 symbolic slack matrix of a polytope in class 14 to set as many entries to 1 as possible, 
 we obtain the symbolic matrix
\begin{align} \label{eq:slack matrix poly 14}
\begin{small}
\begin{bmatrix}
   1&    1&   1&   1&    0&    0&   0&    0\\
   1&  x_3&   0&   0&  x_7&    0&   0&    0\\
   0&  x_4&   1& x_5&    0& x_{10}&   0&    0\\
   0&    1&   0&   0&  x_8& x_{11}&   0&    0\\
 x_1&    0&   0&   1&    0&    0&   1&    0\\
   0&    0&   0& x_6&    0&    1&   1&    0\\
 x_2&    0&   1&   0&    0&    0&   0&    1\\
   0&    0&   1&   0&    0& x_{12}&   0& x_{16}\\
   1&    0&   0&   0&    1&    0&x_{14}& x_{17}\\
   0&    0&   0&   0&  x_9& x_{14}&x_{15}& 1
\end{bmatrix}.
\end{small}
\end{align}

We then saturate the ideal of the $6$-minors of the above matrix, and 
use all the linear generators to eliminate variables as we did in the previous example.
In addition, we also use some of the quadratic generators to eliminate more variables. 
For instance, the polynomial $x_8x_{17}-1$ is a generator of the ideal and can be 
used to replace $x_{17}$ by $1/x_8$. Since scalings are allowed, we then multiply 
the row by $x_8$ to get rid of denominators. Continuing this way, we arrive at 
a parametrization of the slack matrices of class 14 in the form
\begin{small}
\begin{align} \label{eq:slack matrix parametrization poly 14}
 \begin{bmatrix}
 x_{10}&    1&   1&   1&    0&    0&   0&    0\\
 x_{11}&    1&   0&   0&    1&    0&   0&    0\\
   0&    1&   1&   1&    0&    x_{10}&   0&    0\\
   0&    1&   0&   0&    1&    x_{11}&   0&    0\\
   1&    0&   0&   1&    0&    0&  1 &    0\\
   0&    0&   0&   1&    0&    1&   1&    0\\
 x_{12}&    0&   1&   0&    0&    0&   0&    1\\
   0&    0&   1&   0&    0&    x_{12}&   0&    1\\
 1+x_{12}+x_{11}-x_{10}&    0&   0&   0&    1&    0&   1&    1\\
   0&    0&   0&   0&    1&    1+x_{12}+x_{11}-x_{10}&   1&    1
\end{bmatrix}.
\end{align}
\end{small}
As before, we let $y_1, y_2$ and $y_3$ be the parameters of a symbolic square root of 
the above slack matrix. Then psd-minimality imposes the conditions 
\begin{align}
 y_1^2 = x_{10}, \,\, y_2^2 = x_{11}, \,\, y_3^2 = x_{12}, \,\, (1-y_1+y_2+y_3)^2=1-x_{10}+x_{11}+x_{12}.
\end{align}
Eliminating the $y$ variables, results in the following degree eight algebraic equation:

\begin{tiny}
\begin{equation}\label{eq:conditions poly 14}
\begin{split}
x_{10}^8-4 x_{11} x_{10}^7-4 x_{12} x_{10}^7-4 x_{10}^7+6 x_{11}^2 x_{10}^6+6 x_{12}^2 x_{10}^6+16 x_{11} x_{10}^6+16 x_{11} x_{12} x_{10}^6+ 16 x_{12} x_{10}^6+6 x_{10}^6-\\
4  x_{11}^3 x_{10}^5  -4 x_{12}^3 x_{10}^5-24 x_{11}^2 x_{10}^5-24 x_{11} x_{12}^2 x_{10}^5-24 x_{12}^2 x_{10}^5-24 x_{11} x_{10}^5-24 x_{11}^2 x_{12} x_{10}^5-60 x_{11} x_{12}  x_{10}^5-\\
24 x_{12} x_{10}^5-4 x_{10}^5 +x_{11}^4 x_{10}^4+x_{12}^4 x_{10}^4+16 x_{11}^3 x_{10}^4+16 x_{11} x_{12}^3 x_{10}^4+16 x_{12}^3 x_{10}^4+36 x_{11}^2 x_{10}^4+\\
36 x_{11}^2 x_{12}^2 x_{10}^4+76 x_{11} x_{12}^2 x_{10}^4+36 x_{12}^2 x_{10}^4 +16 x_{11} x_{10}^4+16 x_{11}^3 x_{12} x_{10}^4+76 x_{11}^2 x_{12} x_{10}^4+76 x_{11} x_{12} x_{10}^4+\\
16 x_{12} x_{10}^4+x_{10}^4-4 x_{11}^4 x_{10}^3-4 x_{11} x_{12}^4 x_{10}^3-4 x_{12}^4 x_{10}^3-24 x_{11}^3 x_{10}^3-24 x_{11}^2 x_{12}^3 x_{10}^3-36 x_{11} x_{12}^3 x_{10}^3-\\
24 x_{12}^3 x_{10}^3-24 x_{11}^2 x_{10}^3-24 x_{11}^3 x_{12}^2 x_{10}^3-76 x_{11}^2 x_{12}^2 x_{10}^3-76 x_{11} x_{12}^2 x_{10}^3-24 x_{12}^2 x_{10}^3-4 x_{11} x_{10}^3\\
-4 x_{11}^4 x_{12} x_{10}^3-36 x_{11}^3 x_{12} x_{10}^3-76 x_{11}^2 x_{12} x_{10}^3-36 x_{11} x_{12} x_{10}^3-4 x_{12} x_{10}^3+6 x_{11}^4 x_{10}^2+6 x_{11}^2 x_{12}^4 x_{10}^2+\\
4 x_{11} x_{12}^4 x_{10}^2+6 x_{12}^4 x_{10}^2+16 x_{11}^3 x_{10}^2+16 x_{11}^3 x_{12}^3 x_{10}^2+20 x_{11}^2 x_{12}^3 x_{10}^2+20 x_{11}  x_{12}^3 x_{10}^2+16 x_{12}^3 x_{10}^2+\\
6 x_{11}^2 x_{10}^2+6 x_{11}^4 x_{12}^2 x_{10}^2+20 x_{11}^3 x_{12}^2 x_{10}^2+82 x_{11}^2 x_{12}^2 x_{10}^2+20 x_{11} x_{12}^2 x_{10}^2+6 x_{12}^2 x_{10}^2+4 x_{11}^4 x_{12} x_{10}^2+\\
20 x_{11}^3 x_{12} x_{10}^2+20 x_{11}^2 x_{12} x_{10}^2+4 x_{11} x_{12} x_{10}^2-4 x_{11}^4 x_{10}-4 x_{11}^3 x_{12}^4 x_{10}+4 x_{11}^2 x_{12}^4 x_{10}+4 x_{11} x_{12}^4 x_{10}-\\
4 x_{12}^4 x_{10}-4 x_{11}^3 x_{10}-4 x_{11}^4 x_{12}^3 x_{10}+4 x_{11}^3 x_{12}^3 x_{10}-32 x_{11}^2 x_{12}^3 x_{10}+4 x_{11} x_{12}^3 x_{10}-4 x_{12}^3 x_{10}+\\
4 x_{11}^4 x_{12}^2 x_{10}-32 x_{11}^3 x_{12}^2 x_{10}-32 x_{11}^2 x_{12}^2 x_{10}+4 x_{11} x_{12}^2 x_{10}+4 x_{11}^4 x_{12} x_{10}+4 x_{11}^3 x_{12}x_{10}+4 x_{11}^2 x_{12} x_{10}+\\
x_{11}^4+x_{11}^4 x_{12}^4-4 x_{11}^3 x_{12}^4+6 x_{11}^2 x_{12}^4-4 x_{11} x_{12}^4+x_{12}^4-4 x_{11}^4 x_{12}^3+4 x_{11}^3 x_{12}^3+4 x_{11}^2 x_{12}^3-4 x_{11} x_{12}^3+\\
6 x_{11}^4 x_{12}^2+4 x_{11}^3 x_{12}^2+6 x_{11}^2 x_{12}^2-4 x_{11}^4 x_{12}-4 x_{11}^3 x_{12} = 0.  
\end{split}
\end{equation}
\end{tiny}

 Thus the psd-minimal polytopes in the combinatorial class 14 are those whose 
 slack matrices can be parametrized (up to scalings) as in \eqref{eq:slack matrix parametrization poly 14} with $x_{10},x_{11},x_{12}$ satisfying 
 \eqref{eq:conditions poly 14}. This algebraic variety is shown in Figure~\ref{fig:poly14}.
 
 \begin{figure}
 \includegraphics[width=6cm]{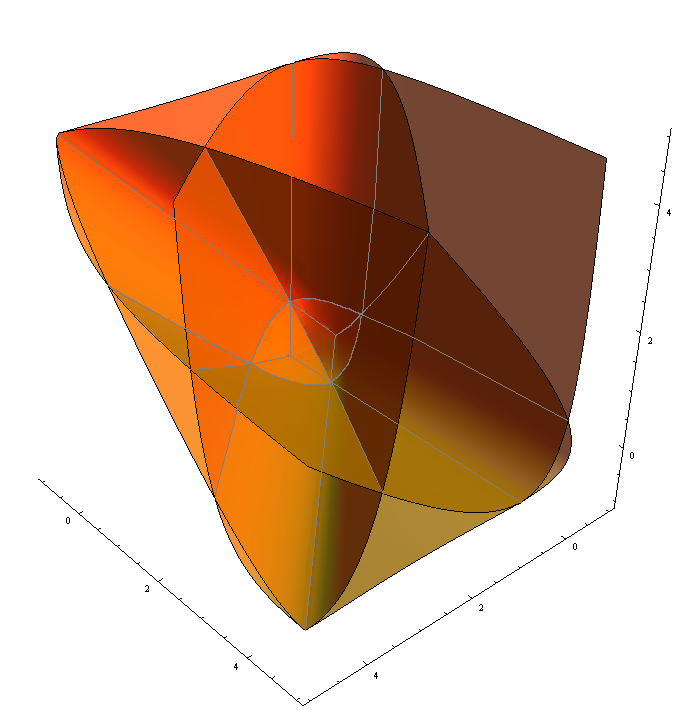} 
 \caption{Set of parameters for which psd-minimality is verified on combinatorial type $14$.}\label{fig:poly14}
 \end{figure}

\end{example}

\begin{proposition}\label{prop:polytope 14 15}
A polytope in the combinatorial class 14 (respectively, 15) is psd-minimal if and only if, the entries of its slack matrix 
in the form \eqref{eq:slack matrix parametrization poly 14} (respectively, transpose of \eqref{eq:slack matrix parametrization poly 14}) satisfy the degree eight equation \eqref{eq:conditions poly 14}. 
\end{proposition}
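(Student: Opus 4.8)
The plan is to mirror Example~\ref{ex:polytope 14} and make it into a clean proof. First I would show that every polytope $P$ in class $14$ has a slack matrix of the form~\eqref{eq:slack matrix parametrization poly 14}. This requires two things: (i) that the row/column scalings performed in Example~\ref{ex:polytope 14} can always be carried out, i.e.\ that the nonzero entries used to set ones are in positions that admit a consistent scaling order (a routine check on the zero pattern, exactly as spelled out in Example~\ref{ex:poly12} for class $12$), and (ii) that any matrix with the zero pattern of $S_P(x)$ and rank $\le 5$ lies in the variety of the saturated ideal of $6$-minors, which by the Macaulay2 computation is exactly the ideal cutting out the parametrization~\eqref{eq:slack matrix parametrization poly 14}. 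By Corollary~\ref{cor:proj_equivalent slack matrix}, since we only used row and column scalings, this produces a slack matrix of $P$ of the stated form, with some parameter values $(x_{10},x_{11},x_{12})$.

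Next, using Theorem~\ref{thm:square root rank}, $P$ is psd-minimal if and only if some Hadamard square root $\sqrt{\ }$ of this slack matrix has rank $\le 5$ (equivalently $=d+1=5$, since the rank is already $\ge 5$ for a $4$-polytope's slack matrix). A Hadamard square root has the same zero pattern, so after re-scaling rows and columns it too has the form~\eqref{eq:slack matrix parametrization poly 14} with parameters $(y_1^2,y_2^2,y_3^2)=(x_{10},x_{11},x_{12})$; the rank condition forces the remaining entry $1-y_1+y_2+y_3$ in that slot, but the original matrix has $1-x_{10}+x_{11}+x_{12}$ there, giving the compatibility equation $(1-y_1+y_2+y_3)^2 = 1-x_{10}+x_{11}+x_{12}$ together with $y_i^2 = x_{10},x_{11},x_{12}$. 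Here one must be slightly careful: a priori the square root, after scaling, could land in the parametrized form with a \emph{different} sign pattern in the ``$1$'' slots; but those entries are fixed to $1$ by the normalization, and the only free sign choices are $y_1=\pm\sqrt{x_{10}}$, $y_2=\pm\sqrt{x_{11}}$, $y_3=\pm\sqrt{x_{12}}$, which is precisely what the elimination below accounts for. Eliminating $y_1,y_2,y_3$ from these four equations yields~\eqref{eq:conditions poly 14}, so $P$ is psd-minimal iff $(x_{10},x_{11},x_{12})$ satisfies~\eqref{eq:conditions poly 14}. The class $15$ statement then follows by polarity: $\rankpsd(P)=\rankpsd(P^\circ)$ and the slack matrix of $P^\circ$ is the transpose of that of $P$ (Corollary~\ref{cor:proj_equivalent slack matrix}), so the transpose of~\eqref{eq:slack matrix parametrization poly 14} parametrizes class $15$ and the same equation governs psd-minimality.

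The genuinely verification-heavy steps are the two Macaulay2 computations — that the saturated ideal of $6$-minors of~\eqref{eq:slack matrix poly 14} cuts out exactly the parametrization~\eqref{eq:slack matrix parametrization poly 14}, and that the elimination ideal obtained from the three square conditions is generated by the single degree-eight polynomial~\eqref{eq:conditions poly 14} — both of which are referenced to the posted Sage/Macaulay2 worksheets. The main conceptual obstacle, and the step I would write most carefully, is the re-scaling argument for the Hadamard square root: one must argue that after choosing signs for the square roots of the parametrized entries, the remaining nonzero entries of the square root (which were $1$'s in $S_P$, hence their square roots are $\pm 1$) can be rescaled back to $1$ using only row/column operations, and that doing so is compatible with the already-fixed normalization, so that the square root really does sit inside the family~\eqref{eq:slack matrix parametrization poly 14}. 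This is exactly the ``we can scale rows and columns of the Hadamard square root to bring it into the form\ldots'' sentence in Example~\ref{ex:poly12}, and the proof should simply invoke that same reasoning rather than redo it.
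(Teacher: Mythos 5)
Your proposal is correct and follows essentially the same route as the paper, whose proof of Proposition~\ref{prop:polytope 14 15} is precisely the computation carried out in Example~\ref{ex:polytope 14}: scale to the form~\eqref{eq:slack matrix poly 14}, saturate the ideal of $6$-minors to reach the parametrization~\eqref{eq:slack matrix parametrization poly 14}, impose the square-root conditions via Theorem~\ref{thm:square root rank}, and eliminate the $y$-variables to obtain~\eqref{eq:conditions poly 14}. The extra care you take with the sign choices and the rescaling of the Hadamard square root is a correct elaboration of the step the paper disposes of with the sentence ``we can scale rows and columns of the Hadamard square root to bring it into the form\ldots'' in Example~\ref{ex:poly12}.
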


\section{The remaining cases: classes 16--31} \label{sec:polys16-31}

In this section we briefly describe the methods to characterize psd-minimality 
in the remaining combinatorial classes from Table~\ref{table:list}.

\subsection{The $3$-cube}

To illustrate the techniques, and because we use this in the forthcoming arguments, we reprove the conditions under which a cube (or dually, an octahedron) is psd-minimal.

As before, we scale the rows and columns of the symbolic slack matrix of a cube to fix $15$ entries to be one, obtaining the following matrix:
\begin{small}
\begin{align} \label{eq:slack cube}
\begin{bmatrix}
 1&  0&  1&  0&  1&  0\\
 1&  0& x_1&  0&  0&  1\\
 1&  0&  0&  1& x_6&  0\\
 1&  0&  0& x_3&  0& x_9\\
 0&  1&  1&  0& x_7&  0\\
 0&  1& x_2&  0&  0& x_{10}\\
 0&  1&  0& x_4& x_8&  0\\
 0&  1&  0& x_5& 0& x_{11}
\end{bmatrix}.
\end{align}
\end{small}
Computing the saturation of the ideal of $5$-minors of this matrix, we obtain algebraic conditions 
that allow us to eliminate variables and get the matrix
\begin{small}
\begin{align} \label{eq:param cube}
\begin{bmatrix}
 1&  0&  1          &  0                &  1&  0\\
 1&  0& x_1         &  0                &  0&  1\\
 1&  0&  0          &  1                & x_6&  0\\
 1&  0&  0          & x_1x_9-x_9+1      &  0& x_9\\
 0&  1&  1          &  0                & x_7&  0\\
 0&  1& x_1x_7-x_7+1&  0                &  0& x_7\\
 0&  1&  0          & 1                 & x_6+x_7-1&  0\\
 0&  1&  0          & x_1x_{11}-x_{11}+1& 0& x_{11}
\end{bmatrix}.
\end{align}
\end{small}
Instead of eliminating more variables, we now impose psd-minimality requirements on this matrix.  
If matrix \eqref{eq:slack cube} and one of its square roots have rank four, both have the parametrization \eqref{eq:param cube}. 
Let $y_1$ and $y_{11}$ denote the square roots of $x_1$ and $x_{11}$ respectively. Entry $(8,4)$ then implies that $$y_1^2y_{11}^2-y_{11}^2+1=x_1x_{11}-x_{11}+1=(y_1y_{11}-y_{11}+1)^2.$$
Simplifying, we obtain $y_1(1-y_1)(1-y_{11})=0$ which implies either $y_1$ or $y_{11}$ is $1$. Similarly, looking at the other entries of 
\eqref{eq:param cube} 
we get that $y_1$ or $y_7$ must be $1$, $y_1$ or $y_9$ must be $1$, and $y_6$ or $y_7$ must be $1$. 

Therefore, if $y_1$ is not $1$ then $y_9, y_7$ and $y_{11}$ must be $1$. Making this substitution and 
recomputing the ideal we see that $y_6$ must be $1$. If $y_1$ is $1$, further computations force us to decide if $y_6$ or $y_7$ is $1$. In the end we get three possibilities for a Hadamard square root of rank four:
\begin{small}
\begin{align} \label{eq:cube_slacks}
\begin{bmatrix}
 1&  0&  1&  0&  1&  0\\
 1&  0& y_1&  0&  0&  1\\
 1&  0&  0&  1& 1&  0\\
 1&  0&  0& y_1&  0& 1\\
 0&  1&  1&  0& 1&  0\\
 0&  1& y_1&  0&  0& 1\\
 0&  1&  0& 1& 1&  0\\
 0&  1&  0& y_1& 0& 1
\end{bmatrix} \textrm{ or }
\begin{bmatrix}
 1&  0&  1&  0&  1&  0\\
 1&  0&  1&  0&  0&  1\\
 1&  0&  0&  1& y_6&  0\\
 1&  0&  0&  1&  0& y_6\\
 0&  1&  1&  0& 1&  0\\
 0&  1&  1&  0&  0& 1\\
 0&  1&  0&  1& y_6&  0\\
 0&  1&  0&  1& 0& y_6
\end{bmatrix} \textrm{ or }
\begin{bmatrix}
 1&  0&  1&  0&  1&  0\\
 1&  0&  1&  0&  0&  1\\
 1&  0&  0&  1& 1&  0\\
 1&  0&  0&  1&  0& 1\\
 0&  1&  1&  0& y_7&  0\\
 0&  1&  1&  0&  0& y_7\\
 0&  1&  0&  1& y_7&  0\\
 0&  1&  0&  1& 0& y_7
\end{bmatrix}.
\end{align}
\end{small}
The three possibilities above are the same up to permuting rows and columns, which yields the following result.
\begin{proposition}\label{prop:cube_classification}
A $3$-cube is psd-minimal if and only if its slack matrix, up to scalings and permutations of rows and columns is 
of the form
\begin{equation}
\begin{bmatrix}  \label{eq:param psd minimal cube}
 1&  0&  1&  0&  1&  0\\
 1&  0&  1&  0&  0&  1\\
 1&  0&  0&  1& x&  0\\
 1&  0&  0&  1&  0& x\\
 0&  1&  1&  0& 1&  0\\
 0&  1&  1&  0&  0& 1\\
 0&  1&  0&  1& x&  0\\
 0&  1&  0&  1& 0& x
\end{bmatrix}.
\end{equation}
\end{proposition}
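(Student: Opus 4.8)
The plan is to follow the computational strategy already laid out in the paragraphs preceding the statement, but to present it as a clean argument. First I would recall the setup: by Corollary~\ref{cor:proj_equivalent slack matrix} we may work with any slack matrix of a given $3$-cube up to row and column scalings and permutations, so we may assume it has the form \eqref{eq:slack cube} with $15$ entries set to one. The justification that such a normalization is always achievable is the same order-of-scaling argument used in Example~\ref{ex:poly12}: scale the appropriate columns and then rows to produce ones in a chosen spanning set of positions, using that the zero pattern is fixed by the combinatorial type. Since only the \emph{rank} of the matrix and of its Hadamard square roots matters for psd-minimality (Theorem~\ref{thm:square root rank}), and both are unaffected by scaling, this normalization costs nothing.

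Next I would carry out the elimination. Saturating the ideal of $5$-minors of \eqref{eq:slack cube} with respect to the product of the variables yields (by a computer algebra computation) linear and quadratic generators that let us solve for $x_2,x_3,x_4,x_5,x_8,x_{10}$ in terms of $x_1,x_6,x_7,x_9,x_{11}$, producing the parametrized form \eqref{eq:param cube}; by the theorem following Definition~\ref{def:slack ideal}, every rank-$4$ matrix with the cube's zero pattern is, up to scaling, of this form. Now I would invoke Theorem~\ref{thm:square root rank}: a $3$-cube with slack matrix $S$ in the form \eqref{eq:param cube} is psd-minimal iff some Hadamard square root of $S$ also has rank $4$, hence is \emph{also} of the form \eqref{eq:param cube} with parameters $y_1,y_6,y_7,y_9,y_{11}$ satisfying $y_i^2=x_i$. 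Comparing entries $(8,4)$, $(6,3)$, $(4,4)$ and $(7,5)$ of the two parametrizations gives the four scalar equations
\begin{align*}
y_1^2y_{11}^2 - y_{11}^2 + 1 &= (y_1 y_{11} - y_{11} + 1)^2,\\
y_1^2y_{7}^2 - y_{7}^2 + 1 &= (y_1 y_{7} - y_{7} + 1)^2,\\
y_1^2y_{9}^2 - y_{9}^2 + 1 &= (y_1 y_{9} - y_{9} + 1)^2,\\
y_6^2 + y_7^2 - 1 &= (y_6 + y_7 - 1)^2,
\end{align*}
and the first three simplify to $y_1(1-y_1)(1-y_j)=0$ for $j\in\{7,9,11\}$ while the last simplifies to $(1-y_6)(1-y_7)=0$. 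A short case split on whether $y_1=1$ or not — recomputing the saturated ideal after each substitution, exactly as described in the text — forces the square root into one of the three matrices in \eqref{eq:cube_slacks}.

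Finally I would observe that the three matrices in \eqref{eq:cube_slacks} differ only by a permutation of rows and columns (they are related by swapping the roles of the three "directions" of the cube), so up to scalings and permutations the slack matrix of a psd-minimal $3$-cube must be of the single form \eqref{eq:param psd minimal cube}; conversely, for any $x>0$ the matrix \eqref{eq:param psd minimal cube} has rank $4$ and its positive Hadamard square root (obtained by $x\mapsto\sqrt{x}$) also has rank $4$, so by Theorem~\ref{thm:square root rank} every cube with such a slack matrix is psd-minimal. The main obstacle is not conceptual but bookkeeping: one must correctly perform the saturated-ideal computations (these are the steps flagged as relying on Macaulay2) and, in the case analysis, keep track of which variable substitutions are forced at each stage so that the final list of three square-root shapes is complete and the permutation-equivalence among them is verified honestly rather than asserted. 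Everything else is routine once the normalization \eqref{eq:slack cube} and the parametrization \eqref{eq:param cube} are in place.
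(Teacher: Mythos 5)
Your proposal is correct and follows essentially the same route as the paper's own argument: normalize the slack matrix to the form \eqref{eq:slack cube}, use the saturated ideal of $5$-minors to reach the parametrization \eqref{eq:param cube}, impose the square-root conditions entrywise to force the disjunctions $y_1=1$ or $y_j=1$ (for $j\in\{7,9,11\}$) and $y_6=1$ or $y_7=1$, and then reduce the three resulting shapes in \eqref{eq:cube_slacks} to a single one by row/column permutations. The only additions you make — explicitly recording which matrix entries yield which equations and explicitly checking the converse direction via the positive Hadamard square root — are harmless elaborations of steps the paper leaves implicit.
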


\begin{remark}\label{rem:psd minimal cube dependencies}
From the proof of Proposition~\ref{prop:cube_classification} it follows that a psd-minimal $3$-cube has a 
slack matrix of the same support as \eqref{eq:slack cube}
such that both 
the slack matrix, and each of its Hadamard square roots of rank four, have the property that 
their first, second, third and fourth columns are linearly dependent, and also the third, fourth, fifth and sixth columns 
are dependent.
\end{remark}

Furthermore, the matrix in \eqref{eq:param psd minimal cube} is the slack matrix of the Cartesian product of the unit segment and the trapezoid with vertices $\{(0,0),(1,0),(0,1),(x,1)\}$, hence by Corollary~\ref{cor:proj_equivalent slack matrix}, a $3$-cube is psd-minimal if and only if it is projectively equivalent to the product of a segment and a trapezoid. Note that Proposition~\ref{prop:cube_classification}, together with Proposition~\ref{prop:combinatorial classification in 3-space} and Theorem~\ref{thm:n+2} finishes an alternate proof of the complete characterization of psd-minimality in $\RR^3$, as stated in Theorem~\ref{thm:classification in 3-space}.

In what follows we make extensive use of the above idea of imposing psd-minimality requirements during 
the process of parametrizing the slack matrix to make some entries constant at the price of branching the computation. This makes the computations possible and easier to track. We can also explicitly use the parametrization~\eqref{eq:param psd minimal cube} whenever a $4$-polytope or its dual has a cubical or octahedral facet, since facets of psd-minimal polytopes must be psd-minimal. This allows us to set many variables to $1$ at the start, 
leading to big computational savings. These tricks are enough to complete the characterization of psd-minimality in the remaining classes from Table~\ref{table:list}.

\subsection{Classes $16$--$31$}
We first provide the full result for classes $19$ through $27$ leaving the remaining classes  to be explored in more detail after that. 
The computations follow the exact same ideas as in the previous sections, and hence are not included in the paper. 

\begin{proposition}\label{prop:classes 16-29}
A polytope of combinatorial type 16, 18, 20, 22, 24 or 26 (respectively 17, 19, 21, 23, 25 or 27) is psd-minimal if and only if, after scaling and permuting rows and columns, its slack matrix (respectively, its transpose) 
is of the following form: 
\begin{small}
$$
\begin{bmatrix}
  {1}&  {0}&  {1}&  {0}&    {1}&   {0}& 0\\
  {1}&  {0}&  {1}&  {0}&    {0}&   {1}& 0\\
  {1}&  {0}&  {0}&  {1}&    {1}&   {0}& 0\\
  {1}&  {0}&  {0}&  {1}&    {0}&   {1}& 0\\
  {0}&  {1}&  {1}&  {0}&    {x}&   {0}& 0\\
  {0}&  {1}&  {1}&  {0}&    {0}&   {x}& 0\\
  {0}&  {1}&  {0}&  {1}&    {x}&   {0}& 0\\
  {0}&  {1}&  {0}&  {1}&    {0}&   {x}& 0\\
    0&    0&    0&    0&      0&     0& 1
\end{bmatrix},
\begin{bmatrix}
1& 1&   0&    0&    0&    0&      0&     0\\
0& 0& {1}&  {0}&  {1}&  {0}&    {1}&   {0}\\
0& 0& {1}&  {0}&  {1}&  {0}&    {0}&   {1}\\
0& 1& {1}&  {0}&  {0}&  {1}&    {1}&   {0}\\
0& 1& {1}&  {0}&  {0}&  {1}&    {0}&   {1}\\
1& 0& {0}&  {1}&  {1}&  {0}&    {x}&   {0}\\
1& 0& {0}&  {1}&  {1}&  {0}&    {0}&   {x}\\
0& 0& {0}&  {1}&  {0}&  {1}&    {x}&   {0}\\
0& 0& {0}&  {1}&  {0}&  {1}&    {0}&   {x}
\end{bmatrix},  $$ $$
\begin{bmatrix}
1&    0&    0&    0&    0&      1&     0\\
1&    0&    0&    0&    0&      0&     1\\
0&  {1}&  {0}&  {1}&  {0}&    {1}&   {0}\\
0&  {1}&  {0}&  {1}&  {0}&    {0}&   {1}\\
0&  {1}&  {0}&  {0}&  {1}&    {1}&   {0}\\
0&  {1}&  {0}&  {0}&  {1}&    {0}&   {1}\\
0&  {0}&  {1}&  {1}&  {0}&    {x}&   {0}\\
0&  {0}&  {1}&  {1}&  {0}&    {0}&   {x}\\
0&  {0}&  {1}&  {0}&  {1}&    {x}&   {0}\\
0&  {0}&  {1}&  {0}&  {1}&    {0}&   {x}
\end{bmatrix}, 
\begin{bmatrix}
1&1&1&    0&    0&    0&    0&      0&     0\\
1&1&0&  {1}&  {0}&  {1}&  {0}&    {1}&   {0}\\
1&0&0&  {1}&  {0}&  {1}&  {0}&    {0}&   {1}\\
0&0&0&  {1}&  {0}&  {0}&  {1}&    {1}&   {0}\\
1&0&1&  {1}&  {0}&  {0}&  {1}&    {0}&   {1}\\
0&x&0&  {0}&  {1}&  {1}&  {0}&    {x}&   {0}\\
0&0&0&  {0}&  {1}&  {1}&  {0}&    {0}&   {x}\\
0&x&1&  {0}&  {1}&  {0}&  {1}&    {x}&   {0}\\
0&0&1&  {0}&  {1}&  {0}&  {1}&    {0}&   {x}
\end{bmatrix},$$  $$
\begin{bmatrix}
      {1}&      {0}&      {1}&      {0}&        {1}&       {0} & 1 & 0 & 0 & 0\\
      {1}&      {0}&      {1}&      {0}&        {0}&       {1} & 1 & 1 & 1 & 0\\
      {1}&      {0}&      {0}&      {1}&        {1}&       {0} & 1 & x & 0 & 1\\
      {1}&      {0}&      {0}&      {1}&        {0}&       {1} & 0 & x & 0 & 0\\
      {0}&      {1}&      {1}&      {0}&        {x}&       {0} & x & 0 & x & x\\
      {0}&      {1}&      {1}&      {0}&        {0}&       {x} & 0 & 0 & x & 0\\
      {0}&      {1}&      {0}&      {1}&        {x}&       {0} & 0 & 0 & 0 & 1\\
      {0}&      {1}&      {0}&      {1}&        {0}&       {x} & 0 & x & x & 1\\
        0&        0&        0&        0&          0&         0 & 1 & 1 & 1 & 1\\
        1&        1&        1&        1&          1&         x & 0 & 0 & 0 & 0
\end{bmatrix},
\begin{bmatrix}
  {1}&  {0}&  {1}&  {0}&    {1}&   {0} &  {1}&  {0}&  {1}&  {0}&    {1}&   {0}\\
  {1}&  {0}&  {1}&  {0}&    {0}&   {1} &  {1}&  {0}&  {1}&  {0}&    {0}&   {1}\\
  {1}&  {0}&  {0}&  {1}&    {1}&   {0} &  {1}&  {0}&  {0}&  {1}&    {1}&   {0} \\
  {1}&  {0}&  {0}&  {1}&    {0}&   {1} &  {1}&  {0}&  {0}&  {1}&    {0}&   {1}\\
  {0}&  {1}&  {1}&  {0}&    {x}&   {0} &  {0}&  {1}&  {1}&  {0}&    {x}&   {0}\\
  {0}&  {1}&  {1}&  {0}&    {0}&   {x} &  {0}&  {1}&  {1}&  {0}&    {0}&   {x}\\
  {0}&  {1}&  {0}&  {1}&    {x}&   {0} &  {0}&  {1}&  {0}&  {1}&    {x}&   {0}\\
  {0}&  {1}&  {0}&  {1}&    {0}&   {x} &  {0}&  {1}&  {0}&  {1}&    {0}&   {x}\\
    0&    0&    0&    0&      0&      0&   y &    1&    1&    y&      y&    x\\
   y &    1&    1&    y&      y&      x&    0&    0&    0&    0&      0&    0
\end{bmatrix}.$$
\end{small}
\end{proposition}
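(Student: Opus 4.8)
\textbf{Proof strategy for Proposition~\ref{prop:classes 16-29}.} The plan is to follow the computational template already established for the $3$-cube and for classes 12--15: for each of the six primal classes 16, 18, 20, 22, 24, 26 (the dual classes 17, 19, 21, 23, 25, 27 follow by transposing, since psd rank and square root rank are preserved under polarity, by Corollary~\ref{cor:proj_equivalent slack matrix} and the fact that $\rankpsd(P)=\rankpsd(P^\circ)$), I would first write down a partially symbolic slack matrix, using row and column scalings to fix as many entries to $1$ as possible. The key simplification here, as noted at the end of the $3$-cube subsection, is that each of these six classes has a cubical facet (classes 16, 20, 22, 24, 26 by inspection of Table~\ref{table:list}; class 18 has a cube in its dual class 19, or one uses the prism/bisimplex facet structure analogously). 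Since a facet of a psd-minimal polytope is psd-minimal (Lemma~\ref{lem:face_inheritance}), that facet's slack submatrix must have the form~\eqref{eq:param psd minimal cube} from Proposition~\ref{prop:cube_classification}, which lets us set a large block of variables to $1$ and introduce the single parameter $x$ for the cube facet at the very start.

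Next, for each class I would compute the saturation of the ideal of $(d+2)$-minors $=6$-minors of the resulting partially symbolic matrix (using Macaulay2, exactly as in Examples~\ref{ex:poly12} and~\ref{ex:polytope 14}), and use the linear and low-degree generators of that ideal to eliminate further variables, arriving at a parametrization of the rank-$5$ matrices with the given zero pattern in terms of very few free parameters. Then I would impose the psd-minimality requirement by branching: a psd-minimal realization has a Hadamard square root of rank $\le 5$, which (having the same rank and zero pattern) must satisfy the same parametrization; writing $y_i$ for the square roots of the free parameters $x_i$ and equating the two parametrized forms entrywise produces equations of the shape $y_i(1-y_i)(1-y_j)=0$, exactly as in the $3$-cube case. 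Each such factor forces some $y_i=1$, i.e.\ the corresponding slack variable equals $1$; following all branches and re-running the saturation after each substitution (to propagate forced equalities) collapses every branch, up to permutation of rows and columns, to the single displayed matrix with one remaining free parameter $x$. I would verify that the displayed matrices indeed have rank $5$ and admit a rank-$5$ Hadamard square root (take the square root with the same support and the substitution forcing the non-$1$ entries, as in~\eqref{eq:cube_slacks}), establishing sufficiency; necessity is exactly the branching argument above.

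The main obstacle I anticipate is purely computational bookkeeping rather than conceptual: classes 24, 25, 26, 27 have $10$-row or $12$-column slack matrices with many variables, so the Gröbner basis computations for the saturated minor ideals are the bottleneck, and one must choose the initial scalings and the order of variable elimination carefully (as emphasized in Example~\ref{ex:poly12}) to keep the intermediate ideals tractable and to avoid denominators when using quadratic generators. A secondary subtlety is checking that the many branches really do all reduce to permutation-equivalent matrices; here one exploits the symmetry group of the combinatorial type to cut down the number of essentially distinct branches before computing, just as the three matrices in~\eqref{eq:cube_slacks} were seen to coincide up to relabeling. All of these calculations are carried out in the accompanying Sage/Macaulay2 worksheets, so in the paper I would simply record that the computations follow the exact pattern of the $3$-cube and classes 12--15 and are omitted, stating only the resulting normal forms as in the proposition.
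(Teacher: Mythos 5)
Your proposal matches the paper's approach exactly: the paper gives no detailed proof for these classes, stating only that the computations follow the same template as the $3$-cube and classes 12--15 (partially symbolic slack matrix with entries scaled to $1$, saturated ideal of $6$-minors, variable elimination, branching on the Hadamard square root conditions of the form $y_i(1-y_i)(1-y_j)=0$), with the parametrization \eqref{eq:param psd minimal cube} used to pre-set variables whenever the polytope or its dual has a cubical or octahedral facet, and the actual computations relegated to the accompanying worksheets. One small correction to your bookkeeping: by Table~\ref{table:list} only classes 16 and 20 among the primal classes actually have cubical facets (22, 24 and 26 have none, and the dual of 18, namely 19, has an octahedral rather than cubical facet), but this does not affect the argument, since the duals 19, 23, 25, 27 all have octahedral facets and the paper's trick explicitly covers the case where the polytope \emph{or its dual} has a cubical or octahedral facet.
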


A point to note is that in all these cases, the positive square root and the support of the slack matrix are of rank five. Thus these classes fail to provide counter-examples to the conjectures that were disproved 
by the polytopes in classes 12-15 as discussed before. Furthermore, unlike in classes 12-15 where the 
spaces of slack matrices of psd-minimal polytopes were higher degree algebraic varieties, in the above cases, these are just affine spaces. 

We now come to the last two dual pairs of combinatorial classes from Table~\ref{table:list}.

\begin{proposition} \label{prop:polys 28-29}
A polytope of combinatorial type 28 (respectively 29) is psd-minimal if and only if, after scaling and permuting rows and columns, its slack matrix (respectively, its transpose) 
is of one of the following forms: 
$$\begin{bmatrix}
0&  {1}&  {0}&  {1}&  {0}&    {1}&   {0}\\
0&  {1}&  {0}&  {1}&  {0}&    {0}&   {1}\\
0&  {1}&  {0}&  {0}&  {1}&    {x}&   {0}\\
0&  {1}&  {0}&  {0}&  {1}&    {0}&   {x}\\
0&  {0}&  {1}&  {1}&  {0}&    {1}&   {0}\\
0&  {0}&  {1}&  {1}&  {0}&    {0}&   {1}\\
0&  {0}&  {1}&  {0}&  {1}&    {x}&   {0}\\
0&  {0}&  {1}&  {0}&  {1}&    {0}&   {x}\\
1&    0&    0&    1&    0&      1&     0\\
1&    0&    0&    1&    0&      0&     1\\ 
1&    0&    0&    0&    1&      x&     0\\
1&    0&    0&    0&    1&      0&     x
\end{bmatrix},  \begin{bmatrix}
0&  {1}&  {0}&  {1}&  {0}&    {1}&   {0}\\
0&  {1}&  {0}&  {1}&  {0}&    {0}&   {1}\\
0&  {1}&  {0}&  {0}&  {1}&    {1}&   {0}\\
0&  {1}&  {0}&  {0}&  {1}&    {0}&   {1}\\
0&  {0}&  {1}&  {1}&  {0}&    {y}&   {0}\\
0&  {0}&  {1}&  {1}&  {0}&    {0}&   {y}\\
0&  {0}&  {1}&  {0}&  {1}&    {y}&   {0}\\
0&  {0}&  {1}&  {0}&  {1}&    {0}&   {y}\\
1&    0&    0&    1&    0&      x&     0\\
1&    0&    0&    1&    0&      0&     x\\ 
1&    0&    0&    0&    1&      x&     0\\
1&    0&    0&    0&    1&      0&     x
\end{bmatrix}$$
\end{proposition}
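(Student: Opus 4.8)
The plan is to follow the same template used throughout Section~\ref{sec:polys16-31} for the cubical case, exploiting that classes $28$ and $29$ are dual and that $28$ has three cubical facets. First I would fix a convenient scaling of the symbolic slack matrix of a polytope $P$ in class $28$. Since every facet of a psd-minimal polytope is psd-minimal (Lemma~\ref{lem:face_inheritance}), each of the three cubical facets of $P$ must be a psd-minimal $3$-cube, so by Proposition~\ref{prop:cube_classification} the restriction of $S_P(x)$ to the rows and columns of each cube has, up to scaling, the form \eqref{eq:param psd minimal cube}. Scaling rows and columns globally to make this compatible across the three overlapping cubes sets most of the variables to $1$ and a handful to a common parameter, which is exactly the structure displayed in the two candidate matrices in the statement: the block of twelve rows splits into three groups of four (one per cube), and the ``$x$'' entries are forced to sit in the trapezoidal slots of each cube.

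Next I would carry out the saturated-minor computation as in Example~\ref{ex:poly12} and the $3$-cube discussion: saturate the ideal of $6$-minors (here $d=4$, so $(d+2)=6$) of the partially-scaled symbolic slack matrix with respect to the product of the variables, and use the linear and low-degree generators of the resulting ideal to eliminate variables. This yields a parametrization of all rank-$5$ matrices with the zero pattern of a slack matrix of class $28$, in a small number of free parameters. Then I would impose the psd-minimality condition: introduce square-root variables $y_i$ with $y_i^2=x_i$, demand that the Hadamard square root also have rank $5$ — which, because of the rank constraint, forces it into the same parametrized form — and match entries. Exactly as in the $3$-cube argument, each cubical facet contributes relations of the type $y_1(1-y_1)(1-y_6)=0$, forcing various square-root parameters to equal $1$; branching on these cases collapses the possibilities down to the two forms listed (one where the three cubes share the ``same'' parameter $x$, and one where two independent parameters $x,y$ survive). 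For class $29$ the statement follows by polarity, since $\rankpsd$ is invariant under polars and the slack matrix of the polar is the transpose of the slack matrix; this is why the proposition is phrased with ``its transpose''.

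Finally I would check that the two matrices exhibited are genuinely realizable, i.e.\ that each is (up to row scaling) the slack matrix of an actual polytope in class $28$ — this is automatic from \cite[Theorem~24]{slackmatrixpaper} once one verifies the rank is $5$ and the zero pattern is correct, and one can confirm this on the explicit construction $\Delta_1\times\Delta_1\times\Delta_2$ listed in Table~\ref{table:list} — and that the positive Hadamard square root realizes rank $5$ (the remark after Proposition~\ref{prop:classes 16-29} already notes this is the expected behavior here, in contrast to classes $12$--$15$), so that no further hidden obstruction arises.

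The main obstacle I expect is bookkeeping rather than conceptual: organizing the scalings so that the psd-minimality constraints inherited from three \emph{distinct but overlapping} cubical facets are handled simultaneously without over-constraining, and then managing the case split on which $y_i$ are forced to $1$ so that it provably terminates in exactly the two families stated (and no spurious extra family, and no collapse of the two families into one). Concretely, one must be careful that the common trapezoid parameter of one cube is compatible with — but not necessarily equal to — that of an adjacent cube, which is precisely what produces the genuine two-parameter form on the right versus the one-parameter form on the left; verifying that both survive, and that they are not projectively equivalent, is the delicate point. As elsewhere in the paper, the actual Gröbner/elimination computations would be delegated to Macaulay2 and are not reproduced here.
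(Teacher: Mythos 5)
Your proposal follows essentially the same route as the paper: exploit psd-minimality of the three cubical facets via Proposition~\ref{prop:cube_classification} to pre-scale the symbolic slack matrix, saturate the ideal of $6$-minors to parametrize, impose the Hadamard-square-root rank condition and branch, observe that (unlike classes 16--27) two distinct families survive, and pass to class 29 by polarity. The paper gives no further detail for this case beyond noting it is "similar to Proposition~\ref{prop:classes 16-29}" with two components remaining, so your elaboration matches its intended argument.
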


While it is still true that the positive square root and the support of these slack matrices have rank five, it differs from the previous instances since the space of slack matrices is a union
of two components, each an affine space. One can check that the first case corresponds to polytopes projectively equivalent to the product of a triangle and a trapezoid, while the second corresponds to those projectively equivalent to a prism over a wedge.

The computation involved in Proposition~\ref{prop:polys 28-29} is similar to those in Proposition~\ref{prop:classes 16-29}. The only 
difference is that while in classes 16-27 all the slack matrices obtained by the disjunctive process collapse to the same one, in classes 28 and 29 two distinct slack matrices remain at the end.

We now consider the $4$-cube and its dual.
Once again we will see that the space of slack matrices is the union of two components, however the techniques used are more involved than in the previous cases.

\begin{example} [Class $30$: the $4$-cube] \label{ex:4cube} 

We start by reordering rows and columns of the slack matrix so that the upper left $8 \times 6$ submatrix corresponds to the slack matrix of a facet of the $4$-cube. If the $4$-cube is psd-minimal, this facet is as well, 
and hence, we can assume that this submatrix is as in Proposition~\ref{prop:cube_classification}:
\begin{small}
$$
\begin{pmatrix}
  \red{1}&    \red{0}&   \red{1}&  \red{0}&      \red{1}&    \red{0} & x_{25}& 0\\
  \red{1}&    \red{0}&   \red{1}&  \red{0}&      \red{0}&    \red{1} & x_{26}& 0\\ 
  \red{1}&    \red{0}&   \red{0}&  \red{1}&      \red{x}&    \red{0} & x_{27}& 0\\
  \red{1}&    \red{0}&   \red{0}&  \red{1}&      \red{0}&    \red{x} & x_{28}& 0\\ 
  \red{0}&    \red{1}&   \red{1}&  \red{0}&      \red{0}&    \red{0} & x_{29}& 0\\
  \red{0}&    \red{1}&   \red{1}&  \red{0}&      \red{0}&    \red{0} & x_{30}& 0\\
  \red{0}&    \red{1}&   \red{0}&  \red{1}&      \red{x}&    \red{0} & x_{31}& 0\\
  \red{0}&    \red{1}&   \red{0}&  \red{1}&      \red{0}&    \red{x} & x_{32}& 0\\
      x_1&        {0}&       x_9&      {0}&       x_{17}&        {0} & 0& x_{33}\\
      x_2&        {0}&    x_{10}&      {0}&          {0}&     x_{21} & 0& x_{34}\\
      x_3&        {0}&       {0}&   x_{13}&       x_{18}&        {0} & 0& x_{35}\\
      x_4&        {0}&       {0}&   x_{14}&          {0}&     x_{22} & 0& x_{36}\\
      {0}&        x_5&    x_{11}&      {0}&       x_{19}&        {0} & 0& x_{37}\\
      {0}&        x_6&    x_{12}&      {0}&          {0}&     x_{23} & 0& x_{38}\\
      {0}&        x_7&       {0}&   x_{15}&       x_{20}&        {0} & 0& x_{39}\\
      {0}&        x_8&       {0}&   x_{16}&          {0}&     x_{24} & 0& x_{40}
\end{pmatrix}.$$
\end{small}
There are still too many variables for computations, but we can repeat the idea on the lower left submatrix, which also corresponds to the slack matrix of a $3$-cube. Recall that we already used scalings and permutations of the first six columns to put the top left $8 \times 6$ submatrix into the present form.
Therefore we can only assume that the bottom left $8 \times 6$ submatrix is of one of the forms in \eqref{eq:cube_slacks} up to column scalings.
Since we want to keep the top left submatrix intact, we cannot freely permute to reduce the three possible matrices in \eqref{eq:cube_slacks} 
to just one as we did in 
Proposition~\ref{prop:cube_classification}. However, there is enough freedom to reduce it to one of the last two types in \eqref{eq:cube_slacks}. 
In addition, scaling the last two columns of the full slack matrix, we get two possibilities:
\begin{tiny} 
$$\begin{pmatrix}
  \red{1}&    \red{0}&   \red{1}&  \red{0}&      \red{1}&    \red{0} & 1& 0\\
  \red{1}&    \red{0}&   \red{1}&  \red{0}&      \red{0}&    \red{1} & x_9& 0\\ 
  \red{1}&    \red{0}&   \red{0}&  \red{1}&      \red{1}&    \red{0} & x_{10}& 0\\
  \red{1}&    \red{0}&   \red{0}&  \red{1}&      \red{0}&    \red{1} & x_{11}& 0\\ 
  \red{0}&    \red{1}&   \red{1}&  \red{0}&      \red{x_1}&    \red{0} & x_{12}& 0\\
  \red{0}&    \red{1}&   \red{1}&  \red{0}&      \red{0}&    \red{x_1} & x_{13}& 0\\
  \red{0}&    \red{1}&   \red{0}&  \red{1}&      \red{x_1}&    \red{0} & x_{14}& 0\\
  \red{0}&    \red{1}&   \red{0}&  \red{1}&      \red{0}&    \red{x_1} & x_{15}& 0\\
  \blue{x_2}&  \blue{0}&  \blue{x_4}&  \blue{0}&    \blue{x_6}&   \blue{0} & 0& 1\\
  \blue{x_2}&  \blue{0}&  \blue{x_4}&  \blue{0}&    \blue{0}&   \blue{x_8} & 0& x_{16}\\
  \blue{x_2}&  \blue{0}&  \blue{0}&  \blue{x_5}&    \blue{x_6x_7}&   \blue{0} & 0& x_{17}\\
  \blue{x_2}&  \blue{0}&  \blue{0}&  \blue{x_5}&    \blue{0}&   \blue{x_8x_7} & 0& x_{18}\\
  \blue{0}&  \blue{x_3}&  \blue{x_4}&  \blue{0}&    \blue{x_6}&   \blue{0} & 0& x_{19}\\
  \blue{0}&  \blue{x_3}&  \blue{x_4}&  \blue{0}&    \blue{0}&   \blue{x_8} & 0& x_{20}\\
  \blue{0}&  \blue{x_3}&  \blue{0}&  \blue{x_5}&    \blue{x_6x_7}&   \blue{0} & 0& x_{21}\\
  \blue{0}&  \blue{x_3}&  \blue{0}&  \blue{x_5}&    \blue{0}&   \blue{x_8x_7} & 0& x_{22}
\end{pmatrix} \hfill
\begin{pmatrix}
  \red{1}&    \red{0}&   \red{1}&  \red{0}&      \red{1}&    \red{0} & 1& 0\\
  \red{1}&    \red{0}&   \red{1}&  \red{0}&      \red{0}&    \red{1} & x_9& 0\\ 
  \red{1}&    \red{0}&   \red{0}&  \red{1}&      \red{1}&    \red{0} & x_{10}& 0\\
  \red{1}&    \red{0}&   \red{0}&  \red{1}&      \red{0}&    \red{1} & x_{11}& 0\\ 
  \red{0}&    \red{1}&   \red{1}&  \red{0}&      \red{x_1}&    \red{0} & x_{12}& 0\\
  \red{0}&    \red{1}&   \red{1}&  \red{0}&      \red{0}&    \red{x_1} & x_{13}& 0\\
  \red{0}&    \red{1}&   \red{0}&  \red{1}&      \red{x_1}&    \red{0} & x_{14}& 0\\
  \red{0}&    \red{1}&   \red{0}&  \red{1}&      \red{0}&    \red{x_1} & x_{15}& 0\\
  \blue{x_2}&  \blue{0}&  \blue{x_4}&  \blue{0}&    \blue{x_6}&   \blue{0} & 0& 1\\
  \blue{x_2}&  \blue{0}&  \blue{x_4}&  \blue{0}&    \blue{0}&   \blue{x_8} & 0& x_{16}\\
  \blue{x_2}&  \blue{0}&  \blue{0}&  \blue{x_5}&    \blue{x_6}&   \blue{0} & 0& x_{17}\\
  \blue{x_2}&  \blue{0}&  \blue{0}&  \blue{x_5}&    \blue{0}&   \blue{x_8} & 0& x_{18}\\
  \blue{0}&  \blue{x_3}&  \blue{x_4}&  \blue{0}&    \blue{x_6x_7}&   \blue{0} & 0& x_{19}\\
  \blue{0}&  \blue{x_3}&  \blue{x_4}&  \blue{0}&    \blue{0}&   \blue{x_8x_7} & 0& x_{20}\\
  \blue{0}&  \blue{x_3}&  \blue{0}&  \blue{x_5}&    \blue{x_6x_7}&   \blue{0} & 0& x_{21}\\
  \blue{0}&  \blue{x_3}&  \blue{0}&  \blue{x_5}&    \blue{0}&   \blue{x_8x_7} & 0& x_{22}
\end{pmatrix}.$$
\end{tiny}

We then apply our techniques to each of them, with the added idea that we not always take the full ideal of the $6$-minors, opting instead to work with minors of submatrices to reduce the size of 
computations while still eliminating variables. There are several branching points but in the end we obtain just two distinct possibilities, as described in the proposition below.
\end{example}

\begin{proposition}\label{prop:polytope 30 31}
 A combinatorial $4$-cube (respectively a combinatorial $4$-cross polytope) is psd-minimal if and only if, after scaling and permuting rows and columns its slack matrix (respectively its transpose) 
 has one of the following forms:
 $$\begin{pmatrix}
  \red{1}&    \red{0}&   \red{1}&  \red{0}&      \red{1}&    \red{0} & 1& 0\\
  \red{1}&    \red{0}&   \red{1}&  \red{0}&      \red{0}&    \red{1} & 1& 0\\ 
  \red{1}&    \red{0}&   \red{0}&  \red{1}&      \red{1}&    \red{0} & y& 0\\
  \red{1}&    \red{0}&   \red{0}&  \red{1}&      \red{0}&    \red{1} & y& 0\\ 
  \red{0}&    \red{1}&   \red{1}&  \red{0}&      \red{x}&    \red{0} & 1& 0\\
  \red{0}&    \red{1}&   \red{1}&  \red{0}&      \red{0}&    \red{x} & 1& 0\\
  \red{0}&    \red{1}&   \red{0}&  \red{1}&      \red{x}&    \red{0} & y& 0\\
  \red{0}&    \red{1}&   \red{0}&  \red{1}&      \red{0}&    \red{x} & y& 0\\
  \blue{1}&  \blue{0}&  \blue{1}&  \blue{0}&    \blue{1}&   \blue{0} & 0& 1\\
  \blue{1}&  \blue{0}&  \blue{1}&  \blue{0}&    \blue{0}&   \blue{1} & 0& 1\\
  \blue{1}&  \blue{0}&  \blue{0}&  \blue{1}&    \blue{1}&   \blue{0} & 0& y\\
  \blue{1}&  \blue{0}&  \blue{0}&  \blue{1}&    \blue{0}&   \blue{1} & 0& y\\
  \blue{0}&  \blue{1}&  \blue{1}&  \blue{0}&    \blue{x}&   \blue{0} & 0& 1\\
  \blue{0}&  \blue{1}&  \blue{1}&  \blue{0}&    \blue{0}&   \blue{x} & 0& 1\\
  \blue{0}&  \blue{1}&  \blue{0}&  \blue{1}&    \blue{x}&   \blue{0} & 0& y\\
  \blue{0}&  \blue{1}&  \blue{0}&  \blue{1}&    \blue{0}&   \blue{x} & 0& y
\end{pmatrix}  \textrm{ or }
\begin{pmatrix}
  \red{1}&    \red{0}&   \red{1}&  \red{0}&      \red{1}&    \red{0} & 1& 0\\
  \red{1}&    \red{0}&   \red{1}&  \red{0}&      \red{0}&    \red{1} & 1& 0\\ 
  \red{1}&    \red{0}&   \red{0}&  \red{1}&      \red{1}&    \red{0} & 1& 0\\
  \red{1}&    \red{0}&   \red{0}&  \red{1}&      \red{0}&    \red{1} & 1& 0\\ 
  \red{0}&    \red{1}&   \red{1}&  \red{0}&      \red{x}&    \red{0} & y& 0\\
  \red{0}&    \red{1}&   \red{1}&  \red{0}&      \red{0}&    \red{x} & y& 0\\
  \red{0}&    \red{1}&   \red{0}&  \red{1}&      \red{x}&    \red{0} & y& 0\\
  \red{0}&    \red{1}&   \red{0}&  \red{1}&      \red{0}&    \red{x} & y& 0\\
  \blue{1}&  \blue{0}&  \blue{1}&  \blue{0}&    \blue{1}&   \blue{0} & 0& 1\\
  \blue{1}&  \blue{0}&  \blue{1}&  \blue{0}&    \blue{0}&   \blue{1} & 0& 1\\
  \blue{1}&  \blue{0}&  \blue{0}&  \blue{1}&    \blue{1}&   \blue{0} & 0& 1\\
  \blue{1}&  \blue{0}&  \blue{0}&  \blue{1}&    \blue{0}&   \blue{1} & 0& 1\\
  \blue{0}&  \blue{1}&  \blue{1}&  \blue{0}&    \blue{x}&   \blue{0} & 0& y\\
  \blue{0}&  \blue{1}&  \blue{1}&  \blue{0}&    \blue{0}&   \blue{x} & 0& y\\
  \blue{0}&  \blue{1}&  \blue{0}&  \blue{1}&    \blue{x}&   \blue{0} & 0& y\\
  \blue{0}&  \blue{1}&  \blue{0}&  \blue{1}&    \blue{0}&   \blue{x} & 0& y
\end{pmatrix}.$$
 \end{proposition}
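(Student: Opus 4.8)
The plan is to follow the strategy outlined in Example~\ref{ex:4cube} and push it through to the end, treating the $4$-cube case in full and deducing the cross-polytope case by polarity (using that psd rank and square root rank are invariant under polarity, and that the combinatorial type of the polar is the combinatorial type of the $4$-cross-polytope). First I would fix an ordering of the $16$ vertices and $8$ facets of a combinatorial $4$-cube so that two complementary facets come first among the columns, and the eight vertices of the first facet come first among the rows. Since a face of a psd-minimal polytope is psd-minimal (Lemma~\ref{lem:face_inheritance}) and the facets of a $4$-cube are $3$-cubes, Proposition~\ref{prop:cube_classification} forces the top-left $8\times 6$ block of the slack matrix (up to row and column scalings) into the normal form~\eqref{eq:param psd minimal cube}. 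Likewise the bottom-left $8\times 6$ block is the slack matrix of another $3$-cube facet, hence also of that form up to scalings; but since the column scalings have already been used up on the first six columns, I can only bring this block into one of the three shapes in~\eqref{eq:cube_slacks}, and after using the residual symmetries (permuting the two complementary facet-columns, and the symmetry of the cube exchanging the two cube facets) I can reduce to one of the last two shapes there. Scaling the last two columns then gives the two explicit partially-symbolic matrices displayed at the end of Example~\ref{ex:4cube}, each in roughly twenty-two variables.

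Next, for each of these two candidate matrices I would impose the rank-$5$ condition: saturate the ideal of $6$-minors (or, to keep computations tractable, ideals of $6$-minors of well-chosen $6$-column submatrices that still see all the variables) with respect to the product of all variables, extract the linear and simple quadratic generators, and use them to eliminate variables and scale away denominators, exactly as in Example~\ref{ex:poly12} and the $3$-cube computation. This produces a parametrization (up to scalings) of all rank-$5$ matrices with the zero pattern of a $4$-cube slack matrix that also have the prescribed facet normal forms. Then I would impose psd-minimality by Theorem~\ref{thm:square root rank}: introduce new variables $y_i$ for the square roots of the parameters $x_i$, add the relations $y_i^2=x_i$ together with the requirement that the matrix obtained by substituting the $y_i$ in place of the $x_i$ (with consistent sign choices) again has the same parametrized rank-$5$ form, and eliminate the $y_i$. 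The branching over sign choices of square roots (as in the $3$-cube argument, where one repeatedly concludes that one of two square roots must equal $1$) produces a finite decision tree; at each leaf one recomputes the slack ideal and continues. I expect that after all branches are resolved exactly two inequivalent families survive, and a direct check that they coincide with the two matrices in the statement (up to the remaining row/column permutations) finishes the $4$-cube half; the cross-polytope half is then immediate by transposition and Corollary~\ref{cor:proj_equivalent slack matrix}.

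The main obstacle will be the size and bookkeeping of the Gr\"obner basis / elimination computations: even after exploiting both facets, the candidate matrices carry on the order of twenty variables, the ideals of $6$-minors of a $16\times 8$ matrix are large, and the square-root variety has a genuinely combinatorial branching structure, so the computation cannot be carried out in a single pass. The practical resolution — already flagged in Example~\ref{ex:4cube} — is to (i) work with carefully selected submatrices rather than the full minor ideal whenever that still eliminates a variable, (ii) use the quadratic binomial generators aggressively to clear variables and rescale, and (iii) organize the sign-choice branching so that each branch is small and terminates quickly; the actual Macaulay2/Sage scripts realizing this are those referenced in the paper's companion worksheets. A secondary subtlety is justifying that the residual symmetry group of the configuration (stabilizing the already-fixed top-left block) is exactly large enough to collapse the three shapes of~\eqref{eq:cube_slacks} to two but no further, and that the two final matrices are genuinely inequivalent under the allowed operations — this is a finite check on the symmetry group of the $4$-cube acting on the relevant flag, which I would verify directly. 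Finally, one should observe, as noted after the proposition, that in both surviving families the positive Hadamard square root and the $0/1$ support matrix have rank $5$, so the $4$-cube does not furnish further counterexamples beyond classes $12$--$15$; this is a trivial verification once the normal forms are in hand.
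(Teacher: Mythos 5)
Your proposal follows essentially the same route as the paper: fixing the top-left $8\times 6$ block via Proposition~\ref{prop:cube_classification}, reducing the bottom-left block to the last two shapes of~\eqref{eq:cube_slacks} because the column scalings are exhausted, and then running the saturation/elimination and square-root branching machinery on the two resulting candidate matrices, with the cross-polytope case handled by transposition. The computational caveats you flag (working with submatrix minors, organizing the sign branching) are exactly the ones the paper itself notes in Example~\ref{ex:4cube}, so there is nothing substantive to add.
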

 
 One can check that the first slack matrix is the slack matrix of the product of two trapezoids, namely the ones with vertices $\{(0,0),(1,0),(0,1),(1,x)\}$ and $\{(0,0),(1,0),(0,1),(1,y)\}$,
 while the second one is the slack matrix of the polytope with vertices $(\{0\} \times \{0,1\}^3) \cup (\{1\} \times \{0,1\} \times \{0,x\} \times \{0,y\})$, a cubical prismoid.

\section{Open questions}

In this paper we classified all psd-minimal  $4$-polytopes.  Such a classification in 
higher dimensions seems unwieldy with the current techniques, although it might be 
possible to tackle $5$-polytopes using similar ideas to those in this paper.
For any fixed $d$ there is only a finite number of combinatorial classes of psd-minimal $d$-polytopes.
An answer to the following question, besides being of independent interest, would also be useful for efficiently enumerating these classes.

\begin{problem}
What is the maximum number of facets in a psd-minimal $d$-polytope?
\end{problem}

By the invariance of psd rank under polarity, this is equivalent to asking how many vertices a psd-minimal $d$-polytope can have. For $2$-level polytopes in $\RR^d$, it was shown in \cite{gouveia2010thetabodies} that the number of vertices and facets cannot exceed $2^d$ and that this bound is tight. Since $2$-level polytopes are all psd-minimal, $2^d$ is a lower bound for the maximum number of facets in a psd-minimal $d$-polytope, and we suspect that it is 
in fact also an upper bound.

It might be easier to classify psd-minimal polytopes of a fixed combinatorial type. For instance, 
$[0,1]^d$ is $2$-level and thus a psd-minimal $d$-polytope. 

\begin{problem} What are the precise conditions for psd-minimality of a $d$-cube (cross-polytope)?
\end{problem}

The cube $[0,1]^d$ is a product of psd-minimal polytopes, and it is natural to ask about the connection between psd-minimality and standard polytope operations.

\begin{problem}
How does psd-minimality behave under the polytope operations of free sum, product and join?
\end{problem}

We saw that, unlike a $d$-cube, some polytopes are combinatorially psd-minimal, in particular, those with a binomial slack ideal. All known combinatorially psd-minimal polytopes have a binomial slack ideal (see Section~\ref{sec:slackideals}).

\begin{problem} Is the slack ideal of a combinatorially psd-minimal polytope always binomial?
\end{problem}

All $d$-polytopes with at most $d+2$ vertices have a binomial slack ideal. We also saw in Section~\ref{sec:slackideals} that there are psd-minimal $d$-polytopes with more than $d+2$ vertices that have binomial slack ideals.

\begin{problem} What can be said about the combinatorics of a polytope with a binomial slack ideal?
\end{problem}

Toric ideals \cite{GBCP} form a rich class of binomial ideals and we saw in Section~\ref{sec:slackideals} that the 
slack ideal of a $d$-polytope with $d+2$ vertices is, in fact, toric.

\begin{problem}
When a slack ideal is binomial, is it also toric?
\end{problem}

We saw that the stable set polytope of a perfect graph is $2$-level, and thus psd-minimal. Further, this polytope admits a simple 
semidefinite representation that allows the stable set problem to be solved in polynomial time in a perfect graph.

\begin{problem} Are there other interesting examples of polytopes from 
combinatorial optimization that are psd-minimal (and not necessarily $2$-level)? Do these polytopes admit semidefinite 
representations that lead to efficient algorithms for linear optimization over them?
\end{problem}

\bibliography{psdrankbibfile}
\bibliographystyle{alpha}

\end{document}